\documentclass[11pt]{amsart}
\usepackage{tikz}
\usetikzlibrary{arrows.meta}
\usepackage[utf8]{inputenc}
\usepackage[T1]{fontenc}
\usepackage{geometry}
\usepackage{amsmath}
\usepackage{amsthm}
\usepackage{amssymb}
\usepackage{mathtools}
\usepackage{enumitem}
\usepackage{xcolor}
\usepackage[unicode=true,pdfusetitle,
 bookmarks=true,bookmarksopen=false,
 breaklinks=false,pdfborder={0 0 1},backref=false,colorlinks=false]
 {hyperref}

\numberwithin{equation}{section}
\numberwithin{figure}{section}

\theoremstyle{plain}
\newtheorem{thm}{Theorem}[section]
\newtheorem{prop}[thm]{Proposition}
\newtheorem{cor}[thm]{Corollary}
\newtheorem{lem}[thm]{Lemma}
\theoremstyle{definition}
\newtheorem{defn}[thm]{Definition}
\theoremstyle{remark}
\newtheorem{rem}[thm]{Remark}
\theoremstyle{plain}

\newcommand{\set}[1]{\left\{\, #1 \,\right\}}
\newcommand{\abs}[1]{\lvert #1 \rvert}
\newcommand{\norm}[1]{\lVert #1 \rVert}
\newcommand{\vep}{\varepsilon}
\newcommand{\supp}{\operatorname{spt}}
\def\R{\mathbb{R}}
\def\e{\varepsilon}
\title{Space-time geometry of Hele-Shaw flow}
\author{Carson Collins}
\address{Department of Mathematics, University of California, Los Angeles, CA 90095, USA}
\email{ctcollins@math.ucla.edu}
\author{Inwon Kim}
\address{Department of Mathematics, University of California, Los Angeles, CA 90095, USA}
\email{ikim@math.ucla.edu}
\author{Sebastian Munoz}
\address{Department of Mathematics, University of California, Los Angeles, CA 90095, USA}
\email{sebastian@math.ucla.edu}
\subjclass[2020]{35R35, 35B65, 76D27, 35Q92}
\keywords{Hele-Shaw flow, free boundary, obstacle problem, singularities, tumor growth}
\date{}

\begin{document}

\begin{abstract}
We study the dynamic behavior of the free boundary of the Hele-Shaw flow with a nonnegative source, in
which the pressure satisfies $-\Delta p=c$ in $\{p>0\}$ and the free boundary moves with
normal velocity $V=\abs{\nabla p}$. For initial domains satisfying an interior ball
condition, we prove that the patch $\{p>0\}$ expands at a locally positive rate, so that its
hitting time is locally Lipschitz; this is optimal, since distinct portions of the patch may
collide. We also show that the free boundary is locally either regular, or of collision type, or undergoing parabolic-scale hole closings of various dimensions: such a space-time characterization of singularities is new in Hele-Shaw flow.
We further show that, near any
time at which no collision occurs, the space-time free boundary is a $C^1$ hypersurface. This implies that at such times every free boundary point has a space-time normal: the space-time unit normal is parallel to the time direction precisely where the spatial free boundary has singular geometry.
The results apply in particular to the tumor growth model with nutrients introduced in \cite{PQV14},
but the analysis of singularities is new even for the classical injection problem.
\end{abstract}

\maketitle
\tableofcontents

\section{Introduction and main results}\label{sec:introduction}

In this paper we study the Hele-Shaw flow with a source term, which can be written in terms of the nonnegative pressure variable $p(x,t)$ as
\begin{equation*}\tag{HS}\label{eq:intro-HS-law}
\left\{\begin{array}{lll}
    -\Delta p=c &\hbox{ in }&\{p>0\},\\
    V=|\nabla p| &\hbox{on }&\partial\{p>0\},
\end{array}\right.
\end{equation*}
where $c=c(x,t) \geq 0$ is a given source term and $V=V_{x,t}$ denotes the outer normal velocity of the {\it free boundary } $\partial\{p>0\}$; the second equation is Darcy's law. We are interested  in understanding the effect of the source term on the evolution of the free boundary.

 The classical Hele-Shaw free boundary problem arose in fluid dynamics, especially in injection or Laplacian-growth settings where the forcing is prescribed on a fixed boundary or at injection sites \cite{SaffmanTaylor58,Richardson72,BCMP73}. More recently, variants of \eqref{eq:intro-HS-law}  have been actively studied in the context of crowd motion and tumor growth. To motivate the presence of the source term, it is helpful to view \eqref{eq:intro-HS-law}
as a constrained transport equation:
\begin{equation}\label{eqn:density}
\partial_t\rho - \nabla\cdot (\rho \nabla p) = c\rho,  \quad 0\leq \rho \leq 1.
\end{equation}

Here $p\geq 0$ is the pressure variable generated by the constraint on $\rho$: when $c\geq 0$, $p$ satisfies $p(1-\rho)=0$.  
When the initial density $\rho_0:=\rho(\cdot,0)$ is a {\it patch}, namely when $\rho_0=\chi_{\Omega_0^+}$ a.e., where $\Omega_0^+:=\{\rho_0=1\}$, it is known that $\rho(\cdot,t)$ remains a patch \cite[Theorem~1.2(c')]{JKT21}. For patch solutions, \eqref{eq:intro-HS-law} is the pressure formulation of \eqref{eqn:density}, and $\{p>0\}$ coincides with the saturated region $\{\rho=1\}$. 
For solutions of \eqref{eqn:density} with non-patch initial data, by contrast, the source can raise the density to saturation in a region where $p=0$, thereby allowing a new component of $\{p>0\}$ to nucleate away from the existing pressure phase (see e.g. \cite{MPQ17}). We do not address this mechanism in this paper and thus focus on \eqref{eq:intro-HS-law}.

While we consider a general class of $c$ in this paper, our primary motivating example is  the tumor growth model \cite{PQV14,DavidPerthame,JKT23,CJK25}. Here the source $c$ denotes the nutrient in the environment that the tumor consumes. In this case $c$ is coupled with $\rho$ and satisfies
\begin{equation}\label{eqn:nutrient}
\partial_t c - \Delta c = -c\rho.
\end{equation}
The model \eqref{eqn:density}--\eqref{eqn:nutrient} is well known to generate diverse patterns of irregular tumor growth, including the growth of thin fingers, patterns very different from those of the injection problem \cite{CLN03,MRCSdendritic,FTXZ23}.
The dynamics behind these numerically observed irregular shapes remain far from clear, even at a formal level. Thus, studying the dynamic profile of the set $\{p>0\}$ in terms of the source $c$  carries both mathematical and practical interest.

  While well-posedness of weak solutions holds for a broad class of source terms $c$ \cite{PQV14,MPQ17,JKT21,GKM22,DavidPerthame}, classical solutions of \eqref{eq:intro-HS-law}, in general, only exist in short-time regimes \cite{EscherSimonett97,PruSim16}. Extensive literature exists on short-time free-boundary regularity and on regularization from flat or Lipschitz profiles \cite{Kim06,CJK07,CJK09,ChangLaraGuillen16,kimZhang24,Liang26}, as well as on explicit examples of finite-time singularities in zero-surface-tension Hele-Shaw flow and Laplacian growth \cite{Howison86,LeeTeodorescuWiegmann09}. There are also comprehensive studies of the stationary profiles of singularities at a fixed time, via the elliptic obstacle problem \cite{Caffarelli98,Monneau03,FS18,FRS20,EFW25}.  By contrast, the dynamic characterization of singularities, our focus in this paper, is much less understood.

Below we highlight our key findings on the space-time geometry of the flow. For simplicity, we state them for initial domains $\{\rho_0=1\}$ satisfying an interior ball condition. Here {\it singular} points refer to the free boundary points near which the set $\{p>0\}$ does not approximate a half-space.
  \begin{enumerate}[label=(\alph*)]
      \item\label{i:result-nondegeneracy} {\it Nondegeneracy:} the patch expands at a locally positive rate, depending on the geometry of $\Omega_0^+$ and on $c$. In particular, the pressure stays uniformly nondegenerate at the interface; see Theorem~\ref{thm:HS-hitting-time-lipschitz}.
      \item\label{i:result-space-time-profile} {\it Space-time profile of singularities:} at every singular  point that is not of {\it collision type} (at such a point the zero set collapses, at small scales, onto a hyperplane, as when two advancing fronts of the patch collide), we establish the sharp extinction rate of the zero set, together with its asymptotic shape, a cylinder over an ellipsoid, both of which depend only on the blowup profile at the singular point. Thus, all singularities are essentially either collisions of free boundaries or hole closings of various dimensions; see Theorem~\ref{thm:closing-rates}.
      \item\label{i:result-space-time-regularity} {\it Regularity of the space-time boundary:} near any time at which no collision-type singularity occurs, the space-time free boundary is a $C^1$ hypersurface. It follows that  at these times its singular points are precisely the points where its space-time normal is parallel to the time direction, namely where the patch expands with infinite speed; see Theorem~\ref{thm:no_top_stratum_C1}.
 \end{enumerate}

For the tumor growth model \eqref{eqn:density}--\eqref{eqn:nutrient}, these results resolve questions on the singular behavior of the patch raised in
\cite{CJK25}, which we recall below alongside the corresponding theorems. On the other hand, our results are new contributions in the broader context of Hele-Shaw type flows. In particular, results~\ref{i:result-space-time-profile} and
\ref{i:result-space-time-regularity} are new even for the classical injection problem. Alongside them, and for general initial domains, our analysis yields a Hopf--Lax type inequality for the pressure variable (Theorem~\ref{prop:HS-pressure-hopf-lax}), of independent interest, which serves as a key tool in the proof of result~\ref{i:result-space-time-profile}.

We note that the patch expands over time due to its velocity law in \eqref{eq:intro-HS-law}, and thus the space-time free boundary is the graph of the {\it hitting time} $T(x):\R^d\to\R\cup\{+\infty\}$, which can be formally defined as the first time at which the patch contains $x$ (see \eqref{eq:intro-w-obstacle}). Our regularity analysis can be stated in terms of $T$, since formally $\{T(x)=t\}=\partial\Omega_t$ and thus the normal velocity of $\Omega_t$ is $V=|\nabla T|^{-1}$. Result~\ref{i:result-nondegeneracy} corresponds to Lipschitz regularity of $T$, which is optimal due to potential topological changes of the patch. Result~\ref{i:result-space-time-regularity} states that $T$ is $C^1$ near times without collision-type singularities, with singular set given exactly by the critical set $\{\nabla T=0\}$.

We stress that our focus is on finite-time singularities of the free boundary. The main challenge in studying the large-time behavior of the free boundary lies in the potential degeneration of the source term $c$, which occurs in the tumor growth model \eqref{eqn:density}--\eqref{eqn:nutrient} as the tumor consumes the nutrient. Global problems, such as the characterization of the limit shape of the tumor patch as $t\to\infty$, remain intriguing open questions; we make some initial observations on this matter in Section~\ref{sec:tumor-applications}.

\subsection{Setting and standing assumptions}\label{subsec:intro-setting}

Throughout the paper, $p$ denotes a weak solution of \eqref{eq:intro-HS-law} in the
distributional sense: writing
$\rho:=\chi_{\{p>0\}}$, we require that
\[
    \partial_t\rho-\Delta p=c\rho \qquad\hbox{in }\mathcal D'(\R^d\times(0,\infty)),
\]
together with natural integrability and time-continuity conditions (see Definition~\ref{def:HS-weak-solution}). Section~\ref{sec:HS-weak-basic} develops the
basic theory that we require, including a comparison principle
(Theorem~\ref{thm:HS-distributional-comparison}), proved in
Appendix~\ref{app:distributional-comparison} in a relaxed form that accommodates the
degenerate barriers used throughout the paper.

 We describe the evolution of the patch through the Baiocchi
transform \cite{BCMP73}
\begin{equation}\label{eq:intro-baiocchi}
    w(x,t):=\int_0^tp(x,s)\,ds,
    \qquad
    \Omega_t:=\{w(\cdot,t)>0\},
    \qquad
    T(x):=\inf\{t>0:x\in\Omega_t\}.
\end{equation}
The set $\Omega_t$ is open, agrees with $\{p(\cdot,t)>0\}$ up to a null set, and satisfies
$\Omega_t=\{T<t\}$; see \eqref{eq:HS-open-phase-identities}. It follows that for any interval $I\subset (0, \infty)$, $\partial\{w>0\}\cap\{t\in I\}$ is the space-time graph of $T$ over the set
\begin{equation}\label{eq:O}
    O_I:=\{x\in\R^d:T(x)\in I\}.
\end{equation}
We abbreviate $O := O_{(0,\infty)} = \{ x\in \R^d : 0 < T(x) < \infty\}$.

Integrating the equation for $p$ in time shows that, away from the initial patch, each slice
$w(\cdot,t)$ solves an elliptic obstacle problem  in $O$:
\begin{equation}\label{eq:intro-w-obstacle}
    \Delta w(x,t)=f(x,t)\chi_{\{w(x,t)>0\}},
    \qquad
    f(x,t):=1-\int_{T(x)\wedge t}^{t}c(x,s)\,ds.  
\end{equation}
whose source $f$ is positive at times close to $T(x)$ and involves the hitting time itself.
Consequently, at each $x\in O\setminus\overline{\Omega_0^{+}}$ the function $w(\cdot,T(x))$
admits a quadratic blowup at $x$ (Lemma~\ref{lem:stationary-blowup}), which is either a
half-space solution or a nonnegative $2$-homogeneous polynomial $p_2$ with
$\Delta p_2=1$, since $f(x,T(x))=1$ by \eqref{eq:intro-w-obstacle}. We call $x$ {\it regular} in the first case and {\it singular} in the
second, and denote by $\mathcal R$ and $\Sigma$ the corresponding subsets of $O$. A singular
blowup satisfies $D^2p_2\neq0$, so the {\it spine}
$V_x:=\ker D^2p_2=\{p_2=0\}$ has dimension at most $d-1$, and $\Sigma$ stratifies as
\begin{equation}\label{eq:intro-strata}
    \Sigma=\bigcup_{k=0}^{d-1}\Sigma_k,
    \qquad
    \Sigma_k:=\{x\in\Sigma:\dim V_x=k\}.
\end{equation}
We write $\Sigma(t_0):=\Sigma\cap\{T=t_0\}$ and $\Sigma_k(t_0):=\Sigma_k\cap\{T=t_0\}$ for the
time slices. The top stratum $\Sigma_{d-1}$ consists of the points at which the zero set of
the blowup is a hyperplane; these are the {\it collision-type} singularities referred to in
result~\ref{i:result-space-time-profile} above, which occur, for instance, when two advancing portions of the patch meet. All of our local results
concern the complementary strata $\Sigma_k$ with $k\leq d-2$.

Our standing hypotheses are the following. The first collects the qualitative requirements on
the source, and the second is a mild restriction on the initial patch:
\begin{equation*}\tag{A1}\label{eq:A-source}
\begin{gathered}
    c:\R^d\times[0,\infty)\to[0,\infty)\hbox{ is continuous and, for every }\tau>0,\\
    0<\underline c_\tau\leq c\leq\overline c_\tau<\infty
    \quad\hbox{on }\R^d\times[0,\tau],
    \qquad
    \norm{\nabla_xc(\cdot,t)}_{L^\infty(\R^d)}\leq L_\tau
    \quad\hbox{for a.e. }t\in[0,\tau];
\end{gathered}
\end{equation*}
\begin{equation*}\tag{A2}\label{eq:A-initial}
    \Omega_0^{+}\hbox{ is bounded.}
\end{equation*}
Both hypotheses are assumed in each of our main results. At each fixed time, Hypothesis \eqref{eq:A-source} controls the spatial variation of \(c\): the Lipschitz bound at small scales, and the uniform upper and lower bounds at large scales. It also makes absolute and relative control of temporal differences equivalent, since \((\partial_t\log c)^-=(\partial_t c)^-/c\).

Let us mention that the injection problem, in which the source is concentrated on a fixed
boundary, does not satisfy \eqref{eq:A-source}. There, however, the uniform positivity of
the fixed boundary data replaces the lower bound on $c$ in \eqref{eq:A-source}. Overall, the technical work needed here is largely eliminated due to the simplifications that arise from \(c\equiv0\) in the bulk. We therefore expect our results to extend
without notable challenges; see Section~\ref{sec:injection}. On the other hand, when $c$ changes sign, the
correspondence between \eqref{eqn:density} and \eqref{eq:intro-HS-law} breaks down
altogether, and the regularity of the free boundary remains largely open \cite{GKM22}.

The remaining hypotheses enter only some of our results. The first is a
nondegeneracy condition on the initial patch:
\begin{equation*}\tag{IB}\label{eq:A-interior-ball}
    \Omega_0^{+}\hbox{ satisfies a uniform interior ball condition of radius }r_0>0 .
\end{equation*}
This excludes inward cusps and corners of the initial patch, and is the hypothesis
that yields Lipschitz, rather than merely H\"older, continuity of the hitting time
(see Theorem~\ref{thm:HS-hitting-time-lipschitz} below).
The second is a time-regularity requirement on the source, indexed by an integrability
exponent $m$:
\begin{equation*}\tag{S$_m$}\label{eq:A-source-time}
    (\partial_tc)^-\in L^1_{\operatorname{loc}}\bigl([0,\infty);L^m(\R^d)\bigr),
    \qquad m\in(d/2,\infty] .
\end{equation*}

The hypothesis \eqref{eq:A-source-time} comes in varying strengths: stronger time
regularity of the source yields stronger conclusions. A hypothesis of this kind is needed
because the quasi-static evolution \eqref{eq:intro-HS-law} provides no time regularity for
the pressure. Indeed, $p$ may increase discontinuously when patch parts merge together. On the other hand, the decrease of the pressure can still be controlled: a lower bound on $\partial_t p$ can be derived under assumptions such as \eqref{eq:A-source-time}, either through Aronson--B\'enilan estimates for the porous medium approximation (see e.g. \cite{PQV14}) or by a direct argument, as we do here (Lemma~\ref{lem:HS-finite-difference-pt-lower}). For $m>d/2$, the threshold of elliptic regularity, this control is uniform
in space, and the pressure acquires a canonical pointwise representative
(Proposition~\ref{prop:HS-USC-representative}), to which all pointwise statements below
refer. In this range the Hopf--Lax inequality of Theorem~\ref{prop:HS-pressure-hopf-lax}
holds, and the hitting time is H\"older continuous, with no hypothesis on the initial patch
beyond \eqref{eq:A-initial} (Remark~\ref{rem:HS-hitting-time-holder}). The endpoint case
$m=\infty$, combined with \eqref{eq:A-interior-ball}, yields the Lipschitz estimate of
Theorem~\ref{thm:HS-hitting-time-lipschitz}. For the
nutrient system \eqref{eqn:density}--\eqref{eqn:nutrient} in the physical dimensions
$d\leq3$ we verify
in the proof of Theorem~\ref{thm:tumor-application} that $\partial_t c\in L^4(0,\tau;L^\infty(\R^d))$, so  \eqref{eq:A-source-time} holds for $m=\infty$, so all the conclusions discussed above apply.

\subsection{Main results}\label{subsec:intro-results}

Our first result establishes the nondegeneracy asserted in~\ref{i:result-nondegeneracy}, in the form of a Lipschitz
bound on the hitting time. We recall that the set $O$ was defined below \eqref{eq:O}.

\begin{thm}[Nondegeneracy and Lipschitz regularity of the hitting time]\label{thm:HS-hitting-time-lipschitz}
Assume \eqref{eq:A-source}, \eqref{eq:A-initial}, \eqref{eq:A-interior-ball}, and \eqref{eq:A-source-time} with $m=\infty$. Then, with $O=T^{-1}((0,\infty)),$
\[
    \partial\{p>0\} =\{(x,T(x)):x\in O\},
\]
and $T:\R^d\to[0,\infty]$ is continuous in the extended sense. Moreover, for each $\tau>0$
there exist $L<\infty$ and $\vep>0$, depending only on
\[
    d,\ r_0,\ \tau,\ \inf_{\R^d\times[0,\tau+1]}c,\
    \norm{\nabla_xc}_{L^\infty(\R^d\times(0,\tau+1))},\
    \norm{(\partial_tc)^-}_{L^1(0,\tau+1;L^\infty(\R^d))},
\]
such that
\begin{equation}\label{eq:HS-T-local-Lipschitz-estimate}
    \abs{T(x)-T(y)}\leq L\abs{x-y}
    \qquad\hbox{whenever }\min\{T(x),T(y)\}<\tau\hbox{ and }\abs{x-y}<\vep .
\end{equation}
In particular, $O$ is open, and $T$ is locally Lipschitz in $O$.
There also exist $\kappa,r_*>0$, depending only on the same quantities and on
$\overline c_\tau:=\sup_{\R^d\times[0,\tau]}c$, such that
\begin{equation}\label{eq:HS-pressure-linear-nondegeneracy}
    \sup_{B_r(x)}p(\cdot,t)\geq\kappa r
    \qquad\hbox{if }x\in\partial\Omega_t,\quad 0<t<\tau,\quad 0<r<r_*.
\end{equation}

\end{thm}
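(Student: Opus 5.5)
The strategy is to show that the patch advances at a speed bounded below locally uniformly. From this, the Lipschitz bound on $T$ and the graph description of the space-time boundary follow directly.

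\textbf{Step 1 (uniform linear nondegeneracy at $t=0^+$).} Using \eqref{eq:A-interior-ball}, place a ball $B_{r_0}(y)\subset\Omega_0^+$ at each $x_0\in\partial\Omega_0^+$. For a short time, the solution dominates the radial Hele-Shaw solution started from $B_{r_0}(y)$ with the constant source $\underline c_{\tau+1}$. This follows from the comparison principle, Theorem~\ref{thm:HS-distributional-comparison}, whose relaxed form admits such barriers. The radial solution has explicit pressure $\underline c\,(R(t)^2-|x-y|^2)/(2d)$ plus a harmonic correction, and its radius grows at rate $\gtrsim \underline c\,r_0$. Hence every point within distance $\delta$ of $\Omega_0^+$ is covered by time $C\delta$, so $T(x)\le C\,\mathrm{dist}(x,\Omega_0^+)$ near $\partial\Omega_0^+$.

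\textbf{Step 2 (propagation in time of a quantitative interior ball / lower speed bound).} The key claim is a one-sided monotonicity-type estimate. Lemma~\ref{lem:HS-finite-difference-pt-lower} with $m=\infty$ gives $p(\cdot,t+h)\ge p(\cdot,t)-\omega(h)$, where the error is controlled by $\int (\partial_t c)^-$ and is in fact multiplicative, $p(t+h)\ge e^{-\int_t^{t+h}\|(\partial_t c)^-/c\|_\infty}p(t)$. I would combine this with a space-time scaling comparison: compare $p$ with $\lambda^{-1}p(y+\lambda(x-y),\,\lambda^{?}t+s)$-type dilations about interior points. The bound $\|\nabla_x c\|_\infty$ absorbs the resulting error in the source term, and the gap is started by Step 1.

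Comparison then yields $\Omega_{t}\supset y+(1+\eta s)(\Omega_{t-s}-y)$ for small $s$, uniformly for $t<\tau$ and for $y$ in the interior balls. Equivalently, every point at distance $r$ outside $\Omega_t$ is reached within time $Lr$. The interior balls persist because the initial ones grow, and the dilation inherits them.

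Iterating this gives $T(x)\le T(z)+L|x-z|$ whenever $T(z)<\tau$ and $|x-z|<\vep$; symmetrizing yields \eqref{eq:HS-T-local-Lipschitz-estimate}. Continuity of $T$ in the extended sense, openness of $O$, and the identification $\partial\{p>0\}=\{(x,T(x))\}$ follow. The last point holds because $\{w>0\}=\{T<t\}$ and strict expansion rules out flat pieces of $\partial\{w>0\}$ in time.

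\textbf{Step 3 (pressure nondegeneracy \eqref{eq:HS-pressure-linear-nondegeneracy}).} Suppose $\sup_{B_r(x)}p(\cdot,t)\le \kappa r$ with $x\in\partial\Omega_t$. Then I would use a supersolution barrier on $B_r(x)\times[t,t+Lr]$. Its pressure solves $-\Delta q=\overline c_\tau$ with boundary values of size $\kappa r$. By Darcy's law the front then moves at most distance $\lesssim(\kappa+\overline c_\tau r)\,Lr$ in time $Lr$, so for small $\kappa$ and $r<r_*$ the point $x$ has a neighborhood not covered by time $t+Lr/2$. This contradicts the lower speed bound of Step 2, which forces $B_{r/2L'}(x)$ to be covered.

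\textbf{Main obstacle.} The hardest part is Step 2: making the dilation comparison rigorous with a nonconstant, time-dependent source and only one-sided time control on $p$, while keeping constants independent of $\overline c_\tau$. I would first try the multiplicative form of the $\partial_t p$ lower bound, which is exactly where $(\partial_t\log c)^-$ enters, and absorb the spatial source mismatch of order $L_\tau\lambda r$ into a slightly slower dilation rate.
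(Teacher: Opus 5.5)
There are genuine gaps in Steps~2 and~3. Step~1 matches Lemma~\ref{lem:HS-initial-nondegeneracy}.

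\textbf{Step 2.} With the amplitude $\lambda^{-1}$ you wrote, $\lambda^{-1}p(y+\lambda(x-y),\lambda^{a}t+s)$ obeys Darcy's law only for $a=1$. More generally, $\mu^2b\,p(y+(x-y)/\mu,bt+s)$ has source $b\,c(\cdot)$. So a \emph{linear} time rescaling multiplies the source by a fixed constant $b=1\pm O(\epsilon)$. Meanwhile the comparison evaluates the two sources at times $O(\epsilon)$ apart, across which $c$ may drop by a factor $e^{-\int k}$, where $k(t)=\norm{(\partial_t\log c)^-(\cdot,t)}_{L^\infty}$. Absorbing this with an $O(\epsilon)$ lag essentially requires $k\in L^\infty_t$. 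However, \eqref{eq:A-source-time} with $m=\infty$ only gives $k\in L^1_t$. The multiplicative lower bound on $\partial_tp$ does not enter the source ordering that the comparison principle needs.

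There is a second problem in Step~2. Dilations about centers $y$ in interior balls move a point $z$ only in the directions $z-y$. When $z$ is far from $\Omega_0^+$ these directions form a narrow cone. So neither one dilation nor compositions of them give $T(x)\le T(z)+L\abs{x-z}$ for all $x$ near $z$. Persistence of interior balls is also neither known nor needed.

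The missing idea is Lemmas~\ref{lem:HS-time-rescaled-comparison} and~\ref{lem:HS-source-envelope}. Each translate $p(\cdot+h,t)$, $\abs h\le\vep$, solves \eqref{eq:intro-HS-law} with source $c(\cdot+h,t)$. For \emph{any} increasing $\Theta$, the function $\Theta'(t)p(\cdot,\Theta(t))$ solves it with source $\Theta'(t)c(\cdot,\Theta(t))$. Take $\Theta'=e^{M_x\vep+K(\Theta)-K(t)}$ and $\Theta(0)=C_0\vep$, with $M_x=\norm{\nabla_x\log c}_{L^\infty}$ and $K(t)=\int_0^tk$. This places the amplification exactly where $c$ decreases, so the sources are ordered under mere $L^1_t$ control. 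Step~1 orders the initial data, and the lag satisfies $\Theta(t)-t\le e^{K(\tau+1)}(C_0\vep+(e^{M_x\vep}-1)t)\le L\vep$. Then $p(y,T(y)+\eta)>0$ forces $p(x,\Theta(T(y)+\eta))>0$, that is, $T(x)\le T(y)+L\abs{x-y}$, in a single step with no iteration.

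\textbf{Step 3.} This step fails for two reasons. First, a local supersolution on $B_r(x)\times[t,t+Lr]$ needs boundary values of $p(\cdot,s)$ for $s>t$. Nothing bounds the pressure forward in time: it can jump up when parts of the patch merge elsewhere. The only available time control runs backward, namely $p(\cdot,s)\le(\overline c_\tau/\underline c_\tau)\,p(\cdot,t)$ for $s<t$ by \eqref{eq:HS-slice-pressure-comparison}.

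Second, even with such boundary data, the contradiction ``some neighborhood of $x$ remains uncovered'' is unavailable when $\R^d\setminus\Omega_t$ is thin near $x$. For example, $x$ may be the last point of a closing hole, with $\Omega_t\supset B_r(x)\setminus\{x\}$.

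The paper argues at the single time $t$. Suppose $B_{r/2}(x)\cap\Omega_0^+=\emptyset$. The Lipschitz bound gives $t-T\le Lr/2$ on $B_{r/2}(x)$. Hence $w(\cdot,t)$ solves the obstacle problem \eqref{eq:intro-w-obstacle} there with $f\ge\frac12$, and it vanishes at $x\in\overline{\Omega_t}$. Lemma~\ref{obst-quadratic-growth} then gives $\sup_{B_{r/2}(x)}w(\cdot,t)\ge r^2/(16d)$. On the other hand, $w(y,t)=\int_{T(y)}^tp(y,s)\,ds\le\frac{Lr}{2}\frac{\overline c_\tau}{\underline c_\tau}\,p(y,t)$. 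Together these give $\sup_{B_r(x)}p(\cdot,t)\ge\kappa r$. If instead $\operatorname{dist}(x,\Omega_0^+)<r/2$, the tangent-ball radial solution gives the bound directly.
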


Because the normal velocity of the patch is $\abs{\nabla T}^{-1}$, the estimate
\eqref{eq:HS-T-local-Lipschitz-estimate} is a quantitative lower bound on the speed of the free boundary,
valid at singular as well as at regular points. This is the regularity conjectured in
\cite{CJK25}, where the hitting time was shown to be H\"older continuous, and Lipschitz
only on the regular set. Lipschitz
continuity is optimal, since at a collision $T$ behaves like the minimum
of two smooth functions.

Separately, and without any nondegeneracy assumption on the initial patch, we obtain the
following Hopf--Lax inequality for the pressure.

\begin{thm}[Hopf--Lax inequality for the pressure]\label{prop:HS-pressure-hopf-lax}
Assume \eqref{eq:A-source}, \eqref{eq:A-initial}, and \eqref{eq:A-source-time}. Let $0<t_0<t_1$, and set
\begin{equation}\label{eq:HS-Hopf-Lax-cinf-L-M}
    \underline c:=\inf_{\R^d\times[t_0,t_1]}c,\qquad
    L:=\norm{\nabla_xc}_{L^\infty(\R^d\times(t_0,t_1))},\qquad
    M:=\frac{L}{\underline c} .
\end{equation}
Fix $R_*\geq1$ with $\overline{\Omega_{t_1}}\subset B_{R_*}$, and for $t\in[t_0,t_1]$ put
$\Phi(t):=\int_{t_0}^t\norm{(\partial_sc)^-(\cdot,s)}_{L^m(B_{4R_*})}\,ds$ and
\begin{equation}\label{eq:HS-Hopf-Lax-Lambda-q}
    \Lambda(t):=
    \begin{cases}
    \displaystyle
    C_{d,m}\,\underline c^{-1}
    \left(L^{\frac{d}{d+m}}\Phi(t)^{\frac{m}{d+m}}+\Phi(t)\right),
        &d/2<m<\infty,\\[2ex]
    \displaystyle
    \underline c^{-1}\Phi(t),
        &m=\infty .
    \end{cases}
\end{equation}
Then, writing $A(r,s):=Mr\frac{s-t_0}{t_1-t_0}+\Lambda(s)$, we have
\begin{equation}\label{eq:HS-pressure-hopf-lax}
    p(y,t_0)
    \leq
    e^{A(\abs{x-y},t_1)}\,p(x,t_1)
    +\frac{\abs{x-y}^2}{4(t_1-t_0)^2}
    \int_{t_0}^{t_1}e^{A(\abs{x-y},s)}\,ds
    \qquad\hbox{for all }x,y\in B_{R_*}.
\end{equation}
\end{thm}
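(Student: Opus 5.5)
We prove \eqref{eq:HS-pressure-hopf-lax} by comparison. Fix $x,y\in B_{R_*}$ and write $e:=x-y$. Consider the translated, time-dilated and rescaled family
\[
    q(z,s):=e^{-A(\abs{e},s)}\,p\Bigl(z+\tfrac{s-t_0}{t_1-t_0}\,e,\;s\Bigr)-\frac{\abs{e}^2}{4(t_1-t_0)^2}\int_{t_0}^{s}e^{A(\abs{e},\sigma)-A(\abs{e},s)}\,d\sigma ,
\]
or rather its positive part. The aim is to show that $q^+$ is a supersolution of \eqref{eq:intro-HS-law}, in the relaxed distributional sense of Theorem~\ref{thm:HS-distributional-comparison}, on $[t_0,t_1]$. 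Formally the translation moves $y$ at time $t_0$ to $x$ at time $t_1$. A cleaner route works along straight paths: set $\gamma(s)=y+\frac{s-t_0}{t_1-t_0}e$ and show that
\[
    \phi(s):=e^{-A(\abs e,s)}p(\gamma(s),s)+\frac{\abs e^2}{4(t_1-t_0)^2}\int_{s}^{t_1}\cdots
\]
is nondecreasing in $s$. The main ingredient is a differential inequality for $p$ along moving points,
\[
    \partial_sp+\nabla p\cdot v\ \geq\ -\tfrac{\abs v^2}{4}\cdot(\text{weight})-(\text{loss terms})\,p ,
\]
which we interpret as a comparison statement for $p$ rather than a pointwise one.

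\textbf{Step 1: lower bound on $\partial_tp$.} We use the lower bound on $\partial_tp$ from Lemma~\ref{lem:HS-finite-difference-pt-lower}, in the form $\partial_tp\geq-\Lambda'(t)\,\underline c\,(\ldots)$. The idea is that differentiating $-\Delta p=c$ in time gives $-\Delta\partial_tp=\partial_tc$ in the positivity set, and the negative part is controlled by $(\partial_tc)^-$ through elliptic $L^m\to L^\infty$ estimates, which hold since $m>d/2$. The resulting bound is relative to $p$: we compare with the barrier $c\,p$-type quantity, using that $-\Delta p=c\geq\underline c$ makes $p$ large relative to Green potentials. The interpolation exponents $\frac{d}{d+m}$ and $\frac{m}{d+m}$ in $\Lambda$ come from optimizing a localization radius, balancing the $L$-Lipschitz variation of $c$ against the $L^m$ norm of $\Phi$. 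The net effect is that $e^{\Lambda(t)}p$ is, in the viscosity/comparison sense, nondecreasing up to error terms.

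\textbf{Step 2: spatial translation.} The function $p(\cdot+h,t)$ solves \eqref{eq:intro-HS-law} with source $c(\cdot+h,t)\leq c(\cdot,t)+L\abs h\leq c\,(1+M\abs h)$. Multiplying by $e^{-M\abs h}$-type factors produces a subsolution with source at most $c$. This accounts for the term $Mr\frac{s-t_0}{t_1-t_0}$ in $A$: the translation grows linearly in $s$ up to $\abs e$.

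\textbf{Step 3: the Hopf--Lax term.} Moving the translation point with velocity $v=e/(t_1-t_0)$ adds a transport term $v\cdot\nabla p$. This is absorbed via $\abs{v\cdot\nabla p}\leq\ldots$ together with subtracting $\frac{\abs v^2}{4}(s-t_0)$ from the pressure. Lowering the pressure by a spatial constant keeps the subsolution property, because Darcy's law gives normal velocity $\abs{\nabla p}\geq$ the speed needed, by the inequality $\abs{\nabla p}\,\abs v\leq\abs{\nabla p}^2+\abs v^2/4$. This is exactly the quadratic Hopf--Lax cost. The exponential weights then integrate as in \eqref{eq:HS-pressure-hopf-lax}.

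Having established the sub/supersolution properties, we compare the modified translated pressure with $p$ at time $t_1$, starting from the ordering at $t_0$. This ordering is trivial once the time derivative is reorganized; concretely, we apply Theorem~\ref{thm:HS-distributional-comparison} on $B_{4R_*}$, using $\overline{\Omega_{t_1}}\subset B_{R_*}$ for boundary data. We then evaluate at $y$ using the pointwise representative of Proposition~\ref{prop:HS-USC-representative}.

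\textbf{Main obstacle.} I expect the main obstacle to be Step 1, the combination with time dilation. Getting a multiplicative (exponential) control $\partial_t\log p\geq-\Lambda'$ from only $(\partial_tc)^-\in L^m$ needs a nondegeneracy comparison of $p$ with the Green potential of $c$. This is where the lower bound on $c$ and the Lipschitz constant enter the precise form of $\Lambda$, and it must be done at the weak-solution level, possibly via finite differences in time.
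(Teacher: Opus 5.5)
There is a genuine gap: your barrier is built from the \emph{evolving} pressure $p(\cdot,s)$, and with that choice the argument cannot close without assuming what is to be proved.

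\textbf{Why your construction is circular.} For $x\neq y$ and $s>t_0$, the zero set of your $q$ is a translate of the positive level set $\{p(\cdot,s)=e^{A(s)}C(s)\}$. This set lies inside the phase. Darcy's law says nothing there, and the motion of that level set is governed by $\partial_sp$. That quantity is uncontrolled from above, since merges make $p$ jump up. From below it is controlled only by the inequality you want.

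Your ``cleaner route'' has the same problem. What is actually needed is monotonicity of $s\mapsto e^{A(s)}p(\gamma(s),s)+\tfrac{\abs v^2}{4}\int_{t_0}^se^{A}$ along $\gamma$, with $v=e/(t_1-t_0)$. Note the weight $e^{+A}$ and the opposite sign of the integral compared with your $\phi$; as written, $\phi$ would give $p(y,t_0)\leq e^{-A}p(x,t_1)$, which is false in general. This monotonicity amounts to
\[
\partial_sp+v\cdot\nabla p\geq-\tfrac{\abs v^2}{4}-A'(s)\,p, \qquad A'(s)=M\abs v+\Lambda'(s).
\]
Requiring this for all $v$ and optimizing in $v$ is equivalent to $\partial_tp\geq(\abs{\nabla p}-Mp)_+^2-\Lambda'p$. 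That is the differential form of the Hopf--Lax inequality itself (Remark~\ref{rem:HS-Hopf-Lax-HJ}).

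Lemma~\ref{lem:HS-finite-difference-pt-lower} cannot supply this. It gives only the additive bound $\partial_tp\geq-V$, with $V$ the Dirichlet potential of $(\partial_tc)^-$. That bound has no $\abs{\nabla p}^2$ term, and, as you suspect, it does not upgrade to $\partial_t\log p\geq-\Lambda'$ where $p\to0$. So Step~1 is both unavailable and the wrong place for $\Lambda$ to enter.

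\textbf{The missing idea: freeze the slice at $t_0$.} Take $h=y-x$ and set
\[
P(z,t):=p\bigl(z+\tfrac{t-t_0}{t_1-t_0}h,t_0\bigr),\qquad C(t):=\tfrac{\abs h^2}{4(t_1-t_0)^2}\int_{t_0}^te^{A},\qquad \bar p:=e^{-A(t)}(P-C(t))_+ .
\]
Then $\partial_tP=\tfrac{h}{t_1-t_0}\cdot\nabla P$ exactly. The free-boundary condition $\partial_t\bar p\leq\abs{\nabla\bar p}^2$ reduces to the algebraic inequality
\[
\tfrac{h}{t_1-t_0}\cdot\nabla P-C'(t)\leq e^{-A(t)}\abs{\nabla P}^2,
\]
which holds by completing the square. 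This is your Step~3, but it uses neither Darcy's law nor any time regularity of $p$.

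The time variation of $c$ then enters only through the elliptic part. One needs
\[
e^{-A(t)}c\bigl(z+\tfrac{t-t_0}{t_1-t_0}h,t_0\bigr)\leq c(z,t)\quad\text{on }B_{3R_*}.
\]
The Lipschitz bound gives the $M$ term; this is your Step~2. For the rest, $G:=(c(\cdot,t_0)-c(\cdot,t))_+\leq\int_{t_0}^t(\partial_sc)^-$. Since $G$ is $2L$-Lipschitz, $G(z)\leq c_dr^{-d/m}\norm{G}_{L^m}+2Lr$, and optimizing in $r$ produces $\Lambda$ with the exponents $\frac d{d+m},\frac m{d+m}$. This is the mechanism you guessed, but it belongs to the source comparison, not to $\partial_tp$.

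Since $\bar p(\cdot,t_0)=p(\cdot,t_0)$, Theorem~\ref{thm:HS-distributional-comparison} gives $\bar p\leq p$, and evaluating at $(x,t_1)$ yields the estimate. The remaining work is technical:
\begin{itemize}
\item the weak subsolution property of $\bar p$, which has only single-slice regularity; this is handled by regularizing $(\cdot)_+$ and restricting to admissible $t_0$;
\item its $L^1$ time continuity, via $\abs{\{p(\cdot,t_0)=a\}}=0$ for $a>0$;
\item the passage from the a.e.\ inequality to the pointwise one, through the representative of Proposition~\ref{prop:HS-USC-representative}.
\end{itemize}
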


Formally, \eqref{eq:HS-pressure-hopf-lax} is the integrated form of the Hamilton--Jacobi inequality
$\partial_t p-(\abs{\nabla p}-M(t)p)_+^2+k(t)p\geq0$, where the coefficients $M(t)$ and $k(t)$ measure the
relative spatial oscillation and time decrease of the source; see Remark~\ref{rem:HS-Hopf-Lax-HJ}. An inequality of
this type was discovered in \cite{CJK25}, through a porous medium approximation together    
with elaborate Aronson--B\'enilan estimates, and a direct derivation was raised there as an
open question; the proof of Theorem~\ref{prop:HS-pressure-hopf-lax} answers this through a novel comparison argument, and is significantly simpler than the original derivation. Our inequality also
improves on the earlier one in that it
carries no additive error term, and it requires less time regularity of the source.

We turn to the local analysis at a singular point, which forms the core of the paper
(Sections~\ref{sec:static-barriers}--\ref{sec:improvement-flatness}). After a
translation we place the point at the origin and its hitting time at $t_0$.
If $V$ is the spine of the quadratic blowup at the singular point, we
write $x=(x',x'')\in V^\perp\times V$.
The theorem below measures the portion of the zero set that lies outside a thin
cone around the spine $V$ by its maximal distance from $V$ (see
Figure~\ref{fig:r-perp}). The aperture of the cone is a small fixed constant
$\alpha=\alpha(d,k)\in(0,1)$.
The theorem determines both the rate at which this transverse radius
vanishes and the asymptotic shape of the zero set.

\begin{figure}[ht]
\centering
\begin{tikzpicture}[scale=0.84,>=Stealth]
    \fill[green!10] plot [smooth cycle, tension=0.9] coordinates
        {(0:3.6) (40:3.95) (85:3.75) (130:3.55) (175:3.7) (220:3.5) (265:3.75) (315:3.55)};
    \draw[thin] plot [smooth cycle, tension=0.9] coordinates
        {(0:3.6) (40:3.95) (85:3.75) (130:3.55) (175:3.7) (220:3.5) (265:3.75) (315:3.55)};
    \filldraw[fill=white, draw=black, thin] plot [smooth cycle, tension=0.7] coordinates
        {(0.48,0.35) (0.22,0.55) (0.18,0.7) (0.3,1.1) (0.25,1.5) (0.62,2.0)
         (0.5,2.35) (0.85,2.75) (0,3.4)
         (-0.75,2.7) (-0.35,2.1) (-0.5,1.6) (-0.28,1.0) (-0.18,0.7)
         (-0.33,0.25) (-0.36,0) (-0.33,-0.25) (-0.18,-0.7)
         (-0.35,-1.2) (-0.3,-1.8) (-0.7,-2.3) (-0.6,-2.9) (0,-3.4)
         (0.8,-2.8) (0.4,-2.2) (0.55,-1.7) (0.25,-1.05) (0.18,-0.7)
         (0.32,-0.28) (0.42,-0.05)};
    \draw[->] (-3.9,0) -- (3.9,0) node[below right] {$V^\perp$};
    \draw[->] (0,-3.9) -- (0,3.9) node[above right] {$V$};
    \draw[dashed] (110:3.3) -- (0,0) -- (70:3.3);
    \draw[dashed] (250:3.3) -- (0,0) -- (290:3.3);
    \node[right] at (70:3.35) {\tiny $|x'|=\alpha|x''|$};
    \draw[dotted, thick] (0,0) circle (3);
    \node at (160:2.65) {\small $B_{R_0}$};
    \fill (0.48,0.35) circle (1.7pt);
    \draw[dashed, thin] (0.48,-0.75) -- (0.48,0.95);
    \draw[<->, >={Stealth[length=3.5pt,width=2.2pt]}] (0,-0.5) -- (0.48,-0.5);
    \node[below right] at (0.2,-0.52) {\scriptsize $r_\perp(t)$};
    \node at (40:3.45) {$\Omega_t$};
\end{tikzpicture}
\caption{The setting of Theorem~\ref{thm:closing-rates} near a singular point with spine
$V$, at a time $t<t_0$: within $B_{R_0}$ (dotted), the zero set of $p(\cdot,t)$ (white) lies
in the union of a thin cone around $V$ (dashed) with a small ball, and $r_\perp(t)$ is its
maximal distance from $V$ outside the cone.}
\label{fig:r-perp}
\end{figure}

\begin{thm}[Closing rate and asymptotic shape at a singular point]\label{thm:closing-rates}
Assume \eqref{eq:A-source}, \eqref{eq:A-initial}, and \eqref{eq:A-source-time}. Let $t_0>0$, and let
$0\in\Sigma_k(t_0)$ with $0\leq k\leq d-2$. Let $p_2$ be the blowup of $w(\cdot,t_0)$ at the
origin, and set $V:=\ker D^2p_2$. Let $\alpha\in(0,1)$ be a
sufficiently small constant depending only on $d$ and $k$. For $R_0>0$, set
\begin{equation}\label{eq:intro-r-perp}
    r_\perp(t):=
    \sup\bigl\{\abs{x'}:x\in B_{R_0}\setminus\Omega_t,\
    \abs{x'}>\alpha\abs{x''}\bigr\}.
\end{equation}
We have $r_\perp(t)>0$ for all $t<t_0$, and if $R_0$ is sufficiently small, then $r_\perp(t)$ tends to $0$ as
$t\uparrow t_0$. Moreover, there exist $C>1$ and, for each $\epsilon>0$,
$C_\epsilon>1$ such that the following estimates hold for $t<t_0$
sufficiently close to $t_0$:
\begin{enumerate}[label=(\roman*)]
\item\label{item:intro-closing-high-d}
if $k=0$ and $d\geq3$, then
$C^{-1}(t_0-t)^{1/2}\leq r_\perp(t)\leq C(t_0-t)^{1/2}$;
\item\label{item:intro-closing-d2}
if $k=0$ and $d=2$, then
$C^{-1}(t_0-t)\leq r_\perp(t)^2\log\frac{R_0}{r_\perp(t)}$
$\leq C(t_0-t)$;
\item\label{item:intro-closing-intermediate}
if $1\leq k\leq d-2$, then
$C_\epsilon^{-1}(t_0-t)^{\frac12+\epsilon}\leq r_\perp(t)\leq C(t_0-t)^{1/2}$.
\end{enumerate}
Finally,
\begin{equation}\label{eq:intro-cylinder-limit}
    \{x:w(r_\perp(t)x,t)=0\}\longrightarrow E \times V
    \qquad\hbox{as }t\uparrow t_0,
\end{equation}
locally in Hausdorff distance, where $E\subset V^\perp$ is a $(d-k)$-dimensional ellipsoid depending
only on $p_2$.
\end{thm}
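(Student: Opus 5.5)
The plan is to compare, at times $t$ slightly below $t_0$, the slice $w(\cdot,t)$ with explicit barriers built from the blowup $p_2$. The time difference $w(\cdot,t_0)-w(\cdot,t)$ is controlled by the pressure: formally $w(\cdot,t_0)-w(\cdot,t)\approx (t_0-t)\,p(\cdot,t_0)$ near the origin. The reference model is the ``hole closing'' solution. At time $t_0$, $w(\cdot,t_0)\approx p_2$. At earlier times the zero set is a small neighbourhood of the spine $V$, thickened in $V^\perp$ to size $r_\perp(t)$. The obstacle problem $\Delta w = f\chi_{\{w>0\}}$ with $f\approx 1$ then forces $w(\cdot,t)$ to look like the solution of an exterior obstacle problem in $V^\perp$ with quadratic growth $p_2$ at infinity. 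That solution is, up to lower order, $p_2 - \lambda\,\Gamma$-type corrections, where $\Gamma$ is the fundamental solution in $V^\perp\cong\R^{d-k}$. Its coincidence set is exactly an ellipsoid $E$ determined by $p_2$, by the classical ellipsoid theorem for Newtonian potentials: a homogeneous ellipsoid has quadratic interior potential.

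First, I would record the two ingredients that couple space and time. One is the nondegeneracy (linear) and the Lipschitz-type control of $T$ from Theorem~\ref{thm:HS-hitting-time-lipschitz}, or its H\"older analogue from Remark~\ref{rem:HS-hitting-time-holder}, since only \eqref{eq:A-source-time} is assumed. The other is the Hopf--Lax inequality of Theorem~\ref{prop:HS-pressure-hopf-lax}. Hopf--Lax applied between $t$ and $t_0$ at $y=0$ shows that $p(\cdot,t_0)$ cannot be small near $0$ where $p(\cdot,t)$ is large. Conversely, $p(x,t)\le C|x|^2/(t_0-t)$-type bounds hold near the hole. Together these give the parabolic scaling $|x'|^2\sim t_0-t$ for the boundary of $\{w(\cdot,t)=0\}$ outside the cone. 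They also show $r_\perp(t)>0$ for $t<t_0$, since $0\notin\Omega_t$, and $r_\perp\to0$ by continuity of $T$ together with the fact that $\{T\ge t_0\}$ is contained in the cone near $0$. That containment holds because the blowup forces $w(\cdot,t_0)>0$ off a cone around $V$, for $R_0$ small.

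Second, for the sharp rates, I would rescale $w_t(x):=w(r_\perp(t)x,t)/r_\perp(t)^2$ and show that it converges, along $t\uparrow t_0$, to a global solution $u$ of $\Delta u=\chi_{\{u>0\}}$. This solution has $u-p_2=o(|x|^2)$ at infinity and a nonempty compact (in $V^\perp$) coincidence set whose outer transverse radius is $1$. By the Liouville-type classification of such solutions (the DiBenedetto–Friedman / Eberle–Shahgholian–Weiss classification), its coincidence set is $E\times V$ for an ellipsoid $E$. The key requirement is that $E$ depends only on $p_2$, with no free parameters after normalizing $r_\perp=1$. This yields \eqref{eq:intro-cylinder-limit}.

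The rate then comes from matching. The difference $p_2-u$ behaves like $c\,|x'|^{2-(d-k)}$ when $d-k\ge3$, and like a logarithm when $d-k=2$. Meanwhile, integrating the pressure gives $w(\cdot,t_0)-w(\cdot,t)\approx(t_0-t)p(\cdot,t_0)$ with $p(x,t_0)\sim|x|$-scale growth from nondegeneracy. Comparing at $|x'|\sim R_0$, through barriers built from $u$ rescaled at $r_\perp$ and corrected by $f-1=O(t_0-t)$, gives:
\begin{itemize}
\item $r_\perp^{d-k}\cdot R_0^{2-(d-k)}\sim (t_0-t)$ would be the wrong form, so the comparison must instead produce $r_\perp^2\sim t_0-t$ in case \ref{item:intro-closing-high-d};
\item $r_\perp^2\log(R_0/r_\perp)\sim t_0-t$ when $d=2$, $k=0$.
\end{itemize}

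I expect the main obstacle to be case \ref{item:intro-closing-intermediate}, $1\le k\le d-2$. There the zero set near the spine is not controlled uniformly along $V$, and the rate of convergence to the blowup in the $x''$ directions is unknown. That is exactly the source of the $\epsilon$-loss. I would handle it by an iterative improvement-of-flatness argument along dyadic scales, matching the paper's later sections: at each scale, closeness to $E\times V$ improves by a fixed factor up to an $r^{\epsilon}$ error, which produces $(t_0-t)^{1/2+\epsilon}$ from below. The upper bound $C(t_0-t)^{1/2}$ comes directly from Hopf--Lax without such losses.
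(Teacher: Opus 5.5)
You have a genuine gap. Your blowup at scale $r_\perp(t)$ followed by the classification of global solutions matches the paper in outline, but your proposal gives no working mechanism for the lower bounds in (i)--(iii) or for the logarithmic upper bound in (ii). Your matching step concedes that it gives ``the wrong form'' and then asserts what the comparison ``must'' produce. It also matches at $\abs{x'}\sim R_0$, a scale about which the local convergence of $w_t$ says nothing.

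The missing ingredient is a lower bound for $p(\cdot,t)$ at distance comparable to $r_\perp(t)$ from the hole, essentially independent of $r_\perp$. Let $r_+(t)$ be the least $r$ with $U(V,\alpha,R_0,r)\subset\Omega_t$; then $r_+\simeq r_\perp$. Since $-\Delta p(\cdot,t)=c\geq\underline c$ on this conical annulus, comparison with the explicit annular subsolution ($k=0$), or with the subsolution built from the cone harmonic $\rho^{\nu_\alpha}\Phi_\alpha(\varphi)$ ($k\geq1$), gives for unit $e\in V^\perp$ (Proposition~\ref{prop:pressure_lower_bd}):
\begin{itemize}
\item $p(4r_+e,t)\gtrsim R_0^2$ if $k=0$, $d\geq3$;
\item $p(4r_+e,t)\gtrsim R_0^2/\log(R_0/r_+)$ if $k=0$, $d=2$;
\item $p(4r_+e,t)\gtrsim R_0^{2-\nu_\alpha}r_+^{\nu_\alpha}$ if $k\geq1$.
\end{itemize}
Paired with the Hopf--Lax bound $p(4r_+e,t)\leq Cr_+^2/(t_0-t)$, which you do write down, this yields $t_0-t\lesssim r_+^2\cdot\{1,\ \log(R_0/r_+),\ (R_0/r_+)^{\nu_\alpha}\}$.

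The linear nondegeneracy $p\sim\abs{x}$ you rely on instead fails on three counts. It would only give $r_\perp\gtrsim t_0-t$; your own heuristic $r_\perp^2\sim(t_0-t)\,p$ returns exactly this with $p\sim r_\perp$. It is not available under the stated hypotheses, since it needs \eqref{eq:A-interior-ball} and $m=\infty$. And it misdescribes the geometry: below the top stratum the pressure stays nearly of order one up to the hole, which is what makes the closing parabolic.

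You also have the roles reversed. Hopf--Lax limits how fast the hole can close, so it gives the \emph{lower} bounds. The upper bound $r_\perp\leq C(t_0-t)^{1/2}$ comes instead from $w(\cdot,t)\geq w(\cdot,t_0)-\norm{p}_{L^\infty}(t_0-t)$ together with $w(x,t_0)\gtrsim\abs{x}^2$ in the cone (Lemma~\ref{lem:r_plus_upper_bd}).

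For (iii), the $\epsilon$-loss is not caused by an unknown convergence rate along $V$, and the dyadic improvement of flatness you sketch has no quantity that actually improves. The loss comes from the exponent $\nu_\alpha>0$ of the conical barrier. Since $\nu_\alpha\to0$ as $\alpha\to0$, one reruns the argument with a smaller aperture $\tilde\alpha$ to get $\tilde r_\perp(t)\gtrsim(t_0-t)^{1/(2-\nu_{\tilde\alpha})}$. One then uses that the normalized limit $E\times V$ does not depend on the aperture to get $r_\perp\geq(1-o(1))\tilde r_\perp$.

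The logarithmic upper bound in (ii) needs its own barrier. The shape convergence and nondegeneracy put a ball $B_{ar_\perp(t_1)}$ in the zero set at time $t_1$. Hence the radial solution with constant source $\sup c$ and initial phase $B_{R_*}\setminus\overline{B_{ar_\perp(t_1)}}$, with $B_{R_*}$ a large ball containing the phase, lies above $p(\cdot,t_1+\cdot)$. Its hole must therefore close within time $t_0-t_1$, while the radial ODE shows this takes time $\gtrsim r_\perp(t_1)^2\log(R_0/r_\perp(t_1))$.

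Finally, for the shape, you assert that the limit $u$ has coincidence set compact in the $V^\perp$ directions. The definition of $r_\perp$ only controls it outside the cone, and quadratic asymptotics $p_2$ plus that control do not exclude, e.g., paraboloids opening along $V$. The needed input is $u\leq p_2$, which follows from the monotonicity $w(\cdot,t)\leq w(\cdot,t_0)$. It forces $u=0$ on $V$, hence invariance along $V$ by \cite[Lem.~2.7]{EFW25}. After that, the normalization $E\subset\{\abs{x'}\leq1\}$, $E\cap\{\abs{x'}=1\}\neq\emptyset$, together with Lemma~\ref{lem:ellipsoid_blowup}, pins down $E$.
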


Thus, at a singular point below the top stratum, the zero set closes at essentially the
parabolic rate $(t_0-t)^{1/2}$ of the explicit cylindrical solution.
The rates are fully sharp for $k=0$, while for $1\leq k\leq d-2$ they determine
the exact exponent: $r_\perp(t)=(t_0-t)^{\frac12+o(1)}$ as $t\uparrow t_0$.

As a by-product of the same barriers we obtain a quantitative form of the divergence of the
velocity.

\begin{cor}[Blowup of the free boundary velocity]\label{prop:gradient-lq-blowup}
Under the assumptions of Theorem \ref{thm:closing-rates},
\[
    \int_{t_0-\delta}^{t_0}\int_{B_\delta}\abs{\nabla p}^q\,dxdt=\infty
    \qquad\hbox{for every }0<\delta<t_0\hbox{ and every }q>d+2 .
\]
If moreover $\Sigma_k(t_0)$ has positive lower $\mathcal H^k$-density at the origin, the same
conclusion holds for every $q>d-k+2$.
\end{cor}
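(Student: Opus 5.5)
We argue by contradiction, combining Darcy's law with the closing rates of Theorem~\ref{thm:closing-rates}. The idea is that the zero set near the origin is swept out in time of order $r^2$ (parabolic scale) while its transverse size is $r$. This forces the normal velocity $V=\abs{\nabla p}$ to be of order $r^{-1}$ on a region of space-time measure comparable to the volume swept, and an $L^q$ bound then contradicts the rates.

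Concretely, assume $\int_{t_0-\delta}^{t_0}\int_{B_\delta}\abs{\nabla p}^q<\infty$ for some $q>d+2$. The key identity comes from $\partial_t\rho-\Delta p=c\rho$. We integrate over $B_\rho\times(t,t_0)$ for a ball $B_\rho$, with $\rho\sim r_\perp(t)$ chosen so that $B_\rho$ contains the transverse ellipsoidal cross-section of the hole. Using the cylinder limit \eqref{eq:intro-cylinder-limit}, the set $B_\rho\setminus\Omega_t$ has measure $\gtrsim r_\perp(t)^{d-k}\rho^k$ for $k=0$; for general $k$ we take a cylinder of length comparable to $r_\perp$ along $V$, so the measure is $\gtrsim r_\perp^{d}$. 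This mass is filled by time $t_0$, since $0\in\Sigma_k(t_0)$ means that near the origin the region enters $\Omega$ by $t_0$, at least on the portion outside the cone; where the zero set inside the thin cone persists, we restrict to the region outside the cone. The equation gives
\[
\abs{(B_\rho\setminus\Omega_t)\cap\Omega_{t_0}}\le \int_t^{t_0}\int_{\partial B_\rho}\abs{\nabla p}\,d\sigma\,ds+\overline c\,\abs{B_\rho}(t_0-t).
\]
Averaging in $\rho\in(r,2r)$ replaces the flux term by $r^{-1}\int_t^{t_0}\int_{B_{2r}}\abs{\nabla p}$. The source term is $O(r^d(t_0-t))$, which is negligible. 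The upper rate $r_\perp\le C(t_0-t)^{1/2}$ gives $t_0-t\gtrsim r^2$; we pick $t$ with $t_0-t\simeq r^2$ via the lower rate $r_\perp(t)\ge C_\epsilon^{-1}(t_0-t)^{1/2+\epsilon}$. This yields $\int_{Q_r}\abs{\nabla p}\gtrsim r^{d+1-O(\epsilon)}$ on the parabolic cylinder $Q_r=B_{2r}\times(t_0-Cr^{2-O(\epsilon)},t_0)$, of measure $\simeq r^{d+2-O(\epsilon)}$.

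Hölder's inequality then gives
\[
\int_{Q_r}\abs{\nabla p}^q\ge \abs{Q_r}^{1-q}\Bigl(\int_{Q_r}\abs{\nabla p}\Bigr)^q\gtrsim r^{(d+1)q-(d+2)(q-1)-O(\epsilon)}=r^{d+2-q-O(\epsilon)}.
\]
This tends to $\infty$ as $r\to0$ when $q>d+2$ and $\epsilon$ is small, contradicting local integrability. This proves the first claim; a single scale suffices, so no summation is needed.

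For the refined exponent, positive lower $\mathcal H^k$-density of $\Sigma_k(t_0)$ at $0$ supplies roughly $N\gtrsim(\delta'/r)^k$ disjoint singular points $x_j\in\Sigma_k(t_0)\cap B_{\delta'}$ at mutual distance $\ge 4r$. Applying the above at each of them requires the rates of Theorem~\ref{thm:closing-rates} to hold uniformly with constants uniform in $j$. Summing over the disjoint cylinders gives a total of $\gtrsim r^{d+2-q-k-O(\epsilon)}$, which diverges for $q>d-k+2$.

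The main obstacle is exactly this uniformity. Theorem~\ref{thm:closing-rates} is pointwise, with the threshold for ``$t$ sufficiently close to $t_0$'' depending on the point. I would handle this with a Egorov-type selection: choose a subset of $\Sigma_k(t_0)$ of positive $\mathcal H^k$-density on which the constants $C,C_\epsilon$ and the time thresholds are uniformly bounded. This is possible since those thresholds are measurable functions of the point, and the barriers behind the theorem depend quantitatively on $p_2$, which ranges over a compact family. A second, milder obstacle is making the mass-filling lower bound rigorous in the thin-cone geometry. The cylinder limit \eqref{eq:intro-cylinder-limit}, together with $r_\perp(t)>0$, gives this directly at scale $r_\perp(t)$.
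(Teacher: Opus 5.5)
\textbf{First claim: correct, by a different route.} Your argument for the first claim works and differs from the paper's.

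The paper uses neither the lower closing rate (hence no Hopf--Lax) nor the cylinder limit. It combines $r_+(t)\le C(t_0-t)^{1/2}$ from Lemma~\ref{lem:r_plus_upper_bd} with the static barrier of Proposition~\ref{prop:pressure_lower_bd}, choosing $\alpha$ so that $\nu<\beta$. This gives $p(\cdot,t)\gtrsim(t_0-t)^{\beta/2}$ on the part of $\partial B_{C_0(t_0-t)^{1/2}}$ away from the spine. Since $p(\cdot,t)$ vanishes at the origin, integrating along rays gives $\int_{B_{C_0(t_0-t)^{1/2}}}\abs{\nabla p(\cdot,t)}^q\gtrsim(t_0-t)^{(d-q(1-\beta))/2}$ on each slice. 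The time integral of this diverges once $q\geq(d+2)/(1-\beta)$.

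You instead read $V=\abs{\nabla p}$ as volume swept per unit area and time:
\begin{itemize}
\item the hole outside the cone has volume $\gtrsim r_\perp(t)^d$;
\item this part is filled by $t_0$, by \eqref{eq:cone_Ur_positive_at_t0};
\item the lower closing rate bounds the elapsed time by $r_\perp(t)^{2-O(\epsilon)}$;
\item a radial cutoff in the weak equation plus H\"older on one parabolic cylinder finishes.
\end{itemize}
For the volume bound, Hausdorff convergence alone is not enough. It must be upgraded by nondegeneracy, as in the proof of Proposition~\ref{prop:cone_hitting_time_upper_bd}, to $B_{br_\perp(t)}\subset\{w(\cdot,t)=0\}$. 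Your route yields divergence on a single cylinder. However, it draws on all of Theorem~\ref{thm:closing-rates}, including its purely qualitative blowup part, whereas the paper's route is entirely quantitative.

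\textbf{Refined claim: the uniformity step is not established.} That distinction is exactly where the density case breaks as written.

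The constants are fine on $A:=\{x\in\Sigma_k(t_0)\cap B_{\delta'}:\lambda_1(x)\geq\lambda_1(0)/2\}$. Lemmas~\ref{lem:r_plus_upper_bd} and \ref{lem:r_plus_lower_bd} depend only on $\lambda_1$, the uniform blowup modulus, $\underline c$, $R_0$ and $C_{\mathrm{Hopf-Lax}}$. Your swept-volume bound, however, comes from Proposition~\ref{prop:cone_moving_blowup}. That is a compactness and classification argument with no rate. So the time after which the bound holds at a given base point is not controlled by any barrier, contrary to your justification. The claimed measurability of this threshold in the base point is also simply asserted.

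The step can be repaired without measurability, but you have to do it:
\begin{itemize}
\item Write $A=\bigcup_n A_n$, where $A_n$ is the set on which the threshold exceeds $1/n$. By countable subadditivity, some $A_n$ has positive outer $\mathcal H^k$-measure.
\item A maximal $4r$-separated subset of $A_n$ has $\gtrsim r^{-k}$ points only because $\mathcal H^k(\Sigma_k(t_0)\cap B_{4r}(x))\lesssim r^k$. This comes from $\Sigma_k(t_0)\subset M$ (Theorem~\ref{thm:hitting-interface-regularity}\ref{i:singular-set-c1-manifold}), which your count uses silently.
\item You must also check that the normalized ellipsoids $E_x$ have inradius bounded below on $A$.
\end{itemize}

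The paper avoids all of this. On a compact $A$, the barrier gives $p(\cdot,t)\gtrsim(t_0-t)^{\beta/2}$ uniformly on a tube at distance $a=C_0\sqrt{t_0-t}$ around $A$. Lemma~\ref{lem:cylindrical-q-capacity-est}, applied on about $a^{-k}$ disjoint unit balls in the rescaled picture, then gives $\int_{B_\delta}\abs{\nabla p(\cdot,t)}^q\gtrsim(t_0-t)^{(d-k-q(1-\beta))/2}$, with no appeal to the blowup limit.
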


In particular, taking $k=d-2$ shows that the bound $\nabla p\in L^4(\R^d\times (0,\tau))$ of \cite{DavidPerthame} is sharp. The blowup at $q>d-k+2$ can be observed in self-similar cylindrical profiles; see Remark~\ref{rem:lq-sharpness}.

The closing rate of Theorem~\ref{thm:closing-rates} describes the free boundary at times
strictly before $t_0$. Our next result concerns the hitting time itself, and quantifies
the nondegeneracy of the flow at a singular point.

\begin{thm}[Vanishing gradient of the hitting time at singular points]\label{prop:cone_T_differentiability}
Assume the hypotheses of Theorem~\ref{thm:HS-hitting-time-lipschitz}, and let $0\in\Sigma_k(t_0)$ with
$0\leq k\leq d-2$. Then for every $\vep\in(0,1)$ there exists $r_\vep>0$ such that
\begin{equation}\label{eq:cone_T_near_sharp_modulus}
    \abs{T(x)-T(y)}\leq r^{1-\vep}\abs{x-y},
    \qquad x,y\in B_{r/8},\quad 0<r<r_\vep .
\end{equation}
In particular $T$ is differentiable at the origin with $\nabla T(0)=0$; and if $x_j\to0$ and
$T$ is differentiable at each $x_j$, then $\nabla T(x_j)\to0$.
\end{thm}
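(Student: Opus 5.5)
Since $T$ is locally Lipschitz near the origin (Theorem~\ref{thm:HS-hitting-time-lipschitz}), the claimed modulus follows if the speed of the patch is at least $r^{-(1-\vep)}$ in $B_{r/4}$, in the weak sense: for every $x\in B_{r/4}$ with $T(x)=s$, the ball $B_{\rho}(x)$ is swallowed by $\Omega_{s+r^{1-\vep}\rho}$ for $\rho$ up to scale $r$. In the notation of \eqref{eq:intro-baiocchi}, the target is
\[
T(y)\le T(x)+r^{1-\vep}\abs{x-y},\qquad x,y\in B_{r/8}.
\]
Symmetry then gives \eqref{eq:cone_T_near_sharp_modulus}. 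This one-sided bound is what I will prove: the front moves distance $\ge r^{-(1-\vep)}\cdot(\text{time})$ near the singular point.

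\textbf{Step 1 (pressure is large near the origin).} I will show that $p\ge c\,r^{2}/(t_0-t)$-type lower bounds hold, or more simply that $\sup_{B_r}p(\cdot,t)\gtrsim r^{\vep}$ for $t$ near $t_0$. Theorem~\ref{thm:closing-rates} gives this: at time $t=t_0-\sigma$ the zero set near the origin lies in a region of transverse size $r_\perp(t)\le C\sigma^{1/2}$ around the spine, plus a thin cone. Compare $w(\cdot,t)$ with the cylindrical profile $p_2$ rescaled. Near $t_0$, $w(\cdot,t_0)\approx p_2$, so $p(\cdot,t)\approx\partial_tw$. The increment $w(\cdot,t_0)-w(\cdot,t)=\int_t^{t_0}p$ must fill the hole $E\times V$ of size $r_\perp$. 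Consequently $p$ at distance $\sim r$ from the hole is of order $r^2/\sigma$ with $\sigma\sim r^{2+2\epsilon}$, i.e.\ $p\gtrsim r^{-2\epsilon}\gg r^{\vep}$ after choosing $\epsilon$. For $k=0$ the rate is sharp; for $1\le k\le d-2$ I use the lower bound in item~\ref{item:intro-closing-intermediate}, which is exactly why only $r^{1-\vep}$ rather than $Cr$ is claimed.

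\textbf{Step 2 (propagate to the whole small ball).} Points $x\in B_{r/8}$ with $T(x)$ close to $t_0$ are at distance $\lesssim r_\perp$ from $V$ or inside the cone. I will use the Hopf--Lax inequality, Theorem~\ref{prop:HS-pressure-hopf-lax}, in reverse. A large pressure $p(y,t_1)$ at $y$ forces positivity at nearby $x$ within time $\abs{x-y}^2/p$-order, because the quadratic term in \eqref{eq:HS-pressure-hopf-lax} otherwise cannot be absorbed. Alternatively I will use a radial barrier (expanding ball subsolution with interior pressure $\kappa$), giving ball growth at speed $\gtrsim\kappa/\rho$. Either way, the hitting time of $y$ exceeds that of $x$ by at most $\abs{x-y}\cdot r^{1-\vep}$ for $\abs{x-y}\lesssim r$.

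\textbf{Step 3 (consequences).} Letting $y=0$ gives $\abs{T(x)-t_0}\le 8\abs{x}^{2-\vep}$, hence differentiability with $\nabla T(0)=0$. If $T$ is differentiable at $x_j\to0$, take $r\sim 8\abs{x_j}$. The Lipschitz constant of $T$ on $B_{r/8}\ni x_j$ is $\le r^{1-\vep}\to0$, so $\abs{\nabla T(x_j)}\to0$.

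\textbf{Main obstacle.} The hard part is Step 1–2 uniformly over all points in $B_{r/8}$, not just the origin. Nearby points may be regular, or lie at earlier times far from $t_0$. I expect to handle this by working at the scale-$r$ time slice $t_r:=t_0-r^{2+2\epsilon}$. At that time all of $B_{r/8}\setminus\Omega_{t_r}$ is within the cylinder hole. The lower bound of Theorem~\ref{thm:closing-rates} then guarantees that the remaining hole is filled within time $r^{2+2\epsilon}\le r^{1-\vep}\cdot(\text{distance})$ once distances are compared with $r_\perp$. Points with $T(x)<t_r$ must be treated via the Lipschitz bound at the scale of the distance to the hole.
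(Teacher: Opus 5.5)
There is a genuine gap: nothing in your Steps 1--2 gives a Lipschitz bound for $T$ below the scale $r$, and that is exactly what \eqref{eq:cone_T_near_sharp_modulus} and the stability claim $\nabla T(x_j)\to0$ require.

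Your propagation mechanism is sound as far as it goes: if $p(z,s)\geq\kappa$, Theorem~\ref{prop:HS-pressure-hopf-lax} forces $T(x)\leq s+C\abs{x-z}^2/\kappa$. But consider what $T(x)-T(y)\leq r^{1-\vep}\rho$ with $\rho=\abs{x-y}$ demands. You need, for every free boundary point $y\in B_{r/8}$ and every $\rho\leq r$, a point within $O(\rho)$ of $y$ where $p(\cdot,T(y))\gtrsim r^{-(1-\vep)}\rho$. At a regular point this is the statement $\abs{\nabla p(y,T(y))}\gtrsim r^{-(1-\vep)}$.

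The lower bounds you can actually use do not provide this. These are the barriers of Proposition~\ref{prop:pressure_lower_bd}, not the closing rate; your claimed $p\gtrsim r^{-2\epsilon}$ is impossible, since $p$ is bounded. Those barriers are centered at the singular point and give $p\gtrsim R^{\nu}$ at distance $R\gtrsim r_+(t)$ off the spine. They say nothing about the slope of $p$ at a nearby regular point on scales smaller than its distance to the origin. The generic bounds \eqref{eq:HS-pressure-linear-nondegeneracy} and \eqref{eq:HS-T-local-Lipschitz-estimate} only give slope $\kappa$ and constant $L$.

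Your time-slice device has the same defect: a remaining time $r^{2+2\epsilon}$ is dominated by $r^{1-\vep}\abs{x-y}$ only when $\abs{x-y}\gtrsim r^{1+2\epsilon+\vep}$. In addition, Theorem~\ref{thm:closing-rates} is purely backward in time, while points of $B_{r/8}$ near $V$ may have $T>t_0$; you never treat that forward side.

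Your tools do yield something. Hopf--Lax from off-spine points at time $t_0$ handles the forward side, and Proposition~\ref{prop:cone_hitting_time_upper_bd} handles the backward side. Together they give $\abs{T(x)-t_0}\lesssim\abs{x}^{2-\vep}$, hence $\nabla T(0)=0$. They give neither the modulus on $B_{r/8}$ nor $\nabla T(x_j)\to0$.

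The missing ingredient is an infinitesimal estimate, Proposition~\ref{lem:cone_gradient_intrinsic_window}: $\abs{\nabla w}\leq R^{1-\sigma}p=R^{1-\sigma}\partial_tw$ on $B_{R/4}\times[t_0-c_\sigma R^2,t_0+c_\sigma R^{2-\sigma}]$. The paper proves it by a Caffarelli-type comparison of $p$ with $\partial_ew$. The function $z_j$ of \eqref{eq:cone_barrier} is superharmonic because Green-function correctors absorb the $T$-dependent source $f$. Its values on $\abs{x}\geq R/2$ are controlled by $p\geq c_*R^\gamma$ away from the spine and by $C^{1,1}$ smallness of $w$ near it. The backward window uses $r_+(t)\leq K\sqrt{t_0-t}$, and the forward window uses the slice comparison \eqref{eq:cone_forward_v_bound}.

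The minimum principle then transfers the inequality to every point of $B_{R/4}$, whatever the local free boundary geometry there. Integrating $\abs{\nabla w}\leq A\,\partial_tw$ along space-time segments gives $\abs{T(x)-T(y)}\leq A\abs{x-y}$ at all scales. A bootstrap that enlarges the time window then upgrades the exponent to $1-\vep$.
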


Since $\abs{\nabla T}^{-1}$ is the normal speed of the patch, \eqref{eq:cone_T_near_sharp_modulus}
says that the free boundary sweeps a ball of radius $r$ around a singular point in time
$o(r^{2-\vep})$. In particular, at every singular point outside the top stratum the velocity
diverges, at a rate at least $r^{-(1-\vep)}$ at distance $r$, for every $\vep>0$. 
Combining Theorem~\ref{prop:cone_T_differentiability} with the corresponding statement at regular points
yields the regularity of the space-time free boundary asserted in~\ref{i:result-space-time-regularity}.

\begin{thm}[$C^1$ regularity at times without collisions]\label{thm:no_top_stratum_C1}
Assume the hypotheses of Theorem~\ref{thm:HS-hitting-time-lipschitz}, let $t_0>0$, and suppose that
$\Sigma_{d-1}(t_0)=\emptyset$. Then there is an open interval $t_0\in I\Subset(0,\infty)$ such that 
\[
    T\in C^1(O_I)\quad \hbox{ and }\quad
    \Sigma\cap O_I=\{x\in O_I:\nabla T(x)=0\} ,{\hbox{ where } O_I \hbox{ is given by } \eqref{eq:O}.}
\]
In particular the space-time free boundary is a $C^1$ hypersurface over $O_I$, and its
singular points are exactly the points at which the normal velocity of the patch diverges.
\end{thm}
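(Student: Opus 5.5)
The plan is to combine two inputs. The first is Theorem~\ref{prop:cone_T_differentiability}: at singular points below the top stratum, $T$ is differentiable with vanishing gradient, uniformly along approximating sequences. The second is a regular-point counterpart: near regular points the space-time free boundary is a $C^1$ graph with $\nabla T\neq0$. The proof then reduces to showing that $\nabla T$ exists everywhere in $O_I$ and is continuous across the interface between $\mathcal R$ and $\Sigma$.

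The first step is to choose $I$. Since $\Sigma_{d-1}(t_0)=\emptyset$, I would show that $\Sigma_{d-1}\cap\{T\in I\}=\emptyset$ for some open $I\ni t_0$. Argue by contradiction with a sequence $x_j\in\Sigma_{d-1}$, $T(x_j)\to t_0$. Boundedness of $\Omega_{t}$ under \eqref{eq:A-initial} gives compactness, so $x_j\to x_\infty$ with $T(x_\infty)=t_0$ by continuity of $T$ (Theorem~\ref{thm:HS-hitting-time-lipschitz}). Then use upper semicontinuity of the blowup dimension, as in the obstacle problem: the Weiss/Monneau-type frequency is upper semicontinuous, and a limit of points whose blowups have $(d-1)$-dimensional spines has a singular blowup of spine dimension $d-1$, or else is a half-space point. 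Half-space points cannot be limits of singular points, by openness of the regular set via the $\varepsilon$-regularity for the obstacle problem \eqref{eq:intro-w-obstacle}. Hence $x_\infty\in\Sigma_{d-1}(t_0)$, a contradiction.

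The second step handles the regular points. For $x\in\mathcal R\cap O_I$, the slice $w(\cdot,T(x))$ has a $C^{1,\beta}$ free boundary near $x$ by Caffarelli's theory, and the nondegeneracy \eqref{eq:HS-pressure-linear-nondegeneracy} together with the Lipschitz bound on $T$ gives a positive normal velocity there. I would use the known regular-set result (cf.\ \cite{CJK25}, Lipschitz and regularity on $\mathcal R$) to conclude that $T$ is $C^1$ near $x$ with $\abs{\nabla T}=1/\abs{\nabla p}>0$. This relies on continuity of $\nabla p$ up to the free boundary from the space-time flatness improvement of the later sections. Thus $\mathcal R\cap O_I$ is open, $T\in C^1$ there, and $\nabla T\ne0$ there.

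The third step handles the singular points. By the first step, every $x\in\Sigma\cap O_I$ lies in some $\Sigma_k$ with $k\le d-2$, so Theorem~\ref{prop:cone_T_differentiability} applies at $x$ and gives $\nabla T(x)=0$. For continuity, take $y_j\to x$. If $y_j\in\mathcal R$, then $T$ is differentiable at $y_j$ and the last clause of Theorem~\ref{prop:cone_T_differentiability} gives $\nabla T(y_j)\to0$. If $y_j\in\Sigma$, then $\nabla T(y_j)=0$ already. Hence $\nabla T$ is continuous at $x$, so $T\in C^1(O_I)$ and $\Sigma\cap O_I=\{\nabla T=0\}$. The graph of a $C^1$ function is a $C^1$ hypersurface, and $\abs{\nabla T}^{-1}$ is the normal speed, which gives the final sentence.

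The main obstacle is the first step, the stability of the condition $\Sigma_{d-1}(t_0)=\emptyset$ under small time perturbations. Blowups are taken at varying times and with a varying source $f$ in \eqref{eq:intro-w-obstacle}, so I would first check that $f(\cdot,T(x))=1$ and that $f$ is continuous in $(x,t)$ near the graph. With that in hand, the standard Weiss-monotonicity upper semicontinuity of the stratification applies to the family $w(\cdot,T(x_j))$ with uniform $C^{1,1}$ bounds.
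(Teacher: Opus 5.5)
\textbf{The gap is in your Step 2.} Your Steps 1 and 3 match the paper. The paper picks $I$ from the relative closedness of $\Sigma_{d-1}$ (continuity of blowups along $\Sigma$, Theorem~\ref{thm:hitting-interface-regularity}(iii)) together with compactness of $\{T=t_0\}$; your compactness argument reproves this. The singular-point part is exactly Theorem~\ref{prop:cone_T_differentiability} with its stability clause.

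The regular-set theory you cite from \cite{CJK25} gives much less than you need. Theorem~\ref{thm:hitting-interface-regularity}(ii) only says $\partial\Omega_t$ is locally a graph that is $C^{1,\beta}$ in space and Lipschitz in time. To get $T$ differentiable, let alone $C^1$, at a regular point, you need $\nabla p$ to be continuous in space-time up to the free boundary there, so that $\nabla T=-\nabla p/\abs{\nabla p}^2$ holds and is continuous.

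That continuity is not a local property. When a top-stratum collision occurs anywhere in space at a time $t_*$, the Dirichlet problem defining $p(\cdot,t)$ changes discontinuously, because the $(d-1)$-dimensional contact set has positive capacity. Then $\abs{\nabla p}$ can jump at regular points arbitrarily far away, and $T$ can fail to be differentiable along $\{T=t_*\}$ even at regular points. The improvement-of-flatness section does not address this; it works only at lower-stratum singular points. Your Step 3 inherits the gap, since it needs $T$ to be differentiable at the regular points $y_j$ accumulating at $q$.

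\textbf{What is missing is the global argument that makes the hypothesis $\Sigma_{d-1}\cap O_I=\emptyset$ do its real work.} In the paper it runs as follows.
\begin{enumerate}
\item Absence of the top stratum on all of $O_I$, by Theorem~\ref{thm:hitting-interface-regularity}\ref{i:singular-set-c1-manifold}, puts each $\Sigma(t)$, $t\in I$, inside countably many $C^1$ manifolds of dimension at most $d-2$. Hence $\Sigma(t)$ has zero $H^1$-capacity.
\item Combined with $C^1$ convergence of the regular graph pieces, this gives Mosco convergence $H^1_0(\Omega_t)\to H^1_0(\Omega_{t_*})$ (Lemma~\ref{lem:no_top_mosco}). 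The nontrivial direction is $t\downarrow t_*$: weak limits of functions in $H^1_0(\Omega_t)$ must vanish q.e.\ off $\Omega_{t_*}$. The same capacity fact identifies $p(\cdot,t)$ with the Dirichlet minimizer on $H^1_0(\Omega_t)$ for every $t$.
\item Dirichlet stability under Mosco convergence then gives strong $H^1$ continuity of $p(\cdot,t)$ in $t$.
\item Uniform boundary $C^{1,\beta}$ estimates in the regular graph patches upgrade this to local uniform convergence of $\nabla p$ up to the moving regular boundary (Lemma~\ref{lem:pressure_C1_no_top}).
\end{enumerate}
Only then does the regular-point formula give $T\in C^1$ on $\mathcal R\cap O_I$ with $\nabla T\neq0$.

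So your assessment that Step 1 is the main obstacle is misplaced. Step 1 is immediate from known facts. The interval $I$ it produces is needed mainly for this capacity/Mosco step, not only to keep top-stratum points away from the singular points in Step 3.
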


Collisions act nonlocally: when two portions of the patch merge, the
pressure is determined on a newly connected domain, and its normal derivative---and hence
the velocity---can change instantaneously at far-away regular parts of the boundary
(see \cite[App.~A.2]{PQV14}).
It is illuminating to contrast the behavior of the hitting time in Hele-Shaw flow with that in the classical and
the supercooled Stefan problems. In the supercooled problem, although solutions are unstable and non-unique, the hitting time of any given solution is the best behaved of the three: every
singularity---collisions included---forms with infinite speed, and the hitting time is
$C^1$ \cite[Thm.~1.2]{EKM25}. In the classical problem, collisions occur with finite
speed, and the hitting time is Lipschitz \cite{caffarelli1978} but fails to be $C^1$
precisely at the top stratum \cite[Thm.~1.4]{FRT26}; the failure is local in space. In
Hele-Shaw flow, a single collision can affect the regularity of $T$ globally in
space. Theorem~\ref{thm:no_top_stratum_C1} shows that
collisions are the only mechanism for such discontinuities: the remaining singularities
are too small, in the sense of capacity, to disturb the pressure field away from
the singularity.

Finally, we show that the tumor growth model falls within the scope of the
above results.

\begin{thm}[The tumor growth model with nutrients]\label{thm:tumor-application}
Let $d\in \{1,2,3\}$, and let $(\rho,p,c)$ be a patch solution of
\eqref{eqn:density}--\eqref{eqn:nutrient} with $\rho(\cdot,0)=\chi_{\Omega_0^{+}}$ and
$\Omega_0^{+}$ bounded and satisfying \eqref{eq:A-interior-ball}. Assume that
\begin{equation}\label{eq:tumor-initial-nutrient-assumptions}
    c_0\in W^{2,\infty}(\R^d),
    \qquad
    \underline c_0:=\inf_{\R^d}c_0>0 .
\end{equation}
Then $c$ satisfies \eqref{eq:A-source} and \eqref{eq:A-source-time} with
$m=\infty$. Consequently,
Theorems~\ref{thm:HS-hitting-time-lipschitz}, \ref{thm:closing-rates},
\ref{prop:cone_T_differentiability} and \ref{thm:no_top_stratum_C1} all apply to the tumor
patch; in particular, the hitting time is locally Lipschitz in $\{T<\infty\}$.
\end{thm}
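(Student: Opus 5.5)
The plan is to verify the two hypotheses (A1) and (S$_m$) with $m=\infty$ for the nutrient $c$ solving \eqref{eqn:nutrient}, and then quote Theorems~\ref{thm:HS-hitting-time-lipschitz}, \ref{thm:closing-rates}, \ref{prop:cone_T_differentiability} and \ref{thm:no_top_stratum_C1}. For a patch solution, $\rho=\chi_{\{p>0\}}$ takes values in $[0,1]$, and $(\rho,p)$ is a weak solution of \eqref{eq:intro-HS-law} in the sense of Definition~\ref{def:HS-weak-solution}. Hypothesis (A2) is part of the statement, and so is (IB). It therefore remains only to study the linear parabolic equation $\partial_t c-\Delta c=-\rho c$, whose coefficient $\rho\in L^\infty$ is bounded by $1$.

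\emph{Bounds for (A1).} The maximum principle gives $0\le c\le \norm{c_0}_\infty$. Comparing with the subsolution $e^{-t}\underline c_0$ (which satisfies $\partial_t-\Delta+\rho\ge$ for it: $-e^{-t}\underline c_0+\rho e^{-t}\underline c_0\le0$) gives $c\ge \underline c_0e^{-t}$. So $0<\underline c_\tau\le c\le\overline c_\tau$ on $[0,\tau]$.

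For the gradient, write Duhamel's formula $c(t)=e^{t\Delta}c_0-\int_0^te^{(t-s)\Delta}(\rho c)(s)\,ds$. The first term has gradient bounded by $\norm{\nabla c_0}_\infty$. For the second, the heat kernel gives $\norm{\nabla e^{(t-s)\Delta}g}_\infty\le C(t-s)^{-1/2}\norm g_\infty$, which is integrable in $s$. Hence $\norm{\nabla c(t)}_\infty\le \norm{\nabla c_0}_\infty+C\sqrt t\,\norm{c_0}_\infty=:L_\tau$. Continuity of $c$ follows from the same representation, since $c$ is H\"older continuous in space-time by parabolic regularity with a bounded right-hand side.

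\emph{Time derivative for (S$_\infty$).} This is the main step. Differentiating Duhamel's formula in time gives
\[
\partial_tc(t)=e^{t\Delta}\Delta c_0-(\rho c)(t)+\int_0^t\Delta e^{(t-s)\Delta}\bigl[(\rho c)(s)-(\rho c)(t)\bigr]ds-e^{t\Delta}(\rho c)(t)+(\rho c)(t),
\]
but $\rho$ is only bounded and discontinuous in time, so this route fails directly. Instead I use maximal regularity. Since $c_0\in W^{2,\infty}$ and $\rho c\in L^\infty$, $L^p$ maximal regularity gives $\partial_tc\in L^p_{loc}(L^p)$, but not an $L^\infty$ bound in space.

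The better route: since $c\ge0$ and $\rho\le1$, we have $\partial_t c=\Delta c-\rho c\ge \Delta c-c$, so $(\partial_tc)^-\le(\Delta c)^-+c$. It therefore suffices to bound $\norm{\Delta c(t)}_\infty$ in an $L^4_t$ sense. Write $\Delta c(t)=e^{t\Delta}\Delta c_0-\int_0^t\Delta e^{(t-s)\Delta}(\rho c)(s)\,ds$. The first term is bounded by $\norm{c_0}_{W^{2,\infty}}$. For the integral, I move one derivative onto $\rho c$. Using the weak formulation $\nabla\cdot(\rho\nabla p)$-type structure is awkward, so instead use $\nabla(\rho c)$ in $L^\infty_tL^2$-type spaces. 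Here $\rho$ is a characteristic function whose total variation is controlled: the patch perimeter is in $L^1_t$, and more usefully $\partial_t\rho=\Delta p+c\rho$ with $\nabla p\in L^4$ (from \cite{DavidPerthame}).

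Concretely, I would exploit the time regularity of $\rho$ rather than space regularity. Split $\int_0^t\Delta e^{(t-s)\Delta}(\rho c)(s)\,ds$ and use $\rho(s)=\rho(t)-\int_s^t\partial_r\rho\,dr$, with $\partial_r\rho=\Delta p+c\rho$. This produces $\Delta e^{(t-s)\Delta}\Delta p$-type terms, which are handled in $L^\infty$ via $\norm{\nabla^3 e^{\sigma\Delta}\nabla p}$ bounds. It is precisely here that $d\le3$ and $\nabla p\in L^4_{t,x}$, together with $L^\infty$ bounds on $p$, enter through Sobolev and heat-kernel estimates. The outcome should be $\Delta c\in L^4(0,\tau;L^\infty)$.

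This is the obstacle I expect. If the direct kernel estimate is too lossy, a fallback is a one-sided bound via comparison: the function $-\partial_tc$ formally solves $(\partial_t-\Delta+\rho)(-\partial_t c)=c\,\partial_t\rho$. Since $\partial_t\rho\ge0$ for expanding patches (with $c\,\partial_t\rho\ge 0$ as a measure), the maximum principle gives $(\partial_tc)^-\le\norm{(\partial_tc)^-(0)}_\infty\le\norm{\Delta c_0}_\infty+\norm{c_0}_\infty$. This is a clean $L^\infty$ bound that suffices for (S$_\infty$) and is justified by mollifying in time and using monotonicity of $\Omega_t$. I would pursue this fallback first, as it only needs the monotonicity of the patch.

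Given (A1) and (S$_\infty$), all four theorems apply, yielding the local Lipschitz bound on $T$ in $\{T<\infty\}$.
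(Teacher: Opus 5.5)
Your verification of \eqref{eq:A-source} matches the paper's: the maximum principle, the subsolution $\underline c_0e^{-t}$, Duhamel for $\nabla c$, and continuity. The time-regularity step, however, has a genuine gap, and the fallback you say you would pursue first fails by a sign. With $u:=\partial_tc$, differentiating \eqref{eqn:nutrient} gives
\[
    (\partial_t-\Delta+\rho)\,u=-c\,\partial_t\rho\leq0 .
\]
So $u$ is a \emph{subsolution}; equivalently, $-\partial_tc$ is a supersolution. The maximum principle then only yields $(\partial_tc)^+\leq\norm{(\partial_tc)^+(\cdot,0)}_{L^\infty}$. That is an upper bound on $\partial_tc$, which is irrelevant for \eqref{eq:A-source-time}.

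The negative part of $\partial_tc$ is produced exactly by the invasion measure $c\,\partial_t\rho$: consumption switches on where the front passes. Monotonicity of $\Omega_t$ gives no control of this measure. In fact the uniform bound you claim is false. Since $0\leq\rho\leq1$ and $\partial_t\rho=\delta_{t=T(y)}$ on $\{0<T<\infty\}$, comparison formally gives
\[
    -\partial_tc(x,t)\geq e^{-t}\int_{\{0<T<t\}}G\bigl(x-y,t-T(y)\bigr)\,c\bigl(y,T(y)\bigr)\,dy-\norm{\partial_tc(\cdot,0)}_{L^\infty},
\]
where $G$ is the heat kernel. Consider a point-hole closing in $d=3$, e.g.\ a radial annular patch whose hole closes at $(0,t_0)$. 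There $t_0-T(y)\simeq\abs{y}^2$ by Theorem~\ref{thm:closing-rates}, so the integrand is $\gtrsim\abs{y}^{-3}$ near the origin. Hence $\norm{(\partial_tc)^-(\cdot,t)}_{L^\infty}\gtrsim\log\frac1{t_0-t}$. Only time integrability can hold, and proving it requires quantitative control of $\partial_t\rho$.

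Your primary route points in that direction, but it is not carried out, and the estimate you indicate does not close. Applying $\nabla^3e^{\sigma\Delta}$ to $\int_s^t\nabla p\,dr$ costs $\sigma^{-3/2-d/8}$ from $L^4$ to $L^\infty$. H\"older in time only returns $\sigma^{3/4}$, leaving $\sigma^{-3/4-d/8}$, which is integrable only for $d<2$.

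The missing idea is to differentiate the equation in time first (equivalently, integrate the semigroup in time), so that only one derivative falls on the heat kernel. This is what the paper does. Write Duhamel for $u=\partial_tc$, and use $\partial_t\rho=\Delta p+c\rho$ and $\Delta c=u+c\rho$ to rewrite the equation as
\[
    u_t-\Delta u=(p-\rho)u-\Delta(cp)+2\nabla c\cdot\nabla p+c\rho(p-c).
\]
The worst term then obeys $\norm{e^{t\Delta}\Delta(cp)}_{L^\infty}\leq Ct^{-1/2-d/8}\norm{\nabla(cp)}_{L^4}$, and this kernel is integrable precisely for $d<4$. You also need $\nabla p\in L^4(\R^d\times(0,\tau))$, which must be re-derived for the coupled system (Lemma~\ref{lem:tumor-pressure-L4}). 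With that, Young's and Gronwall's inequalities give $\partial_tc\in L^4(0,\tau;L^\infty)$. Hence $(\partial_tc)^-\in L^1(0,\tau;L^\infty)$, which is all that \eqref{eq:A-source-time} with $m=\infty$ requires.
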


The dimensional restriction is necessary for our argument, which bounds $\partial_t c$
by inserting the sharp $L^4$ estimate for $\nabla p$ of \cite{DavidPerthame}, used in its
full strength, into the smoothing of the heat semigroup; the resulting singularity in time
is integrable exactly in the physically relevant dimensions $d\leq 3$. In Section~\ref{sec:tumor-applications} we also show that $\abs{\Omega_t}\geq\abs{\Omega_0^{+}}+Ct^{d/2}$ for the growth of the tumor region
(Proposition~\ref{prop:tumor-volume-growth}), which implies
in particular that the tumor region cannot remain bounded.

\subsection{Main ideas}\label{subsec:intro-ideas}

The difficulty of the analysis comes from two features of \eqref{eq:intro-HS-law}.
The first is the quasi-static nature of the
flow, discussed in \S\ref{subsec:intro-setting} and shared with the injection
problem: the dynamics of the patch are strongly
nonlocal, and only the decrease of the pressure in time can be controlled. The second is the presence of the
hitting time in the elliptic problem \eqref{eq:intro-w-obstacle} solved by the Baiocchi transform. For the injection and
Stefan problems the Baiocchi transform linearizes the flow into a {\it homogeneous} obstacle
problem; here the obstacle problem source $f$ depends on $T$, whose Lipschitz continuity is itself
one of our main results and cannot be improved, and every argument that differentiates the obstacle problem must carry a corrector
for the variation of $f$.

We begin with the proof of Theorem~\ref{thm:HS-hitting-time-lipschitz}, which compares the
solution with a time-rescaled copy of itself. The strategy has its roots in the
barrier methods of Caffarelli \cite{Caffarelli77,Caffarelli98}, and \cite{kim03}, here in substantially refined form, as the rescaling must
be adapted to the time dependence of the source. Given $\vep>0$, we seek an
increasing reparametrization $\Theta$ for which $\Theta'(t)\,p(x,\Theta(t))$ is a
supersolution dominating every $\vep$-translate of $p$. The comparison principle reduces
this to an ordering of the sources and an ordering of the initial data. The first is
arranged in Lemma~\ref{lem:HS-source-envelope} by solving an ordinary differential
equation for $\Theta$, and is where the endpoint $m=\infty$ of \eqref{eq:A-source-time} is
used. The second is the only place in the paper where the interior ball condition
\eqref{eq:A-interior-ball} enters: radial solutions planted in interior balls show that
the patch covers an $\vep$-neighborhood of $\Omega_0^{+}$ within time proportional to
$\vep$ (Lemma~\ref{lem:HS-initial-nondegeneracy}). The resulting lag $\Theta(t)-t$ is
linear in $\vep$, and this yields the Lipschitz estimate.

We turn to the Hopf--Lax inequality of Theorem~\ref{prop:HS-pressure-hopf-lax}, for which we compare $p$
with the barrier
\[
    \bar p(z,t):=
    e^{-A(t)}
    \Bigl(p\bigl(z+\tfrac{t-t_0}{t_1-t_0}h,t_0\bigr)-C(t)\Bigr)_+ ,
\]
built from the {\it single} time slice $p(\cdot,t_0)$ by translating it at constant velocity
$h/(t_1-t_0)$ while subtracting a growing constant. Completing the square shows that the
free boundary condition $\partial_t\bar p\leq\abs{\nabla\bar p}^2$ holds identically, and \eqref{eq:HS-pressure-hopf-lax}
follows by evaluating the comparison at $(x,t_1)$. The main technical work lies in the low
regularity of $\bar p$, which inherits only what a single time slice of a weak solution
provides.

Sections~\ref{sec:static-barriers} and
\ref{sec:closing-rates} analyze the patch shortly before a singularity of $\Sigma_k$ forms.
In this regime the zero set is asymptotically concentrated near
the spine $V$, and thus we introduce an extra parameter $\alpha>0$ so that for a small $R_0>0$ the conical annulus
$U(V,\alpha,R_0,r):=\{r<\abs{x}<R_0,\ \abs{x'}>\alpha\abs{x''}\}$ lies in the patch
(see Figure~\ref{fig:U-region}). On such a region we construct an explicit subsolution for the pressure by
separating radial and angular variables.
The crucial estimate, proved in Lemma~\ref{lem:first_eigenvalue}, 
yields the almost-optimal growth $p\gtrsim\underline cR_0^{2-\nu_\alpha}\abs{x}^{\nu_\alpha}$, with $\nu_{\alpha}\to 0$ as $\alpha\to 0$.
This degeneration of $\nu_{\alpha}$ occurs only when $k\leq d-2$: this is consistent with the fact that top-stratum
collisions may occur with finite speed and therefore need not satisfy the conclusions of
Theorems~\ref{thm:closing-rates} and \ref{prop:cone_T_differentiability}.

Using the growth rate we obtain here, we show the closing rates of Theorem~\ref{thm:closing-rates} by a barrier argument.
An interesting point here is the use of the Hopf--Lax inequality (Theorem~\ref{prop:HS-pressure-hopf-lax}) to show that the pressure growth estimate prevents the zero set from closing too quickly
(see
Section~\ref{sec:closing-rates}).
For the asymptotic shape we
apply
the classification of global solutions of the obstacle problem
\cite{FriedmanSakai1986,EFW25} to the rescaled solution, to show that zero sets converge to a cylinder
over an ellipsoid determined by $p_2$.

Theorem~\ref{prop:cone_T_differentiability} is proved by comparing the pressure with directional
derivatives of $w$, in the spirit of Caffarelli's barrier proof of Lipschitz regularity for
the Stefan problem \cite{caffarelli1978}, and of its recent refinements near singular points
\cite{EKM25,FRT26}. One seeks a bound $\abs{\nabla w}\leq R^{1-\sigma}p$ on a
space-time window around the singularity, which integrates to the modulus
\eqref{eq:cone_T_near_sharp_modulus}. In contrast to the Stefan problem, the lack of time regularity of $p$ forces us to treat
the times before and after $t_0$ by different arguments, relying on the closing rate and the
barrier of Lemma~\ref{lem:cone_pressure_away_spine} on the backward side, and on the slice
comparison for the pressure on the forward side. A second difficulty new to this paper is
the dependence of $f$ in \eqref{eq:intro-w-obstacle} on the hitting time.
With a constant source, one makes a competitor superharmonic by subtracting an
explicit quadratic; no fixed quadratic can absorb a source that varies with $T$. We
instead introduce Green's function correctors, whose precise choice is a delicate point of
the argument. The resulting
estimate holds on a window whose time length is intrinsic to the spatial scale, and a
bootstrap argument simultaneously widens the window and improves the modulus.

Finally, Theorem~\ref{thm:no_top_stratum_C1} requires one further global ingredient. Knowing that $\nabla T$ vanishes at singular points and
is determined by $\nabla p$ at regular ones does not by itself give continuity of
$\nabla T$, because $\abs{\nabla p}$ at a regular point can jump when the topology of
$\Omega_t$ changes elsewhere. We show that only the top stratum can cause such jumps. In the
absence of $\Sigma_{d-1}$ the singular set is contained in countably many $C^1$ manifolds of
dimension at most $d-2$, and so has zero Sobolev $2$-capacity. Combined with the $C^1$
convergence of the regular portions of the free boundary, this gives Mosco convergence of the
spaces $H^1_0(\Omega_t)$ as $t\to t_*$ (Lemma~\ref{lem:no_top_mosco}). Since $p(\cdot,t)$ is
the minimizer of the Dirichlet energy on $H^1_0(\Omega_t)$, Mosco convergence yields strong
$H^1$ continuity in time, and elliptic estimates up to the regular boundary upgrade this to
local uniform convergence of $\nabla p$.

\subsection{Outline of the paper}\label{subsec:intro-outline}

Section~\ref{sec:preliminaries} collects the required facts about the elliptic obstacle
problem, and Section~\ref{sec:HS-weak-basic} develops the weak-solution theory, with the
comparison principle proved in Appendix~\ref{app:distributional-comparison}.
Section~\ref{sec:HS-nondegeneracy-hopf-lax} proves
Theorems~\ref{thm:HS-hitting-time-lipschitz} and \ref{prop:HS-pressure-hopf-lax}.
Section~\ref{sec:static-barriers} constructs the static pressure barriers, and
Section~\ref{sec:closing-rates} combines them with the Hopf--Lax inequality to prove
Theorem~\ref{thm:closing-rates} and Corollary~\ref{prop:gradient-lq-blowup}.
Section~\ref{sec:improvement-flatness} proves
Theorems~\ref{prop:cone_T_differentiability} and \ref{thm:no_top_stratum_C1}.
Section~\ref{sec:tumor-applications} proves Theorem~\ref{thm:tumor-application},
Section~\ref{sec:injection} discusses the injection problem, and
Appendix~\ref{app:global-obstacle} collects the facts about global solutions of the obstacle
problem used in the blowup analysis.

\section{Preliminaries}\label{sec:preliminaries}
In this section we recall some basic properties of the obstacle problem used throughout the paper. We say that $v$ solves the obstacle problem with source $f$ in $\Omega\subset \R^d$ if
\begin{equation}
    v\geq 0 \hbox{ and } \Delta v = f\chi_{\{v > 0\}} \hbox{ in }\Omega.
\end{equation}
Solutions to this problem exist and can be found by minimizing $\int\frac12|\nabla v|^2 + fv$ over nonnegative $v$ with prescribed Dirichlet data.

We fix a parameter $\lambda > 0$, and assume that all sources in the statements below satisfy
\begin{equation}
    \inf_{\Omega} f \geq \lambda.
\end{equation}

The first result we present is the quadratic growth of $v$ away from its free boundary $\partial\{ v > 0\}$.

\begin{lem}[{\cite[Theorems 2.1 and 2.4]{blank}}]\label{obst-quadratic-growth}
    Let $v$ solve the obstacle problem with source $f$ in $B_1$. Then for $r\in (0,1/2)$, we have
    \begin{enumerate}[label=(\roman*)]
        \item\label{item:obst-nondegeneracy} If $0\in \overline{\{ v > 0\}}$, then
        \[ \frac{\lambda}{2d}r^2 \leq \sup_{B_r} v. \]
        \item\label{item:obst-growth-bd} If $0\in \{ v= 0\}$, then
        \[ \sup_{B_r} v \leq C(d)\|f\|_\infty r^2. \]
    \end{enumerate}
\end{lem}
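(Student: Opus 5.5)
Both parts follow from comparison with explicit radial barriers. The only structural facts used are that $v\ge0$, that $v$ is continuous (and in fact $C^{1,1}_{\mathrm{loc}}$, which we take as known for bounded $f$), and that $\Delta v=f\ge\lambda$ holds on the open set $\{v>0\}$.

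\emph{Part \ref{item:obst-nondegeneracy} (nondegeneracy).} We use Caffarelli's classical barrier.
\begin{itemize}
\item By continuity of $v$ it suffices to treat $x_0\in\{v>0\}$ with $|x_0|$ small, and to prove $\sup_{B_\rho(x_0)}v\ge v(x_0)+\frac{\lambda}{2d}\rho^2$ for $\rho<1/2$. Letting $x_0\to0$ and using $v\ge0$ then gives the claim.
\item Set $h(x):=v(x)-v(x_0)-\frac{\lambda}{2d}|x-x_0|^2$. On $\{v>0\}\cap B_\rho(x_0)$ we have $\Delta h=f-\lambda\ge0$, so $h$ is subharmonic there, and $h(x_0)=0$.
\item By the maximum principle, $h$ attains a nonnegative maximum on the boundary of $\{v>0\}\cap B_\rho(x_0)$.
\item On $\partial\{v>0\}$ we have $h<0$, since $v=0<v(x_0)$. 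Hence the maximum is attained on $\partial B_\rho(x_0)\cap\{v>0\}$, and there $v\ge v(x_0)+\frac{\lambda}{2d}\rho^2$.
\end{itemize}
The only subtlety is that $\Delta v=f$ is meant a.e./distributionally. Subharmonicity of $h$ in the distributional sense on an open set is enough for the weak maximum principle.

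\emph{Part \ref{item:obst-growth-bd} (quadratic growth).} Assume $v(0)=0$. Then $0$ is a minimum of $v$, so $\nabla v(0)=0$ wherever $v$ is differentiable, but we avoid using $C^{1,1}$ directly. Fix $r<1/2$ and decompose on $B_{2r}$ as $v=u+g$:
\begin{itemize}
\item $g$ solves $\Delta g=f\chi_{\{v>0\}}$ in $B_{2r}$ with $g=0$ on $\partial B_{2r}$, so $|g|\le C\|f\|_\infty r^2$ by comparison with $\|f\|_\infty(|x|^2-4r^2)/(2d)$.
\item $u$ is harmonic with $u=v\ge0$ on $\partial B_{2r}$, so $u\ge0$.
\item From $u(0)=v(0)-g(0)=-g(0)\le C\|f\|_\infty r^2$ and the Harnack inequality for the nonnegative harmonic $u$, we get $\sup_{B_r}u\le C(d)u(0)\le C\|f\|_\infty r^2$.
\end{itemize}
Adding the bounds on $u$ and $g$ gives $\sup_{B_r}v\le C(d)\|f\|_\infty r^2$.

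The main point needing care is the Harnack step. It requires $u\ge0$, which holds precisely because $v\ge0$ on the boundary, and it is what converts the pointwise information $v(0)=0$ into a uniform bound on the whole ball.
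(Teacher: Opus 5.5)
Your argument is correct and is the standard one behind the results of Blank that the paper cites without proof: Caffarelli's barrier $v-v(x_0)-\frac{\lambda}{2d}|x-x_0|^2$, subharmonic on $\{v>0\}\cap B_\rho(x_0)$, gives nondegeneracy, and splitting $v$ on $B_{2r}$ into a nonnegative harmonic part plus a potential of size $O(\|f\|_\infty r^2)$, followed by Harnack, gives quadratic growth. One side remark is false: $v\in C^{1,1}_{\mathrm{loc}}$ does not hold in general for merely bounded $f$ (for instance $\Delta(x_1x_2\log|x|)$ is bounded in $\R^2$ while its Hessian is not); you never use this, however, and the continuity of $v$ that follows from $\Delta v\in L^\infty$ is all your argument needs.
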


As a consequence of this quadratic growth estimate, we can perform a quadratic blowup at any point in the zero set. We summarize some basic properties of this blowup in the next statement.

\begin{lem}[{\cite[Theorem 2, Corollary 7, Lemma 12a]{Caffarelli98}}]\label{lem:obst-blowup}
Let $f_n$ be uniformly bounded in $C^{0,\alpha}(B_1)$ for some $\alpha\in (0,1)$. Let $u_n$ solve the obstacle problem with source $f_n$ in $B_1$, and suppose that $u_n(0) = 0$ for each $n$. Let $r_n > 0$ with $\lim_{n\to\infty}r_n = 0$, and set $v_n(x) = r_n^{-2}u_n(r_n x)$. Then,
\begin{enumerate}[label=(\roman*)]
    \item\label{item:obst-precompact} For any compact $K\subset \R^d$, there exists $N(K)$ such that for $n \geq N(K)$, $v_n$ is defined on $K$ and bounded in $C^{1,1}(K)$, uniformly in $n$.
    \item\label{item:obst-converges} There exists a subsequence $n_k$ along which $f_{n_k}(r_{n_k}\cdot)$ converges in $C^{0,\alpha-}_{\mathrm{loc}}$ to some constant $f_0$ and $v_{n_k}$ converges in $C^{1,1-}_{\mathrm{loc}}$ to some $v_0$. This $v_0$ solves
    \[ \Delta v_0 = f_0\chi_{\{v_0 > 0\}}, \qquad x\in \R^d. \]
    \item\label{item:obst-global-convex}The function $v_0$ is convex, and hence $\{ v_0 = 0\}$ is convex.
    \item\label{item:obst-hausdorff-dist} If $\{ v_0 = 0\}$ has nonempty interior, then the sets $\{ v_{n_k} = 0\}$ converge locally in Hausdorff distance to $\{ v_0 = 0\}$. That is, for every compact $K\subset \R^d$, we have
    \[ \lim_{k\to\infty}\max\left(\sup_{x\in K\cap \{ v_0 = 0\}} \mathrm{dist}(x, \{ v_{n_k} = 0\}), \sup_{x\in K\cap \{ v_{n_k} = 0\}} \mathrm{dist}(x, \{ v_0 = 0\}) \right) = 0. \]
\end{enumerate}
\end{lem}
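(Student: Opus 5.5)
The plan is the standard compactness argument for quadratic blowups of the obstacle problem, taking the uniform estimates of Lemma~\ref{obst-quadratic-growth} as given. Items \ref{item:obst-precompact} and \ref{item:obst-converges} are routine consequences of scaling. Items \ref{item:obst-global-convex} and \ref{item:obst-hausdorff-dist} need the structure of global solutions; convexity in \ref{item:obst-global-convex} is the step that requires real work.

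\emph{Items \ref{item:obst-precompact}--\ref{item:obst-converges}.} Fix $K\subset B_\rho$; for $n$ large we have $r_n\rho<1/2$, so $v_n$ is defined on $B_{2\rho}$. Since $u_n(0)=0$, Lemma~\ref{obst-quadratic-growth}\ref{item:obst-growth-bd} gives $\sup_{B_{2\rho}}v_n\leq C(d)\sup_n\|f_n\|_\infty\rho^2$. The rescaled function $v_n$ solves $\Delta v_n=f_n(r_n\cdot)\chi_{\{v_n>0\}}$, with a source whose $C^{0,\alpha}$ seminorm improves by the factor $r_n^\alpha$. The interior $C^{1,1}$ estimate for the obstacle problem (Caffarelli) then bounds $\|v_n\|_{C^{1,1}(K)}$ uniformly in $n$. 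Arzel\`a--Ascoli gives $C^{1,\beta}_{\mathrm{loc}}$ convergence of a subsequence for every $\beta<1$. Along a further subsequence $f_n(0)\to f_0$, and $|f_n(r_nx)-f_n(0)|\leq C(r_n|x|)^\alpha$, so $f_{n}(r_{n}\cdot)\to f_0$ in $C^{0,\alpha-}_{\mathrm{loc}}$.

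To pass to the limit in the equation, note that on $\{v_0>0\}$ we have $v_n>0$ locally for large $n$, so $\Delta v_0=f_0$ there in the weak sense. On the interior of $\{v_0=0\}$, $\Delta v_0=0$ trivially. The free boundary $\partial\{v_0>0\}$ has measure zero: by Lemma~\ref{obst-quadratic-growth}\ref{item:obst-nondegeneracy}, which passes to the limit, the set $\{v_0=0\}$ has positive density at each of its boundary points, so it is porous. Since $D^2v_0\in L^\infty$, this identifies $\Delta v_0=f_0\chi_{\{v_0>0\}}$ a.e. If $f_0\geq\lambda>0$, we can normalize to $f_0=1$.

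\emph{Item \ref{item:obst-global-convex}.} This is the main obstacle. I would follow Caffarelli's argument that global solutions with quadratic growth are convex. First show that second pure derivatives are bounded below. For each unit vector $e$, the function $\partial_{ee}v_0$ is harmonic in $\{v_0>0\}$, bounded, and satisfies $\partial_{ee}v_0\geq0$ a.e.\ on the zero set in the appropriate limiting sense; the difficulty is the free boundary. I would work with the second difference quotients $\delta^2_{h,e}v_0$, which satisfy $\Delta\delta^2_{h,e}v_0\leq \frac{f_0}{h^2}(\ldots)$ away from the contact set. Then rescale down at the infimum of $\partial_{ee}v_0$ over $\R^d$ (or blow down, using a Liouville theorem) to reach a configuration where $\inf\partial_{ee}v_0<0$ is attained. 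On the set where it is attained, harmonicity in the positive phase and the sign on the contact set yield a contradiction via the strong maximum principle. If this direct route stalls, I would instead invoke Caffarelli's global convexity theorem as stated in \cite{Caffarelli98}. The conclusion is that $\{v_0=0\}=\{v_0\leq 0\}$ is convex.

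\emph{Item \ref{item:obst-hausdorff-dist}.} One inclusion follows from uniform convergence and nondegeneracy. If $x\in K\cap\{v_{n_k}=0\}$ with $\operatorname{dist}(x,\{v_0=0\})\geq\delta$, then $v_0\geq c\delta^2$ near $x$ by nondegeneracy at the nearest free boundary point, which combined with convexity and $f_0>0$ gives a quantitative lower bound. This contradicts $v_{n_k}(x)=0$ for $k$ large. For the other inclusion, let $x\in\{v_0=0\}$. Since the zero set is convex with nonempty interior, every neighbourhood of $x$ contains a ball $B_\eta(y)$ in the interior of $\{v_0=0\}$. If $v_{n_k}>0$ somewhere in $B_{\eta/2}(y)$, then Lemma~\ref{obst-quadratic-growth}\ref{item:obst-nondegeneracy} gives $\sup_{B_\eta(y)}v_{n_k}\geq c\eta^2$. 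This contradicts $v_{n_k}\to0$ uniformly there, so $\{v_{n_k}=0\}$ meets every neighbourhood of $x$. A compactness argument over $K$ makes both estimates uniform.
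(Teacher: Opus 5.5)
Your plan matches what the paper does. The paper gives no argument for this lemma and simply cites Caffarelli for all four items; your treatment of (i), (ii) and (iv) follows the standard arguments behind those citations (modulo two slips noted below), and your fallback for (iii) is the same citation.

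\textbf{The direct route for (iii).} The self-contained argument you sketch does not close as written. Once the strong maximum principle gives $\partial_{ee}v\equiv m<0$ on a component of the positive phase, ``the sign on the contact set'' contradicts nothing. The reason is that $\partial_{ee}v$ jumps across the free boundary in any case: for $\frac12(x_1)_+^2$ and $e=e_1$ it jumps from $1$ to $0$. What closes the argument is a one-dimensional restriction, as follows.
\begin{enumerate}[label=(\alph*)]
\item Assume $m:=\operatorname*{ess\,inf}\partial_{ee}v_0<0$. Since $\partial_{ee}v_0=0$ a.e.\ on the zero set, you can take $x_k\in\{v_0>0\}$ with $\partial_{ee}v_0(x_k)\to m$.
\item Rescale by $d_k:=\operatorname{dist}(x_k,\{v_0=0\})$, which is finite since $v_0(0)=0$. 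The rescalings are global solutions with the same Hessian bound, they are positive on $B_1$, and each has a free boundary point on $\partial B_1$.
\item The functions $\partial_{ee}\tilde v_k\geq m$ are harmonic on $B_1$ and tend to $m$ at the origin. So a limit $\tilde v$ has $\partial_{ee}\tilde v\equiv m$ on $B_1$.
\item By unique continuation, $\partial_{ee}\tilde v\equiv m$ on the connected component $\Omega'$ of $\{\tilde v>0\}\cup B_1$ containing $B_1$, since $\Delta\tilde v$ is constant there.
\item On the maximal interval around $0$ of $\{s:se\in\Omega'\}$, the function $s\mapsto\tilde v(se)$ is a quadratic with leading coefficient $m/2<0$. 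If the interval is all of $\R$, this contradicts $\tilde v\geq0$ at infinity.
\item Otherwise, at an endpoint $s_0$ one has $s_0e\notin\{\tilde v>0\}$. Hence $\tilde v=|\nabla\tilde v|=0$ there, and $\tilde v(se)=\frac m2(s-s_0)^2<0$ on the interval, again a contradiction.
\end{enumerate}

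\textbf{Two smaller slips.}

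In (ii), nondegeneracy plus the $C^{1,1}$ bound show that the positive phase $\{v_0>0\}$, not the zero set, contains a ball of radius comparable to $r$ inside $B_r(x_0)$ for every free boundary point $x_0$. That is what makes the free boundary porous and hence null; positive density of $\{v_0=0\}$ at its boundary points would give nothing. You can also bypass this step entirely, since $D^2v_0=0$ a.e.\ on $\{v_0=0\}$ for $v_0\in W^{2,\infty}_{\mathrm{loc}}$.

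In the first inclusion of (iv), nondegeneracy bounds $\sup_{B_r}v_0$ from below, not $v_0$ pointwise near a given point. So it does not give $v_0\geq c\delta^2$ near $x$. You do not need it: the continuous function $v_0$ has a positive minimum on the compact set $\{x\in K:\operatorname{dist}(x,\{v_0=0\})\geq\delta\}$, and uniform convergence finishes the argument. Your argument for the other inclusion, which is where the nonempty interior enters, is correct.
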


In the next lemma, we restrict to the special case that $u_n\equiv u, f_n\equiv f$, and $0\in \partial \{ u > 0\}$. Then the blowup limit from the previous lemma is independent of the choice of $r_n$, and is either a polynomial or a half-space solution.

\begin{lem}[{\cite[Theorem 7, Theorem 8]{Caffarelli98}}]\label{lem:stationary-blowup}
Let $u$ solve the obstacle problem in $B_1$ with source $f\in C^{0,\alpha}(B_1)$, and assume $0\in \partial \{u > 0\}$. Set $u_r(x) = r^{-2}u(rx)$. Then $u_r$ converges in $C^{1,1-}_{\mathrm{loc}}$ as $r\to 0^+$ to a $u_0$ which solves $\Delta u_0 = f(0)\chi_{\{ u_0 > 0\}}$ on $\R^d$. Moreover, $u_0$ is either a nonnegative 2-homogeneous polynomial $p_2$ with $\Delta p_2 \equiv f(0)$, or $u_0(x) = \frac{f(0)}{2}(x\cdot e)_+^2$ for some $e\in \mathbb{S}^{d-1}$.
\end{lem}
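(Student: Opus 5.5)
This lemma is quoted from \cite{Caffarelli98}, so the plan is to reconstruct Caffarelli's argument for a H\"older source. The four steps are: freeze the source, use a monotonicity formula to get homogeneity of every blowup, prove uniqueness of the blowup, and classify the homogeneous global solutions.

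\emph{Step 1: reduce to a frozen source.}
By Lemma~\ref{obst-quadratic-growth}\ref{item:obst-growth-bd} the rescalings $u_r$ are bounded in $C^{1,1}$ on compacts. By Lemma~\ref{lem:obst-blowup}, every sequence $r_n\to0$ has a subsequence along which $u_{r_n}\to u_0$ in $C^{1,1-}_{\mathrm{loc}}$. Since $f(rx)\to f(0)$, the limit solves $\Delta u_0=f(0)\chi_{\{u_0>0\}}$ and is convex. Lemma~\ref{obst-quadratic-growth}\ref{item:obst-nondegeneracy} shows that $0\in\partial\{u_0>0\}$, so $u_0\not\equiv0$.

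\emph{Step 2: homogeneity via Weiss's monotonicity formula.}
Set
\[
W(r)=r^{-d-2}\int_{B_r}\Bigl(\tfrac12|\nabla u|^2+f(0)u\Bigr)-r^{-d-3}\int_{\partial B_r}u^2 .
\]
For the constant-source problem, $W'(r)=2r^{-d-2}\int_{\partial B_r}(\partial_\nu u-2u/r)^2\ge0$. With $f\in C^{0,\alpha}$, the error from replacing $f(x)$ by $f(0)$ is $O(r^\alpha)$, because $u=O(r^2)$ and $\nabla u=O(r)$ near the origin. Hence $W(r)+Cr^\alpha$ is monotone and has a limit as $r\to0$. The scaling $W(u,rs)=W(u_r,s)$ then shows that every blowup has constant Weiss energy, and therefore is $2$-homogeneous.

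\emph{Step 3: uniqueness of the blowup limit.}
This is the main obstacle, since the monotone quantity only gives homogeneity along subsequences.

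In the \emph{half-space case}, the zero set of $u_0$ has nonempty interior, so the coincidence sets converge in Hausdorff distance by Lemma~\ref{lem:obst-blowup}\ref{item:obst-hausdorff-dist}. Caffarelli's flatness-improvement then applies: directional monotonicity $C\partial_e u-u\ge0$ holds for $e$ in a cone, which makes the free boundary locally a Lipschitz, and in fact $C^{1,\beta}$, graph. This fixes the normal $e$, and with it the limit.

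In the \emph{polynomial case}, I would use Monneau's monotonicity formula: the quantity $r\mapsto r^{-d-3}\int_{\partial B_r}(u-p_2)^2$ is almost monotone, up to an $r^\alpha$ error, for each admissible $p_2$. This forces any two subsequential polynomial limits to coincide. A half-space limit and a polynomial limit cannot both occur, because their Weiss energies differ (the energy of $u_0$ is $f(0)$ times the Lebesgue measure of its positivity set in the unit ball, up to a constant); the constancy of $W$ from Step 2 therefore rules this out. Hence the limit is independent of the sequence $r_n$, and $u_r$ converges as $r\to0^+$.

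\emph{Step 4: classification of homogeneous global solutions.}
A convex, $2$-homogeneous global solution has a convex cone $\{u_0=0\}$.

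If this cone has nonempty interior, then $u_0$ is harmonic with a quadratic profile away from the cone, and an ODE/eigenvalue argument on the sphere forces the cone to be a half-space. This gives $\tfrac{f(0)}2(x\cdot e)_+^2$.

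If the cone has empty interior, then $\Delta u_0=f(0)$ a.e., and Liouville's theorem together with homogeneity makes $u_0$ a polynomial $p_2$. This polynomial is nonnegative and satisfies $\Delta p_2=f(0)$.
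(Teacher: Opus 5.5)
Your reconstruction takes a different route from the paper, which simply quotes \cite{Caffarelli98} and gives no proof. Most of it survives the H\"older hypothesis, but the half-space case of Step~3 does not.

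\textbf{The gap.} Caffarelli's directional monotonicity lemma says: if $C\partial_e u-u\geq-\varepsilon_0$ in $B_1$, then $C\partial_e u-u\geq0$ in $B_{1/2}$. It is proved by applying the minimum principle in $\{u>0\}$ to $C\partial_e u-u+c\abs{x-y}^2$. That function is superharmonic only because $\Delta\partial_e u=\partial_e f$ is a bounded function that becomes small after rescaling, i.e.\ only because $f$ is Lipschitz.

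For $f\in C^{0,\alpha}$, $\partial_e f$ is merely a distribution of negative order, and the barrier argument collapses. You could subtract a corrector $g$ with $\Delta g=\partial_e f$. That restores superharmonicity, but it spoils the inequality on $\partial\{u>0\}$ by an amount $O(r^{\alpha})$ at scale $r$. Closing the argument then requires summing these losses over dyadic scales (the Dini-type argument of \cite{blank}), and that is not in your sketch. So, as written, you have not shown that a half-space blowup is unique under the stated hypothesis.

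This matters for the paper. In Section~\ref{sec:closing-rates} only H\"older continuity of $T$ is assumed, so the source in \eqref{eq:intro-w-obstacle} is merely H\"older there.

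\textbf{A fix within your framework} is Weiss's epiperimetric inequality at regular points.
\begin{itemize}
\item Since every blowup is a half-space solution, $u_r$ is close to some half-space solution for all small $r$.
\item The epiperimetric inequality for the frozen functional, with the $C^{0,\alpha}$ perturbation contributing $O(r^\alpha)$, gives $W(r)-W(0^+)\leq Cr^{\gamma}$.
\item Integrate $\frac{d}{dr}u_r=r^{-1}(x\cdot\nabla u_r-2u_r)$ over dyadic intervals and apply Cauchy--Schwarz against $W'$. This yields $\int_{\partial B_1}\abs{u_r-u_s}\leq Cr^{\gamma/2}$ for $s<r$, hence convergence of $u_r$.
\end{itemize}

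\textbf{The rest is sound.}
\begin{itemize}
\item Freezing $f$ at $0$ produces the error $r^{-1}\int_{B_1\cap\{u_r>0\}}(f(0)-f(rx))(x\cdot\nabla u_r-2u_r)$ in $W'$. In the Monneau derivative it produces $r^{-1}\int_{B_1\cap\{u_r>0\}}(u_r-p_2)(f(rx)-f(0))$. Both are $O(r^{\alpha-1})$ by quadratic growth, hence integrable.
\item A $2$-homogeneous solution has energy $\frac{f(0)^2}{4(d+2)}\abs{\{u_0>0\}\cap B_1}$. This separates half-space limits from polynomial limits.
\end{itemize}

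\textbf{A correction to Step~4.} It is not $u_0$ that is harmonic off the cone. Run the eigenvalue argument on $\partial_{-e}u_0$ with $e\in\operatorname{int}\{u_0=0\}$. This function is nonnegative by convexity, harmonic and $1$-homogeneous off the cone, and zero on it. That pins the first Dirichlet eigenvalue of the spherical domain at $d-1$. Since that domain contains a hemisphere, strict domain monotonicity forces the cone to be a half-space.
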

\begin{thm}[Regularity of the hitting interface]
\label{thm:hitting-interface-regularity}
Assume the hypotheses of Theorem~\ref{thm:HS-hitting-time-lipschitz}, and let $I\Subset(0,\infty)$ be an interval.  Then the following hold in \(O_I=T^{-1}(I)\):
\begin{enumerate}[label=(\roman*)]
\item \cite[Props 5.4]{CJK25}
\(\mathcal R\) is relatively open and \(\Sigma\) is relatively closed.
\item \cite[Prop 5.6 and App.~6]{CJK25}. Near each \(q\in\mathcal R\), the boundaries $\partial\Omega_t$ admit a local graph representation $x_d=g(x',t)$, where $g(\cdot,t)$ is uniformly $C^{1,\beta}$ in $x'$ and $g(x',\cdot)$ is uniformly Lipschitz in $t$, for any $0<\beta<1$. The corresponding spatial outward unit normal extends continuously along \(\mathcal R\).
\item \cite[Props 5.13]{CJK25} The quadratic blowup of \(w(\cdot,T(q))\) at $q$ depends continuously on $q\in\Sigma$.
In particular, \(\Sigma_{d-1}\) is relatively closed in \(O_I\).
\item\label{i:singular-set-c1-manifold} \cite[Prop 5.15]{CJK25} For $k=0,\ldots,d-1$, near each \(q\in\Sigma_k\), the set \(\Sigma\) is contained in a
\(C^1\) manifold $M$ of dimension \(k\). The zero set of the quadratic blowup of $w(\cdot, T(q))$ at $q\in \Sigma$ is contained in the $k$-hyperplane tangent to $M$ at $q$.
\end{enumerate}

\end{thm}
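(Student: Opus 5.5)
The theorem is essentially a transfer of results from \cite{CJK25}. The plan is to check that the hypotheses used there hold in our setting, and then invoke the cited propositions.

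\textbf{Setting up the obstacle problem.} Fix $I\Subset(0,\infty)$ with $\sup I<\tau$. By Theorem~\ref{thm:HS-hitting-time-lipschitz}, $T$ is locally Lipschitz on the open set $O$. Assumption \eqref{eq:A-source} makes $c$ bounded and continuous. Hence the source
\[
f(x,t)=1-\int_{T(x)\wedge t}^t c(x,s)\,ds
\]
in \eqref{eq:intro-w-obstacle} is locally Lipschitz in $x$, uniformly for $t$ near $I$. Moreover $f\geq 1/2$ in a space-time neighborhood of the graph of $T$, since $f(x,T(x))=1$ and $\abs{f(x,t)-1}\leq \overline c_\tau\,(t-T(x))_+$. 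Thus each slice $w(\cdot,t)$ solves an obstacle problem with a $C^{0,1}\subset C^{0,\alpha}$ source bounded below by $\lambda=1/2$ near $\partial\Omega_t$. This is the setting of Lemmas~\ref{obst-quadratic-growth}--\ref{lem:stationary-blowup}, and it matches the standing assumptions of \cite[Sect.~5]{CJK25}. There, Lipschitz continuity of $T$ (or H\"older continuity, together with Lipschitz bounds on $\mathcal R$) was either assumed or proved.

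\textbf{Items (i) and (iii).} For (i), I would argue that $q\in\mathcal R$ means that the rescalings $w_r$ of $w(\cdot,T(q))$ at $q$ are close to a half-space solution. Uniform $C^{1,1}$ bounds (Lemma~\ref{lem:obst-blowup}) and continuity of $T$ show that nearby points $q'$ have rescalings at a fixed small scale that are also close to a half-space. Caffarelli's flatness-implies-regularity argument then gives $q'\in\mathcal R$. This shows $\mathcal R$ is open, and $\Sigma$ is its relative complement.

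Item (iii) follows the Monneau-type monotonicity argument of \cite{CJK25}, with the quadratic error coming from the Lipschitz variation of $f$. Given (iii), $\Sigma_{d-1}$ is relatively closed, because $\dim\ker D^2p_2$ is upper semicontinuous and $d-1$ is the maximal possible value. For $q_j\to q$ with $q_j\in\Sigma_{d-1}$, the limit $q$ lies in $\Sigma$ by (i). Its blowup is the limit of the blowups at $q_j$, whose kernels are hyperplanes, so its kernel has dimension at least $d-1$.

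\textbf{Items (ii) and (iv).} Item (iv) follows from (iii) by Whitney's extension theorem applied to the continuous map $q\mapsto p_2^q$, as in \cite[Prop 5.15]{CJK25}. The second statement of (iv) comes from the convergence of secant directions of $\Sigma$ into $\ker D^2p_2^q$.

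The main obstacle is item (ii). Spatial $C^{1,\beta}$ regularity of $\partial\Omega_t$ near regular points is the standard consequence of flatness for obstacle problems with H\"older sources. The time Lipschitz bound on $g$, however, requires the nondegeneracy of the speed. In \cite{CJK25} this was obtained only on $\mathcal R$, whereas here it follows directly from \eqref{eq:HS-T-local-Lipschitz-estimate}. Indeed, the graph of $T$ near $q$ is Lipschitz, and $\partial_{x_d}T$ is bounded below by a positive constant because the normal velocity $\abs{\nabla p}$ is finite and positive at regular points. The implicit function theorem for Lipschitz functions then gives $g(x',\cdot)$ Lipschitz. Continuity of the normal follows from the uniform $C^{1,\beta}$ estimates together with the continuity of $T$.
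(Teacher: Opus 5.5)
Your overall plan matches the paper's treatment. Theorem~\ref{thm:hitting-interface-regularity} is not reproved in the paper; each item is quoted from \cite{CJK25}, and the hypotheses of Theorem~\ref{thm:HS-hitting-time-lipschitz} only serve to place us in that setting (Lipschitz $T$, hence a Lipschitz source $f\geq 1/2$ near the interface, as you check).

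The problem is the argument you substitute for the time regularity in (ii), which points in the wrong direction. Lipschitz continuity of $g(x',\cdot)$ is an \emph{upper} bound on the normal speed of $\partial\Omega_t$. By contrast, \eqref{eq:HS-T-local-Lipschitz-estimate} is a \emph{lower} bound on that speed: from $T(x',g(x',t))=t$ it only gives $\abs{g(x',t)-g(x',s)}\geq\abs{t-s}/L$. So the time-Lipschitz bound is not a consequence of \eqref{eq:HS-T-local-Lipschitz-estimate}. Your contrast with \cite{CJK25} (Lipschitz bounds ``only on $\mathcal R$'') is also beside the point, since (ii) is a statement about $\mathcal R$.

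In your ``Indeed'' sentence, the real content is the one-sided bound $T(x',a)-T(x',b)\geq c\,(a-b)$ near $q$. That is a restatement of the claim, and pointwise finiteness of $\abs{\nabla p}$ at $(q,T(q))$ does not give it. Two ingredients are needed:
\begin{itemize}
\item a bound on $\abs{\nabla p(\cdot,t)}$ that is uniform in a fixed neighborhood of $q$ and for all $t$ near $T(q)$ (from the uniform $C^{1,\beta}$ graphs, $p\in L^\infty$, and boundary estimates for $-\Delta p(\cdot,t)=c(\cdot,t)$);
\item a rigorous passage from that bound to a speed bound for the weak solution, e.g.\ by a barrier and the comparison principle.
\end{itemize}
This is the part the paper takes from \cite[Prop.~5.6 and App.~6]{CJK25}. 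You also cannot shortcut it with $\nabla T=-\nabla p/\abs{\nabla p}^2$. In the paper that identity is used only after Lemma~\ref{lem:pressure_C1_no_top}, whose proof relies on (ii).

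A smaller point in (iv): continuity of $q\mapsto p_2^q$ alone does not feed Whitney's theorem. You need $p_2^q(q'-q)=o(\abs{q'-q}^2)$ uniformly for $q,q'\in\Sigma$. This is not automatic, because the two expansions live on different slices $w(\cdot,T(q))$ and $w(\cdot,T(q'))$. These differ by $O(\abs{T(q)-T(q')})=O(\abs{q-q'})$, which is much larger than $\abs{q-q'}^2$.

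The estimate does hold, by monotonicity of $w$ in time:
\begin{itemize}
\item If $T(q)\leq T(q')$, then $w(q',T(q))=0$, and the uniform blowup modulus at $q$ gives the estimate.
\item Otherwise, argue at $q'$ and use continuity of the blowups.
\end{itemize}
The first-order Whitney condition then follows from $\abs{D^2p_2^q\,v}^2\leq 2\,p_2^q(v)$. Your ``secant directions'' remark presupposes exactly this estimate.
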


\section{Weak Hele-Shaw solutions and basic estimates}\label{sec:HS-weak-basic}

Let us denote the space-time domains
\[
    Q:=\R^d\times[0,\infty),\qquad
    Q_\tau:=\R^d\times[0,\tau),\qquad Q_\infty:=Q.
\]
The definitions and some of the basic tools in this section allow an arbitrary
nonnegative source \(c\in L^\infty(Q_\tau)\), as they are sometimes applied to
barriers whose sources vanish.

Given a pressure \(p\), we define
\begin{equation}\label{eq:HS-basic-notation}
    \rho=\rho_p:=\chi_{\{p>0\}},
 \qquad
    \Omega_t^{+}:=\set{x\in\R^d:\rho(x,t)=1}\quad(t\geq0).
\end{equation}

Our analysis combines barrier arguments, which are most natural in the viscosity-solutions framework (see e.g. \cite{kimZhang24}), with integral estimates that require the weak formulation. We therefore work with weak solutions throughout, establishing a comparison principle (Theorem~\ref{thm:HS-distributional-comparison}) flexible enough to accommodate the degenerate barriers used in the paper.

\begin{defn}[Weak subsolutions and supersolutions]
\label{def:HS-distributional-subsuper}
Let \(\tau\in(0,\infty]\).  Let \(c\in L^\infty_{\operatorname{loc}}(Q_\tau)\)
be nonnegative and let \(p:Q_\tau\to[0,\infty)\).  Write
\(\rho:=\rho_p:=\chi_{\{p>0\}}\).  We say that \(p\) is a weak subsolution to
the Hele-Shaw law \eqref{eq:intro-HS-law}, with source \(c\), on \(Q_\tau\) if
\begin{equation}\label{eq:HS-weak-regularity}
    p\in L^2_{\operatorname{loc}}([0,\tau);H^1(\R^d)),
    \qquad
    \rho\in L^\infty_{\operatorname{loc}}([0,\tau);L^1(\R^d)),
\end{equation}
and, for the chosen \(L^1\)-valued representative of \(\rho\),
\begin{equation}\label{eq:HS-weak-time-traces}
    \rho\in C([0,\tau);L^1(\R^d)),
\end{equation}
and
\begin{equation}\label{eq:HS-subsolution-distributional}
    \partial_t\rho-\Delta p\leq c(x,t)\rho
    \qquad\hbox{in }\mathcal D'(\R^d\times(0,\tau)).
\end{equation}
A weak supersolution to \eqref{eq:intro-HS-law}, with source \(c\), is
defined by reversing the inequality in \eqref{eq:HS-subsolution-distributional}.
\end{defn}

\begin{defn}[Weak solution]\label{def:HS-weak-solution}
Let \(\tau\in(0,\infty]\).  Let \(c\in L^\infty_{\operatorname{loc}}(Q_\tau)\)
be nonnegative, and let \(p:Q_\tau\to[0,\infty)\).  Write
\(\rho:=\rho_p:=\chi_{\{p>0\}}\).  We say that \(p\) is a weak
solution to the Hele-Shaw law \eqref{eq:intro-HS-law}, with source \(c\), on
\(Q_\tau\), if it is both a weak subsolution and a weak supersolution on
\(Q_\tau\). Equivalently, \(p\) satisfies
\eqref{eq:HS-weak-regularity} and \eqref{eq:HS-weak-time-traces}, and
\begin{equation}\label{eq:HS-weak-equation}
    \partial_t\rho-\Delta p=c(x,t)\rho
    \qquad\hbox{in }\mathcal D'(\R^d\times(0,\tau)).
\end{equation}
When \(\tau=\infty\), we simply say that \(p\) is a weak solution.
\end{defn}

Existence and uniqueness of weak solutions to \eqref{eq:intro-HS-law} were established in \cite{PQV14} for the tumor growth model, and the argument extends readily to a general source $c$.

The following comparison
principle is a special case of
Theorem~\ref{thm:app-HS-relaxed-comparison}.
\begin{thm}[Comparison principle]\label{thm:HS-distributional-comparison}
Let \(0<\tau<\infty\), and let
\(c^-,c^+\in L^\infty(\R^d\times(0,\tau))\) satisfy
\(0\leq c^-\leq c^+\).  Let \(p^-\) and \(p^+\) be, respectively, a
weak subsolution to \eqref{eq:intro-HS-law} with source \(c^-\)
and a weak supersolution to \eqref{eq:intro-HS-law} with
\(c^+\) on \(Q_\tau\).  Write
\(\rho^\pm:=\chi_{\{p^\pm>0\}}\). If
\begin{equation}\label{eq:HS-initial-order}
    \rho^-(\cdot,0)\leq\rho^+(\cdot,0)
    \qquad\hbox{a.e. in }\R^d,
\end{equation}
then
\begin{equation}\label{eq:HS-pressure-comparison}
    p^-\leq p^+
    \qquad\hbox{a.e. in }\R^d\times(0,\tau).
\end{equation}

\end{thm}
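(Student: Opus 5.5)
We will prove the comparison principle by an $L^1$-contraction (Kato-type) argument for the density. We first reduce to showing $\rho^-\leq\rho^+$ a.e.\ in $\R^d\times(0,\tau)$. Subtracting the two distributional inequalities gives
\[
    \partial_t(\rho^--\rho^+)-\Delta(p^--p^+)\leq c^-\rho^--c^+\rho^+\leq c^+(\rho^--\rho^+)
    \qquad\hbox{in }\mathcal D'(\R^d\times(0,\tau)).
\]
Here we used $c^-\le c^+$ and $\rho^-\ge0$. The key structural fact is that $\rho^\pm=\chi_{\{p^\pm>0\}}$, so $\mathrm{sign}_+(\rho^--\rho^+)$ and $\mathrm{sign}_+(p^--p^+)$ are compatible. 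Indeed, wherever $p^->p^+\ge0$ we have $\rho^-=1$, hence $\rho^--\rho^+\geq0$. Conversely, wherever $\rho^->\rho^+$ we have $p^+=0<p^-$. This monotone-graph relation $\rho\in H(p)$ is exactly the setting of Otto's/Carrillo's doubling-free $L^1$ contraction for elliptic--parabolic problems.

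The main steps are as follows. First, we test the inequality against $\eta(x)\,\psi(t)\,S_\delta(p^--p^+)$, where $S_\delta(s)=\min(1,s_+/\delta)$ is a Lipschitz approximation of $\mathrm{sign}_+$ and $\eta$ is a spatial cutoff. Second, the diffusion term yields
\[
    \int\eta\, S_\delta'(p^--p^+)\,\abs{\nabla(p^--p^+)}^2\;\geq\;0
\]
plus a cutoff error controlled by $\norm{\nabla p^\pm}_{L^2}$. Since $p^\pm\in L^2_{\rm loc}H^1$ and the densities are in $L^1$, this error vanishes as $\eta\uparrow1$. Third, the time-derivative term: we cannot directly differentiate $S_\delta(p^--p^+)$ in time, because $p$ has no time regularity. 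We therefore use a Steklov time average of the inequality (or mollification in $t$). We then pass to $\delta\to0$ using $S_\delta(p^--p^+)\to\chi_{\{p^->p^+\}}$. By the compatibility above, this quantity coincides a.e.\ with a choice in $\mathrm{sign}_+(\rho^--\rho^+)$, up to the set where $\rho^-=\rho^+=1$ and $p^->p^+$, on which $\rho^--\rho^+=0$ anyway. This gives $\frac{d}{dt}\int(\rho^--\rho^+)_+\leq\int c^+(\rho^--\rho^+)_+$ in $\mathcal D'(0,\tau)$. Fourth, using the continuity \eqref{eq:HS-weak-time-traces} to identify the initial trace and \eqref{eq:HS-initial-order}, Gronwall gives $(\rho^--\rho^+)_+=0$ for all $t$.

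From $\rho^-\le\rho^+$ we recover the pressure order elliptically. For a.e.\ $t$, the combined inequality now reads
\[
    -\Delta(p^--p^+)\leq c^+(\rho^--\rho^+)-\partial_t(\rho^--\rho^+)
\]
in an integrated sense. The cleanest route is to keep the term $\int S_\delta'\abs{\nabla(p^--p^+)}^2$ from the second step rather than discarding it. Having shown the left side of the integrated inequality is nonpositive, that term forces $\nabla(p^--p^+)_+=0$ a.e. Hence $(p^--p^+)_+$ is constant in $x$ for a.e.\ $t$. Since $p^-\in L^2H^1(\R^d)$ with $\rho^-$ in $L^1$ (so $p^-$ vanishes on a set of infinite measure), this constant is zero, which gives \eqref{eq:HS-pressure-comparison}.

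The main obstacle is the third step: justifying the chain rule $\langle\partial_t(\rho^--\rho^+),S_\delta(p^--p^+)\rangle\to\frac{d}{dt}\int(\rho^--\rho^+)_+$ when $\partial_t\rho$ is only a distribution and $\rho$ is merely $C(L^1)$. My first attempt is Otto's trick: exploit $\rho^\pm=H(p^\pm)$ and the inequality $(H(a)-H(b))\,S_\delta(a'-b')\le \ldots$ for time-shifted values. Concretely, the sign-compatibility gives $\bigl(\rho(t+h)-\rho(t)\bigr)S(p(t+h))\geq\Psi(t+h)-\Psi(t)$ for a convex potential $\Psi$, which replaces the chain rule by a discrete convexity inequality on difference quotients. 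Degenerate barriers used later may need the relaxed version in Theorem~\ref{thm:app-HS-relaxed-comparison}, but for the stated special case this argument should suffice.
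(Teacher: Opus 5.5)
\textbf{There is a genuine gap in your third step.} It is not only a matter of justification: the convergence $\langle\partial_t(\rho^--\rho^+),S_\delta(p^--p^+)\rangle\to\frac{d}{dt}\int(\rho^--\rho^+)_+$ is false. You use the initial ordering only afterwards, through Gronwall, so this identification would have to hold for arbitrary pairs.

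Take $p^+\equiv0$, which is a weak supersolution for any $c^+\geq0$, and let $p^-$ be the radial solution \eqref{eq:HS-explicit-ball-solution}. The newly saturated set $B_{R(t)}\setminus B_{R(t-h)}$ has measure $O(h)$, and $p^-(\cdot,t)=O(h)$ on it. Hence, for each fixed $\delta$,
\[
    \frac1h\int\!\!\int\bigl(\rho^-(t)-\rho^-(t-h)\bigr)\,\xi\,S_\delta\bigl(p^-(t)\bigr)\,dx\,dt=O(h/\delta)\longrightarrow0 ,
\]
while $\frac{d}{dt}\int(\rho^--\rho^+)_+\,dx=\frac{d}{dt}\abs{B_{R(t)}}>0$.

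This failure is structural. $\partial_t\rho^-$ is carried by points where $p^-$ has only just become positive, and there $S_\delta(p^--p^+)$ vanishes. The growth of $\{\rho^->\rho^+\}$ is carried instead by the term you discard, $\frac1\delta\int_{\{0<v<\delta\}}\abs{\nabla v}^2$ with $v=p^--p^+$. In the example this term tends to $\int_{\partial B_{R(t)}}\abs{\nabla p^-}\,d\mathcal H^{d-1}=\frac{d}{dt}\abs{B_{R(t)}}$. Identifying its limit in general would require a free-boundary trace statement that weak solutions do not provide.

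The two natural repairs do not work either:
\begin{itemize}
\item Exchanging limits fails: if you send $\delta\to0$ at fixed $h$, the Steklov-averaged diffusion term $\int\!\!\int\xi S_\delta'(v)\nabla v^h\cdot\nabla v$ has neither a sign nor a bound uniform in $\delta$.
\item Your discrete convexity inequality $(\rho(t+h)-\rho(t))S(p(t+h))\geq\Psi(t+h)-\Psi(t)$ concerns a test function of one solution, whereas $S_\delta(p^--p^+)$ couples two solutions at the same time. If you double the time variable to decouple them, the best potential on the $\rho^-$ side is constant, again because $\rho^-=\chi_{\{p^->0\}}$ jumps exactly where $S_\delta(\cdot-p^+(s))=0$. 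The two potentials then do not match, and the terms containing the derivative of the doubling kernel do not cancel.
\end{itemize}

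\textbf{How the paper avoids this.} It uses no chain rule in time; it argues by duality. Set $u=\rho^--\rho^+$, $v=p^--p^+$, $r=u+v$. The sign compatibility you observed gives $u=rA$ and $v=rB$ with measurable $A,B\in[0,1]$. The difference inequality then reads $\int\!\!\int r(A\partial_t\psi+B\Delta\psi)\geq-\int\!\!\int c^-u\psi$, up to a boundary term, for nonnegative $\psi$ vanishing on the parabolic boundary. One tests with approximate solutions $\psi_j$ of the degenerate backward adjoint $A\partial_t\psi+B\Delta\psi=-AG$ satisfying the uniform bound $0\leq\psi_j\leq(b-a)\norm{G}_{L^\infty}$ (Lemma~\ref{lem:app-HS-dual-tests}). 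This yields $\int\!\!\int uG\leq M(b-a)\norm{G}_{L^\infty}\int\!\!\int u_+$ with $M=\norm{c^-}_{L^\infty}$. Hence $u\leq0$ on any interval with $M(b-a)<1$, and the $C([0,\tau);L^1)$ continuity lets one iterate.

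\textbf{Your final step is sound.} Deducing $p^-\leq p^+$ from $\rho^-\leq\rho^+$ works if you carry it out at fixed $\delta$. Once $u\leq0$, one has $u(t)=0$ wherever $S_\delta(v(t))>0$. So the backward-difference term is nonnegative and the reaction term vanishes, which forces $\nabla v=0$ a.e.\ on $\{0<v<\delta\}$ for every $\delta$. This plays the role of the paper's test with $(p^--p^+)_+$ through Lemma~\ref{lem:app-HS-elliptic-positive-phase}.
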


\begin{proof}
Apply Theorem~\ref{thm:app-HS-relaxed-comparison} to the relaxed pairs
\((\rho_{p^-},p^-)\) and \((\rho_{p^+},p^+)\).  These pairs
satisfy Definition~\ref{def:app-HS-relaxed-subsuper} because \(p^\pm\) are
sub- and supersolutions in the saturated sense of
Definition~\ref{def:HS-distributional-subsuper}.  The relaxed initial ordering
\eqref{eq:app-HS-relaxed-initial-order} is exactly
\eqref{eq:HS-initial-order}, and
\eqref{eq:app-HS-relaxed-pressure-comparison} gives
\eqref{eq:HS-pressure-comparison}.
\end{proof}

\begin{lem}[Basic properties of weak solutions]\label{lem:HS-basic-properties}
Let \(p\) be a weak solution to \eqref{eq:intro-HS-law}, with source \(c\)
satisfying \eqref{eq:A-source}. Then the following
hold:
\begin{enumerate}[label=(\roman*)]
\item For any $\tau>0$, \(p\in L^\infty(Q_{\tau})\).  If \eqref{eq:A-initial}
holds, then \(p\) is compactly supported in \(Q_{\tau}\).
\item The sets \(\Omega_t^{+}\) are nondecreasing up to null sets, that is,
\begin{equation}\label{eq:HS-monotone-sets}
    \abs{\Omega_s^{+}\setminus\Omega_t^{+}}=0
    \qquad\hbox{for }0\leq s\leq t.
\end{equation}
Moreover,
\begin{equation}\label{eq:HS-lower-laplacian-bound}
    \Delta p\geq -c\rho\geq -c
    \qquad\hbox{in }\mathcal D'(Q).
\end{equation}
\item For a.e. \(t\in[0,\infty)\), the slice \(p(\cdot,t)\) satisfies
\begin{equation}\label{eq:HS-slice-variational}
    \int_{\R^d}\nabla p(\cdot,t)\cdot\nabla\phi\,dx
    =
    \int_{\R^d}c(\cdot,t)\phi\,dx,
\end{equation}
for every  \(\phi\in H^1(\R^d)\) such that $\phi\geq 0$ and
\(\phi(1-\rho(\cdot,t))=0\) a.e.
\item If \(0<s<t<\infty\) are two times for which
\eqref{eq:HS-slice-variational} holds, \(A\geq0\), and
\begin{equation}\label{eq:HS-source-slice-comparison}
    c(\cdot,s)\leq Ac(\cdot,t)
    \qquad\hbox{ a.e. in }\Omega_s^{+} ,
\end{equation}
then
\begin{equation}\label{eq:HS-slice-pressure-comparison}
    p(\cdot,s)\leq Ap(\cdot,t)
    \qquad\hbox{ a.e. in  } \R^d .
\end{equation}
In particular, \eqref{eq:HS-slice-pressure-comparison} holds
for a.e. \(0<s<t\leq\tau\), with \(A=\overline c_\tau/\underline c_\tau\).
\end{enumerate}
\end{lem}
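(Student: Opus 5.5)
We prove the four items in order, using only the weak formulation \eqref{eq:HS-weak-equation}, the comparison principle (Theorem~\ref{thm:HS-distributional-comparison}) and standard elliptic facts.

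\emph{Item (i).} For boundedness we build explicit radial supersolutions. Take $c\le\overline c_\tau$ on $[0,\tau]$ and $\Omega_0^+\subset B_{R(0)}$ (the latter is where \eqref{eq:A-initial} is used). Set $P(x,t)=\frac{\overline c_\tau}{2d}(R(t)^2-\abs{x}^2)_+$. Then $-\Delta P=\overline c_\tau$ in its positivity set. On $\partial B_{R(t)}$ the normal velocity of this set is $R'$, while $\abs{\nabla P}=\overline c_\tau R/d$. Choosing $R'=\overline c_\tau R/d$, i.e. $R(t)=R(0)e^{\overline c_\tau t/d}$, makes $P$ a weak solution, hence a supersolution, with source $\overline c_\tau\ge c$. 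Comparison then gives $p\le P$, which yields both the $L^\infty$ bound and compact support. Without \eqref{eq:A-initial}, the $L^\infty$ bound would instead come from a local version: apply the slice comparison of (iv) against a translate of the radial profile, or use subharmonicity of $p+\overline c\abs{x}^2/2d$ together with $\rho\in L^\infty L^1$. I would state (i) mainly under \eqref{eq:A-initial}.

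\emph{Item (ii).} First, since $\rho$ takes values in $\{0,1\}$ and $p=0$ off $\{\rho=1\}$, $p$ vanishes a.e. on $\{\rho<1\}$. In the Stampacchia sense this gives $\Delta p\ge -c\rho$: $\Delta p=-c$ in the interior of the positive phase, and the measure part of $\Delta p$ on the free boundary is nonnegative because $p\ge0$. Rigorously, I would test \eqref{eq:HS-weak-equation} with $\phi\,\eta_\delta(p)$, where $\eta_\delta(s)=\min(s/\delta,1)$ and $\phi\ge0$, and let $\delta\to0$. The time-derivative term vanishes in this limit because $\rho\,\eta_\delta(p)\to\rho$ a.e. and the time derivative falls onto $\rho$ only through $\partial_t\rho\,\eta_\delta(p)$; this is handled by Steklov averaging. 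One obtains $\int\nabla p\cdot\nabla\phi\le\int c\rho\phi$ for nonnegative $\phi$, which is \eqref{eq:HS-lower-laplacian-bound}. Combining with the equation gives $\partial_t\rho=\Delta p+c\rho\ge0$ in $\mathcal D'$. Together with the time continuity \eqref{eq:HS-weak-time-traces}, this gives monotonicity: $\int(\rho(t)-\rho(s))\psi\ge0$ for all $\psi\ge0$, hence $\rho(s)\le\rho(t)$ a.e., which is \eqref{eq:HS-monotone-sets}.

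\emph{Item (iii).} Steklov-average the equation in time and pass to Lebesgue points. For a.e. $t$ and every $\phi\in H^1$ one obtains
\[
\int\nabla p(t)\cdot\nabla\phi=\int c\rho(t)\phi-\lim_{h\to0}\int\frac{\rho(t+h)-\rho(t)}{h}\phi .
\]
We also need this for $h<0$ via $\rho(t)-\rho(t-h)$. If $\phi\ge0$ and $\phi(1-\rho(t))=0$, then by monotonicity $\rho(t+h)\phi=\rho(t)\phi$ for $h>0$, so the forward quotient vanishes. This gives \eqref{eq:HS-slice-variational}. \textbf{This is the delicate step:} one must justify the limit along a full-measure set of $t$ uniformly over the class of test functions. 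I would prove the identity for a countable dense family of test functions and then approximate, keeping the constraint by truncation $\phi\mapsto\min(\phi,k\,p(t)_+)$-type approximations if needed.

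\emph{Item (iv).} By (iii), $p(s)$ is the minimizer of $\int\frac12\abs{\nabla v}^2-c(s)v$ over $H^1_0$ of the open-type set $\Omega_s^+$; equivalently it is the nonnegative solution of $-\Delta p(s)\le c(s)$ with equality where $p(s)>0$. We compare $u=p(s)-Ap(t)$ and test with $\phi=u_+$. This is admissible in (iii) at time $s$, since $u_+$ is supported in $\{p(s)>0\}\subset\Omega_s^+$. It is also admissible at time $t$, since $\Omega_s^+\subset\Omega_t^+$ by (ii). Subtracting the two identities gives
\[
\int\nabla u\cdot\nabla u_+=\int\bigl(c(s)-Ac(t)\bigr)u_+\le0
\]
by \eqref{eq:HS-source-slice-comparison}. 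Hence $u_+\equiv0$, which is \eqref{eq:HS-slice-pressure-comparison}. The final claim follows since $c(s)\le\overline c_\tau\le(\overline c_\tau/\underline c_\tau)c(t)$.
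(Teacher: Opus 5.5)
\textbf{Gap in item (i).} The statement asserts $p\in L^\infty(Q_\tau)$ for every $\tau$ \emph{without} \eqref{eq:A-initial}. Your proposal does not prove this; you only state (i) under \eqref{eq:A-initial}.

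Your first suggested substitute does not work. Item (iv) compares two slices of the same solution, not $p$ with a radial profile. Moreover, without \eqref{eq:A-initial} no ball contains $\Omega_0^{+}$, so there is no global barrier to start from.

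The missing step is short once you have (iii), and it is what the paper does. For a.e.\ $t$ the function $(p(\cdot,t)-k)_+$ is an admissible test in \eqref{eq:HS-slice-variational}, giving
$\int_{\{p(\cdot,t)>k\}}\abs{\nabla p}^2\le C_\tau\int (p(\cdot,t)-k)_+$.
De Giorgi/Stampacchia level-set iteration then yields $\norm{p(\cdot,t)}_{L^\infty}\le C_dC_\tau\abs{\Omega_t^{+}}^{2/d}$. This is uniform on $[0,\tau]$ because $\rho\in L^\infty_{\operatorname{loc}}([0,\infty);L^1(\R^d))$.

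Your subharmonicity idea can also be closed, but not from $\rho\in L^\infty L^1$ alone, since that does not control averages of $p$. You first need local boundedness of the subharmonic function $p(\cdot,t)+\frac{C_\tau}{2d}\abs{x-x_0}^2$ (mean value on unit balls against $\norm{p(\cdot,t)}_{L^2}$) to know $\norm{p(\cdot,t)}_{L^\infty}<\infty$. Then the mean-value inequality on a ball with $\abs{B_r}=2\abs{\Omega_t^{+}}$ gives $\norm{p(\cdot,t)}_{L^\infty}\le\tfrac12\norm{p(\cdot,t)}_{L^\infty}+C_\tau r^2$, and you can absorb.

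\textbf{Item (ii): different route, valid, but your justification is wrong.} The paper gets monotonicity first. It applies the relaxed comparison principle (Theorem~\ref{thm:app-HS-relaxed-comparison}) to the frozen pair $(\rho(\cdot,s),0)$ with zero source against the time-shifted solution, then reads off \eqref{eq:HS-lower-laplacian-bound} from $\partial_t\rho\ge0$. You derive the Kato-type inequality first.

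The time term is not disposed of by letting $\delta\to0$. The limit of $\eta_\delta(p)$ is $\rho$ itself, and pairing $\partial_t\rho$ with $\rho$ is exactly the ill-defined product you must avoid. Instead, the term already has a sign for each fixed $\delta$:
\begin{itemize}
\item[$\circ$] $\eta:=\phi\,\eta_\delta(p)\ge0$ lies in $L^2H^1$ and satisfies $\eta(1-\rho)=0$.
\item[$\circ$] For its forward Steklov average one has pointwise $\rho\,\partial_t\eta^\varepsilon\le\partial_t\eta^\varepsilon$. There is equality where $\eta>0$, since there $\rho=1$; on $\{\eta=0\}$ the inequality follows from $0\le\rho\le1$ and $\eta\ge0$.
\item[$\circ$] Hence $\int\rho\,\partial_t\eta^\varepsilon\le0$, with no monotonicity used.
\end{itemize}
This is the subsolution case of Lemma~\ref{lem:app-HS-elliptic-positive-phase}, and it gives $\int\nabla\eta\cdot\nabla p\le\int c\,\eta$. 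Discarding $\int\phi\,\eta_\delta'(p)\abs{\nabla p}^2\ge0$ and letting $\delta\to0$ gives $\int\nabla\phi\cdot\nabla p\le\int c\rho\,\phi$. Then $\partial_t\rho\ge0$ follows from the equation, and \eqref{eq:HS-monotone-sets} follows from $\rho\in C([0,\infty);L^1)$. With this justification, your route for (ii) is more elementary than the paper's, since it bypasses the duality argument behind the comparison theorem for this step.

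\textbf{Items (iii) and (iv).} These match the paper. In (iii), the exceptional times are the non-Lebesgue points of $t\mapsto p(\cdot,t)\in H^1$, a set independent of $\phi$. So the difference-quotient limit exists simultaneously for all test functions, and no dense family or truncation is needed.
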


\begin{proof}
Set
\[
    C_\tau:=\norm{c}_{L^\infty(Q_{\tau})}.
\]
We first prove \eqref{eq:HS-monotone-sets}.  Fix \(s\geq0\), and let
\(\sigma>0\).  On the time interval \([0,\sigma)\), set
\[
    \rho^-(x,t):=\rho(x,s),\qquad p^-(x,t):=0,\qquad c^-(x,t):=0,
\]
and
\[
    \rho^+(x,t):=\rho(x,s+t),\qquad
    p^+(x,t):=p(x,s+t),\qquad
    c^+(x,t):=c(x,s+t).
\]
The frozen pair \((\rho^-,p^-)\) is a relaxed weak subsolution in the sense of
Definition~\ref{def:app-HS-relaxed-subsuper}: the algebraic constraints in
\eqref{eq:app-HS-relaxed-graph} and the regularity in
\eqref{eq:app-HS-relaxed-regularity} are immediate, and
\(\partial_t\rho^-=\Delta p^-=c^-\rho^-=0\).
The shifted pair \((\rho^+,p^+)\) is a relaxed weak supersolution with source
\(c^+\).  Since the relaxed initial densities are equal,
Theorem~\ref{thm:app-HS-relaxed-comparison} gives
\[
    \rho(x,s)\leq\rho(x,s+t)
    \qquad\hbox{for a.e. }(x,t)\in Q_{\sigma}.
\]
  The continuity in \eqref{eq:HS-weak-time-traces} then gives this
inequality for each \(t\in(0,\sigma)\), after changing the exceptional set in
\(x\).  As
\(\sigma\) is arbitrary and \(\rho(\cdot,t)=\chi_{\Omega_t^{+}}\), this proves
\eqref{eq:HS-monotone-sets}.  Since this monotonicity is equivalent to
\(\partial_t\rho\geq0\) in distributions, \eqref{eq:HS-weak-equation} gives
\eqref{eq:HS-lower-laplacian-bound}.

We next prove the compact-support propagation in (i), under the additional
assumption that \(\Omega_0^{+}\) is bounded.  Choose \(r_0>0\) such that
\(\Omega_0^{+}\subset B_{r_0}\) up to a null set, and set
\begin{equation}\label{eq:HS-compact-support-barrier}
    R(t):=r_0e^{C_\tau t/d},\qquad
    \bar\rho(x,t):=\chi_{B_{R(t)}}(x),\qquad
    \bar p(x,t):=\frac{C_\tau}{2d}\bigl(R(t)^2-\abs{x}^2\bigr)_+ .
\end{equation}
\(\bar p\) is the radial classical Hele-Shaw solution with constant source
\(C_\tau\): indeed,
\(-\Delta\bar p=C_\tau\) in \(B_{R(t)}\), \(\bar p=0\) on
\(\partial B_{R(t)}\), and
\(R'(t)=C_\tau R(t)/d=\abs{\nabla\bar p}\) on \(\partial B_{R(t)}\).
Thus \(\bar p\), with density \(\bar\rho\), is a weak solution with source
\(C_\tau\).  Since
\(\rho(\cdot,0)\leq\bar\rho(\cdot,0)\) and \(c\leq C_\tau\), the comparison
principle (Theorem~\ref{thm:HS-distributional-comparison}) gives
\begin{equation}\label{eq:HS-compact-support-comparison}
    p\leq\bar p
    \qquad\hbox{a.e. in }\R^d\times(0,\tau).
\end{equation}
Since \(p\geq0\), \(\bar p=0\) outside \(B_{R(t)}\), and
\(\rho=\chi_{\{p>0\}}\), both \(\rho\) and \(p\) are supported in
\(B_{r_0e^{C_\tau\tau/d}}\) on \([0,\tau]\), up to null sets.

We now extract the slice identity \eqref{eq:HS-slice-variational} from the
spacetime identity in Lemma~\ref{lem:app-HS-elliptic-positive-phase}.  Fix a
time \(t\in(0,\tau)\) which is a right Lebesgue point of \(p\) in
\(H^1(\R^d)\) and of \(c\) in \(L^2(B_R)\) for every integer \(R\geq1\).
These times have full measure.  Let
\(\phi\in H^1(\R^d)\) be compactly supported and satisfy
\(\phi(1-\rho(\cdot,t))=0\).  By \eqref{eq:HS-monotone-sets},
\(\phi(1-\rho(\cdot,s))=0\) for a.e. \(s>t\).  Thus, for \(h>0\) small,
\[
    \eta_h(x,s):=\frac1h\mathbf 1_{(t,t+h)}(s)\phi(x)
\]
is admissible in
\eqref{eq:app-HS-elliptic-positive-phase-identity}, and hence
\[
    \frac1h\int_t^{t+h}
    \left(
    \int_{\R^d}\nabla p(\cdot,s)\cdot\nabla\phi\,dx
    -
    \int_{\R^d}c(\cdot,s)\phi\,dx
    \right)ds=0 .
\]
Letting \(h\downarrow0\) gives \eqref{eq:HS-slice-variational} for compactly
supported \(\phi\).  For a general
\(\phi\in H^1(\R^d)\) with \(\phi(1-\rho(\cdot,t))=0\), apply the compactly
supported case to \(\chi_R\phi\), where \(\chi_R\) is a standard spatial
cutoff, and let \(R\to\infty\).  The passage to the limit uses
\(\rho(\cdot,t)\in L^1(\R^d)\), \(c(\cdot,t)\in L^\infty(\R^d)\), and
\(p(\cdot,t),\phi\in H^1(\R^d)\).

We next prove the \(L^\infty\) bound in (i), which does not require
\(\Omega_0^{+}\) to be bounded.  For a.e. \(t\), testing
\eqref{eq:HS-slice-variational} with
\((p(\cdot,t)-k)_+\) gives
\begin{equation}\label{eq:HS-level-set-energy-bound}
    \int_{\{p(\cdot,t)>k\}}\abs{\nabla p(\cdot,t)}^2\,dx
    \leq
    C_\tau\int_{\{p(\cdot,t)>k\}}(p(\cdot,t)-k)\,dx .
\end{equation}
The standard De Giorgi level-set iteration applied to
\eqref{eq:HS-level-set-energy-bound} yields
\begin{equation}\label{eq:HS-slice-Linfty-bound}
    \norm{p(\cdot,t)}_{L^\infty(\R^d)}
    \leq C_d C_\tau\abs{\Omega_t^{+}}^{2/d}
    \leq C_d C_\tau
    \left(\sup_{0\leq s\leq\tau}\abs{\Omega_s^{+}}\right)^{2/d}
\end{equation}
for a.e. \(t\in(0,\tau)\).  The last quantity is finite because
\eqref{eq:HS-weak-regularity} holds locally in time, proving
\(p\in L^\infty(Q_{\tau})\).

Finally, to prove \eqref{eq:HS-slice-pressure-comparison}, fix
\(0<s<t \leq\tau\) and \(A\geq0\) satisfying
\eqref{eq:HS-source-slice-comparison}, and set
\[
    \zeta:=(p(\cdot,s)-Ap(\cdot,t))_+ .
\]
By \eqref{eq:HS-monotone-sets}, \(\zeta\) is an admissible test function in
\eqref{eq:HS-slice-variational} at both times.
Using \(\zeta\) at time \(s\) and \(A\zeta\) at time \(t\), then
subtracting, gives
\[
    \int_{\R^d}\abs{\nabla\zeta}^2\,dx
    =
    \int_{\R^d}\zeta\bigl(c(\cdot,s)-Ac(\cdot,t)\bigr)\,dx
    \leq0 .
\]
Thus \(\zeta\) is spatially constant and, since
\(\zeta\in L^2(\R^d)\), $\zeta \equiv 0$, which proves
\eqref{eq:HS-slice-pressure-comparison}.
\end{proof}

In the next lemma we consider a broader family of source conditions than
\eqref{eq:A-source-time}, with the time integrability \(L^1\) replaced by \(L^\beta\),
\(\beta\in[1,\infty]\). The resulting estimate on \((\partial_t p)^-\) is of independent
interest; in particular, the case \(\beta=\infty\) gives a pointwise lower bound on
\(\partial_t p\).

\begin{lem}[Lower estimate on \(\partial_t p\)]
\label{lem:HS-finite-difference-pt-lower}
Let \(p\) be a weak solution to \eqref{eq:intro-HS-law}. Suppose that for each $\tau>0$ there exists \(R_{\tau}>0\) such that \(\Omega_t^{+}\subset B_{R_\tau}\) for \(0\leq t<\tau\). If $c$ satisfies
\begin{equation}\label{eq:HS-source-negative-time-derivative}
    (\partial_t c)^-
    \in L^\beta_{\operatorname{loc}}
    ([0,\infty);L^m_{\operatorname{loc}}(\R^d)) \hbox{ for some } m\in(1,\infty],\ \beta\in[1,\infty],
\end{equation}
then, for every $\tau>0$,  \(\partial_t p\) is a signed Radon
measure on \(B_{R_{\tau}} \times(0,\tau)\), and its negative part satisfies
\begin{equation}\label{eq:HS-pt-negative-part-regularity}
    \norm{(\partial_t p)^-}_{L^\beta(0,\tau;L^{\bar m}(B_{R_{\tau}}))}
    \leq
    C\norm{(\partial_t c)^-}_{L^\beta(0,\tau;L^m(B_{R_{\tau}}))}.
\end{equation}
Here \(C=C(d, m,\bar m,\beta,R_\tau)\), and \(\bar m\) may be chosen as follows: \(\bar m=d m/(d-2 m)\) if \(1< m<d/2\), \(\bar m=\infty\) if \(m>d/2\), and \(\bar m\) is an arbitrary finite exponent if \(m=d/2\).
\end{lem}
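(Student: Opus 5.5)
We propose to work with finite differences in time and pass to the limit. Fix $\tau>0$ and a small $h>0$. For a.e. $0<s<s+h<\tau$, both slices $p(\cdot,s)$ and $p(\cdot,s+h)$ satisfy the variational identity \eqref{eq:HS-slice-variational}. By the monotonicity \eqref{eq:HS-monotone-sets}, $\Omega_s^+\subset\Omega_{s+h}^+$ up to null sets. We then compare $p(\cdot,s)$ with $p(\cdot,s+h)$ in the spirit of Lemma~\ref{lem:HS-basic-properties}(iv), but additively rather than multiplicatively.

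Let $g_h:=(c(\cdot,s)-c(\cdot,s+h))_+\leq\int_s^{s+h}(\partial_tc)^-\,d\sigma$. Let $v_h\geq0$ solve $-\Delta v_h=g_h$ in $B_{R_\tau}$ with $v_h=0$ on $\partial B_{R_\tau}$, extended by zero. Set $\zeta:=(p(\cdot,s)-p(\cdot,s+h)-v_h)_+$. Since $\zeta\leq p(\cdot,s)$, it vanishes a.e. outside $\Omega_s^+\subset\Omega_{s+h}^+\subset B_{R_\tau}$, so it is admissible at both times and lies in $H^1_0(B_{R_\tau})$. Testing both slice identities with $\zeta$, subtracting, and testing the equation of $v_h$ with $\zeta$ gives
\[
\int\abs{\nabla\zeta}^2=\int\zeta\,\bigl(c(\cdot,s)-c(\cdot,s+h)-g_h\bigr)\leq0 .
\]
Hence $\zeta\equiv0$, i.e.
\[
p(\cdot,s)-p(\cdot,s+h)\leq v_h \quad\text{a.e.}
\]
Elliptic regularity for the Dirichlet problem in the ball (Calder\'on--Zygmund plus Sobolev embedding, or Green's function bounds for $m>d/2$) gives $\norm{v_h}_{L^{\bar m}}\leq C\norm{g_h}_{L^m}$, with $\bar m$ as in the statement. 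Combining with Minkowski's inequality,
\[
\Bigl\|\tfrac{(p(\cdot,s)-p(\cdot,s+h))_+}{h}\Bigr\|_{L^{\bar m}}\leq \frac Ch\int_s^{s+h}\norm{(\partial_tc)^-(\sigma)}_{L^m}\,d\sigma .
\]
The right side is the Steklov average of an $L^\beta(0,\tau)$ function, hence bounded in $L^\beta$ in $s$ uniformly in $h$ (with the usual restriction $s<\tau-h$).

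To pass to the limit, write $D_hp:=(p(\cdot,s+h)-p(\cdot,s))/h=(D_hp)^+-(D_hp)^-$. The negative parts are uniformly bounded in $L^\beta(L^{\bar m})$, hence in $L^1(B_{R_\tau}\times(0,\tau-h))$. For the positive parts, we integrate:
\[
\iint (D_hp)^+=\iint D_hp+\iint (D_hp)^- .
\]
The first term telescopes to boundary-in-time averages of $p$, bounded by the $L^\infty$ bound of Lemma~\ref{lem:HS-basic-properties}(i), since $p$ is supported in the bounded set $B_{R_\tau}$. So $D_hp$ is bounded in $L^1$, and a subsequence converges weakly-* to a signed Radon measure, which must be the distributional $\partial_tp$. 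The negative parts converge weakly (weak-* if $\beta$ or $\bar m$ equals $\infty$) in $L^\beta(L^{\bar m})$ to some $\mu\geq0$ with $(\partial_tp)^-\leq\mu$, and lower semicontinuity of the norm yields \eqref{eq:HS-pt-negative-part-regularity}. The main delicate points are the admissibility of $\zeta$ at time $s+h$, which needs the null-set monotonicity and support inclusion, and justifying that the measure-theoretic negative part is dominated by the weak limit of $(D_hp)^-$. The latter follows since $\partial_tp+\mu\geq0$ as a limit of the nonnegative functions $(D_hp)^+$.
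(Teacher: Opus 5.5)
Your slice comparison is correct, and it is a genuinely different route from the paper's. The paper never compares two individual slices. Instead it takes second time differences $u_h=\delta_h^2w$ of the Baiocchi transform and shows $\Delta u_h\le -A_h$ on $\{u_h<0\}$. It then uses Kato's inequality to dominate $(u_h)^-$ by the time-mollified Dirichlet potential $K_h*_tV$, where $-\Delta V(\cdot,t)=(\partial_tc)^-(\cdot,t)$ in $B_{R_\tau}$ and $V=0$ on $\partial B_{R_\tau}$. Your additive analogue of Lemma~\ref{lem:HS-basic-properties}(iv) obtains the same kind of pointwise domination from the energy identity alone, which is more elementary. You test \eqref{eq:HS-slice-variational} at $s$ and $s+h$ with $\zeta$. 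Admissibility of $\zeta$, its membership in $H^1_0(B_{R_\tau})$, and $v_h\in H^1_0(B_{R_\tau})$ (because $g_h\le c(\cdot,s)$ is bounded) are all fine.

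The gap is in the limit passage when $\beta=1$, which is exactly the case the paper needs. Hypothesis \eqref{eq:A-source-time} is only $L^1$ in time, and Proposition~\ref{prop:HS-USC-representative} uses $(\partial_tp)^-\in L^1(0,\tau;L^\infty)$.

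A bounded sequence in $L^1(0,\tau;L^{\bar m}(B_{R_\tau}))$ need not have a weakly convergent subsequence, and $L^1(0,\tau;L^\infty)$ is not a usable dual space, so neither branch of your ``weakly / weak-$*$'' claim applies. Weak-$*$ limits in the sense of measures can concentrate in time. For example, $h^{-1}\chi_{[t_0,t_0+h]}(t)f(x)$ is bounded in $L^1_tL^{\bar m}_x$ and converges to $\delta_{t_0}\otimes f$. Lower semicontinuity of the norm therefore does not show that $\mu$, and hence $(\partial_tp)^-$, is an $L^1_tL^{\bar m}_x$ function. Put differently, a uniform norm bound on $(D_hp)^-$ is consistent with a downward jump of $p$ in time.

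What excludes this is the structure you noted but then discarded: your bound is a Steklov average of a fixed function. Use it pointwise. The Dirichlet solution operator on $B_{R_\tau}$ is linear and order-preserving, and $g_h\le\int_s^{s+h}(\partial_tc)^-\,d\sigma$, so $v_h\le\int_s^{s+h}V(\cdot,\sigma)\,d\sigma$. This gives $D_hp\ge -\frac1h\int_s^{s+h}V\,d\sigma$ a.e.

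Steklov averages of $V\in L^1_{\rm loc}$ converge to $V$ in $\mathcal D'$, and strongly in $L^\beta(L^{\bar m})$ when $\beta<\infty$. Hence $\partial_tp\ge -V$ in $\mathcal D'(B_{R_\tau}\times(0,\tau))$. Then $\partial_tp+V$ is a positive distribution, hence a Radon measure, and $(\partial_tp)^-\le V$. The elliptic estimate for $V$ yields \eqref{eq:HS-pt-negative-part-regularity} for every $\beta\in[1,\infty]$ without any compactness argument. Your $L^1$ bound on $(D_hp)^+$ also becomes unnecessary. This is precisely the endpoint of the paper's proof.
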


\begin{proof}
Fix \(\tau>0\), set \(R:=R_\tau\), and let
\(I=(a,b)\Subset(0,\tau)\).  We first prove the estimate on
\(B_R\times I\); the conclusion on \(B_R\times(0,\tau)\) then follows by
exhausting \((0,\tau)\) by compact subintervals.
Set
\[
    w(x,t)=\int_{0}^tp(x,s)ds, \qquad \eta_c(x,t):=\int_0^t c(x,s)\rho(x,s)\,ds .
\]
Integrating \eqref{eq:HS-weak-equation} in time gives
\begin{equation}\label{eq:HS-Baiocchi-equation}
    \Delta w(\cdot,t)
    =
    \rho(\cdot,t)-\rho(\cdot,0)-\eta_c(\cdot,t)
    \qquad\hbox{in }\mathcal D'(\R^d).
\end{equation}
For \(0<h<(\tau-b)/2\), write
\[
    \delta_h f(t):=\frac{f(t+h)-f(t)}{h},
    \qquad
    u_h:=\delta_h^2w,
\]
and define
\begin{equation}\label{eq:HS-source-second-average}
    A_h(x,t):=\frac1{h^2}
    \left(\int_{t+h}^{t+2h}c(x,s)\,ds-\int_t^{t+h}c(x,s)\,ds\right).
\end{equation}
On \(\R^d\setminus\Omega_{t+h}^{+}\), monotonicity gives
\(w(\cdot,t)=w(\cdot,t+h)=0\) a.e., and therefore
\(u_h=h^{-2}w(\cdot,t+2h)\geq0\).  Thus
\(\{u_h<0\}\subset\Omega_{t+h}^{+}\subset B_R\) up to null sets.
By \eqref{eq:HS-monotone-sets}, for a.e. \(x\in\Omega_{t+h}^{+}\) one has
\(\delta_h^2\rho\leq0\) and \(\delta_h^2\eta_c\geq A_h\).  Hence
\begin{equation}\label{eq:HS-second-difference-elliptic-ineq}
    \Delta u_h\leq -A_h
    \qquad\hbox{a.e. on }\{u_h<0\}.
\end{equation}
Kato's inequality and
\eqref{eq:HS-second-difference-elliptic-ineq} give
\begin{equation}\label{eq:HS-negative-second-difference-kato}
    -\Delta(u_h)^-
    \leq (A_h)^-
    \qquad\hbox{in }B_R,
    \qquad
    (u_h)^-=0\hbox{ on }\partial B_R .
\end{equation}

Let \(V\) be the time-measurable function obtained by applying, for a.e.
\(t\), the zero-boundary Dirichlet solution operator on \(B_R\) to
\((\partial_t c)^-(\cdot,t)\).  Thus \(V(\cdot,t)\geq0\) and
\[
    -\Delta V(\cdot,t)=(\partial_t c)^-(\cdot,t)
    \quad\hbox{in }B_R,
    \qquad
    V(\cdot,t)=0
    \quad\hbox{on }\partial B_R .
\]
The standard zero-boundary Dirichlet \(W^{2,r}\) estimate
\cite[Thm.~9.15 and Lem.~9.17]{GilTru}, with \(r= m\) if
\(m<\infty\) and any fixed \(r>d/2\) if \(m=\infty\), followed by
the second-order Sobolev embedding into \(L^{\bar m}\), gives
\begin{equation}\label{eq:HS-dirichlet-potential-bound}
    \norm{V(\cdot,t)}_{L^{\bar m}(B_R)}
    \leq
    C\norm{(\partial_t c)^-(\cdot,t)}_{L^m(B_R)} .
\end{equation}
By \eqref{eq:HS-source-second-average}, \(A_h=K_h*_t\partial_t c\) in the
distributional time sense, where \(K_h\) is a nonnegative time kernel of
mass \(1\), supported in \([0,2h]\), and $*_t$ denotes convolution in time.
Since \(\partial_t c\geq-(\partial_t c)^-\), we have
\((A_h)^-\leq K_h*_t(\partial_t c)^-\).  For each \(t\), the function
\((K_h*_tV)(\cdot,t)\) has zero boundary data on \(\partial B_R\) and solves
\[
    -\Delta (K_h*_tV)(\cdot,t)
    =K_h*_t(\partial_t c)^-(\cdot,t)
    \qquad\hbox{in }B_R.
\]
Comparing this equation with \eqref{eq:HS-negative-second-difference-kato}
gives \((u_h)^-\leq K_h*_tV\), hence
\begin{equation}\label{eq:HS-second-difference-lower}
    u_h\geq -K_h*_t V
    \qquad\hbox{in }B_R\times I.
\end{equation}
Letting \(h\downarrow0\) in \eqref{eq:HS-second-difference-lower} gives
\begin{equation}\label{eq:HS-pt-lower-by-potential}
    \partial_t p\geq -V
    \qquad\hbox{in }\mathcal D'(B_R\times I),
\end{equation}
since \(\delta_h^2w\to \partial_t p\) in distributions and \(K_h*_tV\to V\) strongly
in \(L^\beta\) if \(\beta<\infty\), and weak-* if \(\beta=\infty\).
Since \(I\Subset(0,\tau)\) was arbitrary, \eqref{eq:HS-pt-lower-by-potential}
holds in \(\mathcal D'(B_R\times(0,\tau))\).  Thus
\(\partial_t p+V\) is a positive distribution, hence a Radon measure, and \(\partial_t p\) is a
signed Radon measure on \(B_R\times(0,\tau)\).  Moreover
\(0\leq(\partial_t p)^-\leq V\), and
\eqref{eq:HS-dirichlet-potential-bound} gives the asserted bound.
\end{proof}

\begin{prop}[Canonical upper semicontinuous representative]\label{prop:HS-USC-representative}
Let \(p\) be a weak solution to \eqref{eq:intro-HS-law}, and assume
\eqref{eq:A-source}, \eqref{eq:A-initial}, and \eqref{eq:A-source-time}.
Then \(p\) has an
upper semicontinuous representative on $Q$, still denoted
by \(p\), characterized by
\begin{equation}\label{eq:HS-forward-average-representative}
    p(x,t)=\lim_{r\downarrow0}
    \frac{1}{r^2\abs{B_r}}
    \int_t^{t+r^2}\int_{B_r(x)}p(y,s)\,dy\,ds  \,\, \hbox{ for } (x,t)\in Q.
\end{equation}
Moreover, for a.e. \(t>0\),
\begin{equation}\label{eq:HS-spatial-envelope-representative}
    p(x,t)=
    \lim_{r\downarrow0}
    \operatorname*{ess\,sup}_{B_r(x)}p(\cdot,t)
    \qquad\hbox{for every }x\in\R^d,
\end{equation}
and \(p\) is right-continuous in time.
\end{prop}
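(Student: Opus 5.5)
The plan is to build the representative from two one-sided controls: a lower bound on \(\partial_t p\) in time and subharmonicity-type control in space. Lemma~\ref{lem:HS-finite-difference-pt-lower} with \(m>d/2\) gives \(\partial_t p\geq -V\), where \(V(\cdot,t)\geq0\) is bounded in \(L^\infty(B_R)\) with \(\norm{V(\cdot,t)}_{L^\infty}\leq C\norm{(\partial_tc)^-(\cdot,t)}_{L^m}\). Hence, setting \(\Phi(t):=\int_0^t\norm{V(\cdot,s)}_{L^\infty}ds\) (finite and continuous by \eqref{eq:A-source-time}), the function \(q(x,t):=p(x,t)+\Phi(t)\) is nondecreasing in \(t\) in the sense of distributions. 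In space, \eqref{eq:HS-lower-laplacian-bound} gives \(\Delta p\geq -\overline c_\tau\), so \(u:=q+\frac{\overline c_\tau}{2d}\abs{x}^2\) is, for a.e. \(t\), subharmonic in \(x\), and \(u\) is still nondecreasing in \(t\). Everything then reduces to showing that a function which is subharmonic in \(x\), nondecreasing in \(t\), and locally bounded (Lemma~\ref{lem:HS-basic-properties}(i)) has an u.s.c. representative given by forward parabolic averages.

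First step: fix the spatial representative. For a.e. \(t\), replace \(u(\cdot,t)\) by its canonical u.s.c. subharmonic representative \(u(x,t)=\lim_{r\downarrow0}\fint_{B_r(x)}u(\cdot,t)\), which is a decreasing limit by the mean value inequality. This limit equals \(\lim_r\operatorname{ess\,sup}_{B_r(x)}u(\cdot,t)\), which is the standard fact for u.s.c. subharmonic functions, and after subtracting the smooth correction it gives \eqref{eq:HS-spatial-envelope-representative}. Second step: define \(u\) at every \(t\) by the right limit. Monotonicity in \(t\) holds a.e. as an inequality between subharmonic functions, so it holds pointwise everywhere for the canonical representatives. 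We then set \(u(x,t):=\lim_{s\downarrow t,\ s\in G}u(x,s)\), with \(G\) the full-measure set of good times. This is a decreasing limit of u.s.c. subharmonic functions, hence u.s.c. subharmonic in \(x\), and it agrees with the previous value for \(t\in G\). The reason is that \(u(\cdot,s)\to u(\cdot,t)\) in \(L^1_{loc}\) as \(s\downarrow t\), by the \(L^2H^1\) regularity together with monotonicity, and a decreasing limit of subharmonic functions with \(L^1\) limit equal to a subharmonic \(v\) equals \(v\) pointwise. Right-continuity in time follows from this construction.

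Third step: joint upper semicontinuity and \eqref{eq:HS-forward-average-representative}. Take \((x_j,t_j)\to(x,t)\) and any \(s>t\) in \(G\). For large \(j\), monotonicity gives \(u(x_j,t_j)\leq u(x_j,s)\), so \(\limsup_j u(x_j,t_j)\leq u(x,s)\) by spatial u.s.c. Letting \(s\downarrow t\) yields \(\limsup_j u(x_j,t_j)\leq u(x,t)\). For the forward averages, the upper bound uses monotonicity, \(\fint_t^{t+r^2}\fint_{B_r(x)}u\leq\fint_{B_r(x)}u(\cdot,t+r^2)\), together with the fact that \(\fint_{B_r(x)}u(\cdot,s)\to u(x,t)\) when \(r\to0\) and \(s\downarrow t\) jointly; this joint limit follows from the u.s.c. just proved plus Fatou, using the boundedness of \(u\). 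The lower bound uses monotonicity \(u(\cdot,s)\geq u(\cdot,t)\) and the mean value inequality \(\fint_{B_r(x)}u(\cdot,t)\geq u(x,t)\). Finally, subtracting \(\Phi(t)+\frac{\overline c_\tau}{2d}\abs{x}^2\), which is continuous, transfers all the statements to \(p\).

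The main obstacle I expect is the bookkeeping needed to make the time monotonicity pointwise. The distributional inequality \(\partial_t q\geq0\) must be converted into \(q(\cdot,s)\leq q(\cdot,s')\) a.e. for a.e. \(s<s'\), which I would do via Lebesgue points in \(L^1_{loc}\). It must then be upgraded to an everywhere inequality using the subharmonic mean-value representatives, and one must also check that \(\Phi\) being only continuous, not Lipschitz, causes no trouble. The joint limit in the upper bound of the forward average also needs care. I would first try to handle it by comparing with \(\sup_{B_{\delta}(x)\times[t,t+\delta]}u\), which tends to \(u(x,t)\) by the joint u.s.c.
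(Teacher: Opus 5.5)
Your proposal is correct, but it reaches the representative by a different route from the paper.

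\textbf{The paper's route.} The paper never chooses representatives slice by slice. It fixes $(x,t)$ and shows that the forward parabolic averages are almost nonincreasing. These averages are written $\Phi(r,\ell)$ in the paper, not to be confused with your correction $\Phi(t)$. The estimate is $\Phi(r',r')\leq\Phi(r,r)+\omega_\tau(r)$, obtained in two moves:
\begin{enumerate}[label=(\roman*)]
\item the sub-mean-value inequality for $p+\frac{C_\tau}{2d}\abs{z-x}^2$ passes between balls at a fixed time length;
\item a dilation in time, $\lambda=(r'/r)^2$, compares $F(t+\lambda s)$ with $F(t+s)$ using $F'\geq -G$, and so passes between time lengths at a fixed ball.
\end{enumerate}
This gives existence of the limit at every point at once. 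Upper semicontinuity then comes for free, since $p=\inf_r\bigl(\Phi(r,r)+\omega_\tau(r)\bigr)$ is an infimum of continuous functions. Right-continuity and \eqref{eq:HS-spatial-envelope-representative} are derived afterwards, from the time lower bound and the same spatial comparison at good times.

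\textbf{Your route.} You absorb both one-sided controls into $u=p+\Phi(t)+\frac{\overline c_\tau}{2d}\abs{x}^2$, which is genuinely nondecreasing in time and subharmonic in space. You then build the representative in layers:
\begin{enumerate}[label=(\roman*)]
\item canonical subharmonic slices at good times;
\item right limits in time;
\item joint upper semicontinuity from monotonicity plus spatial upper semicontinuity;
\item the average formula, squeezed between the mean value inequality and the supremum over forward cylinders.
\end{enumerate}
All of these steps go through as you describe.

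\textbf{What each route buys.} Your route pays for the a.e.-to-everywhere bookkeeping that the paper's almost-monotonicity argument avoids entirely. In return it gives slightly more structure. Every time slice of the representative, not just almost every slice, is upper semicontinuous with $\Delta p(\cdot,t)\geq-\overline c_\tau$. Hence \eqref{eq:HS-spatial-envelope-representative} in fact holds for every $t$, and pointwise monotonicity of $p+\Phi(t)$ in $t$ is built into the construction.

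\textbf{One small correction.} The convergence $u(\cdot,s)\to u(\cdot,t)$ in $L^1_{\mathrm{loc}}$ as $s\downarrow t$, for $t$ in your good set $G$, does not follow from the $L^2H^1$ regularity. It holds because you choose $G$ to consist of right Lebesgue points of $s\mapsto u(\cdot,s)$. There, monotone convergence identifies the right limit $\lim_{s\downarrow t}u(\cdot,s)$ with the Lebesgue value $u(\cdot,t)$. This is the same Lebesgue-point device you already invoke to make the time monotonicity hold for pairs of good times.
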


\begin{proof}
Fix \(\tau>0\) and set
\[
    C_\tau:=\norm{c}_{L^\infty(\R^d\times(0,\tau+1))}.
\]
Since \(p\) is compactly supported and \(m>d/2\),
Lemma~\ref{lem:HS-finite-difference-pt-lower} gives
\begin{equation}\label{eq:HS-representative-time-lower}
    \partial_t p\geq -g
    \qquad\hbox{in }\mathcal D'(\R^d\times(0,\tau+1)),
    \qquad
    g:=(\partial_t p)^-\in L^1(0,\tau+1;L^\infty(\R^d)).
\end{equation}
Set
\[
    \gamma_\tau(r):=
    \sup_{\substack{I\subset(0,\tau+1)\\ \abs{I}\leq r^2}}
    \int_I\norm{g(\cdot,s)}_{L^\infty(\R^d)}\,ds,
\]
so that \(\gamma_\tau(r)\to0\) as \(r\downarrow0\) by absolute continuity
of the integral.  We then have, for every ball \(B_r(x)\) with \(0<r<1\) and every
interval \((a,b)\subset(0,\tau+1)\) with \(b-a\leq r^2\),
\begin{equation}\label{eq:HS-representative-pt-minus-modulus}
    \int_a^b\frac1{\abs{B_r}}\int_{B_r(x)}g(y,s)\,dy\,ds
    \leq \gamma_\tau(r).
\end{equation}

For fixed \((x,t)\) and \(0<\ell\) with \(t+\ell^2<\tau+1\), set
\[
    \Phi(r,\ell):=
    \frac{1}{\ell^2\abs{B_r}}
    \int_t^{t+\ell^2}\int_{B_r(x)}p(y,s)\,dy\,ds .
\]
We compare \(\Phi(r',r')\) and \(\Phi(r,r)\) for \(0<r'<r\) with
\(t+r^2<\tau+1\).  First keep the shorter time interval and enlarge only
the spatial ball.  By \eqref{eq:HS-lower-laplacian-bound}, the function
\(z\mapsto p(z,s)+(C_\tau/2d)\abs{z-x}^2\) is subharmonic in \(z\) for a.e.
\(s\in(0,\tau+1)\).  The mean-value inequality, averaged over
\(s\in(t,t+(r')^2)\), gives
\begin{equation}\label{eq:HS-representative-spatial-comparison}
    \Phi(r',r')\leq \Phi(r,r')+C_\tau r^2 .
\end{equation}
It remains to compare the two time lengths for the fixed ball \(B_r(x)\).
Set
\begin{equation}\label{eq:HS-representative-lambda-def}
    \lambda:=\left(\frac{r'}{r}\right)^2,
    \qquad
    (r')^2=\lambda r^2 .
\end{equation}
With this choice, write
\[
    F(s):=\frac1{\abs{B_r}}\int_{B_r(x)}p(y,s)\,dy,
    \qquad
    G(s):=\frac1{\abs{B_r}}\int_{B_r(x)}g(y,s)\,dy .
\]
By \eqref{eq:HS-representative-time-lower}, after approximating
\(\chi_{B_r(x)}\) by nonnegative smooth spatial cutoffs,
\begin{equation}
    F'\geq -G
    \qquad\hbox{in }\mathcal D'(0,\tau+1).
\end{equation}
Thus, after changing
\(F\) on a null set, for every \(0<a<b<\tau+1\),
\begin{equation}\label{eq:HS-representative-averaged-time-integral}
    F(a)-F(b)\leq \int_a^b G(\sigma)\,d\sigma .
\end{equation}
The choice of \(\lambda\) gives the identity
\begin{equation}\label{eq:HS-representative-time-difference-identity}
    \Phi(r,r')-\Phi(r,r)
    =
    \frac1{r^2}\int_0^{r^2}\bigl(F(t+\lambda s)-F(t+s)\bigr)\,ds .
\end{equation}
Since \(t+\lambda s\leq t+s\), applying
\eqref{eq:HS-representative-averaged-time-integral} with
\(a=t+\lambda s\) and \(b=t+s\), then using
\eqref{eq:HS-representative-time-difference-identity}, yields
\begin{equation}\label{eq:HS-representative-time-comparison}
    \Phi(r,r')-\Phi(r,r)
    \leq
    \frac1{r^2}\int_0^{r^2}\int_{t+\lambda s}^{t+s}G(\sigma)\,d\sigma\,ds
    \leq \int_t^{t+r^2}G(\sigma)\,d\sigma
    \leq \gamma_{\tau}(r)
\end{equation}
Combining \eqref{eq:HS-representative-spatial-comparison} and
\eqref{eq:HS-representative-time-comparison} with
\(\omega_\tau(r):=C_\tau r^2+\gamma_\tau(r)\) gives
\begin{equation}\label{eq:HS-forward-average-almost-monotone}
    \Phi(r',r')\leq \Phi(r,r)+\omega_\tau(r)
    \qquad\hbox{whenever }0<r'<r<1\hbox{ and }t+r^2<\tau+1 .
\end{equation}
Since \(\omega_\tau(r)\to0\) and \(\Phi(r,r)\geq0\),
\eqref{eq:HS-forward-average-almost-monotone} implies that
\(\lim_{r\downarrow0}\Phi(r,r)\) exists for every
\((x,t)\in\R^d\times(0,\tau)\).  Lebesgue differentiation identifies this
limit with the original \(p\) a.e., and we still denote it by \(p\).

To establish upper semicontinuity, fix \(r_\tau>0\) with
\(t+r_\tau^2<\tau+1\).  Sending \(r'\downarrow0\) in
\eqref{eq:HS-forward-average-almost-monotone} and taking the infimum over
\(r\in(0,r_\tau)\) yields
\begin{equation}\label{eq:HS-USC-inf-representation}
    p(x,t)=\inf_{0<r<r_\tau}
    \left(
    \frac{1}{r^2\abs{B_r}}
    \int_t^{t+r^2}\int_{B_r(x)}p(y,s)\,dy\,ds
    +\omega_\tau(r)\right).
\end{equation}
For each fixed \(r\), the function inside the infimum in
\eqref{eq:HS-USC-inf-representation} is continuous in \((x,t)\) by
translation continuity in \(L^1_{\operatorname{loc}}\).  Hence \(p\) is upper
semicontinuous as an infimum of continuous functions.

Right continuity in \(t\) follows directly from
\eqref{eq:HS-representative-time-lower}: since \(g\in L^1(0,\tau+1;L^\infty(\R^d))\),
\eqref{eq:HS-representative-time-lower} passes to the representative as
\[
    p(x,t)\leq p(x,s)+\int_t^s\norm{g(\cdot,r)}_{L^\infty(\R^d)}\,dr,
    \qquad 0<t<s<\tau .
\]
Letting \(s\downarrow t\) gives \(\liminf_{s\downarrow t}p(x,s)\geq
p(x,t)\), and upper semicontinuity provides the reverse inequality on the
limsup.

It remains to prove the spatial envelope formula
\eqref{eq:HS-spatial-envelope-representative}.  Fix a time
\(t\in(0,\tau)\) such that the slice of the representative agrees a.e. with
the original slice and \(t\) is a right Lebesgue point of \(p\) as a map into
\(L^1(B_R)\) for every integer \(R\geq1\).  These times have full measure.
Let \(x\in\R^d\) and \(0<r<1\) be such that \(t+r^2<\tau+1\).  Applying
\eqref{eq:HS-representative-spatial-comparison} with base point \((x,t)\)
and radii \(0<r'<r\), we send \(r'\downarrow0\).  The representative formula
\eqref{eq:HS-forward-average-representative} gives
\begin{equation}\label{eq:HS-shrinking-parabolic-average-limit}
    \lim_{r'\downarrow0}\Phi(r',r')=p(x,t).
\end{equation}
Choose an integer \(R_x\) such that \(B_r(x)\subset B_{R_x}\).  Since \(t\)
is a right Lebesgue point of \(p\) in \(L^1(B_{R_x})\),
\begin{equation}\label{eq:HS-fixed-radius-time-average-limit}
    \lim_{r'\downarrow0}\Phi(r,r')
    =
    \frac1{\abs{B_r}}\int_{B_r(x)}p(y,t)\,dy .
\end{equation}
The limits \eqref{eq:HS-shrinking-parabolic-average-limit} and
\eqref{eq:HS-fixed-radius-time-average-limit} in
\eqref{eq:HS-representative-spatial-comparison} give
\begin{equation}\label{eq:HS-fixed-time-spatial-envelope-upper}
    p(x,t)
    \leq
    \frac1{\abs{B_r}}\int_{B_r(x)}p(y,t)\,dy+C_\tau r^2
    \leq
    \operatorname*{ess\,sup}_{B_r(x)}p(\cdot,t)+C_\tau r^2 .
\end{equation}
Letting \(r\downarrow0\) in
\eqref{eq:HS-fixed-time-spatial-envelope-upper} yields one inequality in
\eqref{eq:HS-spatial-envelope-representative}.  The reverse follows from the
upper semicontinuity of \(p\).
\end{proof}

\section{Nondegeneracy and Hopf--Lax estimates}\label{sec:HS-nondegeneracy-hopf-lax}
The nondegeneracy and hitting-time
estimates of \S\ref{subsec:HS-endpoint-nondegeneracy} use the endpoint case \(m=\infty\) of
\eqref{eq:A-source-time}, while the Hopf--Lax inequality of
\S\ref{subsec:HS-Hopf-Lax-finite-q} requires only \(m>d/2\).
For the remainder of the paper, \(p\) is
identified with the canonical upper semicontinuous and right-continuous
representative from Proposition~\ref{prop:HS-USC-representative}.

\subsection{Nondegeneracy}
\label{subsec:HS-endpoint-nondegeneracy}

For weak solutions with bounded initial patch, the slice
comparison \eqref{eq:HS-slice-pressure-comparison}, applied to
\eqref{eq:HS-forward-average-representative}, gives the pointwise persistence
of the canonical pressure:
\begin{equation}\label{eq:HS-pointwise-phase-persistence}
    p(x,s)>0\quad\Longrightarrow\quad p(x,t)>0
    \qquad\hbox{for }x\in\R^d,\quad 0<s\leq t .
\end{equation}
We note that, since $p$ is, in general, discontinuous, the set $\Omega_t^{+}=\{p(\cdot,t)>0\}$ is not open in general. For this reason, we henceforth consider an open set, defined in terms of a more regular variable, which coincides a.e. with $\Omega_t^{+}$. Namely, we recall from \eqref{eq:intro-baiocchi} that
\begin{equation}\label{eq:HS-open-phase-bridge}
    w(x,t)=\int_{0}^tp(x,s)ds, \qquad \Omega_t:=\{w(\cdot,t)>0\},
    \qquad
    T(x):=\inf\{t>0:x\in\Omega_t\}.
\end{equation}
By \eqref{eq:HS-Baiocchi-equation}, $\Delta w(\cdot,t)\in L^\infty(\R^d)$, so elliptic regularity gives that $w(\cdot,t)$ is continuous and $\Omega_t$ is open. Moreover,
\begin{equation}\label{eq:HS-open-phase-identities}
    \Omega_t=\{T<t\},
    \qquad
    p(x,t)>0\quad\hbox{for }x\in\Omega_t,
    \qquad
    \rho(\cdot,t)=\chi_{\Omega_t}\quad\hbox{a.e.}.
\end{equation}
Indeed, since $w(x,\cdot)$ is nondecreasing and continuous, the definition of $T$ gives $\Omega_t=\{T<t\}$. If $x\in\Omega_t$, then $w(x,t)>0$, so $p(x,s)>0$ for some $s<t$; hence \eqref{eq:HS-pointwise-phase-persistence} gives $p(x,t)>0$. Finally, the a.e. identity follows from the monotonicity in \eqref{eq:HS-monotone-sets} and the $L^1$-continuity of $\rho$. In pointwise time
integrals below, we use the representative \(\rho(x,t)=\chi_{\{T(x)<t\}}\).
Thus \(\Omega_t\) is the positive-time geometric phase, while
\(\Omega_0^{+}\) denotes the initial density patch.

\begin{lem}[Initial nondegeneracy]\label{lem:HS-initial-nondegeneracy}
Assume \eqref{eq:A-source}, \eqref{eq:A-initial}, \eqref{eq:A-interior-ball}, and
\eqref{eq:A-source-time}, and let
\(p\) be a weak solution to \eqref{eq:intro-HS-law}.  Let \(t_*>0\), and set
\(\underline c:=\inf_{\R^d\times[0,t_*]}c\).
Then
\begin{equation}\label{eq:HS-initial-nondegeneracy}
    \Omega_0^{+}+\overline B_\vep\subset\Omega_t
    \qquad\hbox{whenever }\vep>0
    \hbox{ and } \frac{d}{r_0 \underline c}\vep\leq t<t_* .
\end{equation}
\end{lem}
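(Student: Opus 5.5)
Fix $x_0\in\Omega_0^{+}$. By \eqref{eq:A-interior-ball} there is a ball $B_{r_0}(z)\subset\Omega_0^{+}$ with $x_0\in\overline{B_{r_0}(z)}$. I will plant a radial Hele--Shaw solution with constant source $\underline c$ in this ball and compare it with $p$ using Theorem~\ref{thm:HS-distributional-comparison}. The explicit barrier from the proof of Lemma~\ref{lem:HS-basic-properties}(i) is the natural candidate:
\[
    R(t):=r_0e^{\underline c t/d},\qquad \underline p(x,t):=\frac{\underline c}{2d}\bigl(R(t)^2-\abs{x-z}^2\bigr)_+ .
\]
This is a weak solution with source $\underline c$. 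Indeed, $-\Delta\underline p=\underline c$ in $B_{R(t)}(z)$, and $R'=\underline cR/d=\abs{\nabla\underline p}$ on the boundary. Its initial density is $\chi_{B_{r_0}(z)}\leq\rho(\cdot,0)$ a.e., and $\underline c\leq c$ on $[0,t_*]$. Comparison therefore gives $\underline p\leq p$ a.e. on $\R^d\times(0,t_*)$.

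Next I convert this into a statement about the open phase $\Omega_t$. Integrating in time, $w(x,t)\geq\int_0^t\underline p(x,s)\,ds$ for a.e. $x$, and both sides are continuous in $x$, so the inequality holds everywhere. The right side is positive whenever $\abs{x-z}<R(t)$. Hence $B_{R(t)}(z)\subset\Omega_t$, and so $\overline B_{R(s)}(z)\subset\Omega_t$ for all $s<t$.

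It remains to check the radius. Using $e^y\geq1+y$,
\[
    R(t)-r_0\geq r_0\frac{\underline c t}{d}\geq\vep \qquad\hbox{when } t\geq\frac{d\vep}{r_0\underline c}.
\]
Then $\overline B_\vep(x_0)\subset\overline B_{r_0+\vep}(z)$, because $\abs{x_0-z}\leq r_0$. Taking the union over $x_0\in\Omega_0^{+}$ gives \eqref{eq:HS-initial-nondegeneracy}.

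The main obstacle is the borderline case $t=d\vep/(r_0\underline c)$ with closed balls. Since $\Omega_t=\{T<t\}$ is open, I need a margin in the radius. Here $e^y>1+y$ strictly for $y>0$, so $R(t)>r_0+\vep$ strictly, and this handles it. A second point is that $\Omega_0^{+}$ is only defined up to null sets; I interpret the interior ball condition as giving, for each point, a genuine ball of radius $r_0$ contained in $\Omega_0^{+}$ up to a null set, which is all the comparison needs. Finally, the comparison must be run on $(0,t_*)$ with a bounded source; that is ensured by \eqref{eq:A-source}.
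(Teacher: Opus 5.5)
Your proof is correct and follows essentially the same route as the paper: plant the radial solution $\frac{\underline c}{2d}\bigl(R(t)^2-\abs{x-z}^2\bigr)_+$ in each interior ball, compare on $(0,t_*)$, and use the strict inequality $r_0(e^{\underline c t/d}-1)>r_0\underline c t/d$ to handle the closed $\vep$-neighborhood. The only minor difference is that you pass from the a.e.\ comparison to $B_{R(t)}(z)\subset\Omega_t$ by integrating in time and using continuity of $w(\cdot,t)$, whereas the paper uses upper semicontinuity and right-continuity of the canonical representative of $p$; both work equally well.
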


\begin{proof}
Let \(B_r(a)\subset\Omega_0^{+}\) be an open ball.  Define
\begin{equation}\label{eq:HS-explicit-ball-solution}
    P(x,t):=\frac{\underline c}{2d}
    \bigl(R(t)^2-\abs{x-a}^2\bigr)_+,
    \qquad
    R(t):=re^{\underline c t/d}.
\end{equation}
The function \(P\) is the explicit radial classical (and hence weak) solution to
\eqref{eq:intro-HS-law} with constant source \(\underline c\) and initial
phase \(B_r(a)\).  Since \(\underline c\)\(\leq c\) on
\([0,t_*]\),
Theorem~\ref{thm:HS-distributional-comparison} gives
\(P\leq p\) a.e. in \(\R^d\times(0,t_*)\).  Since \(P\) is continuous and
\(p\) is upper semicontinuous,
the inequality holds everywhere.  If \(0<s<t\), positivity of \(p(y,s)\) and
the right-continuity from Proposition~\ref{prop:HS-USC-representative} imply
\(w(y,t)>0\). Thus \(B_{re^{\underline cs/d}}(a)\subset\Omega_t\) for every \(0<s<t\);
letting \(s\uparrow t\) gives
\begin{equation}\label{eq:HS-ball-contained}
    B_{re^{\underline c t/d}}(a)\subset\Omega_t
    \qquad\hbox{whenever }B_r(a)\subset\Omega_0^{+}
    \hbox{ and }0<t<t_* .
\end{equation}
Setting
\begin{equation}\label{eq:HS-sigma-expansion-def}
    \sigma(t):=r_0(e^{\underline c t/d}-1)> \frac{r_0 \underline ct}{d},
\end{equation}
we conclude that
\begin{equation}\label{eq:HS-initial-phase-expands}
    \Omega_0^{+}+B_{\frac{r_0 \underline ct}{d}}\subset \Omega_0^{+}+B_{\sigma(t)}\subset\Omega_t
    \qquad\hbox{for }0<t<t_* ,
\end{equation}
which readily yields the claim.
\end{proof}

\begin{lem}[Comparison with a time-rescaled solution]\label{lem:HS-time-rescaled-comparison}
Let \(\tau,\vep>0\), let \(\Theta\in C^1([0,\tau])\) satisfy
\(\Theta'>0\), \(\Theta(0)>0\), and \(\Theta(\tau)\geq\tau\).
Assume \eqref{eq:A-source}, \eqref{eq:A-initial}, and \eqref{eq:A-source-time}.
Let \(p\) be a weak solution to
\eqref{eq:intro-HS-law} on \([0,\Theta(\tau))\).
Assume
\begin{equation}\label{eq:HS-source-Theta-compatibility}
    \operatorname*{ess\,sup}_{\abs{z-x}\leq\vep}c(z,t)
    \leq \Theta'(t)c(x,\Theta(t))
    \qquad\hbox{for a.e. }(x,t)\in\R^d\times(0,\tau),
\end{equation}
and
\begin{equation}\label{eq:HS-initial-Theta-compatibility}
    \Omega_0^{+}+\overline B_\vep\subset\Omega_{\Theta(0)}.
\end{equation}
Then
\begin{equation}\label{eq:HS-time-rescaled-pressure-comparison}
    \sup_{\abs{z-x}\leq\vep}p(z,t)
    \leq \Theta'(t)p(x,\Theta(t))
    \qquad\hbox{for }x\in\R^d,\quad 0<t<\tau .
\end{equation}
\end{lem}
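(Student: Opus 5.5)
We apply the comparison principle (Theorem~\ref{thm:HS-distributional-comparison}) on \(Q_\tau\) with a family of translated subsolutions and one time-rescaled supersolution. For each fixed \(h\in\overline B_\vep\) set
\[
    p^-_h(x,t):=p(x+h,t),\qquad c^-_h(x,t):=c(x+h,t),
\]
and
\[
    p^+(x,t):=\Theta'(t)\,p(x,\Theta(t)),\qquad c^+(x,t):=\Theta'(t)\,c(x,\Theta(t)).
\]

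\textbf{Step 1: \(p^-_h\) and \(p^+\) are weak (sub/super)solutions with these sources.} Translation invariance of \eqref{eq:HS-weak-equation} shows that \(p^-_h\) is a weak solution with source \(c^-_h\). For \(p^+\), write \(\rho^+(x,t)=\rho(x,\Theta(t))\). The chain rule for the \(C^1\) reparametrization gives \(\partial_t\rho^+=\Theta'(t)(\partial_t\rho)(\cdot,\Theta(t))\) in distributions. To verify this, test with \(\phi(x,t)\) and change variables \(s=\Theta(t)\); the test function \(\phi(x,\Theta^{-1}(s))/\Theta'(\Theta^{-1}(s))\) is only \(C^0\) in time, so we either mollify \(\Theta\) or use the \(L^1\)-continuity of \(\rho\) to justify it. We then get
\[
    \partial_t\rho^+-\Delta p^+=\Theta'(t)\bigl(\partial_t\rho-\Delta p-c\rho\bigr)(\cdot,\Theta(t))+c^+\rho^+=c^+\rho^+ .
\]
The regularity conditions \eqref{eq:HS-weak-regularity}--\eqref{eq:HS-weak-time-traces} transfer because \(\Theta\) is a \(C^1\) diffeomorphism of \([0,\tau]\) onto \([\Theta(0),\Theta(\tau)]\subset(0,\Theta(\tau)]\). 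The data at \(t=0\) is the slice \(\rho(\cdot,\Theta(0))\) at a positive time, so continuity into \(L^1\) there is immediate.

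\textbf{Step 2: ordering of sources and initial data.} The comparison principle requires \(c^-_h\le c^+\) pointwise. This is exactly \eqref{eq:HS-source-Theta-compatibility}, since \(c(x+h,t)\le\operatorname*{ess\,sup}_{\abs{z-x}\le\vep}c(z,t)\). A technical point is that we need the inequality a.e. in \((x,t)\) for each fixed \(h\); continuity of \(c\) from \eqref{eq:A-source} makes the ess sup a true sup, so this holds. For the initial data we need \(\rho(\cdot+h,0)\le\rho(\cdot,\Theta(0))\) a.e., which is \(\Omega_0^+-h\subset\Omega_{\Theta(0)}\) up to null sets. This follows from \eqref{eq:HS-initial-Theta-compatibility} together with \(\rho(\cdot,\Theta(0))=\chi_{\Omega_{\Theta(0)}}\) a.e. from \eqref{eq:HS-open-phase-identities}.

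\textbf{Step 3: from a.e. to pointwise with the sup.} Theorem~\ref{thm:HS-distributional-comparison} yields \(p(x+h,t)\le\Theta'(t)p(x,\Theta(t))\) for a.e. \((x,t)\), for each \(h\). This is the step I expect to be the main obstacle, since the statement is pointwise for all \(x\) and \(t\) for the canonical representative. I would first derive the inequality for the parabolic forward averages \eqref{eq:HS-forward-average-representative}. The average of the left side over \(B_r(x)\times(t,t+r^2)\) is the average of \(p\) at the translated point \((x+h,t)\). On the right, the averaging is distorted: the time window becomes \((\Theta(t),\Theta(t+r^2))\), of length \(\approx\Theta'(t)r^2\), and the factor \(\Theta'\) is continuous. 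I would handle this using the almost-monotonicity \eqref{eq:HS-forward-average-almost-monotone}, or directly using right-continuity and upper semicontinuity of the representative. Passing \(r\downarrow0\) gives the bound at every \((x,t)\) for each \(h\). Taking the sup over \(\abs h\le\vep\) then gives \eqref{eq:HS-time-rescaled-pressure-comparison}.
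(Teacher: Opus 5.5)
Your proposal is correct and is essentially the paper's proof. You translate by $h\in\overline B_\vep$, reparametrize time to get the weak solution $\Theta'(t)p(x,\Theta(t))$ with source $\Theta'(t)c(x,\Theta(t))$, and apply Theorem~\ref{thm:HS-distributional-comparison}. You then upgrade the a.e.\ inequality to every $(x,t)$ using forward averages on the translated side and upper semicontinuity of $\Theta'(t)p(x,\Theta(t))$ on the other, which is your second option in Step 3. One cosmetic point: the change of variables in Step 1 only needs the test function $\phi(x,\Theta^{-1}(s))$, which is already $C^1$ in $s$, so no mollification of $\Theta$ is required.
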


\begin{proof}
Fix \(h\in\overline B_\vep\) and set
\[
    p_h(x,t):=p(x+h,t),
    \qquad
    \rho_h(x,t):=\rho(x+h,t),
    \qquad
    c_h(x,t):=c(x+h,t).
\]
Then \(p_h\), restricted to \([0,\tau)\), is a weak solution to
\eqref{eq:intro-HS-law} with source \(c_h\) on \([0,\tau)\).  Define
\begin{equation}\label{eq:HS-bar-p-time-rescale}
    \bar p(x,t):=\Theta'(t)p(x,\Theta(t)),
    \qquad
    \bar c(x,t):=\Theta'(t)c(x,\Theta(t)),
    \qquad
    \bar\rho(x,t):=\rho(x,\Theta(t)).
\end{equation}
  The regularity and \(L^1\)-time continuity in
\eqref{eq:HS-weak-regularity}--\eqref{eq:HS-weak-time-traces} are preserved by
the continuous increasing reparametrization \(\Theta\).  Since \(\Theta\) is
strictly increasing, the change of variables \(s=\Theta(t)\) in
\eqref{eq:HS-weak-equation} shows that
\begin{equation}\label{eq:HS-bar-p-weak-equation}
    \partial_t\bar\rho-\Delta\bar p=\bar c\,\bar\rho
    \qquad\hbox{in }\mathcal D'(\R^d\times(0,\tau)).
\end{equation}
Together with the trace \(\bar\rho(\cdot,0)=\rho(\cdot,\Theta(0))\), this says
that \(\bar p\) is a weak solution
to \eqref{eq:intro-HS-law} with source \(\bar c\) on \([0,\tau)\).  By
\eqref{eq:HS-source-Theta-compatibility}, \(c_h\leq\bar c\), and by
\eqref{eq:HS-initial-Theta-compatibility},
\[
    \rho_h(\cdot,0)
    =\chi_{\Omega_0^{+}-h}\leq\chi_{\Omega_{\Theta(0)}}
    =\bar\rho(\cdot,0)\quad\hbox{a.e.},
\]
Theorem~\ref{thm:HS-distributional-comparison} gives \(p_h\leq\bar p\) a.e. in
\(\R^d\times(0,\tau)\).  Since \(p_h\) is represented by forward averages as in
\eqref{eq:HS-forward-average-representative} and \(\bar p\) is upper
semicontinuous, averaging this a.e. inequality over forward cylinders and
letting the radius vanish gives
\[
    p(x+h,t)\leq\Theta'(t)p(x,\Theta(t))
    \qquad\hbox{for }x\in\R^d,\quad 0<t<\tau .
\]
Taking the supremum over \(\abs{h}\leq\vep\) gives
\eqref{eq:HS-time-rescaled-pressure-comparison}.
\end{proof}

We now discuss how to choose $\Theta$ for given $c$ such that the compatibility condition \eqref{eq:HS-source-Theta-compatibility} is satisfied.
\begin{lem}[Source-adapted time rescaling]\label{lem:HS-source-envelope}
Let \(\tau>0\), and assume \eqref{eq:A-source} and \eqref{eq:A-source-time} with
\(m=\infty\).
Set
\begin{equation}\label{eq:HS-source-envelope-constants}
    M_x:=\norm{\nabla_x\log c}_{L^\infty(\R^d\times(0,\tau+1))},
    \qquad
    k(t):=\norm{(\partial_t\log c)^-(\cdot,t)}_{L^\infty(\R^d)},
    \qquad
    K(s):=\int_0^s k(r)\,dr .
\end{equation}
For
\(\delta,\vep>0\), set
\[
    H(s):=\int_0^s e^{-K(\sigma)}\,d\sigma
\]
and assume
\begin{equation}\label{eq:HS-envelope-size-condition}
    H(\delta)+e^{M_x\vep}H(\tau)\leq H(\tau+1).
\end{equation}
Define
\begin{equation}\label{eq:HS-Theta-def}
    \Theta(t):=
    H^{-1}\bigl(H(\delta)+e^{M_x\vep}H(t)\bigr),
    \qquad 0\leq t\leq \tau .
\end{equation}
Then \(\Theta\) is well-defined, \(\Theta(\tau)\leq \tau+1\), and
\begin{equation}\label{eq:HS-Theta-ode}
    \Theta'(t)=e^{M_x\vep+K(\Theta(t))-K(t)},
    \qquad
    \Theta(0)=\delta.
\end{equation}
Moreover, \(\Theta(t)>t\),
\eqref{eq:HS-source-Theta-compatibility} holds on \([0,\tau]\), and
\begin{equation}\label{eq:HS-Theta-lag-bound}
    \Theta(t)-t
    \leq e^{K(\tau+1)}
    \bigl(\delta+(e^{M_x\vep}-1)t\bigr)
    \qquad\hbox{for }0\leq t\leq \tau .
\end{equation}
\end{lem}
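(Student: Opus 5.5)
The proof is an ODE computation with $H$, followed by a pointwise check of \eqref{eq:HS-source-Theta-compatibility}.

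\emph{Well-definedness and the ODE.} Since $k\in L^1(0,\tau+1)$ by \eqref{eq:A-source-time} with $m=\infty$ (using $(\partial_t\log c)^-=(\partial_tc)^-/c\le(\partial_tc)^-/\underline c_{\tau+1}$), $K$ is absolutely continuous and finite on $[0,\tau+1]$. Hence $H$ is $C^1$ with $H'=e^{-K}>0$, and is strictly increasing. By \eqref{eq:HS-envelope-size-condition} and monotonicity in $t$, the argument $H(\delta)+e^{M_x\vep}H(t)$ lies in $[H(\delta),H(\tau+1)]$ for $t\in[0,\tau]$. So $\Theta$ is well defined, $\Theta(0)=\delta$, and $\Theta(\tau)\le\tau+1$.

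Differentiating $H(\Theta(t))=H(\delta)+e^{M_x\vep}H(t)$ gives $e^{-K(\Theta)}\Theta'=e^{M_x\vep}e^{-K(t)}$, which is \eqref{eq:HS-Theta-ode}. Here $H^{-1}$ is differentiable because $H'>0$; a.e.\ differentiability of $K$ suffices, and in fact $\Theta$ is $C^1$ since $H$ is.

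For $\Theta(t)>t$, note that $H(\Theta(t))\ge H(\delta)+H(t)>H(t)$ because $H(\delta)>0$.

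\emph{Compatibility.} Fix $t$ and $|z-x|\le\vep$. The spatial Lipschitz bound on $\log c$ gives $c(z,t)\le e^{M_x\vep}c(x,t)$.

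For the time direction, integrate $\partial_s\log c\ge -k(s)$ from $t$ to $\Theta(t)$. This step is justified since $c$ is continuous and $(\partial_tc)^-\in L^1L^\infty$; one can mollify or argue distributionally, then use continuity of $c$ to get the inequality for every $x$. It yields $c(x,\Theta(t))\ge e^{-(K(\Theta(t))-K(t))}c(x,t)$.

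Combining the two, $c(z,t)\le e^{M_x\vep+K(\Theta)-K(t)}c(x,\Theta(t))=\Theta'(t)c(x,\Theta(t))$, which is \eqref{eq:HS-source-Theta-compatibility}.

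\emph{Lag bound.} Since $e^{-K(\tau+1)}\le H'\le1$ on $[0,\tau+1]$, we have
\[
e^{-K(\tau+1)}(\Theta(t)-t)\le H(\Theta(t))-H(t)=H(\delta)+(e^{M_x\vep}-1)H(t)\le\delta+(e^{M_x\vep}-1)t.
\]
The last inequality uses $H(s)\le s$. Rearranging gives \eqref{eq:HS-Theta-lag-bound}.

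The only mildly delicate step is the pointwise integration of $\partial_t\log c\ge-k$ in time under the weak regularity of $c$. I would handle it by time-mollifying $c$: the resulting inequality passes to the limit since $c$ is continuous, and the mollified $k$ converges in $L^1$.
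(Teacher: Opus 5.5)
Your proposal is correct and follows the paper's proof essentially step for step. It gets well-definedness from the size condition and the ODE by differentiating $H(\Theta(t))=H(\delta)+e^{M_x\vep}H(t)$. It obtains $\Theta(t)>t$ from $H(\delta)>0$, and compatibility by combining the spatial Lipschitz bound on $\log c$ with the integration of $\partial_s\log c\geq -k(s)$ over $[t,\Theta(t)]$. The lag bound comes from $e^{-K(\tau+1)}\leq H'\leq 1$ on $[0,\tau+1]$. Your added justification of the pointwise time integration, via mollification and continuity of $c$, fills in a step the paper simply asserts.
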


\begin{proof}
The function \(H\) is \(C^1\) and strictly increasing.  Condition
\eqref{eq:HS-envelope-size-condition} guarantees that
\eqref{eq:HS-Theta-def} is defined on \([0,\tau]\) and that
\(\Theta(\tau)\leq \tau+1\).  Differentiating \eqref{eq:HS-Theta-def} gives
\[
    \Theta'(t)=e^{M_x\vep}\frac{H'(t)}{H'(\Theta(t))}
    =e^{M_x\vep+K(\Theta(t))-K(t)},
\]
which is \eqref{eq:HS-Theta-ode}.  Since
\(H(\Theta(t))=H(\delta)+e^{M_x\vep}H(t)>H(t)\), we have \(\Theta(t)>t\).

To verify \eqref{eq:HS-source-Theta-compatibility},
\eqref{eq:HS-source-envelope-constants} gives
\[
    \operatorname*{ess\,sup}_{\abs{z-x}\leq\vep}c(z,t)
    \leq e^{M_x\vep}c(x,t).
\]
Since \(\Theta(t)>t\), \eqref{eq:HS-source-envelope-constants} gives
\[
    \log c(x,t)-\log c(x,\Theta(t))
    \leq \int_t^{\Theta(t)}k(r)\,dr
    =K(\Theta(t))-K(t).
\]
Exponentiating and combining with the spatial bound gives
\begin{equation}\label{eq:HS-source-multiplicative-control}
    \operatorname*{ess\,sup}_{\abs{z-x}\leq\vep}c(z,t)
    \leq e^{M_x\vep+K(\Theta(t))-K(t)}c(x,\Theta(t)).
\end{equation}
Using \eqref{eq:HS-Theta-ode} proves \eqref{eq:HS-source-Theta-compatibility}.

Let \(\ell(t):=\Theta(t)-t\).  From \eqref{eq:HS-Theta-def},
\begin{equation}\label{eq:HS-Theta-lag-H-identity}
    H(t+\ell(t))-H(t)=H(\delta)+(e^{M_x\vep}-1)H(t).
\end{equation}
Since \(K\) is nondecreasing, \(H(s)\leq s\) and
\[
    H(t+\ell)-H(t)
    =\int_t^{t+\ell}e^{-K(\sigma)}\,d\sigma
    \geq \ell e^{-K(t+\ell)}.
\]
Combining this with \eqref{eq:HS-Theta-lag-H-identity} gives
\begin{equation}\label{eq:HS-Theta-lag-prebound}
    \ell(t)
    \leq e^{K(t+\ell(t))}
    \bigl(\delta+(e^{M_x\vep}-1)t\bigr).
\end{equation}
Since \(t+\ell(t)\leq \tau+1\), \eqref{eq:HS-Theta-lag-prebound} proves
\eqref{eq:HS-Theta-lag-bound}.
\end{proof}

\begin{proof}[Proof of Theorem~\ref{thm:HS-hitting-time-lipschitz}]
Fix \(\tau>0\).  Set
\begin{equation}\label{eq:HS-Lip-proof-M-k}
    M_x:=\norm{\nabla_x\log c}_{L^\infty(\R^d\times(0,\tau+1))},
    \qquad
    k(t):=\norm{(\partial_t\log c)^-(\cdot,t)}_{L^\infty(\R^d)},
    \qquad
    K(s):=\int_0^s k(r)\,dr .
\end{equation}
By \eqref{eq:A-source} and \eqref{eq:A-source-time} with \(m=\infty\),
we have \(M_x<\infty\) and
\(k\in L^1(0,\tau+1)\).
Lemma~\ref{lem:HS-initial-nondegeneracy}, applied with \(t_*=1\), gives a
constant \(C_0>0\) such that
\begin{equation}\label{eq:HS-initial-compat-from-nondeg}
    \Omega_0^{+}+\overline B_\vep\subset\Omega_\delta
    \qquad\hbox{if }\vep>0
    \hbox{ and } C_0\vep\leq\delta<1 .
\end{equation}

Choose \(\vep_0\in (0,1)\) so that, whenever \(0<\vep<\vep_0\),
\eqref{eq:HS-envelope-size-condition} holds with \(\delta=C_0\vep\), and also
\(C_0\vep<1\).  For such \(\vep\), let \(\Theta\) be the
function from Lemma~\ref{lem:HS-source-envelope} with \(\delta=C_0\vep\).  Then
\eqref{eq:HS-initial-compat-from-nondeg} gives
\eqref{eq:HS-initial-Theta-compatibility}.  Lemma~\ref{lem:HS-source-envelope}
also gives \eqref{eq:HS-source-Theta-compatibility}, so
Lemma~\ref{lem:HS-time-rescaled-comparison} yields
\begin{equation}\label{eq:HS-pressure-spatial-time-comparison}
    \sup_{\abs{z-x}\leq\vep}p(z,t)
    \leq \Theta'(t)p(x,\Theta(t))
    \qquad\hbox{for }x\in\R^d,\quad 0<t<\tau.
\end{equation}
The lag bound \eqref{eq:HS-Theta-lag-bound} gives
\begin{equation}\label{eq:HS-Theta-linear-lag}
    \Theta(t)-t
    \leq e^{K(\tau+1)}
    \bigl(C_0+M_xe^{M_x}\tau\bigr)\vep
    =:L\vep .
\end{equation}

We first prove the one-sided estimate
\begin{equation}\label{eq:HS-T-one-sided-estimate}
    T(x)\leq T(y)+L\abs{x-y}
\end{equation}
whenever \(T(y)<\tau\) and \(\abs{x-y}<\vep_0\).  The case \(x=y\) is
trivial, so set \(\vep:=\abs{x-y}>0\).  Applying
\eqref{eq:HS-pressure-spatial-time-comparison} with base point \(x\), with
\(z=y\) and \(t=T(y)+\eta<\tau\), with \(\eta<1\) if \(T(y)=0\), gives
\[
    p(y,T(y)+\eta)\leq \Theta'(T(y)+\eta)p(x,\Theta(T(y)+\eta)).
\]
By \eqref{eq:HS-open-phase-identities}, \(y\in\Omega_{T(y)+\eta}\)
implies that the left-hand side is positive.  Hence
\(p(x,\Theta(T(y)+\eta))>0\). By the right-continuity from
Proposition~\ref{prop:HS-USC-representative} and \eqref{eq:HS-open-phase-identities},
\[
    T(x)\leq\Theta(T(y)+\eta).
\]
Letting \(\eta\downarrow0\) and using \eqref{eq:HS-Theta-linear-lag} proves
\eqref{eq:HS-T-one-sided-estimate}.

Since \(\tau>0\) was arbitrary, the family of estimates
\eqref{eq:HS-T-one-sided-estimate} readily gives the extended continuity of \(T\).  If \(\min\{T(x),T(y)\}<\tau\) and \(\abs{x-y}<\vep_0\), we may assume
\(T(y)\leq T(x)\); then \eqref{eq:HS-T-one-sided-estimate} gives
\(T(x)\leq T(y)+L\abs{x-y}<\infty\), which is
\eqref{eq:HS-T-local-Lipschitz-estimate}.  In particular \(\set{T<\infty}\) is
open.  Given \(x_0\) with \(T(x_0)<\tau\), every pair
\(x,y\in B_\rho(x_0)\) with
\(\rho:=\tfrac12\min\{\vep_0,(\tau-T(x_0))/L\}\) satisfies
\(\min\{T(x),T(y)\}\leq T(x_0)+L\rho<\tau\) and \(\abs{x-y}<\vep_0\), so
\(T\) is Lipschitz on \(B_\rho(x_0)\); this proves the local Lipschitz
assertion.

It follows from \eqref{eq:HS-pointwise-phase-persistence} and \eqref{eq:HS-open-phase-bridge} that for $x\in O$, we have $p(x,t) = 0$ for all $t < T(x)$ and $p(x,t) > 0$ for all $t > T(x)$. This immediately gives $(x, T(x))\in \partial \{ p > 0\}$. On the other hand, once we have that $T$ is continuous it follows that for any $x_0\in O$ and $t_0\neq T(x_0)$, there exists a spacetime ball around $(x_0,t_0)$ in which either $T < t$ (and thus $p>0$) or $T > t$ (and thus $p=0$).

Finally, let $x\in\partial\Omega_t$ and $0<r<r_*$.  If
$\operatorname{dist}(x,\Omega_0^{+})<r/2$, comparison with the tangent-ball solution
\eqref{eq:HS-explicit-ball-solution} gives \eqref{eq:HS-pressure-linear-nondegeneracy}.
Otherwise $B_{r/2}(x)\cap\Omega_0^{+}=\emptyset$, while $T(x)=t$ and
\eqref{eq:HS-T-local-Lipschitz-estimate} gives $t-T\leq Lr/2$ on
$B_{r/2}(x)\cap\Omega_t$.  After decreasing $r_*$, the source in
\eqref{eq:intro-w-obstacle} is at least $1/2$ there, so
Lemma~\ref{obst-quadratic-growth}\ref{item:obst-nondegeneracy} and
\eqref{eq:HS-slice-pressure-comparison} give
$r^2/(16d)\leq (\overline c_\tau/\underline c_\tau)Lr
\sup_{B_r(x)}p(\cdot,t)$, proving
\eqref{eq:HS-pressure-linear-nondegeneracy}.
\end{proof}

\begin{rem}\label{rem:HS-pointwise-pressure-equation}
Under \eqref{eq:A-source}, \eqref{eq:A-initial}, and
\eqref{eq:A-source-time}, the
elliptic equation for the pressure holds at every time on the open phase: if
\(t>0\), then
\begin{equation}\label{eq:HS-pointwise-pressure-equation}
    -\Delta p(\cdot,t)=c(\cdot,t)
    \qquad\hbox{in }\mathcal D'(\Omega_t).
\end{equation}
To prove \eqref{eq:HS-pointwise-pressure-equation}, let
\(\phi\in C_c^\infty(\Omega_t)\).  By
\eqref{eq:HS-open-phase-identities}, \(\supp\phi\subset\Omega_s\) for
every \(s>t\). Choose times \(s_j\downarrow t\) at which
\eqref{eq:HS-slice-variational} holds. Then
\[
    -\int_{\R^d}p(x,s_j)\Delta\phi(x)\,dx
    =
    \int_{\R^d}c(x,s_j)\phi(x)\,dx .
\]
The right-continuity of \(p\) from
Proposition~\ref{prop:HS-USC-representative}, the local boundedness of \(p\),
and the continuity of \(c\) allow \(j\to\infty\), giving
\eqref{eq:HS-pointwise-pressure-equation}.
\end{rem}

\subsection{Hopf--Lax inequality}
\label{subsec:HS-Hopf-Lax-finite-q}
In this subsection we prove Theorem~\ref{prop:HS-pressure-hopf-lax}. The proof
is a comparison argument in which the barrier is built from a single time slice of the
pressure.

\begin{proof}[Proof of Theorem~\ref{prop:HS-pressure-hopf-lax}]
Set \(h:=y-x\), and for \(t\in[t_0,t_1]\) define
\begin{equation}\label{eq:HS-Hopf-Lax-A-C}
    A(t):= A(|h|,t),
    \qquad
    C(t):=
    \frac{\abs{h}^2}{4(t_1-t_0)^2}\int_{t_0}^t e^{A(r)}\,dr,
\end{equation}
and
\begin{equation}\label{eq:HS-Hopf-Lax-subsolution}
    \bar p(z,t):=
    e^{-A(t)}
    \left(p\left(z+\frac{t-t_0}{t_1-t_0}h,t_0\right)-C(t)\right)_+ .
\end{equation}
 The key
observation is that formally \(\bar p\) is a weak subsolution
to \eqref{eq:intro-HS-law}, with source \(c\), on \((t_0,t_1]\), (the free boundary condition $\bar{p}_t \leq |D\bar{p}|^2$ can be checked via the equality \eqref{eq:HS-Hopf-Lax-square}). Since \(\bar p(\cdot,t_0)=p(\cdot,t_0)\),
Theorem~\ref{thm:HS-distributional-comparison} formally yields \(\bar p\leq p\)
a.e.\ in \(\R^d\times[t_0,t_1]\).
Evaluating this inequality at $(x,t_1)$ yields \eqref{eq:HS-pressure-hopf-lax}.

Making this simple observation rigorous turns out to be delicate due to the low regularity of \(\bar p\), since it is constructed only from
the single time slice \(p(\cdot,t_0)\).  We thus initially restrict ourselves to proving the comparison for ``good'' time slices.

We say \(t_0\) is {\it admissible} if the following collection of conditions holds:
\begin{equation}\label{eq:HS-good-slice-H1}
    p(\cdot,t_0)\in H^1(\R^d),
\end{equation}
the slice identity \eqref{eq:HS-slice-variational} holds at \(t=t_0\),
\begin{equation}\label{eq:HS-slice-elliptic-inequality}
    -\Delta p(\cdot,t_0)
    \leq c(\cdot,t_0)\chi_{\Omega_{t_0}}
    \qquad\hbox{in }\mathcal D'(\R^d),
\end{equation}
and
\begin{equation}\label{eq:HS-good-slice-envelope}
    p(y,t_0)=\lim_{r\downarrow0}
    \operatorname*{ess\,sup}_{B_r(y)}p(\cdot,t_0)
    \qquad\hbox{for every }y\in\R^d .
\end{equation}
These conditions hold for a.e.\ \(t_0\).  Indeed,
\eqref{eq:HS-good-slice-H1} follows from \eqref{eq:HS-weak-regularity}, and
\eqref{eq:HS-slice-variational} holds for a.e.\ \(t_0\) by
Lemma~\ref{lem:HS-basic-properties}.  
\eqref{eq:HS-slice-elliptic-inequality} follows from the monotonicity
\eqref{eq:HS-monotone-sets}, since it gives \(\partial_t\rho\geq0\) in distributions and thus
\eqref{eq:HS-weak-equation} yields \(-\Delta p\leq c\rho\) in space-time.  Lastly the
 envelope property \eqref{eq:HS-good-slice-envelope} holds for a.e.\
\(t_0\) by \eqref{eq:HS-spatial-envelope-representative}.

It suffices to prove \eqref{eq:HS-pressure-hopf-lax} for admissible \(t_0\). Indeed the estimate for an arbitrary
\(t_0\) follows  by applying it at admissible
times \(s_j\downarrow t_0\) and passing \(j\to\infty\), using the
right-continuity of \(p\) from Proposition~\ref{prop:HS-USC-representative}
and the absolute continuity of the integral in
\eqref{eq:HS-Hopf-Lax-Lambda-q}.

Hence we assume that $t_0$ is admissible. Let us first compare the source terms for $\bar{p}$ and $p$.
For each
\(t\in[t_0,t_1]\), define
\begin{equation}\label{eq:HS-Hopf-Lax-G-def}
    G(z,t):=(c(z,t_0)-c(z,t))_+ .
\end{equation}
Since both \(c(\cdot,t_0)\) and \(c(\cdot,t)\) are \(L\)-Lipschitz,
\begin{equation}\label{eq:HS-Hopf-Lax-G-Lipschitz}
    \abs{G(z,t)-G(w,t)}\leq 2L\abs{z-w}
    \qquad\hbox{for }z,w\in\R^d .
\end{equation}
Since \(\partial_t c\geq-(\partial_t c)^-\) in the distributional sense,
\begin{equation}\label{eq:HS-Hopf-Lax-G-pointwise-time}
    G(z,t)\leq\int_{t_0}^t(\partial_s c)^-(z,s)\,ds
    \qquad\hbox{for a.e. }z\in\R^d .
\end{equation}
Since \(x,y\in B_{R_*}\), we have \(\abs h\leq 2R_*\).  As
\(\bar p(z,t)>0\) forces \(z+\frac{t-t_0}{t_1-t_0}h\in\overline{\Omega_{t_0}}\subset B_{R_*}\),
the barrier \(\bar p(\cdot,t)\) is supported in \(B_{3R_*}\); it thus suffices to
bound \(G\) on \(B_{3R_*}\).  When \(d/2< m<\infty\), Minkowski's inequality
applied to \eqref{eq:HS-Hopf-Lax-G-pointwise-time} over \(B_{4R_*}\) gives
\(\norm{G(\cdot,t)}_{L^m(B_{4R_*})}\leq\Phi(t)\).  Since \(G(\cdot,t)\) is
\(2L\)-Lipschitz by \eqref{eq:HS-Hopf-Lax-G-Lipschitz} and \(B_r(z)\subset B_{4R_*}\)
for \(z\in B_{3R_*}\), \(r\in(0,1]\),
\[
    G(z,t)\leq \frac1{\abs{B_r}}\int_{B_r(z)}G(\xi,t)\,d\xi+2Lr
    \leq c_d\,r^{-d/ m}\norm{G(\cdot,t)}_{L^m(B_{4R_*})}+2Lr ;
\]
optimizing over \(r\in(0,1]\) gives
\[
    \norm{G(\cdot,t)}_{L^\infty(B_{3R_*})}
    \leq
    C_{d, m}\left(L^{\frac{d}{d+ m}}\Phi(t)^{\frac{m}{d+ m}}+\Phi(t)\right),
\]
so \(\underline c^{-1}\norm{G(\cdot,t)}_{L^\infty(B_{3R_*})}\leq\Lambda(t)\).  When
\(m=\infty\), \eqref{eq:HS-Hopf-Lax-G-pointwise-time} gives
\(\norm{G(\cdot,t)}_{L^\infty(B_{4R_*})}\leq\Phi(t)\) directly, so again
\(\underline c^{-1}\norm{G(\cdot,t)}_{L^\infty(B_{3R_*})}\leq\Lambda(t)\).  Recalling
\eqref{eq:HS-Hopf-Lax-G-def} and \eqref{eq:HS-Hopf-Lax-cinf-L-M},  we obtain
\[
    \log c\left(z+\frac{t-t_0}{t_1-t_0}h,t_0\right)-\log c(z,t)
    \leq M\frac{t-t_0}{t_1-t_0}\abs{h}+\underline c^{-1}G(z,t)
    \leq M\frac{t-t_0}{t_1-t_0}\abs{h}+\Lambda(t)
    =A(t)
\]
for a.e.\ \(z\in B_{3R_*}\).  Exponentiating gives
\begin{equation}\label{eq:HS-Hopf-Lax-source-control}
    e^{-A(t)}
    c\left(z+\frac{t-t_0}{t_1-t_0}h,t_0\right)
    \leq c(z,t)
    \qquad\hbox{for a.e. }(z,t)\in B_{3R_*}\times[t_0,t_1].
\end{equation}

We now proceed to show that $\bar{p}$ is a weak subsolution of \eqref{eq:intro-HS-law} with source $c$ in $\R^d\times [t_0,t_1]$.
Note that $\bar{p}(\cdot,t)>0$ has uniform compact support in \(t\in[t_0,t_1]\) due to Lemma~\ref{lem:HS-basic-properties}, hence
\(\bar\rho:=\chi_{\{\bar p>0\}} \in L^\infty(t_0,t_1;L^1(\R^d))\), while
\eqref{eq:HS-good-slice-H1} gives the \(H^1\) regularity of \(\bar p\).  It thus remains to verify the subsolution inequality.

Let \(P(z,t):=p\left(z+\frac{t-t_0}{t_1-t_0}h,t_0\right)\) and
\(\ell(z,t):=P(z,t)-C(t)\), so that $\bar{p}(t)=e^{-A(t)}\ell(t)_+$. Then
\(\ell_t=\frac{h}{t_1-t_0}\cdot\nabla P-C'(t)\) in $\mathcal{D}'$.
For a.e.\ \(t\), differentiating \eqref{eq:HS-Hopf-Lax-A-C} gives
\(C'(t)=\frac{\abs{h}^2e^{A(t)}}{4(t_1-t_0)^2}\), and completing the square yields
\begin{equation}\label{eq:HS-Hopf-Lax-square}
    e^{-A(t)}
    \left(\ell_t-e^{-A(t)}\abs{\nabla P}^2\right)
    =
    -\left|e^{-A(t)}\nabla P-\frac{h}{2(t_1-t_0)}\right|^2 .
\end{equation}
Now we introduce a mollified version of $\bar{p}$ and $\bar{\rho}$.
Let
\(\eta_\vep\in C^\infty(\R;[0,1])\) be nondecreasing, with
\(\eta_\vep=0\) on \((-\infty,0]\) and \(\eta_\vep=1\) on
\([\vep,\infty)\), and set
\[
    \beta_\vep(s):=\int_0^s\eta_\vep(a)\,da,\qquad
    \bar p_\vep(t):=e^{-A(t)}\beta_\vep(\ell(t)),\qquad
    \chi_\vep:=\eta_\vep(\ell).
\]

We will compute the subsolution-like inequality using $\bar p_\e$ and $\chi_\e$, and will send $\e\to 0$ to conclude. Fix \(t_0\leq a<b<t_1\).  For every nonnegative
\(\phi\in C_c^\infty(\R^d\times[a,b))\), test
\eqref{eq:HS-slice-elliptic-inequality} at time \(t_0\), for each
\(t\in[a,b]\), with
\[
    \zeta_t(x):=
    e^{-A(t)}
    \phi(z,t)
    \chi_\vep(z,t),\qquad z=z_{h,t} := x-\frac{t-t_0}{t_1-t_0}h.
\]
Note that $\zeta_t(x)\in H^1(\R^d)$, due to
\eqref{eq:HS-good-slice-H1}.  
After applying
\eqref{eq:HS-slice-elliptic-inequality}, changing variables
\(z=x-\frac{t-t_0}{t_1-t_0}h\) and integrating in \(t\in[a,b]\) and using \eqref{eq:HS-Hopf-Lax-source-control},  we obtain
\begin{equation}\label{eq:HS-Hopf-Lax-regularized-elliptic}
    \int_a^b\!\!\int_{\R^d}\nabla\phi\cdot\nabla \bar p_\vep\,dzdt
    \leq
    \int_a^b\!\!\int_{\R^d}\phi\,c\,\chi_\vep\,dzdt 
      -\int_a^b\!\!\int_{\R^d}
    e^{-A(t)}\phi\,\eta_\vep'(\ell)\abs{\nabla P}^2\,dzdt .
\end{equation}
  The chain rule in the time variable
gives, since \(\phi(\cdot,b)=0\),
\begin{equation}\label{eq:HS-Hopf-Lax-regularized-time}
    -\int_a^b\!\!\int_{\R^d}\chi_\vep\,\partial_t\phi\,dzdt
    =
    \int_{\R^d}\phi(z,a)\chi_\vep(z,a)\,dz
    +\int_a^b\!\!\int_{\R^d}
    \phi\,\eta_\vep'(\ell)\ell_t\,dzdt .
\end{equation}
Adding \eqref{eq:HS-Hopf-Lax-regularized-elliptic} and
\eqref{eq:HS-Hopf-Lax-regularized-time} leaves the boundary-condition term
\begin{equation}\label{eq:HS-Hopf-Lax-regularized-subsolution}
\begin{aligned}
    \int_a^b\!\!\int_{\R^d}
    \nabla\phi\cdot\nabla \bar p_\vep-\chi_\vep\,\partial_t\phi\,dzdt
    &\leq
    \int_{\R^d}(\phi\chi_\vep)(z,a)\,dz +\int_a^b\!\!\int_{\R^d}\phi\,c\chi_\vep\,dzdt \\
    &\quad
    +\int_a^b\!\!\int_{\R^d}
    \phi\,\eta_\vep'(\ell)
    \left(\ell_t-e^{-A(t)}\abs{\nabla P}^2\right)\,dzdt .
\end{aligned}
\end{equation}
Note that the last integrand is nonpositive due to  \eqref{eq:HS-Hopf-Lax-square}.
Lastly we send $\e\to 0$, using dominated convergence theorem, to obtain
\(\bar p_\vep\to \bar p\) in \(L^2(a,b;H^1(\R^d))\) and
\(\chi_\vep\to\bar\rho\) in \(L^1(\R^d\times(a,b))\) and we conclude that
 for every nonnegative
\(\phi\in C_c^\infty(\R^d\times[a,b))\),
\begin{equation}\label{eq:HS-Hopf-Lax-bar-p-distributional-subsolution}
    \int_a^b\!\!\int_{\R^d}
    \nabla\phi\cdot\nabla \bar p-\bar\rho\,\partial_t\phi\,dzdt
    \leq
    \int_{\R^d}\phi(z,a)\bar\rho(z,a)\,dz
    +\int_a^b\!\!\int_{\R^d}\phi\,c\bar\rho\,dzdt .
\end{equation}

It remains to verify the \(L^1\)-continuity condition
\eqref{eq:HS-weak-time-traces}.  We first claim that
\begin{equation}\label{eq:HS-Hopf-Lax-positive-levels-null}
    \abs{\{p(\cdot,t_0)=a\}}=0
    \qquad\hbox{for every }a>0 .
\end{equation}
By \eqref{eq:HS-weak-time-traces} and \eqref{eq:HS-monotone-sets}, the monotone
representative of the density satisfies
\(\rho(\cdot,t_0)=\chi_{\{w(\cdot,t_0)>0\}}\) a.e.: 
Integrating
\eqref{eq:HS-weak-equation} in time gives a locally bounded right-hand side for
\(\Delta w(\cdot,t_0)\), so elliptic regularity makes \(w(\cdot,t_0)\)
continuous.  If \(\abs{\{p(\cdot,t_0)=a\}}>0\) for some
\(a>0\), choose a ball \(B\Subset\{w(\cdot,t_0)>0\}\) with
\(\abs{B\cap\{p(\cdot,t_0)=a\}}>0\); there \(\rho(\cdot,t_0)=1\) a.e., so
\eqref{eq:HS-slice-variational} gives \(-\Delta p(\cdot,t_0)=c(\cdot,t_0)\) and
hence \(p(\cdot,t_0)\in H^2(B)\) by elliptic regularity.  The second derivatives
of an \(H^2\) function vanish a.e.\ on each of its level sets, so
\(\Delta p(\cdot,t_0)=0\) a.e.\ on \(B\cap\{p(\cdot,t_0)=a\}\), contradicting
\(-\Delta p(\cdot,t_0)=c(\cdot,t_0)>0\).  This proves
\eqref{eq:HS-Hopf-Lax-positive-levels-null}.

Since \(p(\cdot,t_0)\) is compactly supported and
\eqref{eq:HS-Hopf-Lax-positive-levels-null} holds, the map
\(a\mapsto\chi_{\{p(\cdot,t_0)>a\}}\) is continuous from \([0,\infty)\) into
\(L^1(\R^d)\); continuity at \(a=0\) follows by monotone convergence.  Together
with \(L^1\)-continuity of translations, this gives
\(\bar\rho\in C([t_0,t_1);L^1(\R^d))\).

We have verified that \(\bar p\) is a weak subsolution in $\R^d\times [t_0,t_1]$. Since
\(\bar p(\cdot,t_0)=p(\cdot,t_0)\), the comparison principle
Theorem~\ref{thm:HS-distributional-comparison} therefore yields
\begin{equation}\label{eq:HS-Hopf-Lax-ae-comparison}
    \bar p\leq p
    \qquad\hbox{a.e. in }\R^d\times[t_0,t_1].
\end{equation}

It remains to pass from the a.e.\ comparison in
\(\R^d\times[t_0,t_1]\) to the pointwise inequality at \((x,t_1)\).  Choose
\(\alpha_j,r_j,\eta_j\downarrow0\), with \(t_1-\eta_j>t_0\).  By
\eqref{eq:HS-good-slice-envelope}, each set
\[
    E_j:=\{w\in B_{r_j}(y):p(w,t_0)>p(y,t_0)-\alpha_j\}
\]
has positive measure.  Thus we may choose
\((z_j,s_j)\in\R^d\times(t_1-\eta_j,t_1)\) such that
\[
    z_j+\frac{s_j-t_0}{t_1-t_0}h\in E_j
\]
and \eqref{eq:HS-Hopf-Lax-ae-comparison} holds at \((z_j,s_j)\).  Then
\(s_j\to t_1\) and \(z_j\to y-h=x\) .  Moreover,
\begin{equation}\label{eq:HS-Hopf-Lax-endpoint-sequence-lower}
    p(z_j,s_j)\geq \bar p(z_j,s_j)
    \geq
    e^{-A(s_j)}
    \bigl(p(y,t_0)-\alpha_j-C(s_j)\bigr)_+ .
\end{equation}
Letting \(j\to\infty\) in
\eqref{eq:HS-Hopf-Lax-endpoint-sequence-lower}, using space-time upper semicontinuity of
\(p\) from \eqref{eq:HS-forward-average-representative} and continuity of \(A\) and \(C\), gives
\[
    p(x,t_1)\geq e^{-A(t_1)}\bigl(p(y,t_0)-C(t_1)\bigr)_+ .
\]
Substituting \(A(t_1)=M\abs{x-y}+\Lambda(t_1)\) together with the
definition of \(C(t_1)\) yields \eqref{eq:HS-pressure-hopf-lax}.
\end{proof}

\begin{rem}\label{rem:HS-Hopf-Lax-HJ}
Note that, when \(m=\infty\), formally setting \(y=x-v\delta\), \(t_1=t+\delta\),
\(t_0=t\), dividing \eqref{eq:HS-pressure-hopf-lax} by \(\delta\), and
optimizing in \(v\) gives the Hamilton--Jacobi inequality
\begin{equation}\label{eq:HS-formal-HJ}
    \partial_t p-\bigl(\abs{\nabla p}-M(t)p\bigr)_+^2+k(t)p\geq0,
\end{equation}
where
\[
    M(t):=\frac{\norm{\nabla_x c(\cdot,t)}_{L^\infty(\R^d)}}
    {\inf_{\R^d}c(\cdot,t)},
    \qquad
    k(t):=\frac{\norm{(\partial_t c)^-(\cdot,t)}_{L^\infty(\R^d)}}
    {\inf_{\R^d}c(\cdot,t)} .
\]

\end{rem}

\begin{rem}\label{rem:HS-hitting-time-holder}
The Hopf--Lax inequality, together with the annular barrier argument of
\cite[Lem.~4.1, Thm.~4.2]{CJK25}, yields H\"older continuity of the hitting time under
\eqref{eq:A-source-time} alone, with no nondegeneracy assumption on the initial patch:
\[T\in C^{0,\alpha}_{\mathrm{loc}}(\{0<T<\infty\})
\qquad\hbox{for every }\alpha<\alpha_d:=
\begin{cases}
2/e, & d=2,\\
2(2/d)^{d/(d-2)}, & d>2.
\end{cases}\]
The endpoint exponent \(\alpha_d\) is attained if the time integrability in
\eqref{eq:A-source-time} is strengthened to
\((\partial_t c)^-\in L_{\operatorname{loc}}^\beta([0,\infty);L_{\operatorname{loc}}^m(\R^d))\)
for some \(\beta>1\).
\end{rem}

\section{Static barriers for pressure at singular geometry}\label{sec:static-barriers}

In this section, we estimate the growth of the pressure away from the zero set in geometric configurations which model $\Omega_t$ shortly before a $k$th stratum singular point occurs, for $0\leq k \leq d-2$. 

In the following, we fix a $k$-dimensional hyperplane $V\subset \R^d$ containing the origin,
and work in the coordinate split $x = (x',x'')\in V^\perp\times V$. When $\dim V=k=0$, the second coordinate is trivial. For parameters $0 < r_0 < R_0$ and $\alpha\in (0,1)$, we set (see Figure~\ref{fig:U-region})
\begin{equation}\label{eq:u_alpha}
    U(V,\alpha,R_0,r_0) := \{ x : r_0 < |x| < R_0 \hbox{ and } |x'| > \alpha |x''| \}.
\end{equation}
When $k=0$ we have $U(V,\alpha, R_0, r_0)=U( R_0, r_0)= B_{R_0}\setminus \overline{B_{r_0}}$.

\begin{figure}[ht]
\centering
\begin{tikzpicture}[scale=0.672,>=Stealth]
    \fill[gray!15] (0,0) -- (70:3.3) arc (70:110:3.3) -- cycle;
    \fill[gray!15] (0,0) -- (250:3.3) arc (250:290:3.3) -- cycle;
    \fill[blue!10] (-70:3) arc (-70:70:3) -- (70:0.6) arc (70:-70:0.6) -- cycle;
    \fill[blue!10] (110:3) arc (110:250:3) -- (250:0.6) arc (250:110:0.6) -- cycle;
    \draw[->] (-3.6,0) -- (3.6,0) node[below right] {$V^\perp$};
    \draw[->] (0,-3.6) -- (0,3.6) node[above] {$V$};
    \draw[dashed] (110:3.3) -- (0,0) -- (70:3.3);
    \draw[dashed] (250:3.3) -- (0,0) -- (290:3.3);
    \draw[thick] (-70:3) arc (-70:70:3) -- (70:0.6) arc (70:-70:0.6) -- cycle;
    \draw[thick] (110:3) arc (110:250:3) -- (250:0.6) arc (250:110:0.6) -- cycle;
    \draw[->] (0,0) -- (160:3);
    \node at (160:1.7) [above=-2pt] {$R_0$};
    \draw[->] (0,0) -- (205:0.6);
    \node at (218:0.95) {\scriptsize $r_0$};
    \fill (1.9,0) circle (1.7pt);
    \node[below right=-2pt] at (1.9,0) {$re$};
    \node at (35:1.9) {$U$};
    \node at (0,2.9) {\tiny $|x'|\leq \alpha|x''|$};
\end{tikzpicture}
\hfil
\begin{tikzpicture}[scale=0.672,>=Stealth]
    \fill[green!10] plot [smooth cycle, tension=0.9] coordinates
        {(0:3.6) (40:3.95) (85:3.75) (130:3.55) (175:3.7) (220:3.5) (265:3.75) (315:3.55)};
    \draw[thin] plot [smooth cycle, tension=0.9] coordinates
        {(0:3.6) (40:3.95) (85:3.75) (130:3.55) (175:3.7) (220:3.5) (265:3.75) (315:3.55)};
    \filldraw[fill=white, draw=black, thin] plot [smooth cycle, tension=0.7] coordinates
        {(0.6,0) (0.38,0.28) (0.18,0.7) (0.45,2.0) (0.5,2.6) (0,2.95)
         (-0.5,2.6) (-0.45,2.0) (-0.18,0.7) (-0.4,0.25) (-0.44,0) (-0.4,-0.25)
         (-0.18,-0.7) (-0.45,-2.0) (-0.5,-2.6) (0,-2.95) (0.5,-2.6)
         (0.45,-2.0) (0.18,-0.7) (0.38,-0.28)};
    \fill[blue!10] (-70:3) arc (-70:70:3) -- (70:0.6) arc (70:-70:0.6) -- cycle;
    \fill[blue!10] (110:3) arc (110:250:3) -- (250:0.6) arc (250:110:0.6) -- cycle;
    \draw[dashed] (110:3.3) -- (0,0) -- (70:3.3);
    \draw[dashed] (250:3.3) -- (0,0) -- (290:3.3);
    \draw[thick] (-70:3) arc (-70:70:3) -- (70:0.6) arc (70:-70:0.6) -- cycle;
    \draw[thick] (110:3) arc (110:250:3) -- (250:0.6) arc (250:110:0.6) -- cycle;
    \draw[->] (0,0) -- (-30:0.6);
    \node at (-22:1.15) {\scriptsize $r_+(t)$};
    \node at (40:3.45) {$\Omega_t$};
\end{tikzpicture}
\caption{Left: the region $U(V,\alpha,R_0,r_0)$ (blue). Right: the setting of Section~\ref{sec:closing-rates}: $U(V,\alpha,R_0,r_+(t))\subset\Omega_t$ (green), and the zero set (white) lies in the union of the cone with $\overline{B_{r_+(t)}}$, touching $\partial B_{r_+(t)}$.}
\label{fig:U-region}
\end{figure}

The main result of this section is the following growth estimate for $p$ at a fixed time:

\begin{prop}\label{prop:pressure_lower_bd}
Assume \eqref{eq:A-source}, \eqref{eq:A-initial}, and \eqref{eq:A-source-time}.
Let $V \subset \R^d$ be a subspace of dimension $0 \leq k \leq d-2$,  and let $\alpha\in (0, \alpha_0)$ with  $\alpha_0 = \alpha_0(d,k)\leq 1$. Suppose that for some $t$ and for $0 < r_0 < \frac{R_0}{16}$, we have
\[ U(V, \alpha, R_0, r_0) \subset \Omega_t. \]
Let $e\in V^\perp$ with $|e| = 1$. Then for $4r_0 < r < \frac{R_0}{4}$, we have
\[ p(re, t) \geq C(d,k)\underline{c}(R_0,t)R_0^2\cdot \begin{cases}
    1 & d\geq 3, k=0 \\
    (\log \frac{R_0}{r})^{-1} & d=2,k=0 \\
    (\tfrac{r}{R_0})^{\nu} & 1\leq k \leq d-2
\end{cases} \]
Here,
$\underline{c}(R_0, t) = \inf_{B_{R_0}\times \{t\}} c$, and $\nu=\nu(\alpha,d,k)\to 0^+$ as $\alpha \to 0$.
\end{prop}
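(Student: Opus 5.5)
The plan is to bound $p(\cdot,t)$ from below on $U:=U(V,\alpha,R_0,r_0)$ by an explicit subsolution with zero boundary values on $\partial U$. By Remark~\ref{rem:HS-pointwise-pressure-equation}, $-\Delta p(\cdot,t)=c(\cdot,t)\geq \underline c:=\underline c(R_0,t)$ in $\mathcal D'(\Omega_t)\supset \mathcal D'(U)$, and $p\geq0$. It therefore suffices to build $\phi\in C^2(U)\cap C(\overline U)$ with $-\Delta\phi\leq \underline c$ in $U$ and $\phi\leq 0$ on $\partial U$. The weak maximum principle, applied to the superharmonic function $p-\phi$, then gives $p\geq\phi$ a.e. in $U$, and the same bound holds pointwise at $re$ by the envelope formula \eqref{eq:HS-spatial-envelope-representative} together with upper semicontinuity (or by lower semicontinuity of the superharmonic representative in $U$). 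Scaling $x\mapsto x/R_0$ reduces everything to $R_0=1$, $\underline c=1$, which accounts for the factor $\underline cR_0^2$.

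For $k=0$ the region is the annulus $B_1\setminus\overline{B_{r_0}}$, and the candidate is radial: $\phi=\frac{1}{2d}(1-|x|^2)-\frac{1}{2d}(1-r_0^2)\,G(|x|)$. Here $G$ is the harmonic function on the annulus equal to $1$ at $|x|=r_0$ and $0$ at $|x|=1$, namely $G=\frac{|x|^{2-d}-1}{r_0^{2-d}-1}$ for $d\geq3$ and $G=\frac{\log(1/|x|)}{\log(1/r_0)}$ for $d=2$. At $r\in(4r_0,1/4)$ we get $G(r)\leq 4^{2-d}$ when $d\geq3$, so $\phi(re)\gtrsim 1$. When $d=2$, the quantity $1-G(r)=\log(r/r_0)/\log(1/r_0)$ is only bounded below by $\log 4/\log(1/r_0)\geq \log 4/\log(1/r)$ up to constants, which gives the factor $(\log\frac1r)^{-1}$. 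This lower bound for $1-G(r)$ should be checked using $r>4r_0$.

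For $1\leq k\leq d-2$, I would separate variables in the conical region. Write $x=\varrho\theta$ with $\theta$ in the spherical cap domain $S_\alpha:=\{\theta\in\mathbb S^{d-1}:|\theta'|>\alpha|\theta''|\}$. Let $\psi_\alpha>0$ be the first Dirichlet eigenfunction of $S_\alpha$, with eigenvalue $\lambda_\alpha$, and let $\nu>0$ solve $\nu(\nu+d-2)=\lambda_\alpha$, so that $h:=\varrho^\nu\psi_\alpha(\theta)$ is harmonic and vanishes on the lateral boundary. The key step is $\lambda_\alpha\to0$ as $\alpha\to0$. The removed set $\{|\theta'|\leq\alpha|\theta''|\}$ shrinks to the great subsphere $\mathbb S^{d-1}\cap V$ of codimension $d-k\geq2$, which has zero capacity. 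Accordingly, I would use test functions equal to $1$ away from an $\epsilon$-neighborhood of $V$, with logarithmic cutoffs in $|\theta'|$ for $d-k=2$ and linear cutoffs otherwise, to show that their Rayleigh quotients tend to $0$. This gives $\nu(\alpha)\to0^+$.

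To handle the source and the radial boundaries, I would take $\phi=a\bigl(h-\varrho^{\nu}\ldots\bigr)$. Concretely, set $\phi=\min$-type combinations such as $\phi=\varepsilon_0\,\psi_\alpha(\theta)\,(\varrho^\nu-r_0^{\nu'}\varrho^{-\mu}\cdots)$ plus a quadratic correction $q=\frac{1}{2(d-k)}(\alpha^2|x''|^2\ldots)$. The simplest robust choice is $\phi=\psi_\alpha(\theta)\,g(\varrho)$, for which
\[
\Delta\phi=\psi_\alpha\Bigl(g''+\tfrac{d-1}{\varrho}g'-\tfrac{\lambda_\alpha}{\varrho^2}g\Bigr).
\]
I would choose $g(\varrho)=\varrho^\nu-\varrho^{2}$, normalized with $\sup\psi_\alpha=1$. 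This gives $-\Delta\phi=\psi_\alpha\bigl(2d-\lambda_\alpha\bigr)\leq 2d$ (the $\varrho^\nu$ part is harmonic), and $g(1)=0$. At the inner sphere I would subtract $A\,\psi_\alpha\,r_0^{\nu+d-2+\nu}\varrho^{-(\nu+d-2)}$, which is also harmonic, to make $g(r_0)\leq0$; at $\varrho=r\geq4r_0$ this costs at most $4^{-(2\nu+d-2)}r^\nu$. Dividing by $2d$ gives a valid subsolution, and we obtain $\phi(re)\gtrsim \psi_\alpha(e)\,r^\nu$ for $r<1/4$, since $r^\nu-r^2\gtrsim r^\nu$ there.

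The main obstacle is the quantitative lower bound on $\psi_\alpha(e)$ for $e\in V^\perp$, uniformly in $\alpha<\alpha_0$, together with the eigenvalue limit. For $\psi_\alpha(e)$, I would use the $L^2$-normalized convergence $\psi_\alpha\to$ const on compact subsets away from $V$, again via capacity. This follows from $\lambda_\alpha\to0$ and Harnack's inequality on the fixed region $\{|\theta'|\geq1/2\}$ containing $e$. After that, the constant $C(d,k)$ is uniform in $\alpha$, and only $\nu$ depends on $\alpha$.
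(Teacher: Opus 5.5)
Your proposal is correct and is essentially the paper's proof: the same explicit radial annular barrier for $k=0$, and for $1\leq k\leq d-2$ the same separated-variables barrier $\underline c\,\Phi_\alpha\,(A\varrho^{\nu}+B\varrho^{2-d-\nu}-\varrho^{2})/(2d-\lambda_\alpha)$, built from the first Dirichlet eigenfunction of the cone's cross-section, with $\lambda_\alpha,\nu_\alpha\to0$ obtained from cutoff test functions in the Rayleigh quotient. Two small fixes: in the $d=2$ case the step $\log 4/\log(1/r_0)\geq\log 4/\log(1/r)$ runs the wrong way (since $r_0<r$), so write instead $1-G(r)=s/(\log\frac1r+s)$ with $s=\log(r/r_0)\geq\log 4$, which is increasing in $s$ and hence at least $\log 4/(2\log\frac1r)$; and your ``main obstacle'' is not really one, because by symmetry the eigenfunction depends only on $\varphi=\arctan(|x'|/|x''|)$, and integrating the Sturm--Liouville equation from $\varphi=\pi/2$ shows it is increasing in $\varphi$, so it attains its maximum exactly on $V^\perp$ and $\psi_\alpha(e)=\sup\psi_\alpha=1$ with no Harnack or $\alpha\to0$ convergence argument needed.
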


We split the proof of Proposition~\ref{prop:pressure_lower_bd} into separate lemmas for the $k = 0$ and $1 \leq k \leq d-2$ cases. 

\begin{lem}\label{lem:radial_subbarrier}
Proposition~\ref{prop:pressure_lower_bd} holds in the case $k = 0$.
\end{lem}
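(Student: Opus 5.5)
We compare the pressure slice $p(\cdot,t)$ with an explicit radial subsolution on the annulus $U(R_0,r_0)=B_{R_0}\setminus\overline{B_{r_0}}$, which lies in $\Omega_t$ by hypothesis. By Remark~\ref{rem:HS-pointwise-pressure-equation}, $-\Delta p(\cdot,t)=c(\cdot,t)\geq\underline c:=\underline c(R_0,t)$ in $\mathcal D'(U)$, and $p\geq0$ everywhere. So any $\phi$ with $-\Delta\phi\leq\underline c$ in $U$ and $\phi\leq0$ on $\partial U$ lies below $p$, after a comparison step justified below.

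For the barrier we take $\phi=\underline c\,\psi$, where $\psi$ is the radial solution of $-\Delta\psi=1$ in $U$ with $\psi=0$ on both boundary spheres:
\[
    \psi(x)=\frac{R_0^2-|x|^2}{2d}-a\,\Gamma_d(x)-b,
\]
with $\Gamma_d(x)=|x|^{2-d}$ for $d\geq3$ and $\Gamma_2(x)=-\log|x|$ for $d=2$. The constants $a,b$ are fixed by the two boundary conditions.

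The comparison step is as follows. The function $v:=p(\cdot,t)-\phi$ satisfies $-\Delta v\geq0$ in $\mathcal D'(U)$, so $v$ is superharmonic in $U$ and has a lower semicontinuous representative. The canonical $p$ is upper semicontinuous, and by \eqref{eq:HS-spatial-envelope-representative} it equals the spatial essential upper envelope at a.e.\ time. To avoid these representative issues, I would first prove the inequality for a.e.\ admissible $t$, where $p(\cdot,t)\in H^1$, using the weak maximum principle with $(\phi-p)_+\in H^1_0(U)$. The latter holds since $\phi\leq0$ on $\partial U$ and $p\geq0$. This gives $p\geq\phi$ a.e.\ in $U$, and hence pointwise by the envelope formula \eqref{eq:HS-spatial-envelope-representative}. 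For a general $t$, I would pass to the limit along admissible $s_j\downarrow t$: we have $U\subset\Omega_t\subset\Omega_{s_j}$, $\underline c(R_0,s_j)\to\underline c(R_0,t)$ by continuity of $c$, and $p$ is right-continuous by Proposition~\ref{prop:HS-USC-representative}.

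It remains to estimate $\psi(re)$ from below for $4r_0<r<R_0/4$; this is the only real computation. Rather than expanding the formula directly, I would use the maximum principle once more. The function $\psi$ is at least the solution $\psi_\infty$ of the same problem with inner radius $r_0\to0$, corrected by the harmonic function vanishing at $R_0$ and equal to $-\max\psi_\infty$ at $r_0$. Quantitatively:
\begin{itemize}
    \item For $d\geq3$, we have $\psi(x)\geq\frac{R_0^2-|x|^2}{2d}-\frac{R_0^2}{2d}\frac{|x|^{2-d}-R_0^{2-d}}{r_0^{2-d}-R_0^{2-d}}$. At $|x|=r\geq4r_0$ the fraction is at most $4^{2-d}/(1-16^{2-d})\leq1/2$, roughly. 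So $\psi(re)\geq\frac{R_0^2}{2d}(1-\frac1{16}-\frac12)\geq cR_0^2$.
    \item For $d=2$, the harmonic part is $\frac{R_0^2}{4}\frac{\log(R_0/|x|)}{\log(R_0/r_0)}$. This needs slightly more care because the inner circle's capacity effect decays only logarithmically. The cleaner route is a different harmonic minorant: compare with $\underline c\,\frac{R_0^2}{8}\frac{\log(|x|/r_0)}{\log(R_0/r_0)}$ on the smaller annulus $B_{R_0/2}\setminus B_{r_0}$, using that the quadratic part is at least $cR_0^2$ on $\partial B_{R_0/2}$. This yields $\psi(re)\gtrsim R_0^2\frac{\log(r/r_0)}{\log(R_0/r_0)}$. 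For $r\geq4r_0$ this is $\gtrsim R_0^2(\log\frac{R_0}{r})^{-1}$, since $\log(r/r_0)\geq\log4$ and $\log(R_0/r_0)\leq\log(R_0/r)\cdot C$ when $r$ is comparable to $r_0$. When $r\gg r_0$ the ratio is bounded below, and $(\log\frac{R_0}{r})^{-1}\lesssim1$ because $r<R_0/4$.
\end{itemize}
Tracking these constants in two dimensions is the step I expect to need the most care. Everything else is the standard comparison principle.
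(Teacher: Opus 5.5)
Your proposal is correct and follows the paper's proof: compare $p(\cdot,t)$ on the annulus with the explicit radial solution of $-\Delta\psi=\underline c$ that vanishes on both spheres, then bound that solution from below on $4r_0<|x|<R_0/4$. Your a.e.-time weak maximum principle, envelope formula and right-continuity argument makes rigorous the comparison the paper states in one line, and your harmonic minorant on $B_{R_0/2}\setminus B_{r_0}$ in $d=2$ replaces the paper's direct calculus bound (what you need there is that $\psi$ itself, not just its quadratic part, is $\gtrsim R_0^2$ on $\partial B_{R_0/2}$; this holds because $r_0<R_0/16$ gives $\psi\geq R_0^2/8$ on that circle).
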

\begin{proof}
For fixed $t$, let $\psi(x)$ solve
\begin{equation}\label{eq:zs_annular_subsolution_eqn}
    \begin{cases}
        -\Delta \psi = \underline{c}(R_0,t) & \hbox{ in }B_{R_0}\setminus \overline{B_{r_0}} \\
        \psi = 0 & \hbox{ on }\partial B_{R_0}\cup \partial B_{r_0} \\
    \end{cases}
\end{equation}
By assumption, $U = B_{R_0}\setminus \overline{B_{r_0}} \subset \Omega_t$, so $\Delta p(t) = -c \leq -\underline{c}(R_0,t)$ on $U$. We have $p(t)\geq 0$ on $\partial U$, so comparison gives $\psi \leq p(t)$ on $U$. We can compute $\psi$ explicitly as
\begin{equation}\label{eq:radial_barrier_explicit}
    \psi(x) = \begin{cases}
        \frac{\underline{c}}{4}\left(R_0^2 - r_0^2\right)\left(1 - \frac{|x|^2 - r_0^2}{R_0^2 - r_0^2} - \frac{\log \frac{R_0}{|x|}}{\log\frac{R_0}{r_0}}\right) & d=2 \\
        \frac{\underline{c}}{2d}\left(R_0^2 - r_0^2\right)\left(1 - \frac{|x|^2 - r_0^2}{R_0^2 - r_0^2} - \frac{|x|^{2-d} - R_0^{2-d}}{r_0^{2-d} - R_0^{2-d}}\right) & d\geq 3
    \end{cases}
\end{equation}

On the range $4r_0 \leq |x| \leq R_0/4$, elementary calculus bounds yield that
\[ 1 - \frac{|x|^2 - r_0^2}{R_0^2 - r_0^2} - \frac{\log \frac{R_0}{|x|}}{\log\frac{R_0}{r_0}} \geq \frac{15\log 4 }{34\log \frac{R_0}{|x|}}, \]
and
\[ 1 - \frac{|x|^2 - r_0^2}{R_0^2 - r_0^2} - \frac{|x|^{2-d} - R_0^{2-d}}{r_0^{2-d} - R_0^{2-d}} \geq \min\left(\frac{15}{16}, \frac{16}{17} - \frac{1}{4^{d-2}+1}\right) \geq \frac{63}{85}. \]
Putting these lower bounds into \eqref{eq:radial_barrier_explicit}, we conclude.

\end{proof}

We now turn to the case $1 \leq k \leq d-2$. Our aim is to construct a subsolution $\psi$ to the pressure equation, vanishing on the cone $|x'|=\alpha |x''|$ and satisfying an almost optimal growth rate in $\{4r_0<|x|<R_0/4\}$: see \eqref{subsolution}. To this end, we separate variables. Set $\rho:=|x|$ and $\varphi:= \arctan \frac{|x'|}{|x''|}$, so that 
\begin{equation}\label{eq:spherical_coordinates}
\begin{aligned}
    |x'| &= \rho\sin\varphi,\quad |x''| = \rho\cos\varphi,& \\
    \rho&\in (r_0, R_0), \quad \varphi\in (\varphi_\alpha, \pi/2] &\hbox{ on } U(V, \alpha, R_0, r_0), \quad \varphi_\alpha:=\arctan\alpha.
\end{aligned}
\end{equation}
Let $u(x) = u(\rho, \varphi)$, so that $u$ is invariant under rotations inside $V$ and $V^\perp$. We have
\begin{equation}\label{eq:cone_laplacian}
    \Delta_x u = \partial_{\rho\rho}u + \frac{d-1}{\rho}\partial_\rho u + \frac{1}{\rho^2}\left(\partial_{\varphi\varphi} + ((d-k-1)\cot\varphi - (k-1)\tan\varphi)\partial_\varphi\right)u.
\end{equation}
 For $u=\rho^{\nu}\Phi(\varphi)$, $u$ is harmonic if $\Phi''+\big((d-k-1)\cot\varphi-(k-1)\tan\varphi\big)\Phi'+\nu(\nu+d-2)\Phi=0$. This leads to the Sturm-Liouville problem
\begin{equation}\label{eq:sturm-liouville}
    -\frac{1}{w_{d,k}}\left(w_{d,k}\Phi'_\alpha\right)' = \lambda_\alpha\Phi_\alpha,  \Phi_\alpha(\varphi_\alpha) = 0, \Phi_\alpha'(\pi/2) = 0,
\hbox{ where } w_{d,k}(\varphi) := \sin^{d-k-1}(\varphi)\cos^{k-1}(\varphi).
\end{equation} We set $\lambda_\alpha > 0$ to be the smallest eigenvalue of \eqref{eq:sturm-liouville}, determined by the Rayleigh quotient
\begin{equation}\label{eq:rayleigh_quotient}
        \lambda_\alpha := \underset{\Phi(\varphi_\alpha) = 0, \Phi\not\equiv0}{\inf_{\Phi\in H^1([\varphi_\alpha,\pi/2], w_{d,k}d\varphi)}}\frac{\int_{\varphi_\alpha}^{\pi/2} |\Phi'|^2w_{d,k}(\varphi)d\varphi}{\int_{\varphi_\alpha}^{\pi/2} |\Phi|^2 w_{d,k}(\varphi)d\varphi}.
\end{equation}
We record a few  properties of $\Phi_{\alpha}$ in the next lemma.

\begin{lem}\label{lem:first_eigenfunction}
There exists a solution  $\Phi_\alpha$ of \eqref{eq:sturm-liouville} with $\Phi_\alpha(\pi/2) = 1$. Moreover $\Phi_\alpha'>0$ on $[\varphi_\alpha, \pi/2)$.
\end{lem}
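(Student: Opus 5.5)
The plan is to obtain $\Phi_\alpha$ as a minimizer of the Rayleigh quotient \eqref{eq:rayleigh_quotient}, normalize it, and read off monotonicity from the integrated form of the ODE.

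\emph{Existence.} The interval $[\varphi_\alpha,\pi/2]$ stays away from $\varphi=0$, where $\sin^{d-k-1}$ degenerates. At the endpoint $\pi/2$ the weight $\cos^{k-1}\varphi$ is either bounded ($k\geq1$) or degenerate like $(\pi/2-\varphi)^{k-1}$. For $k=1$ there is no degeneracy at all. For $k\geq2$ the weight vanishes at $\pi/2$ but is integrable. In the weighted space $H^1([\varphi_\alpha,\pi/2],w_{d,k}d\varphi)$ with $\Phi(\varphi_\alpha)=0$, a one-dimensional Hardy/Poincaré argument gives the compact embedding into $L^2(w_{d,k})$. Indeed, $w_{d,k}$ is comparable to $(\pi/2-\varphi)^{k-1}$ near $\pi/2$, which is the radial weight of $\R^k$, so this is compactness for radial $H^1$ functions on a ball in $\R^k$.

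Hence the infimum $\lambda_\alpha>0$ is attained by some $\Phi$. Replacing $\Phi$ by $|\Phi|$ does not change the quotient, so we may take $\Phi\geq0$. The Euler--Lagrange equation gives \eqref{eq:sturm-liouville} weakly. The natural boundary condition at $\pi/2$ reads $w_{d,k}\Phi'\to0$; for $k=1$ this is exactly $\Phi'(\pi/2)=0$. For $k\geq2$, regularity of the radial eigenfunction in $\R^k$ (equivalently, Frobenius analysis of the ODE at the regular singular point $\pi/2$, with indicial roots $0$ and $2-k$, the latter excluded by finite energy) gives $\Phi$ smooth up to $\pi/2$ with $\Phi'(\pi/2)=0$.

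\emph{Normalization and monotonicity.} Integrate the equation from $\varphi$ to $\pi/2$, using $w\Phi'\to0$ at $\pi/2$:
\[
w_{d,k}(\varphi)\Phi'(\varphi)=\lambda_\alpha\int_\varphi^{\pi/2}\Phi\, w_{d,k}\,d\theta .
\]
Since $\Phi\geq0$ and $\Phi\not\equiv0$, the right side is nonnegative, so $\Phi$ is nondecreasing. Because $\Phi(\varphi_\alpha)=0$ and $\Phi\not\equiv0$, we get $\Phi>0$ near $\pi/2$, and then $\Phi(\pi/2)>0$; dividing by it gives $\Phi_\alpha(\pi/2)=1$.

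For strict positivity of $\Phi'$ on $[\varphi_\alpha,\pi/2)$: the integral $\int_\varphi^{\pi/2}\Phi w$ is strictly positive for every $\varphi<\pi/2$, since $\Phi$ is nondecreasing and positive near $\pi/2$ with $w>0$ on the open interval. As $w_{d,k}(\varphi)>0$ for $\varphi<\pi/2$, this gives $\Phi_\alpha'>0$. At $\varphi_\alpha$ itself, uniqueness for the ODE also excludes $\Phi'(\varphi_\alpha)=0$ together with $\Phi(\varphi_\alpha)=0$, as a consistency check.

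\emph{Main obstacle.} The only delicate step is the endpoint $\pi/2$ when $k\geq2$: justifying compactness and the boundary condition $\Phi'(\pi/2)=0$ despite the degenerate weight. I would handle it by identifying the problem near $\pi/2$ with radial functions in $\R^k$ and invoking elliptic regularity, or by the explicit Frobenius expansion described above.
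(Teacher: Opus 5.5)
Your proposal is correct and follows the paper's proof: minimize the Rayleigh quotient \eqref{eq:rayleigh_quotient}, replace the minimizer by its absolute value, integrate the equation backward from $\pi/2$ to get $w_{d,k}(\varphi)\Phi_\alpha'(\varphi)=\lambda_\alpha\int_\varphi^{\pi/2}w_{d,k}\Phi_\alpha$, and read off strict monotonicity before normalizing at $\pi/2$. Your extra care at the degenerate endpoint $\pi/2$ when $k\geq2$ fills in what the paper leaves to ``the direct method'': the compactness, and the upgrade of the natural condition $w_{d,k}\Phi'\to0$ to $\Phi'(\pi/2)=0$. One small correction there: for $k=2$ the indicial root $0$ is double, so the excluded second solution is logarithmic rather than $s^{2-k}$, though finite energy rules it out just the same.
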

\begin{proof}
    By the direct method, we can find a minimizer $\Phi_\alpha$ of \eqref{eq:rayleigh_quotient}. If $\Phi_\alpha$ minimizes \eqref{eq:rayleigh_quotient}, then so does $|\Phi_\alpha|$, so we may assume $\Phi_\alpha \geq 0$. Integrating \eqref{eq:sturm-liouville} backwards from $\pi/2$ then gives
    \begin{equation}\label{eq:sturm-liouville-1st-derivative}
        w_{d,k}(\varphi)\Phi_\alpha'(\varphi) = \lambda_\alpha \int_{\varphi}^{\pi/2} w_{d,k}(s)\Phi_\alpha(s) ds.
    \end{equation}
    We use \eqref{eq:sturm-liouville-1st-derivative} with $\Phi_\alpha \geq 0$, $\lambda_\alpha > 0$, and $w_{d,k} > 0$ to obtain that $\Phi_\alpha' \geq 0$. Since $\Phi_\alpha(\varphi_\alpha) = 0$ and $\Phi_\alpha\not\equiv 0$, it follows that the right side of \eqref{eq:sturm-liouville-1st-derivative} is positive for all $\varphi\in (\varphi_\alpha, \pi/2)$, and hence $\Phi_\alpha' > 0$ on $[\varphi_\alpha, \pi/2)$. We conclude by scaling $\Phi_\alpha$ to satisfy $\Phi_\alpha(\pi/2) = 1$.
\end{proof}

From \eqref{eq:cone_laplacian} and \eqref{eq:sturm-liouville}, direct computation shows that, for
\begin{equation}\label{eq:cone_exponent}
    \nu_\alpha := \lambda_\alpha[\tfrac{d-2}{2} + ((\tfrac{d-2}{2})^2 + \lambda_\alpha)^{1/2}]^{-1},
\end{equation}
we have
\begin{equation}\label{eq:cone_harmonics}
\Delta(\rho^{\nu_\alpha}\Phi_\alpha(\varphi)) = 0, \quad
    \Delta(\rho^{2-d -\nu_\alpha}\Phi_\alpha(\varphi)) =0, \quad
  \Delta(\rho^2\Phi_\alpha(\varphi)) = (2d-\lambda_\alpha)\Phi_\alpha.
\end{equation}

We will use the functions in \eqref{eq:cone_harmonics} to construct the subsolution $\psi$.  First, we estimate the decay of $\lambda_\alpha$ and $\nu_\alpha$ as $\alpha\to 0$, using \eqref{eq:rayleigh_quotient}.

\begin{lem}\label{lem:first_eigenvalue}
Assume $0 < \alpha \leq \alpha_0(d,k)$, and set
\begin{equation}\label{eq:sigma_d_minus_k}
    \sigma_{d-k}(\alpha) := \begin{cases}
    \frac{1}{\log(1/\alpha)} & d-k=2 \\ \alpha^{d-k-2} & d-k > 2
\end{cases}
\end{equation}

Then there exists a constant $C=C(d,k) > 0$ such that
    \[ C^{-1}\sigma_{d-k}(\alpha) \leq \lambda_\alpha, \nu_{\alpha} \leq C\sigma_{d-k}(\alpha) \leq 1. \]
 
\end{lem}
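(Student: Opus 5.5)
First reduce $\nu_\alpha$ to $\lambda_\alpha$. By \eqref{eq:cone_exponent}, $\nu_\alpha=\lambda_\alpha/[\frac{d-2}{2}+((\frac{d-2}{2})^2+\lambda_\alpha)^{1/2}]$. Since $k\ge1$ forces $d\ge3$, we have $d-2\geq1$. Hence, as long as $\lambda_\alpha\le1$, the denominator lies between $d-2$ and a constant depending on $d$, and $\nu_\alpha\simeq\lambda_\alpha$. It therefore suffices to prove $C^{-1}\sigma_{d-k}(\alpha)\le\lambda_\alpha\le C\sigma_{d-k}(\alpha)$. The final bound $C\sigma_{d-k}(\alpha)\le1$ then follows by shrinking $\alpha_0$, since $\sigma_{d-k}(\alpha)\to0$.

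\emph{Upper bound.} Insert a test function into the Rayleigh quotient \eqref{eq:rayleigh_quotient}. Near $\varphi=\varphi_\alpha\approx\alpha$ the weight behaves like $w_{d,k}\approx\varphi^{d-k-1}$, as in the $(d-k)$-dimensional radial Laplacian. This suggests the capacitary profile of a small ball of radius $\alpha$ inside a ball of radius $\sim1$ in dimension $n:=d-k\ge2$. Take $\Phi=1$ on $[c_0,\pi/2]$ for a fixed small $c_0$, and on $[\varphi_\alpha,c_0]$ take
\[
\Phi(\varphi)=\frac{\log(\varphi/\varphi_\alpha)}{\log(c_0/\varphi_\alpha)}\quad (n=2),\qquad
\Phi(\varphi)=\frac{\varphi_\alpha^{2-n}-\varphi^{2-n}}{\varphi_\alpha^{2-n}-c_0^{2-n}}\quad(n>2).
\]
The denominator of the quotient is bounded below by $\int_{c_0}^{\pi/2}w_{d,k}$, which is a positive constant depending on $d,k$; the factor $\cos^{k-1}$ is harmless except near $\pi/2$, where $\Phi'=0$. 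The numerator is comparable to the capacity $\int\Phi'^2\varphi^{n-1}$, which is $\lesssim 1/\log(1/\alpha)$ for $n=2$ and $\lesssim\alpha^{n-2}$ for $n>2$. This gives $\lambda_\alpha\le C\sigma_{d-k}(\alpha)$.

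\emph{Lower bound.} This is the main obstacle, since it requires a Poincaré/capacity inequality in the weighted space. Let $\Phi$ be admissible with $\Phi(\varphi_\alpha)=0$ and, by homogeneity, normalize $\int|\Phi'|^2w=1$. By Cauchy--Schwarz, for $\varphi\ge\varphi_\alpha$,
\[
|\Phi(\varphi)|^2\le\int_{\varphi_\alpha}^{\varphi}|\Phi'|^2w\cdot\int_{\varphi_\alpha}^{\varphi}\frac{ds}{w(s)}\le\int_{\varphi_\alpha}^{\pi/2}\frac{ds}{w(s)}.
\]
The last integral is finite because $\cos^{1-k}$ is integrable at $\pi/2$ only when $k<2$. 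For $k\ge2$ I would therefore split: on $[\pi/4,\pi/2]$ the weight is bounded below by a constant times $\cos^{k-1}$, and a standard one-dimensional weighted Poincaré/Hardy estimate (the region near $\varphi=\pi/2$ is a $k$-dimensional "origin" in $V$) bounds $\int_{\pi/4}^{\pi/2}\Phi^2w\le C(\Phi(\pi/4)^2+\int|\Phi'|^2w)$. On $[\varphi_\alpha,\pi/4]$ one has $w\simeq\varphi^{n-1}$, so $\int_{\varphi_\alpha}^{\pi/4}ds/w\lesssim\sigma_{n}(\alpha)^{-1}$, namely $\log(1/\alpha)$ when $n=2$ and $\alpha^{2-n}$ when $n>2$. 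Combining these bounds gives $\int\Phi^2w\le C\sigma_n(\alpha)^{-1}$ for $\alpha$ small, and hence $\lambda_\alpha\ge C^{-1}\sigma_{d-k}(\alpha)$.
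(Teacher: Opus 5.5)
Your proof is correct and close to the paper's. The reduction of $\nu_\alpha$ to $\lambda_\alpha$ and the upper bound are the same in substance. The paper tests with the exact profile $\int_{\varphi_\alpha}^{\varphi}w_{d,k}^{-1}$, normalized at $\pi/4$, instead of your flat capacitary profile, but both give a numerator $\lesssim\sigma_{d-k}(\alpha)$ because $w_{d,k}\le\varphi^{d-k-1}$.

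The genuine difference is in the lower bound. The paper starts from the same Cauchy--Schwarz bound $|\Phi(\varphi)|^2\le\int_{\varphi_\alpha}^{\varphi}w_{d,k}^{-1}\cdot\int_{\varphi_\alpha}^{\varphi}|\Phi'|^2w_{d,k}$. Instead of taking a supremum in $\varphi$, it integrates this against $w_{d,k}(\varphi)\,d\varphi$, which gives $\lambda_\alpha^{-1}\le\int_{\varphi_\alpha}^{\pi/2}\int_{\varphi_\alpha}^{\varphi}\frac{w_{d,k}(\varphi)}{w_{d,k}(t)}\,dt\,d\varphi$. Since $t\le\varphi$ implies $\cos t\ge\cos\varphi$, the ratio $\cos^{k-1}\varphi/\cos^{k-1}t$ is at most $1$. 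The degeneracy of $w_{d,k}$ at $\pi/2$ therefore never appears, and every $k\ge1$ is handled at once.

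Taking the supremum, as you do, forces the split for $k\ge2$ and an extra weighted Poincar\'e inequality on $[\pi/4,\pi/2]$. That inequality is true, but you assert it as standard rather than proving it. Here is how to close it:
\begin{itemize}
\item Put $s=\pi/2-\varphi$ and write $\Phi(s)=\Phi(\pi/4)-\int_s^{\pi/4}\Phi'$.
\item Cauchy--Schwarz reduces the estimate to the finiteness of $\int_0^{\pi/4}s^{k-1}\int_s^{\pi/4}r^{1-k}\,dr\,ds$.
\item That integral is finite for every $k\ge2$.
\end{itemize}
This is exactly the Fubini/Hardy computation the paper performs globally. The paper's version is shorter and uniform in $k$. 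Yours makes explicit the heuristic that the region near $\varphi=\pi/2$ behaves like a $k$-dimensional origin and contributes only $O(1)$ to the Rayleigh quotient's denominator.
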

\begin{proof}
  Recalling $\varphi_\alpha := \arctan \alpha$, let $\alpha$ be sufficiently small that $\varphi_\alpha \leq \pi/8$. Define
    \[ \Phi(\varphi) := \begin{cases}
        \frac{\int_{\varphi_\alpha}^{\varphi} w_{d,k}(t)^{-1} dt}{\int_{\varphi_\alpha}^{\pi/4} w_{d,k}(t)^{-1} dt} & \varphi \leq \pi/4 \\
        1 & \varphi \geq \pi/4,
    \end{cases} \]
     Since $\Phi \geq 0$ with $\Phi\equiv 1$ on $[\pi/4, \pi/2]$, we have
    \[ \int_{\varphi_\alpha}^{\pi/2} |\Phi|^2w_{d,k}(\varphi)d\varphi \geq \int_{\pi/4}^{\pi/2} w_{d,k}(\varphi)d\varphi =: c_{d,k}. \]
    Testing the Rayleigh quotient \eqref{eq:rayleigh_quotient} with $\Phi$ then yields
    \[ \lambda_\alpha \leq \frac{1}{c_{d,k}}\int_{\varphi_\alpha}^{\pi/2} |\Phi'|^2w_{d,k}(\varphi)d\varphi = \frac{1}{c_{d,k}}\left(\int_{\varphi_\alpha}^{\pi/4} \frac{1}{\sin^{d-k-1}(\varphi)\cos^{k-1}(\varphi)}d\varphi\right)^{-1}. \]
    Estimating with $\sin\varphi \leq \varphi$ and $\cos\varphi \leq 1$ gives $\lambda_\alpha \leq \frac{C_1}{\sigma_{d-k}(\varphi_\alpha)^{-1} - C_2}$ for some $C_i(d,k)$. This yields the upper bound for $\lambda_\alpha$ once $\alpha$ is sufficiently small.

    Toward the reverse inequality for $\lambda_\alpha$, Cauchy-Schwarz gives
    \begin{equation*}
        |\Phi(\varphi)|^2 \leq \left(\int_{\varphi_\alpha}^{\varphi} w_{d,k}(t)^{-1}dt\right)\left(\int_{\varphi_\alpha}^{\varphi}|\Phi'(t)|^2w_{d,k}(t)dt\right)
    \end{equation*}
    for any admissible $\Phi$ and $\varphi\in (\varphi_\alpha, \pi/2)$. We integrate both sides against $w_{d,k}(\varphi)$ and bound the rightmost factor by its value at $\varphi = \pi/2$. Rearranging yields the Rayleigh quotient, and thus
    \begin{align*}
        \lambda_\alpha \geq \left(\int_{\varphi_\alpha}^{\pi/2}\int_{\varphi_\alpha}^{\varphi}  \frac{w_{d,k}(\varphi)}{w_{d,k}(t)}dtd\varphi\right)^{-1}
    \end{align*}
    We can estimate this using $\cos t \geq \cos\varphi \geq 0$, $\sin\varphi \leq \varphi$, and $\sin t \geq t/2$, since $0 \leq t \leq\varphi\leq \pi/2$. As before, this implies $\lambda_\alpha \geq \frac{C_3}{\sigma_{d-k}(\varphi_\alpha)^{-1} - C_4}$ for some $C_i(d,k)$, which implies the lower bound for $\lambda_\alpha$ once $\alpha$ is sufficiently small. The bounds for $\nu_\alpha$ follow directly from applying the bounds for $\lambda_\alpha$ to \eqref{eq:cone_exponent}.

\end{proof}

\begin{lem}\label{lem:cone_subbarrier}
Proposition~\ref{prop:pressure_lower_bd} holds in the case $1\leq k \leq d-2$. In fact, we have
\[ p(x,t) \geq C(d)\underline{c}R_0^{2-\nu_\alpha}\rho^{\nu_\alpha}\Phi_\alpha(\varphi), \]
whenever $4r_0 < \rho < R_0/4$ and $\varphi\in [\varphi_\alpha, \pi/2]$.
\end{lem}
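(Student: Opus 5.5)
We construct an explicit subsolution $\psi$ on $U:=U(V,\alpha,R_0,r_0)$ from the three separated functions in \eqref{eq:cone_harmonics}, and then compare it with $p(\cdot,t)$. On $U\subset\Omega_t$, Remark~\ref{rem:HS-pointwise-pressure-equation} gives $-\Delta p(\cdot,t)=c(\cdot,t)\geq\underline c:=\underline c(R_0,t)$, and $p\geq0$ on $\partial U$.

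We look for $\psi$ of the form
\[
\psi=a\bigl(R_0^{2-\nu}\rho^{\nu}+b\,r_0^{2d-4+2\nu}R_0^{2-\nu}\rho^{2-d-\nu}-\rho^2\bigr)\Phi_\alpha(\varphi),\qquad \nu:=\nu_\alpha .
\]
Here $b$ is a constant, and $a>0$ is fixed below.

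\emph{Sign of $\psi$ on $\partial U$.} The key point is that $\psi\leq0$ on $\partial U$.
\begin{itemize}
\item On the cone $\{\varphi=\varphi_\alpha\}$ we have $\psi=0$, because $\Phi_\alpha(\varphi_\alpha)=0$.
\item On $\{\rho=R_0\}$ the bracket is $R_0^2+b\,r_0^{2d-4+2\nu}R_0^{4-d-2\nu}-R_0^2$. This is $\le0$ if we simply take $b\leq0$.
\item On $\{\rho=r_0\}$ we need $R_0^{2-\nu}r_0^{\nu}+b\,r_0^{d-2+\nu}R_0^{2-\nu}\cdots\le r_0^2$.
\end{itemize}
The inner sphere is more delicate, so a cleaner route is to drop the $\rho^{2-d-\nu}$ term and truncate instead. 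We set
\[
\psi:=a\bigl(R_0^{2-\nu}\rho^{\nu}-\rho^2-\kappa R_0^{2-\nu}r_0^{\nu}\bigr)\Phi_\alpha ,
\]
and replace $\psi$ by $\psi_+$ near the inner sphere. More precisely, we argue by comparison on the subregion where $\psi>0$; maxima of subsolutions are subsolutions, so $\psi_+$ is admissible. On $\{\rho=r_0\}$ the bracket is negative once $\kappa\geq1$. On $\{\rho=R_0\}$ it equals $-\kappa R_0^{2-\nu}r_0^\nu<0$. Using $\Delta\Phi_\alpha$-type terms (the constant term gives $\Delta(\Phi_\alpha)=-\lambda_\alpha\rho^{-2}\Phi_\alpha\le 0$, which has the wrong sign), one checks that the constant term must be handled with care. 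So I would instead keep the pure combination $R_0^{2-\nu}\rho^\nu-\rho^2$, which vanishes at $\rho=R_0$. For the inner boundary I would use the monotone superposition $\max$ with $0$: since $R_0^{2-\nu}\rho^\nu-\rho^2>0$ for all $\rho<R_0$, the inner sphere is not automatic. This is the main obstacle. To handle it, I would add the decaying harmonic $-\beta\rho^{2-d-\nu}\Phi_\alpha$ with $\beta=R_0^{2-\nu}r_0^{d-2+2\nu}$, which kills the inner boundary value up to the harmless $-r_0^2$.

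\emph{Subsolution inequality.} By \eqref{eq:cone_harmonics}, the harmonic terms contribute nothing, and
\[
-\Delta\psi=a(2d-\lambda_\alpha)\Phi_\alpha\leq 2da .
\]
Choosing $a=\underline c/(2d)$ gives $-\Delta\psi\leq\underline c\leq-\Delta p$ in $U$. The maximum principle then yields $\psi\leq p(\cdot,t)$ on $U$, using that $p(\cdot,t)$ is continuous or upper semicontinuous with $p\ge0$ on $\partial U$.

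\emph{Evaluation on $4r_0<\rho<R_0/4$.} Here $\rho^2\le 4^{\nu-2}R_0^{2-\nu}\rho^\nu$, since $\nu\le1$ by Lemma~\ref{lem:first_eigenvalue}. Also
\[
\beta\rho^{2-d-\nu}\le 4^{2-d-2\nu}R_0^{2-\nu}\rho^{\nu}\le\tfrac14R_0^{2-\nu}\rho^\nu .
\]
Hence $\psi\geq C(d)\underline cR_0^{2-\nu}\rho^\nu\Phi_\alpha(\varphi)$, which is the claim of Lemma~\ref{lem:cone_subbarrier}. Finally, at $x=re$ with $e\in V^\perp$ we have $\varphi=\pi/2$ and $\Phi_\alpha=1$, which gives the $(r/R_0)^\nu$ bound of Proposition~\ref{prop:pressure_lower_bd}, with $\nu\to0$ by Lemma~\ref{lem:first_eigenvalue}.
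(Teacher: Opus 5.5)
Your final barrier $\psi=\frac{\underline c}{2d}\bigl(R_0^{2-\nu}\rho^{\nu}-R_0^{2-\nu}r_0^{d-2+2\nu}\rho^{2-d-\nu}-\rho^2\bigr)\Phi_\alpha$ is correct and follows the paper's argument: a separated-variables subsolution built from \eqref{eq:cone_harmonics}, comparison on $U$, and evaluation on $4r_0<\rho<R_0/4$. The only difference is that you use nonpositive rather than zero boundary values on the two spheres; this avoids computing the exact Dirichlet coefficients $A,B$ and gives directly $\psi\geq\frac{\underline c}{4d}R_0^{2-\nu}\rho^\nu\Phi_\alpha$.

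Delete the abandoned detours before writing this up. In particular, your claim that the term $-\kappa R_0^{2-\nu}r_0^{\nu}\Phi_\alpha$ has the wrong sign is mistaken: since $\Delta\Phi_\alpha=-\lambda_\alpha\rho^{-2}\Phi_\alpha\leq0$, that term is subharmonic and keeps $\psi$ a subsolution. The real reason that route fails is that it only yields the factor $\tfrac34-\kappa4^{-\nu}$ with $\kappa\geq1$, which is negative for small $\nu$. By contrast, the fast-decaying term $\rho^{2-d-\nu}$ costs only $4^{2-d-2\nu}\leq\tfrac14$.
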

\begin{proof}

    For fixed $t$, let $\psi(x)$ solve
    \begin{equation}\label{subsolution} 
    \Delta \psi = -\underline{c}\Phi_\alpha(\varphi) \hbox{ in } U(V,\alpha, R_0, r_0), \quad         \psi = 0 \hbox{ on } \partial U(V,\alpha, R_0, r_0).
    \end{equation}
 We require $\alpha \leq \alpha_0$, with $\alpha_0(d,k)$ chosen by Lemma~\ref{lem:first_eigenvalue} to ensure that $\nu_\alpha, \lambda_\alpha\in (0, 1]$.   
    From Lemma~\ref{lem:first_eigenfunction}, $0 \leq \Phi_\alpha \leq 1$, and thus $\Delta \psi \geq -\underline{c} \geq -c = \Delta p(t)$ in $U$. Since $\psi = 0 \leq p(t)$ on $\partial U$, elliptic comparison principle gives $\psi \leq p(t)$ in $U$.
    
    From \eqref{eq:cone_harmonics}, $\psi$ can be found explicitly in the form
    \[ \psi(\rho, \varphi) = \frac{\underline{c}}{2d-\lambda_\alpha}(A\rho^{\nu_\alpha} + B\rho^{2-d - \nu_\alpha} - \rho^2)\Phi_\alpha(\varphi), \]
    for constants
    \[ A = \frac{R_0^{d+\nu_\alpha} - r_0^{d+\nu_\alpha}}{R_0^{d-2+2\nu_\alpha}- r_0^{d-2+2\nu_\alpha}}, \quad B = r_0^{d + \nu_\alpha} - Ar_0^{d-2+2\nu_\alpha}. \]
    Let us write $A$ as $R_0^{2-\nu_\alpha}\left(1 + \frac{r_0^{d-2+2\nu_\alpha}(R_0^{2-\nu_\alpha}- r_0^{2-\nu_\alpha})}{R_0^{d+\nu_\alpha}- R_0^{2-\nu_\alpha}r_0^{d-2+2\nu_\alpha}}\right)$. Since $0 < \nu_\alpha < 2$, we have $R_0^{2-\nu_\alpha} - r_0^{2-\nu_\alpha} > 0$ and $R_0^{d+\nu_\alpha}- R_0^{2-\nu_\alpha}r_0^{d-2+2\nu_\alpha} > 0$. Thus, $A \geq R_0^{2-\nu_\alpha}$. On the other hand, using  $16r_0\leq R_0$,  we have $A \leq \frac{1}{1- (1/16)^{d-2+2\nu_\alpha}}R_0^{2-\nu_\alpha}$ and thus $B \geq -\frac{r_0^{d-2+2\nu_\alpha}R_0^{2-\nu_\alpha}}{1- (1/16)^{d-2+2\nu_\alpha}}$. Altogether,
    \[ \begin{aligned}
        \psi(\rho, \varphi) &\geq \frac{\underline{c}}{2d-\lambda_\alpha}\left(R_0^{2-\nu_\alpha}\rho^{\nu_\alpha} - \frac{r_0^{d-2+2\nu_\alpha}R_0^{2-\nu_\alpha}}{1- (1/16)^{d-2+2\nu_\alpha}}\rho^{2-d-\nu_\alpha} - \rho^2\right)\Phi_\alpha(\varphi)
        \\&= \frac{\underline{c}R_0^{2-\nu_\alpha}}{2d-\lambda_\alpha}\left(1 - \frac{1}{1- (1/16)^{d-2+2\nu_\alpha}}\left(\frac{r_0}{\rho}\right)^{d-2 + 2\nu_\alpha} - \left(\frac{\rho}{R_0}\right)^{2-\nu_\alpha}\right)\rho^{\nu_\alpha}\Phi_\alpha(\varphi).
    \end{aligned} \]
    Since $\nu_\alpha > 0$ and $d\geq 3$, we have $d-2+2\nu_\alpha \geq 1$. It follows that $\frac{1}{1- (1/16)^{d-2+2\nu_\alpha}}\left(\frac{r_0}{\rho}\right)^{d-2 + 2\nu_\alpha} \leq \frac{4}{15}$ for $\rho \geq 4r_0$. On the other hand, since $\nu_\alpha \leq 1$, we have $2-\nu_\alpha \geq 1$. For $\rho \leq R_0/4$, it follows that $\left(\frac{\rho}{R_0}\right)^{2-\nu_\alpha} \leq \frac{1}{4}$. Finally, we note that $0 < 2d-\lambda_\alpha < 2d$, since $\lambda_\alpha\in (0,1]$. We conclude that for $\rho\in (4r_0, R_0/4)$, we have
    \[ \psi(\rho, \varphi) \geq \frac{29}{60}\cdot \frac{\underline{c}R_0^{2-\nu_\alpha}}{2d}\rho^{\nu_\alpha}\Phi_\alpha(\varphi). \]
\end{proof}

\section{Closing rates and asymptotic behavior at singular points}\label{sec:closing-rates}

Throughout this section, we work at a singular point $0\in \Sigma_k(t_0)$, with $0 \leq k \leq d-2$. Let $p_2$ be the blowup of $w(\cdot, t_0)$ at $0$, and let $\lambda_1>0$ be its smallest eigenvalue. We set $V := \ker D^2 p_2$, and write $x = (x',x'')\in V^\perp\times V$.

Two hypotheses enter throughout this section: the time regularity
\eqref{eq:A-source-time}, through Theorem~\ref{prop:HS-pressure-hopf-lax}, and the local
H\"older continuity of the hitting time,
\begin{equation}\label{eq:closing-T-holder}
    T\in C^{0,\beta}_{\operatorname{loc}}(\{0<T<\infty\})
    \qquad\hbox{for some }\beta\in(0,1] .
\end{equation}
Under \eqref{eq:closing-T-holder}, the source $f$ of \eqref{eq:intro-w-obstacle} is locally H\"older in space, and
hence the blowup $p_2$ and the stratification \eqref{eq:intro-strata} are well-defined. The H\"older continuity of $f$
enters quantitatively to control the convergence rate of the blowup in Lemma~\ref{lem:r_plus_upper_bd}.  Under
\eqref{eq:A-interior-ball} and \eqref{eq:A-source-time} with $m=\infty$,
\eqref{eq:closing-T-holder} holds with $\beta=1$ by
Theorem~\ref{thm:HS-hitting-time-lipschitz}; alternatively, it holds under
\eqref{eq:A-source}, \eqref{eq:A-initial}, and \eqref{eq:A-source-time} alone, by
Remark~\ref{rem:HS-hitting-time-holder}.

For $R_0>0$ and $0<\alpha \leq \alpha_0(d,k)$ such that all of the results of Section~\ref{sec:static-barriers} hold, define
\begin{equation}\label{eq:r_plus_def}
    r_+(t) =r_+(t;R_0, \alpha):= \inf \{ r > 0 : U(V, \alpha, R_0,r) \subset \Omega_t\}.
\end{equation}
(See Figures~\ref{fig:U-region} and~\ref{fig:r-perp}). When $k=0$, $r_+(t)$ is simply the outer radius of the zero set in $B_{R_0}$.
We measure the transverse size of the same portion of the zero set by
\begin{equation}\label{eq:r-perp-def}
    r_\perp(t)=r_\perp(t;R_0,\alpha)
    :=\sup\bigl\{\abs{x'}:x\in U(V,\alpha, R_0,0)\setminus\Omega_t\bigr\},
\end{equation}
Let us note that for $t < t_0$, we have $r_+(t), r_\perp(t)\in (0, R_0]$. Here, the lower bounds both follow from $0\notin \overline{\Omega_t}$, and the upper bounds are trivial (since $U(V, \alpha, R_0, R_0) = \emptyset$ and $U(V, \alpha, R_0,0)\subset B_{R_0}$). Let us also remark that the definition of $r_\perp$ here is equivalent to the definition given in \eqref{eq:intro-r-perp}, since $\{ x\in B_{R_0}\setminus \Omega_t: |x'| > \alpha |x''| \} = U(V,\alpha, R_0,0)\setminus\Omega_t$.

We have that $r_+, r_\perp$ are comparable, with the estimate
\begin{equation}\label{eq:r-perp-r-plus-comparison}
    \frac{\alpha}{\sqrt{1+\alpha^2}}\,r_+(t)
    \leq r_\perp(t)\leq r_+(t).
\end{equation}
Indeed, since $U(V, \alpha, R_0, r_+(t)) = \bigcup_{r > r_+(t)} U(V, \alpha, R_0, r)$, \eqref{eq:r_plus_def} gives $U(V, \alpha, R_0, r_+(t)) \subset \Omega_t$. Thus, $U(V, \alpha, R_0,0)\setminus \Omega_t \subset \overline{B_{r_+(t)}}$, which gives the upper bound on $r_\perp(t)$. Conversely, for each $r < r_+(t)$, there exists $x\in U(V, \alpha, R_0, r)\setminus \Omega_t$. This $x$ satisfies $|x| > r$ and $|x'| > \alpha |x''|$, which implies $r_\perp(t) \geq |x'| > \alpha (1 + \alpha^2)^{-1/2} r$. We conclude the lower bound by sending $r \uparrow r_+(t)$.

\subsection{Almost-sharp closing rates}
\label{subsec:almost-sharp-closing-rates}

In this section, we estimate $r_+(t)$ for $t<t_0$.
Equation~\eqref{eq:r-perp-r-plus-comparison} then transfers these
bounds to $r_\perp(t)$.
We begin by showing that $r_+(t)\to0$ as $t\to t_0^-$.

\begin{lem}\label{lem:r_plus_upper_bd}
Assume \eqref{eq:A-source}, \eqref{eq:A-initial}, \eqref{eq:A-source-time}, and
\eqref{eq:closing-T-holder}. Let $R_0=R_0(d,\lambda_1, \alpha,\|c\|_{L^1_t C^{0,\beta}_x}, \|T\|_{C^{0,\beta}})$ be sufficiently small. Then
\begin{equation}\label{eq:upper_bd_r}
    r_+(t) \leq C(t_0 - t)^{1/2},
\end{equation}
where $C = C(\lambda_1, \alpha, \|p\|_{L^\infty(B_{R_0}\times (t,t_0))})$.
\end{lem}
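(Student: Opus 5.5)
The plan is to transfer the quadratic lower bound of the blowup $p_2$ from the time slice $t_0$ back to earlier times $t<t_0$, using only the trivial time-Lipschitz bound on the Baiocchi transform. Since $p\geq0$ and $w(x,t)=\int_0^tp(x,s)\,ds$, for $t<t_0$ we have
\[
    w(x,t)\geq w(x,t_0)-(t_0-t)\,\norm{p}_{L^\infty(B_{R_0}\times(t,t_0))},
    \qquad x\in B_{R_0}.
\]
So it suffices to show that $w(\cdot,t_0)$ grows quadratically in $\abs{x}$ on the cone region $U(V,\alpha,R_0,0)$:
\begin{equation*}
    w(x,t_0)\geq c_\alpha\abs{x}^2\qquad\hbox{for }x\in U(V,\alpha,R_0,0),
\end{equation*}
with $c_\alpha=c(\lambda_1,\alpha)>0$. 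Granting this, any $x\in U(V,\alpha,R_0,0)$ with $c_\alpha\abs{x}^2>(t_0-t)\norm{p}_\infty$ satisfies $w(x,t)>0$, that is, $x\in\Omega_t$. Hence $U(V,\alpha,R_0,r)\subset\Omega_t$ for $r=(c_\alpha^{-1}\norm{p}_\infty(t_0-t))^{1/2}$, and \eqref{eq:upper_bd_r} follows from \eqref{eq:r_plus_def}.

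For the quadratic lower bound, first note that $T(0)=t_0>0$ and $T$ is continuous, so $0\notin\overline{\Omega_0^+}$. Hence in a small ball $B_{2R_1}$ the slice $w(\cdot,t_0)$ solves the obstacle problem \eqref{eq:intro-w-obstacle} with source $f(x,t_0)=1-\int_{T(x)\wedge t_0}^{t_0}c(x,s)\,ds$. By \eqref{eq:closing-T-holder} and the Hölder regularity of $c$, this source is $C^{0,\beta}$, with $f(0,t_0)=1$, and it is bounded below by $1/2$ after shrinking $R_1$ in terms of $\norm{c}$ and $\norm{T}_{C^{0,\beta}}$. Lemma~\ref{lem:stationary-blowup} then gives $w_r(x):=r^{-2}w(rx,t_0)\to p_2$ in $C^{1,1-}_{\mathrm{loc}}$, in particular uniformly on $\overline{B_1}$.

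On the compact set $K_\alpha:=\{\abs{x}=1,\ \abs{x'}\geq\alpha\abs{x''}\}$ we have $p_2(x)\geq\frac{\lambda_1}{2}\abs{x'}^2\geq\frac{\lambda_1}{2}\frac{\alpha^2}{1+\alpha^2}=:2c_\alpha$. Uniform convergence therefore gives some $R_0>0$ such that $w_r\geq c_\alpha$ on $K_\alpha$ for all $r<R_0$. Every $x\in U(V,\alpha,R_0,0)$ can be written as $x=r\hat x$ with $r=\abs{x}<R_0$ and $\hat x\in K_\alpha$, so $w(x,t_0)=r^2w_r(\hat x)\geq c_\alpha\abs{x}^2$, which is the claimed bound.

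The main obstacle is making $R_0$ depend only on the quantities listed in the statement, rather than on the particular solution through the qualitative convergence in Lemma~\ref{lem:stationary-blowup}. I would handle this by a compactness/contradiction argument. Suppose there were obstacle solutions whose sources have uniformly bounded $C^{0,\beta}$ norm, equal $1$ at the origin, and whose blowups have smallest nonzero eigenvalue at least $\lambda_1$, yet which violate the bound at scales $r_n\to0$. By Lemma~\ref{lem:obst-blowup} the rescalings would converge, along a subsequence, to a global convex solution with constant source $1$. One must then argue that this limit still satisfies $\geq2c_\alpha$ on $K_\alpha$. Here I would use the quantitative Hölder control of $f$ through a Weiss/Monneau-type monotonicity with a Hölder error term, which gives a uniform modulus for the convergence $w_r\to p_2$. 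This is where the Hölder norms of $T$ and $c$ enter.
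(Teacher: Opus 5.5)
Your argument is correct and is essentially the paper's proof. Both combine the time-Lipschitz bound $w(\cdot,t)\ge w(\cdot,t_0)-\|p\|_{L^\infty}(t_0-t)$ with the quadratic lower bound of $w(\cdot,t_0)$ on the cone coming from $p_2$. The paper writes that bound as a Taylor remainder $\sigma(|x|)|x|^2$, which is equivalent to your uniform convergence of $w_r\to p_2$ on $K_\alpha$.

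For the dependence of $R_0$, the paper simply invokes a universal modulus $\sigma$ depending only on $d$ and the $C^{0,\beta}$ norm of $f(\cdot,t_0)$, citing \cite[Lemma 6.13]{CJK25}. The Monneau-type uniform modulus you mention therefore suffices directly, and the compactness/contradiction detour is unnecessary.
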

\begin{proof}
    By definition of $w$, we have the Lipschitz estimate
    \[ w(x,t) \geq w(x,t_0) - \|p\|_{L^\infty(B_{R_0}\times (t,t_0))}(t_0 - t). \]
    Using $p_2$ to Taylor-expand $w(x,t_0)$ at $x=0$ leads to
    \[ w(x,t) \geq \lambda_1|x'|^2 - \sigma(|x|)|x|^2 - \|p\|_{L^\infty_{x,t}}(t_0 - t). \]
    Here, $\sigma(|x|)|x|^2 = o(|x|^2)$ bounds the remainder for the quadratic blowup from Lemma~\ref{lem:stationary-blowup}. The coefficient $\sigma$ can be taken as a universal modulus of continuity depending only on dimension and the $C^{0,\beta}$ norm of $f(\cdot, t_0)$ in a neighborhood of 0, for $\beta\in (0,1]$ and $f$ from \eqref{eq:intro-w-obstacle} (see \cite[Lemma 6.13]{CJK25}). This norm of $f$ is controlled by $\|c\|_{L^1_tC^{0,\beta}_x}$ and $\|T\|_{C^{0,\beta}}$.
    
    In the $k=0$ case we have $|x'|^2 = |x|^2$. Thus, for $R_0$ sufficiently small that $\sigma(R_0) \leq \frac{\lambda_1}{2}$, we have
    \[ w(x,t) \geq \frac{\lambda_1}{2}|x|^2 - \|p\|_{L^\infty(B_{R_0}\times (t,t_0))}(t_0 - t), \]
     which is positive when $|x|^2 > C(t_0-t)$ with $C=2\lambda_1^{-1}\|p\|_{L^\infty(B_{R_0}\times (t,t_0))}$, so \eqref{eq:upper_bd_r} follows with this $C$.

    In the $1\leq k \leq d-2$ case, we choose $R_0$ small so that $\sigma(R_0) \leq \lambda_1\frac{\alpha^2/2}{1 + \alpha^2/2}$. Then
    \begin{equation}\label{eq:w_lower_bound_cone}
    w(x,t) \geq \frac{\lambda_1}{1 + \alpha^2/2}\left(|x'|^2 - \frac{\alpha^2}{2}|x''|^2 - \frac{1 + \alpha^2/2}{\lambda_1}\|p\|_{L^\infty}(t_0 - t)\right).
    \end{equation}
    Inside the cone $\{ |x'| \geq \alpha |x''|\}$, which is $\{ \varphi \geq \arctan \alpha\}$ in the $(\rho, \varphi)$-coordinates of \eqref{eq:spherical_coordinates}, we have
    \[ \begin{aligned}
    |x'|^2 - \frac{\alpha^2}{2}|x''|^2 
    = |x|^2\left(\sin^2\varphi - \frac{\alpha^2}{2}\cos^2\varphi\right)
    \geq \frac{\alpha^2/2}{1 + \alpha^2}|x|^2.
\end{aligned}\]
    Thus, for $x$ with $|x'| \geq \alpha |x''|$, we have
    \[ w(x,t) \geq \frac{\lambda_1(\alpha^2/2)}{(1 + \alpha^2/2)(1 + \alpha^2)}\left(|x|^2 - \frac{(1 + \alpha^2/2)(1 + \alpha^2)}{\lambda_1(\alpha^2/2)}\|p\|_{L^\infty}(t_0 - t)\right), \]
    Using $\alpha < 1$, it follows that $w > 0$ on $U(V,\alpha, R_0, r)$ for $r > \sqrt{\frac{6\|p\|_{L^\infty}}{\lambda_1 \alpha^2}(t_0 - t)}$, so we conclude.
\end{proof}

Next we obtain a lower bound for $r_+$ by combining growth estimate on pressure with Hopf-Lax. We set
\begin{equation}\label{eq:hopf-lax-const-reminder}
    C_{\mathrm{Hopf-Lax}} := \frac{1}{4}\exp(M + \|\Lambda\|_{L^\infty(0,t_0)})
\end{equation}
where $M, \Lambda$ from \eqref{eq:HS-Hopf-Lax-cinf-L-M}--\eqref{eq:HS-Hopf-Lax-Lambda-q} depend on \eqref{eq:A-source} and \eqref{eq:A-source-time}.

Let $(x_1, t_1)$ and $(x_2,t_2)$ with $t_1 < t_2$, $|x_1 - x_2| \leq 1$. Then if $p(x_2, t_2) = 0$, Theorem~\ref{prop:HS-pressure-hopf-lax} yields
\[ p(x_1, t_1) \leq C_{\mathrm{Hopf-Lax}}\frac{|x_2-x_1|^2}{t_2 - t_1}. \]

\begin{lem}\label{lem:r_plus_lower_bd}
Assume \eqref{eq:A-source}, \eqref{eq:A-initial}, and \eqref{eq:A-source-time}. For $t < t_0$ sufficiently close that $\frac{r_+(t)}{R_0} < \frac{1}{16}$ and $4r_+(t) \leq 1$, we have the estimate
\begin{equation}
    t_0 - t \leq \frac{C(d,k)C_{\mathrm{Hopf-Lax}}}{\underline{c}(R_0,t)R_0^2}r_+(t)^2\cdot\begin{cases}
        1 & d\geq 3, k=0 \\
        \log\frac{R_0}{r_+(t)} & d=2, k=0 \\
        (\tfrac{R_0}{r_+(t)})^{\nu} & 1 \leq k \leq d-2
    \end{cases}
\end{equation}
Here, $C_{\mathrm{Hopf-Lax}}$ is from \eqref{eq:hopf-lax-const-reminder} and $\underline{c},\nu$ are the same as in Proposition~\ref{prop:pressure_lower_bd}.
\end{lem}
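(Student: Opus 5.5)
The plan is to combine the static lower bound of Proposition~\ref{prop:pressure_lower_bd} at time $t$ with the consequence of the Hopf--Lax inequality recorded just above. That consequence is the bound $p(x_1,t_1)\leq C_{\mathrm{Hopf-Lax}}|x_2-x_1|^2/(t_2-t_1)$ whenever $p(x_2,t_2)=0$. We evaluate it with $(x_2,t_2)=(0,t_0^-)$, that is, at times just before $t_0$ where the origin is still outside the patch.

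First, the geometric input. Fix $t<t_0$ as in the statement and set $r_0:=r_+(t)$. By the definition \eqref{eq:r_plus_def} (and the remark following \eqref{eq:r-perp-r-plus-comparison}), $U(V,\alpha,R_0,r_0)\subset\Omega_t$. Because $r_0<R_0/16$, Proposition~\ref{prop:pressure_lower_bd} applies. We choose a unit vector $e\in V^\perp$ and the radius $r:=4r_+(t)$; strictly we need $r>4r_0$, so we take $r=4r_+(t)(1+\eta)$ and let $\eta\downarrow0$ at the end, or simply use $r=5r_+(t)$, which only changes constants. We also need $r<R_0/4$. This holds since $r_+(t)<R_0/16$, up to the same harmless adjustment of constants (e.g.\ replacing $16$ by a slightly larger number, or observing that the bounds in the proof of Lemmas~\ref{lem:radial_subbarrier}--\ref{lem:cone_subbarrier} remain valid up to the endpoint). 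We then obtain
\[ p(re,t)\geq C(d,k)\,\underline c(R_0,t)R_0^2\, g(r), \]
where $g(r)=1$, $(\log\frac{R_0}{r})^{-1}$, or $(r/R_0)^{\nu}$ according to the case. Since $r\simeq r_+(t)$, we have $\log\frac{R_0}{r}\leq\log\frac{R_0}{r_+(t)}$ and $(r/R_0)^\nu\geq(r_+(t)/R_0)^\nu$.

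Second, the upper bound from Hopf--Lax. For every $s\in(t,t_0)$ we have $0\notin\Omega_s$, so $0\notin\overline{\Omega_{s'}}$ for $s'<s$. Hence $w(0,s)=0$ and $p(0,s)=0$, using \eqref{eq:HS-open-phase-identities} together with $p(0,s)>0\Rightarrow w(0,s')>0$ for $s'>s$ by right-continuity. Since $|re|=r\leq 4r_+(t)\cdot(1+\eta)\leq 1$ after the adjustment, the displayed consequence of Theorem~\ref{prop:HS-pressure-hopf-lax} with $x_1=re$, $t_1=t$, $x_2=0$, $t_2=s$ gives
\[ p(re,t)\leq C_{\mathrm{Hopf-Lax}}\,\frac{r^2}{s-t}. \]
Letting $s\uparrow t_0$ yields $p(re,t)\leq C_{\mathrm{Hopf-Lax}}\,r^2/(t_0-t)$.

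Finally, combining the two bounds and rearranging gives
\[ t_0-t\leq \frac{C_{\mathrm{Hopf-Lax}}\,r^2}{C(d,k)\,\underline c(R_0,t)R_0^2\,g(r)}. \]
Substituting $r\simeq r_+(t)$ and the monotonicity comparisons for $g$ noted above produces the three cases of the lemma, with $C(d,k)$ absorbing the numerical factors. The only delicate point is the bookkeeping at the endpoint radius $r=4r_+(t)$. The strict inequalities in Proposition~\ref{prop:pressure_lower_bd} force us to take $r$ slightly larger than $4r_0$, and the hypothesis $4r_+(t)\leq1$ is needed to stay within $|x_1-x_2|\leq1$. This is handled by a limiting argument or by a constant-factor enlargement, and I expect no real obstacle beyond it.
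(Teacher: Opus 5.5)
Your proposal is correct and follows the paper's proof essentially verbatim. You apply the Hopf--Lax consequence between $(4r_+(t)e,t)$ and $(0,s)$ with $s\in(t,t_0)$; there $p(0,s)=0$, which sidesteps the possible positivity of the upper semicontinuous $p(0,t_0)$. You then let $s\uparrow t_0$ and compare with the lower bound of Proposition~\ref{prop:pressure_lower_bd}, which is available because $U(V,\alpha,R_0,r_+(t))\subset\Omega_t$.

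Your extra care at the endpoint $r=4r_+(t)$ is a small improvement over the paper, which simply uses $r=4r_+(t)$. Of your fixes, the right ones are the $\eta\downarrow0$ limit, or noting that the barrier bounds in Lemmas~\ref{lem:radial_subbarrier}--\ref{lem:cone_subbarrier} hold on the closed range $4r_0\leq\rho\leq R_0/4$. The option $r=5r_+(t)$ is not covered by the hypotheses $r_+(t)<R_0/16$ and $4r_+(t)\leq1$ unless you re-derive the barrier constants.
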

\begin{proof}
Let $e\in V^\perp$ with $|e| = 1$. We would like to apply Hopf-Lax to compare the values of $p$ at $(4r_+(t)e, t)$ and $(0, t_0)$, but this encounters issues since the pressure is upper semicontinuous and may have $p(0, t_0) > 0$. Instead, we take $t_1 \in (t, t_0)$. We have $p(0, t_1) = 0$, so Theorem~\ref{prop:HS-pressure-hopf-lax} yields
\begin{equation}\label{eq:r_plus_hopf_lax}
    p(4r_+(t)e, t) \leq C_{\mathrm{Hopf-Lax}}\frac{16r_+(t)^2}{t_1 - t}.
\end{equation}
Sending $t_1\to t_0$, we conclude \eqref{eq:r_plus_hopf_lax} with $t_0$ in place of $t_1$. By the definition of $r_+$, we have $U(V, \alpha, R_0,r_+(t))\subset \Omega_t$, and thus we can use Proposition~\ref{prop:pressure_lower_bd} to get a lower bound for the left side of \eqref{eq:r_plus_hopf_lax}. Rearranging gives the result.
\end{proof}

\subsection{Asymptotic shape of the zero set}
\label{subsec:asymptotic-zero-set-shape}

Next, we show that the zero set, rescaled by
$\frac{1}{r_\perp(t)}$, converges to a $k$-dimensional cylinder over a
$(d-k)$-dimensional ellipsoid as $t\to t_0^-$.
The proof relies on blowup techniques and the classification of global solutions for the obstacle problem. Let us recall that $v$ is a global solution to obstacle problem if
\begin{equation}\label{eq:global-obstacle}
v\geq 0 \hbox{ and } \Delta v = \chi_{\{v>0\}}, \quad x\in \R^d.
\end{equation}
\begin{prop}\label{prop:cone_moving_blowup}
Assume \eqref{eq:A-source}, \eqref{eq:A-initial}, \eqref{eq:A-source-time}, and
\eqref{eq:closing-T-holder}. Let $R_0$ be as given in Lemma~\ref{lem:r_plus_upper_bd}, and set
$v_t(x):=r_\perp(t)^{-2}w(r_\perp(t)x,t)$ for $t < t_0$.
Then
\begin{enumerate}[label=(\alph*)]
\item There exists a solution $v_0$ to \eqref{eq:global-obstacle} such that $v_t\to v_0$ in $C^{1,1-}_{\mathrm{loc}}(\R^d)$ as $t\to t_0^-$.
\item $\{ v_0 = 0\} = E \times V$,
where $E\subset V^\perp$ is a $(d-k)$-dimensional ellipsoid depending only on $p_2$,
normalized so that $\max_{x'\in E}\abs{x'}=1$.
\item\label{item:cone-blowup-hausdorff-dist} The sets $\{ v_t = 0\} \to E\times \R^{k}$ locally in Hausdorff distance.
\end{enumerate}
\end{prop}
\begin{proof}

Set
\[
    S_t:=\overline{U(V,\alpha,R_0,0)\setminus\Omega_t}.
\]
The justification for \eqref{eq:r-perp-r-plus-comparison} gives
$S_t\subset\overline{B_{r_+(t)}}\setminus\Omega_t$, so $S_t$ is compact.
Since taking the closure does not change the supremum of $|x'|$,
\eqref{eq:r-perp-def} gives
\begin{equation}\label{eq:r-perp-attainment}
    r_\perp(t)=\max\{|x'|:x\in S_t\}.
\end{equation}

Thus $r_\perp(t)$ is attained by some point
$r_\perp(t)x_t\in S_t$ with $|x_t'|=1$.
Since $r_\perp(t)\leq r_+(t)$ by \eqref{eq:r-perp-r-plus-comparison},
Lemma~\ref{lem:r_plus_upper_bd} gives $r_\perp(t)\to0$ as $t\to t_0^-$.
At the same time, we have $|x_t'|\geq\alpha|x_t''|$, and thus
\[
    1\leq|x_t|\leq\frac{\sqrt{1+\alpha^2}}{\alpha}.
\]

Now, since $v_t(0) = 0$ and $\lim_{t\to t_0^{-}}r_\perp(t) = 0$, Lemma \ref{lem:obst-blowup} gives that $v_t$ is precompact in $C^{1,1-}_{\mathrm{loc}}(\R^d)$ as $t\to t_0^-$. We select a subsequential limit $v_0$, and then a subsequence $t_n\to t_0^-$ along which $v_{t_n}\to v_0$ and along which also $x_{t_n}$ converges to some $x_0$. Let us remark that Lemma \ref{lem:obst-blowup} guarantees that $v_0$ is convex and solves \eqref{eq:global-obstacle}; we also have $v_0(0) = 0$ from the convergence of the $v_{t_n}$.

A few observations are in order. First, since $w$ is increasing in time, we have
\[
    v_{t_n}(x)\leq
    r_\perp(t_n)^{-2}w(r_\perp(t_n)x,t_0)
\]
for each $n$.
The right side of this inequality converges to $p_2(x)$ as $n\to\infty$,
so $v_0\leq p_2$.
Since $v_0\geq 0$, this implies that $v_0$ vanishes on $\{p_2 = 0\}$. By \cite[Lem.~2.7]{EFW25}, a solution to \eqref{eq:global-obstacle} which vanishes on a line is constant in the direction of that line. Thus, $v_0$ depends only on the $x'$ component. We set
\[ E := \{ x'\in V^\perp : v_0(x',0) = 0\}.\]
Note that $E$ is convex since $v_0$ is convex, and we have $0\in E$ and $x_0'\in E$. Since $\abs{x_{t_n}'}=1$ for all $n$, we
have $|x_0'|=1$.
Since $E$ has two distinct points and $v_0 \leq p_2$, where $p_2$ is positive-definite on $V^\perp$, Lemma~\ref{lem:ellipsoid_blowup}\ref{item:quadratic_interior_criterion} yields that $E$ has nonempty interior.

We claim that $E\subset\{|x'|\leq1\}$. If not, since $E$ is convex with
nonempty interior, we can choose $y=(y',0)$ with $|y'|>1$ such that
$v_0(x)=v_0(x',0)$ vanishes in a ball around $y$. By the nondegeneracy
estimate Lemma~\ref{obst-quadratic-growth}\ref{item:obst-nondegeneracy}, it
follows that $v_{t_n}(y)=0$ for sufficiently large $n$.
Now, since $v_{t_n}(y)=0$, we have
$r_\perp(t_n) y\notin U(V,\alpha,R_0,r_+(t_n))$.
Since $y''=0$ and $r_\perp(t_n)\to0$, this implies
$r_\perp(t_n)|y|\leq r_+(t_n)$, and hence
$r_\perp(t_n)y\in S_{t_n}$, for all sufficiently large $n$.
But the optimality condition in \eqref{eq:r-perp-attainment} gives
\[
    r_\perp(t_n)|y'|
    \leq r_\perp(t_n)|x_{t_n}'|
    = r_\perp(t_n),
\]
and thus $|y'|\leq1$, which is a contradiction.

Invoking Lemma~\ref{lem:ellipsoid_blowup}\ref{item:ellipsoid_unique} with $v_0 \leq p_2$, $E\subset \overline{B_1'}$, $E\cap \partial B_1'\neq\emptyset$, we conclude that $E$ is an ellipsoid determined by $p_2$. Consequently, $v_0$ is determined by $p_2$, and did not depend on the choice of sequence $t_n$. Thus, $v_t \to v_0$, and the local convergence of zero sets in Hausdorff distance holds by Lemma~\ref{lem:obst-blowup}\ref{item:obst-hausdorff-dist}.

\end{proof}

We can now collect the results of this section into the statement announced in
the introduction.

\begin{proof}[Proof of Theorem~\ref{thm:closing-rates}]

Proposition~\ref{prop:cone_moving_blowup}\ref{item:cone-blowup-hausdorff-dist} is exactly
\eqref{eq:intro-cylinder-limit}, so we restrict attention to items \ref{item:intro-closing-high-d}-\ref{item:intro-closing-intermediate}. Let $R_0$ be small enough that Lemma~\ref{lem:r_plus_upper_bd} applies, and thus $r_+(t)\leq C(t_0-t)^{1/2}$. By \eqref{eq:r-perp-r-plus-comparison}, $r_\perp(t)\leq r_+(t)$, so we conclude the upper bounds in items~\ref{item:intro-closing-high-d}
and~\ref{item:intro-closing-intermediate}.

The upper bound in \ref{item:intro-closing-d2} requires a more subtle argument to get the extra logarithmic factor. Since Proposition~\ref{prop:cone_moving_blowup}\ref{item:cone-blowup-hausdorff-dist} gives that $\frac{1}{r_\perp(t)}(\R^2\setminus \Omega_t)$ converges locally in Hausdorff distance to the cylinder $E \times V$, we deduce from Lemma~\ref{obst-quadratic-growth}\ref{item:obst-nondegeneracy} that there exists $a > 0$ such that $B_{a r_\perp(t)} \subset \R^2\setminus \Omega_t$ for all $t < t_0$ sufficiently close to $t_0$. By Lemma~\ref{lem:HS-basic-properties}(i), choose $R_*>R_0$
such that $\Omega_s\subset B_{R_*/2}$ for $0\leq s\leq t_0+1$, and set
$\bar c:=\norm{c}_{L^\infty(\R^2\times(0,t_0+1))}$. Fix $t_1\in (t_0-1, t_0)$, and let $\bar{p}$ solve \eqref{eq:intro-HS-law} with constant source $\bar c$ and annular initial phase $B_{R_*}\setminus \overline{B_{ar_\perp(t_1)}}$. Let $\bar \Omega_s$ denote the open phase at positive-time, and note by radial symmetry that there exist monotone functions $\bar R, \bar r$ such that $\bar \Omega_s = B_{\bar R(s)}\setminus \overline{B_{\bar r(s)}}$ for $0 < s < \bar \tau$, where $\bar \tau$ is the extinction time of the radial inner hole.

By Theorem~\ref{thm:HS-distributional-comparison}, we have $p(x,t_1 +s) \leq \bar p(x, s)$ for a.e. $(x,s)\in \R^d\times (0,\min\{\bar\tau,1\})$, and thus $ \Omega_{t_1+s} \setminus \bar \Omega_{s} $ has measure zero, by \eqref{eq:HS-open-phase-identities}. It follows that $\Omega_{t_1 + s}\subset \bar \Omega_s$ for such $s$, since both sets are open and $\bar \Omega_s$ has smooth boundary. Since $T(0)=t_0$, this inclusion implies $\bar\tau\leq t_0-t_1<1$. Now, $\bar p$ is given by \eqref{eq:radial_barrier_explicit}, so the motion law yields
\[
    -\dot{\bar r}
    =\frac{\bar c}{4}\left(
        \frac{\bar R^2-\bar r^2}{\bar r\log(\bar R/\bar r)}
        -2\bar r\right),
    \qquad
    0\leq\dot{\bar R}\leq\frac{\bar c}{2}\bar R.
\]
Since $\bar R(0)=R_*$ and $\bar\tau\leq t_0-t_1<1$, we have
$R_*\leq\bar R(s)\leq R_*e^{\bar c/2}$ for $0\leq s\leq\bar\tau$, and hence
$-\dot{\bar r}(s)\leq C/(\bar r(s)\log(R_*/\bar r(s)))$. Direct integration
now gives
\[
    t_0-t_1\geq\bar\tau
    \geq C^{-1}\int_0^{ar_{\perp}(t_1)}s\log\frac{R_*}{s}\,ds
    \geq C^{-1}r_{\perp}(t_1)^2\log\frac{R_0}{r_{\perp}(t_1)},
\]
which proves the upper bound in
\ref{item:intro-closing-d2}.

For the lower bounds in \ref{item:intro-closing-high-d}-\ref{item:intro-closing-intermediate}, \eqref{eq:r-perp-r-plus-comparison} gives that $\theta_\alpha r_+(t) \leq r_\perp(t)$, for $\theta_\alpha := \alpha(1 + \alpha^2)^{-1/2}$. We then take $t < t_0$ close enough that $r_+(t)/R_0<1/16$ and $4r_+(t)\leq1$, in order to invoke Lemma~\ref{lem:r_plus_lower_bd}. In the case $d\geq3$ and $k=0$, the lemma gives $t_0-t\leq Cr_+(t)^2$, and thus the lower bound in \ref{item:intro-closing-high-d} holds.
In the case $d=2$ and $k=0$, the lemma gives $t_0 - t \leq C r_+(t)^2 \log \frac{R_0}{r_+(t)}$, and so we have \[ r_\perp(t)^2 \log \frac{R_0}{r_\perp(t)} \geq \theta_\alpha^{2}r_+(t)^2\log\frac{R_0}{\theta_\alpha r_+(t)} \geq \theta_\alpha^{2}r_+(t)^2\log\frac{R_0}{r_+(t)} \geq \frac{\theta_\alpha^2 }{C}(t_0 - t), \]
using that $r_\perp(t) \leq r_+(t) \leq \frac{R_0}{16}$. Thus, the lower
bound in item~\ref{item:intro-closing-d2} also holds.

For the case $1 \leq k \leq d-2$, let $\epsilon > 0$ and choose $\tilde{\alpha} \in (0, \alpha)$ such that $\frac{1}{2 - \nu_{\tilde{\alpha}}} \leq \frac{1}{2} + \epsilon$, as is possible by
Lemma~\ref{lem:first_eigenvalue} and \eqref{eq:cone_exponent}. Let 
$\widetilde R_0\leq R_0$ such that Lemmas~\ref{lem:r_plus_upper_bd} and
\ref{lem:r_plus_lower_bd} and Proposition~\ref{prop:cone_moving_blowup} apply
with $(\widetilde\alpha,\widetilde R_0)$ in place of $(\alpha,R_0)$, and let
$\widetilde r_\perp$ be the radius \eqref{eq:r-perp-def} defined from
$(\widetilde\alpha,\widetilde R_0)$. Then Lemma~\ref{lem:r_plus_lower_bd} yields
\[
    \widetilde r_\perp(t)
    \geq\widetilde C^{-1}(t_0-t)^{\frac{1}{2-\nu_{\widetilde\alpha}}}
    \geq\widetilde C^{-1}(t_0-t)^{\frac12+\epsilon}.
\]
Moreover, $r_\perp(t)\geq(1-o(1))\,\widetilde r_\perp(t)$ as $t\uparrow t_0$.
Indeed, by Proposition~\ref{prop:cone_moving_blowup}(c), applied with
$(\widetilde\alpha,\widetilde R_0)$, the zero set rescaled by
$\widetilde r_\perp(t)$ converges to the normalized cylinder $E \times V$, which does
not depend on the aperture $\alpha$; local Hausdorff convergence near a point $(e,0)$
with $e\in E$ and $\abs e=1$ then produces zero points with
$\abs{x'}\geq(1-o(1))\,\widetilde r_\perp(t)$ and
$\abs{x''}=o(\widetilde r_\perp(t))$, which enter the supremum
\eqref{eq:r-perp-def} defining $r_\perp(t)$. This gives the lower bound in
\ref{item:intro-closing-intermediate} for $t_0-t$ sufficiently small depending
on $\epsilon$; since $r_\perp$ is positive and nonincreasing near $t_0$ while
$(t_0-t)^{\frac12+\epsilon}\leq1$, enlarging $C_\epsilon$ extends the bound to
the full stated range of $t$. Thus, we conclude the lower bound in \ref{item:intro-closing-intermediate}.

\end{proof}

\subsection{Pressure gradient blowup and hitting time flatness}
\label{subsec:pressure-gradient-hitting-time-flatness}

In this last subsection, we use the closing rates and asymptotic shape of the zero set to show blowup of the pressure gradient and one-sided flatness of the hitting time. These results amount to different ways of measuring the sense in which the free boundary moves with infinite speed near a $k$-dimensional singular point, when $k < d-1$. We begin with the proof of Corollary~\ref{prop:gradient-lq-blowup}, where we use a capacity-type estimate to show that $\nabla p$ blows up in $L^q_{x,t}$ for $q$ sufficiently large.

\begin{proof}[Proof of Corollary~\ref{prop:gradient-lq-blowup}]

    Fix $0<\delta<t_0$. For the first claim, let $\beta\in (0,1)$, and choose $\alpha$ sufficiently small so that the $\nu$ from Proposition~\ref{prop:pressure_lower_bd} is strictly smaller than $\beta$. Combining Lemma~\ref{lem:r_plus_upper_bd} and either Lemma~\ref{lem:radial_subbarrier} (in the case $k=0$) or Lemma~\ref{lem:cone_subbarrier} (in the case $1\leq k \leq d-2$), there exist constants $C_0, C_1, \delta_t > 0$ and $\varphi_0\in (\varphi_\alpha, \pi/2)$ such that $p\geq C_1^{-1}(t_0 - t)^{\beta/2}$ on $\partial B_{C_0(t_0-t)^{1/2}}\cap \{ \arctan(|x'|/|x''|) > \varphi_0\}$, whenever $0 < t_0 - t < \delta_t$. Since $p(0, t) = 0$, fundamental theorem of calculus along radial lines yields that
    \[ \int_{B_{C_0(t_0-t)^{1/2}}} |\nabla p|^q \gtrsim (t_0 - t)^{\tfrac{d - q(1-\beta)}{2}}.  \]
    Set $\delta':=\min\{\delta_t,\delta,C_0^{-2}\delta^2\}$. Then
    \[
    \int_{t_0-\delta}^{t_0}\int_{B_{\delta}}|\nabla p|^q
    \gtrsim
    \int_{t_0-\delta'}^{t_0}
    (t_0-t)^{\tfrac{d-q(1-\beta)}2}\,dt.
    \]
    This diverges whenever $q \geq \frac{d+2}{1-\beta}$. Since $\beta\in (0,1)$ is arbitrary, we conclude that $\int_{t_0-\delta}^{t_0}\int_{B_{\delta}} |\nabla p|^q = \infty$ for all $q > d + 2$.
    
    Moving to the second claim, by Theorem~\ref{thm:hitting-interface-regularity}\ref{i:singular-set-c1-manifold}, there exists a $C^1$ $k$-manifold $M$ which contains $\Sigma_k(t_0)$ in a neighborhood of 0, such that $T_xM = \ker D^2 p_2$ whenever $x\in \Sigma_k\cap M$ and $p_2$ is the blowup of $w(\cdot, T(x))$ at $x$. We shrink $\delta$ if necessary so that $M$ is the graph of a function $f$ over $V$ with $\|Df\|_{L^\infty} \leq \delta_M$ in $B_{\delta/3}$, for a small $\delta_M$ to be determined.
    
    For $x\in \Sigma_k(t_0)$, set $\lambda_1(x)$ as the smallest positive eigenvalue of the blowup of $w(t_0)$ at $x$, and set $A := \{x \in \Sigma_k(t_0)\cap M\cap B_{\delta/4} : \lambda_1(x) \geq \frac{1}{2}\lambda_1(0)\}$. By \cite[Proposition 5.13]{CJK25}, $\lambda_1(x)$ is continuous on $\Sigma_k(t_0)$. Thus, after decreasing $\delta$, the assumption that $\Sigma_k(t_0)$ has positive lower density at 0 implies that $\mathcal{H}^k(A) > 0$. Replacing $A$ by a compact subset of positive measure, we may assume that $A$ is compact.

    Fix $\beta\in (0,1)$, and as before, take $\alpha$ sufficiently small that $\nu < \beta$. Shrink $R_0$ to allow applying Lemma~\ref{lem:r_plus_upper_bd} at each point in $A$. Since $\lambda_1$ is uniformly positive on $A$, Lemma~\ref{lem:r_plus_upper_bd} gives a constant $C_0$ such that for each $x\in A$, we have $x + U(T_xM, \alpha, R_0, a/5) \subset \Omega_t$ for $a :=  C_0\sqrt{t_0  -t}$. Proposition~\ref{prop:pressure_lower_bd} then gives a $C_1$ and a $\delta_1$ such that for all $t$ with $0 < t_0 - t < \delta_1$, we have
    \[ p(x + ae,t) \geq C_1^{-1}(t_0 - t)^{\beta/2}, \qquad \forall e\in (T_xM)^\perp, |e| = 1. \]
    Moreover, we have $B_{a/2}(x + ae)\subset x + U(T_xM, \alpha, R_0, a/4)$ since for $y\in B_{a/2}(ae)$, we have $|y'| > \frac{a}{2} > \frac{\alpha a}{2} > \alpha|y''|$.
    The inhomogeneous Harnack inequality gives
    \[
    p(\cdot,t) \geq C(d)^{-1}C_1^{-1}(t_0-t)^{\beta/2}
        -C(d)\|c\|_\infty a^2
    \quad\hbox{in }B_{a/4}(x+ae).
    \]
    Since $a^2=C_0^2(t_0-t)=o((t_0-t)^{\beta/2})$, after decreasing $\delta_1$ this is at least $C_2^{-1}(t_0-t)^{\beta/2}$ for some $C_2>0$.
    
    Set $u(x) := \frac{C_2}{(t_0-t)^{\beta/2}}p(ax,t)$, so that $u = 0$ on $A_0 := a^{-1} A$ and $u\geq 1$ on $A_1 := \{x + e + y : x\in A_0, e\in (T_x(a^{-1}M))^\perp, |e|=1, |y| < \frac{1}{4}\}$. The zero value on $A_0$ is also the Sobolev trace, since continuity of $T$ gives $p(\cdot,t)=0$ near the compact set $A$. If $M$ was the graph of a function $f$, then the rescaled manifold is the graph of $a^{-1}f(a\cdot)$, and the $\delta_M$-Lipschitz bound is preserved under scaling. In Lemma~\ref{lem:cylindrical-q-capacity-est}, we will show that, after choosing $\delta_M = \frac{1}{8}$ and fixing a parameter $\eta > 0$, there exists a constant $C=C(d,k,q,\eta)$ such that $\int_{B_3(x)} |\nabla u|^q \geq C$ for any $x$ such that $\mathcal{H}^k(A_0\cap B_1(x)) \geq \eta$. 

    The $\delta_M$-Lipschitz bound for $M$ gives upper $\mathcal{H}^k$-density estimates for $A_0$. Thus, for $\eta$ sufficiently small, there is a constant $N(\delta_M,\eta)>0$ such that there exist at least $Na^{-k}$ disjoint unit balls $B_j$, centered in $A_0$, with $\mathcal{H}^k(A_0\cap B_j)\geq\eta$. For $a<\delta/12$, the concentric balls of radius $3$ lie in $B_{\delta/a}$ and have bounded overlap. If $u\notin W^{1,q}(B_{\delta/a})$, the desired lower bound is automatic; otherwise Lemma~\ref{lem:cylindrical-q-capacity-est}, applied after translation on each $B_j$, gives $\int_{B_{\delta/a}}|\nabla u|^q\gtrsim a^{-k}$. Substituting the definition of $u$ and integrating in time, we conclude that
    \[ \int_{t_0-\delta}^{t_0} \int_{B_{\delta}} |\nabla p|^q \gtrsim \int_{t_0 - \delta}^{t_0} (t_0-t)^{\tfrac{d-k-q(1 - \beta)}{2}}dt. \]
    This last integral diverges whenever $q \geq \frac{d-k+2}{1-\beta}$. Since $\beta\in (0,1)$ is arbitrary, it follows that $\int_{t_0-\delta}^{t_0} \int_{B_{\delta}} |\nabla p|^q = \infty$ whenever $q > d-k + 2$.
\end{proof}

It remains to check the following capacity estimate.
\begin{lem}\label{lem:cylindrical-q-capacity-est}
     Let $B''\subset V$ be the unit $k$-ball, and $f: B''\to V^\perp$ satisfy $0<\delta\leq1/8$ and $\|f\|_{C^1(B'')}\leq\delta$. Set $M = \{ (f(x''),x'') : x''\in B''\}$ so that $M$ is a $k$-manifold in $\R^d$. Let $A_0 \subset M$ with $\mathcal{H}^k(A_0) \geq \eta > 0$, and set $A_1 := \{ x + e + y : x\in A_0, e\in (T_xM)^\perp, |e| = 1, |y| < 2\delta \}$. Let $q>d-k$, and suppose $u\in W^{1,q}(B_3)$ has zero trace on $A_0$ and satisfies $u\geq1$ a.e. on $A_1$. Then there exists $C=C(d,k,q,\delta,\eta)>0$ such that
    \[ \int_{B_3} |\nabla u|^q \geq C. \]
\end{lem}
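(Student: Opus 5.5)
The plan is to reduce to a one-dimensional problem on each normal disk of $M$, use a Morrey estimate in the $(d-k)$ normal directions to get a uniform lower bound there, and then integrate over $A_0$ with the coarea/area formula. The hypothesis $q>d-k$ is exactly what makes this work: in dimension $d-k$, functions in $W^{1,q}$ are H\"older continuous.

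First I straighten $M$. Consider the map
\[
F(x'',z)=(f(x''),x'')+N(x'')z,\qquad x''\in B'',\ z\in\R^{d-k},\ \abs z<2,
\]
where $N(x'')$ is an orthonormal frame of $(T_{(f(x''),x'')}M)^\perp$. Since $\norm f_{C^1}\le\delta\le1/8$, one can choose the frame $\delta$-close to the fixed frame of $V^\perp$. I would like $F$ to be bi-Lipschitz with constants depending only on $d,k$, and its image to lie in $B_3$.

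Here there is a subtlety: $N$ is only $C^0$, so $F$ need not be Lipschitz. To avoid this, I replace $N(x'')z$ by the constant-frame shift $(z,0)\in V^\perp\times V$. Because $\abs{N(x'')-\mathrm{id}_{V^\perp}}\lesssim\delta$ and $A_1$ allows an error $\abs y<2\delta$, the set $A_1$ contains, for each $x\in A_0$, a point $x+(e',0)+y'$ with $\abs{y'}\le 2\delta$ up to a harmless constant. So I use
\[
G(x'',z)=(f(x'')+z,x''),
\]
which is bi-Lipschitz with constants near $1$. With this choice, for $x=(f(x''),x'')\in A_0$, the fiber $D_{x''}=\{G(x'',z):\abs z<2\}$ contains a point of $A_1$ together with a small ball around it.

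The key step is a slice estimate. Set $v(x'',z)=u(G(x'',z))$. For a.e. $x''$, the slice $v(x'',\cdot)$ lies in $W^{1,q}(B_2^{d-k})$ by Fubini. Since $q>d-k$, Morrey's inequality gives a H\"older-continuous representative with
\[
\abs{v(x'',z_1)-v(x'',z_2)}\le C\,\abs{z_1-z_2}^{1-\frac{d-k}{q}}\norm{\nabla_zv(x'',\cdot)}_{L^q(B_2)}.
\]
At $z=0$, for $x''$ in the projection $A_0''$ of $A_0$ (which has $\mathcal H^k(A_0'')\ge c\,\eta$), $v$ vanishes. On a ball of radius about $\delta$ around some $z_*$ with $\abs{z_*}\approx1$, $v\ge1$ a.e. Morrey then gives
\[
\int_{B_2}\abs{\nabla_z v(x'',z)}^q\,dz\ge c(d,k,q)>0 .
\]
Integrating over $x''\in A_0''$ and changing variables through $G$ yields
\[
\int_{B_3}\abs{\nabla u}^q\ge c\,\eta ,
\]
which is the claim.

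The main obstacle is making the zero trace on $A_0$ and the value on the slices compatible. A Sobolev trace on a set of dimension $k$ is defined only up to capacity, whereas the slice argument needs $v(x'',0)=0$ for $\mathcal H^k$-a.e. $x''\in A_0''$. I would handle this by noting that the H\"older representative on a.e. slice agrees with the precise representative of $u$ quasi-everywhere. Since $q>d-k$, $q$-capacity-null sets meet $\mathcal H^k$-almost no point of $M$ (the $\mathcal H^k$-measure on $M$ is absolutely continuous with respect to $q$-capacity when $q>d-k$). So the trace condition transfers to $v(x'',0)=0$ for a.e. $x''\in A_0''$. In the application this subtlety is moot anyway, since $p(\cdot,t)$ vanishes in a neighborhood of $A$.
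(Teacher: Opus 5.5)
Your proposal is correct and follows essentially the same route as the paper: the same unit-Jacobian straightening $(x'',z)\mapsto(f(x'')+z,x'')$, a lower bound on each $(d-k)$-dimensional normal fiber that relies on $q>d-k$, and integration over the projection of $A_0$ using the area-formula bound $\mathcal H^k(\text{projection})\geq(1+\delta^2)^{-k/2}\eta$. The only difference is how you get the fiber bound. You use Morrey's inequality, which needs $u\geq1$ near just one point of each fiber, and you treat the zero trace through quasicontinuity more carefully than the paper. The paper instead integrates $\nabla u$ along every segment from $x$ to $x+(\theta,0)$, $\theta\in\partial B'$, and applies H\"older against the weight $r^{d-k-1}$; this is the same point-capacity estimate, with an explicit constant.
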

\begin{proof}
    Let $\theta$ be a unit vector in $V^\perp$, and note that $e = \frac{(\theta,-Df(x'')^T\theta)}{\sqrt{1 + |Df(x'')^T\theta|^2}}$ is normal to $M$ at $x=(f(x''),x'')$. This yields $|(\theta,0) - e|^2 = \frac{2|Df(x'')^T\theta|^2}{(1 + \sqrt{1 + |Df(x'')^T\theta|^2})\sqrt{1 + |Df(x'')^T\theta|^2}} \leq \delta^2$, using that $|Df|\leq \delta$. For $x\in A_0$, it follows that $x + (\theta,0) = x + e + ((\theta,0) - e)\in A_1$.

    Let $B'$ denote the unit $(d-k)$-ball, and observe that the map $B'\times B''\ni (y', y'')\mapsto (f(y'') + y', y'')\in B_3$ is a diffeomorphism onto its image, with Jacobian 1. Thus, we change coordinates to get
    \begin{align*}
        \int_{B_3} |\nabla u|^q &\geq \int_{B''} \int_{\partial B'}\int_0^1 |\nabla u((f(y'') + r\theta, y''))|^q r^{d-k-1}dr\, d\mathcal{H}^{d-k-1}(\theta)\,dy''.
    \end{align*}
    Let $E'' := \{ y''\in B'' : (f(y''), y'') \in A_0 \}$. For a.e. $y''\in E''$ and $\theta\in\partial B'$, the trace theorem and absolute continuity on lines give $u(f(y''),y'')=0$, $u(f(y'')+\theta,y'')\geq1$, and $\int_0^1 \nabla u(f(y'')+r\theta,y'')\cdot(\theta,0)\,dr\geq1$. With H\"older, this implies
    \begin{align*}
        1 &\leq \left( \int_0^1|\nabla u((f(y'') + r\theta, y''))|^q r^{d-k-1}dr \right)^{1/q}\left(\int_0^1 r^{-\frac{d-k-1}{q-1}}dr\right)^{\frac{q-1}{q}}
    \end{align*}
    Applying this to the main estimate yields
    \[ \int_{B_3} |\nabla u|^q \geq \left(\frac{q-d+k}{q - 1}\right)^{q-1}\mathcal{H}^{d-k-1}(\partial B')\mathcal{H}^k(E''). \]
    Since $A_0$ is the graph of $f$ over $E''$, we have $\mathcal{H}^k(A_0) = \int_{E''} \sqrt{\det(I + Df^TDf)}$. Thus, $\mathcal{H}^k(E'') \geq (1 + \delta^2)^{-k/2}\eta$, so we conclude.
\end{proof}

\begin{rem}\label{rem:lq-sharpness}
By the result of \cite{DavidPerthame}, which we adapt to our setting in Lemma~\ref{lem:tumor-pressure-L4}, we have $\nabla p\in L^4(\R^d\times (0,\tau))$ for every $\tau > 0$. Corollary~\ref{prop:gradient-lq-blowup} with $k = d-2$ shows that this bound is sharp.

The threshold $q= d-k+2$ is also critical for $k < d-2$. To see this, suppose $\Omega_t=(B_{R(t)}^{d-k}\setminus \overline{B_{r(t)}^{d-k}})\times \mathbb{T}^k$, and the source $c$ is constant. Then the pressure is $p(x,t) = \phi_t(x')$, where $\phi_t$ is given by \eqref{eq:radial_barrier_explicit} with $R_0 := R(t), r_0 := r(t), d:= d-k$. Since $d-k \geq 3$, we have $\nabla \phi_t \sim r(t)^{-1}$ at scale $r(t)$. This implies $r(t) \sim (t_0 - t)^{1/2}$ and thus $\iint |\nabla \phi_t|^qdxdt \gtrsim \int (t_0-t)^{\frac{d-k-q}{2}}dt$. Thus, $\iint |\nabla \phi_t|^{q}dxdt = \infty$ for $q\geq d-k+2$, diverging logarithmically in the critical case $q = d-k+2$. This differs from the critical behavior when $d-k = 2$, where $\iint |\nabla \phi_t|^4dxdt < \infty$ due to extra logarithmic factors from the Green's function (see \cite[Appendix C]{DavidPerthame}).
\end{rem}

\begin{prop}[Flatness of the hitting time]\label{prop:cone_hitting_time_upper_bd}
  Assume \eqref{eq:A-source}, \eqref{eq:A-initial}, \eqref{eq:A-source-time}, and
  \eqref{eq:closing-T-holder}.
  For any $\beta\in(0,1)$, there exists $r_{\beta}>0$  and $C > 0$ such that, for $|x|<r_{\beta}$,
  \begin{equation*}
    t_0-T(x)\leq C|x|^{2-\beta}.
  \end{equation*}
\end{prop}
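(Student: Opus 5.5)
The plan is to show that, for $\abs x$ small, the point $x$ still lies in the zero set at the time $s:=t_0-C\abs x^{2-\beta}$. Then $x\notin\Omega_s=\{T<s\}$, so $T(x)\geq s$, which is exactly $t_0-T(x)\leq C\abs x^{2-\beta}$. Two earlier ingredients give this. The first is the Hopf--Lax lower bound on the closing radius (Lemma~\ref{lem:r_plus_lower_bd}), which says the zero set cannot be too small at times close to $t_0$. The second is the blowup shape (Proposition~\ref{prop:cone_moving_blowup}), which turns a lower bound on the transverse radius into a full ball around the origin contained in the zero set.

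\emph{Step 1 (choice of aperture).} Given $\beta\in(0,1)$, I choose $\alpha\leq\alpha_0(d,k)$ so small that the exponent $\nu=\nu_\alpha$ of Proposition~\ref{prop:pressure_lower_bd} satisfies $\nu<\beta$; this is possible by Lemma~\ref{lem:first_eigenvalue}. For $k=0$ I take $\nu=0$, absorbing the logarithm in $d=2$ into any positive power. I then fix $R_0$ small enough that Lemmas~\ref{lem:r_plus_upper_bd} and \ref{lem:r_plus_lower_bd} and Proposition~\ref{prop:cone_moving_blowup} all apply.

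\emph{Step 2 (a ball in the zero set).} As in the proof of the $d=2$ upper bound in Theorem~\ref{thm:closing-rates}, the rescaled zero sets $r_\perp(s)^{-1}(\R^d\setminus\Omega_s)$ converge locally in Hausdorff distance to $E\times V$, and $v_0$ vanishes near $0$. Combined with the nondegeneracy of Lemma~\ref{obst-quadratic-growth}\ref{item:obst-nondegeneracy}, this should give some $a>0$ with
\[
B_{a r_\perp(s)}\subset\R^d\setminus\Omega_s
\qquad\hbox{for all } s<t_0 \hbox{ close to } t_0 .
\]
The point to check here is that $0$ lies in the interior of $E$. I expect this to be the main obstacle, since the paper uses the analogous inclusion without comment. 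I would first try to derive it from the explicit description of the ellipsoid in Lemma~\ref{lem:ellipsoid_blowup}, where the constraint $v_0\leq p_2$ should center $E$ at the origin. Failing that, I would use the fact that a small ball of the interior of $E$ near $0$ suffices after a harmless shift of constants.

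\emph{Step 3 (conclusion).} By \eqref{eq:r-perp-r-plus-comparison} we have $r_\perp(s)\geq\theta_\alpha r_+(s)$. Lemma~\ref{lem:r_plus_lower_bd} gives
\[
t_0-s\leq C r_+(s)^2\Bigl(\frac{R_0}{r_+(s)}\Bigr)^{\nu},
\qquad\hbox{so}\qquad
r_+(s)\geq c\,(t_0-s)^{1/(2-\nu)} .
\]
With $t_0-s=C\abs x^{2-\beta}$ this yields
\[
a r_\perp(s)\geq c'\abs x^{(2-\beta)/(2-\nu)} .
\]
Since $\nu<\beta$, the exponent $(2-\beta)/(2-\nu)$ is strictly less than $1$. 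Hence the right-hand side exceeds $\abs x$ once $\abs x<r_\beta$ is small; the constant $C$ could alternatively be tuned. Therefore $x\in B_{ar_\perp(s)}\subset\R^d\setminus\Omega_s$, and so $T(x)\geq s$.

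Finally, I shrink $r_\beta$ so that $s$ lies in the range of times where Steps 2 and 3 are valid, namely $r_+(s)/R_0<1/16$ and $4r_+(s)\leq1$. This is possible because $r_+(s)\to0$ by Lemma~\ref{lem:r_plus_upper_bd}.
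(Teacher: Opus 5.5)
Your argument is correct and is essentially the paper's: it combines the lower closing rate of Lemma~\ref{lem:r_plus_lower_bd} with the inclusion $B_{b r_\perp(t)}\subset\{w(\cdot,t)=0\}$, which comes from Proposition~\ref{prop:cone_moving_blowup} and nondegeneracy. The only difference is cosmetic: you evaluate at the explicit time $s=t_0-C\abs{x}^{2-\beta}$ and use $(2-\beta)/(2-\nu)<1$, whereas the paper uses monotonicity of $r_\perp$ to pick the time $\tau$ at which $r_\perp$ drops below $2\abs{x}/b$. The point you flag in Step~2 is settled by your first route, since the proof of Lemma~\ref{lem:ellipsoid_blowup}\ref{item:ellipsoid_unique} shows that the center of $E$ is the origin, so $0\in\operatorname{int}E$; your fallback of using a ball elsewhere in $E$ would not by itself place points $x$ near $0$ in the zero set.
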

\begin{proof}
    Use Lemma~\ref{lem:r_plus_lower_bd} and
    \eqref{eq:r-perp-r-plus-comparison} to choose $\alpha>0$, $R_0 > 0$,
    $C > 0$, and $t_1 < t_0$ such that
    \begin{equation}\label{eq:cone_r_perp_decay_gamma}
        t_0 - t \leq Cr_\perp(t)^{2-\beta},
        \quad t\in (t_1, t_0)
    \end{equation}

    Let $E$ be as in Proposition~\ref{prop:cone_moving_blowup}, so
    that we have
    \[
        \{ x : w(r_\perp(t)x, t) = 0\}
        \longrightarrow E\times V \hbox{ as }t\to t_0^-,
    \]
    where the convergence holds locally in Hausdorff distance between sets.
    The nondegeneracy estimate
    Lemma~\ref{obst-quadratic-growth}\ref{item:obst-nondegeneracy} then
    implies that there exists $b>0$, depending only on $E$, such that
    \begin{equation}\label{eq:cone_inner_ball_r_perp}
     B_{br_\perp(t)}\subset \{ w(\cdot,t) = 0\},
    \end{equation}
    for all $t$ sufficiently close to $t_0$.
    
    Fix $t_*<t_0$ sufficiently close to $t_0$ such that
    \eqref{eq:cone_inner_ball_r_perp} and
    \eqref{eq:cone_r_perp_decay_gamma} hold for $t\in[t_*,t_0]$. We now fix
    $r_{\beta}=\frac{b}{2}r_\perp(t_*)>0$.
    Let $0<|x|<r_\beta$ and set $\rho=2|x|/b$. By monotonicity of
    $r_\perp$, there exists $\tau\in[t_*,t_0)$ such that
    $r_\perp(t)\geq\rho$ for $t<\tau$ and
    $r_\perp(t)<\rho$ for $t>\tau$. Hence
    \eqref{eq:cone_inner_ball_r_perp} gives $T(x)\geq\tau$, while for
    $t>\tau$, \eqref{eq:cone_r_perp_decay_gamma} gives
    $t_0-t\leq C\rho^{2-\beta}$. Letting $t\downarrow\tau$, we conclude that
    \begin{equation*}
        t_0-T(x)\leq t_0-\tau\leq C'|x|^{2-\beta}.
    \end{equation*}
\end{proof}

\section{\texorpdfstring{Improvement of flatness and $C^1$ regularity}{Improvement of flatness and C1 regularity}}\label{sec:improvement-flatness}
In this section we prove Theorems~\ref{prop:cone_T_differentiability} and
\ref{thm:no_top_stratum_C1}. The proof has two ingredients, one local and one global. The
local ingredient is an intrinsic gradient estimate near a lower-stratum singularity,
obtained by comparing $p$ with directional derivatives of $w$, which yields
Theorem~\ref{prop:cone_T_differentiability}. The global ingredient is that lower-stratum
singularities are too small, in the sense of capacity, to affect the pressure nonlocally,
so that the spaces $H^1_0(\Omega_t)$ vary continuously in the Mosco sense; combined with
the local estimate, this yields Theorem~\ref{thm:no_top_stratum_C1}.

\subsection{Improvement of flatness for the hitting time near a singular point}

Throughout this subsection we work at a singular point \(0\in\Sigma_k(t_0)\),
\(0\le k\le d-2\). We continue to use coordinates, written
\[
    x=(x',x'')\in V^\perp\times V,
\]
where $V = \{ p_2 = 0\} = \ker D^2 p_2$, for $p_2$ the quadratic blowup \(p_2\) of
\(w(\cdot,t_0)\) at the origin. Let \(\lambda_1>0\) denote
the smallest positive eigenvalue of \(p_2\) on the \(x'\)-space. Fix a small parameter
\(0<\alpha<1\), with \(0<\alpha\leq\alpha_0(d,k)\) when \(k\geq1\).
Choose \(R_*>0\) so small that, for every \(0<R_0<R_*\),
one has \(t_0-R_0^2>0\), the hypotheses on \(R_0\) in
Lemma~\ref{lem:r_plus_upper_bd} are satisfied, and

\begin{equation}\label{eq:cone_local_mass_bound}
\begin{gathered}
    B_{R_0}\cap\Omega_0^{+}=\emptyset,\\
    0\leq \int_{T(x)\wedge s}^s c(x,\tau)\,d\tau\leq\frac12
    \qquad x\in B_{R_0},\quad |s-t_0|\leq R_0^2 .
\end{gathered}
\end{equation}

For \(0<R_0<R_*\) and \(0<r<R_0\), recall that \(U(\alpha,R_0,r)\) is the conical annulus
defined in \eqref{eq:u_alpha}.
The \(C^1\) convergence of \(w(\cdot,t_0)\) to \(p_2\), together
with \(p_2\simeq |x'|^2\) and \(|\nabla p_2|\leq C|x'|\), gives, uniformly
for \(x\in B_r\),
\begin{equation}\label{eq:cone_w0_bounds}
    w(x,t_0)\leq C|x'|^2+o(r^2),
    \qquad
    |\nabla w(x,t_0)|\leq C|x'|+o(r),
    \qquad r\downarrow0.
\end{equation}
The set \(U(\alpha,R_0,h)\) is also strictly
inside the positive phase at time \(t_0\):
\begin{equation}\label{eq:cone_Ur_positive_at_t0}
    w(x,t_0)\geq c_\alpha |x|^2,
    \qquad x\in B_{R_0},\ |x'|\geq \alpha |x''|.
\end{equation}
Thus \(U(\alpha,R_0,h)\subset \Omega_{t_0}\subset\Omega_t\) for every
\(0<h<R_0\) and \(t\geq t_0\).

In \(B_{R_0}\), the equation for \(w\), \eqref{eq:HS-Baiocchi-equation}, gives
\[
    \Delta w(\cdot,s)=f(\cdot,s)
    \qquad\hbox{in }\Omega_s\cap B_{R_0},
\]
where
\begin{equation}\label{eq:cone_f_definition}
    f(x,s):=1-\int_{T(x)\wedge s}^s c(x,\tau)\,d\tau .
\end{equation}

The bound \eqref{eq:cone_local_mass_bound} gives \(1/2\leq f\leq1\),
while \eqref{eq:cone_f_definition}, Theorem~\ref{thm:HS-hitting-time-lipschitz},
and the local spatial Lipschitz bound for \(c\) give the gradient estimate in
\begin{equation}\label{eq:cone_f_slice}
    \frac12\leq f(x,s)\leq 1,
    \qquad
    |\nabla_x f(x,s)|\leq L_f,
    \qquad
    (x,s)\in B_{R_0}\times [t_0-R_0^2,t_0+R_0^2].
\end{equation}
Set
\[
I_0= [t_0-R_0^2,t_0+R_0^2],\qquad \underline c:=\inf_{\R^d\times I_0}c,\qquad
    \overline c:=\sup_{\R^d\times I_0}c.
\]
Then \(0<\underline c\leq\overline c<\infty\).
\begin{lem}[Pressure lower bound off the spine]\label{lem:cone_pressure_away_spine}
Assume the hypotheses of Theorem~\ref{thm:HS-hitting-time-lipschitz}, and let
\(0\in\Sigma_k(t_0)\) with \(0\leq k\leq d-2\).
Write \(U(\alpha,R_0,r):=U(V,\alpha,R_0,r)\), as in \eqref{eq:u_alpha}. Set \(\gamma=\nu_\alpha\), with \(\nu_\alpha\) defined in \eqref{eq:cone_exponent}, if \(k\geq1\), and fix \(\gamma\in(0,1)\) if \(k=0\). Then for \(\delta\in(4\alpha,1)\), there exists $c_*=c_*(d,k,\alpha,\delta,\gamma,R_0, \underline c)>0$ with the following
property:  for any $r\in (0,\frac{R_0}{128})$, if 
\[
    U(\alpha,R_0,r)\subset\Omega_s \quad \hbox{ for some  }
s\in [t_0-R_0^2, t_0+R_0^2], \]
then, for any $R\in [16r, 32r]$, 
\begin{equation}\label{eq:cone_pressure_lower_away_spine}
    p(x,s)\geq c_*R^\gamma \hbox{ in } \quad \{R/2\leq |x|\leq R\}\cap \{|x'| \geq \delta R\}.
\end{equation}
\end{lem}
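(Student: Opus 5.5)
The plan is to obtain Lemma~\ref{lem:cone_pressure_away_spine} directly from the static barriers of Section~\ref{sec:static-barriers}, which give a pointwise lower bound on $p(\cdot,s)$ on the conical annulus $U(\alpha,R_0,r)$. We then pass from the sphere along which those barriers are evaluated to the whole region $\{R/2\le|x|\le R,\ |x'|\ge\delta R\}$.

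First, suppose $U(\alpha,R_0,r)\subset\Omega_s$. By Remark~\ref{rem:HS-pointwise-pressure-equation}, $-\Delta p(\cdot,s)=c(\cdot,s)\ge\underline c$ in $U$. Let $\psi$ be the explicit subsolution of Lemma~\ref{lem:cone_subbarrier} when $k\ge1$, or of Lemma~\ref{lem:radial_subbarrier} when $k=0$, with inner radius $r_0=r$. Since $r<R_0/128$, the hypothesis $r_0<R_0/16$ holds. The elliptic comparison used in those lemmas gives $p(\cdot,s)\ge\psi$ on $U$.

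For $R\in[16r,32r]$, every $x$ with $R/2\le|x|\le R$ satisfies $4r<8r\le|x|\le32r<R_0/4$, so it lies in the range where the barrier estimates apply. The case $k\ge1$ uses Lemma~\ref{lem:cone_subbarrier}:
\[
p(x,s)\ge C(d)\,\underline c\,R_0^{2-\nu_\alpha}\,|x|^{\nu_\alpha}\,\Phi_\alpha(\varphi).
\]
The condition $|x'|\ge\delta R\ge\delta|x|$ gives $\sin\varphi\ge\delta$, so $\varphi\ge\arcsin\delta$. Because $\delta>4\alpha$, this angle sits strictly above $\varphi_\alpha=\arctan\alpha$. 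The eigenfunction $\Phi_\alpha$ is increasing by Lemma~\ref{lem:first_eigenfunction}, so $\Phi_\alpha(\varphi)\ge\Phi_\alpha(\arcsin\delta)>0$, a constant depending only on $d,k,\alpha,\delta$. Combined with $|x|^{\nu_\alpha}\ge 2^{-\nu_\alpha}R^{\nu_\alpha}$, this gives \eqref{eq:cone_pressure_lower_away_spine} with $\gamma=\nu_\alpha$.

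The case $k=0$ uses Lemma~\ref{lem:radial_subbarrier}, and the cone condition plays no role. For $d\ge3$ the barrier gives $p\ge C\underline cR_0^2$. Since $R<R_0<1$ (shrinking $R_0$ if needed) and $\gamma>0$, we have $R_0^2\ge R_0^2R^\gamma$, which suffices. For $d=2$ the barrier gives $p\ge C\underline cR_0^2(\log(R_0/|x|))^{-1}$. The bound $\log(R_0/|x|)\le C_\gamma R_0^{\gamma}|x|^{-\gamma}$ turns this into $c_*R^\gamma$ with $c_*$ depending on $\gamma$ and $R_0$.

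One remaining check concerns which part of the barrier statements we rely on. Lemma~\ref{lem:cone_subbarrier} is stated in $(\rho,\varphi)$ coordinates on the full conical annulus, so we use its explicit pointwise bound rather than the version in Proposition~\ref{prop:pressure_lower_bd} along the direction $e\in V^\perp$. The only delicate step is making sure the constants are uniform in $s\in I_0$. This holds because $\underline c$ is taken over $\R^d\times I_0$ and the barriers depend on nothing else. Nothing beyond Section~\ref{sec:static-barriers} is needed.
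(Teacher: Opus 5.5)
Your proposal is correct and follows essentially the same route as the paper: the static barriers of Lemma~\ref{lem:radial_subbarrier} ($k=0$) and Lemma~\ref{lem:cone_subbarrier} ($k\geq1$) are applied with inner radius $r$, combined with the range check $8r\leq|x|\leq 32r<R_0/4$, the positivity of $\Phi_\alpha$ on $\{\sin\varphi\geq\delta\}$, and the bound $|x|\geq R/2$. One small correction: in the case $k=0$, $d\geq3$, you should not shrink $R_0$, since doing so could break the constraint $r<R_0/128$; it is also unnecessary, because $R<R_0$ gives $R^\gamma\leq R_0^\gamma$, so $c_*=C\underline c\,R_0^{2-\gamma}$ works, and $c_*$ is allowed to depend on $R_0$.
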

\begin{proof}
If \(k=0\), then \(U(\alpha,R_0,r)=B_{R_0}\setminus B_r\).  Comparing
\(p(\cdot,s)/\underline c\) with the radial annular barrier used in
\eqref{eq:radial_barrier_explicit}, with \(r\) in place of \(r_+(t)\), gives
\[
    p(x,s)\geq c\underline c R_0^2
    \begin{cases}
        \bigl(\log(R_0/R)\bigr)^{-1}, & d=2,\\
        1, & d\geq3,
    \end{cases}
\]
for \(R/2\leq |x|\leq R\).  After
decreasing the constant, this is at least \(c_*R^\gamma\) for
\(0<R<R_0/4\).

We may therefore assume \(k\geq1\). The assumptions give \(4r<|x|<R_0/4\). Also,
\(|x'|\geq\delta R\), \(|x|\leq R\), and \(\delta>4\alpha\) imply
\(|x'|>\alpha |x''|\). Thus \(x\in U(\alpha,R_0,r)\), and
Lemma~\ref{lem:cone_subbarrier} applies to \(p(\cdot,s)/\underline c\) in
\(U(\alpha,R_0,r)\), using \eqref{eq:HS-pointwise-pressure-equation}.

Since \(\{\varphi:\sin\varphi\geq\delta\}\) is compactly contained in
\((\varphi_\alpha,\pi/2]\), Lemma~\ref{lem:first_eigenfunction} gives
\(\Phi_\alpha\geq m_\delta>0\) on this set. The lower bound follows from
\(|x|\geq R/2\), after decreasing \(c_*\).
\end{proof}

\begin{prop}[Intrinsic window gradient estimate]\label{lem:cone_gradient_intrinsic_window}
Assume the hypotheses of Theorem~\ref{thm:HS-hitting-time-lipschitz}, let
\(0\in\Sigma_k(t_0)\) with \(0\leq k\leq d-2\), and let \(0<R_0<R_*\) be as fixed above.
There exists
\(\bar\alpha\in(0,1)\), depending only on \(d,k\), the constant in
\eqref{eq:cone_w0_bounds}, and the constants in \eqref{eq:cone_f_slice}, such
that if \(0<\alpha\leq\bar\alpha\), then the following holds.
Let \(\gamma=\nu_\alpha\) if \(k\geq1\), and let
\(\gamma\in(0,1)\) if \(k=0\).
For every \(\sigma\in(\gamma,1)\), there exist constants
\(c_\sigma\in(0,1)\) and \(r_\sigma\in(0,R_0)\) such that, for every \(R\in(0,r_\sigma)\),
\begin{equation}\label{eq:cone_gradient_intrinsic_window}
    |\nabla w(x,t)|\leq R^{1-\sigma} p(x,t),
    \qquad
    (x,t)\in B_{R/4}\times [t_0-c_\sigma R^2,t_0+c_\sigma R^{2-\sigma}].
\end{equation}
\end{prop}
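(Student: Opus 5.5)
The plan is a Caffarelli-type comparison of directional derivatives of $w$ with the pressure, carried out on the conical annulus and then propagated inward. Fix a unit vector $e$ and $\sigma\in(\gamma,1)$. On a space-time window $D_R:=B_{R}\times J_R$ with $J_R=[t_0-c R^2,t_0+cR^{2-\sigma}]$, I compare
\[
    \partial_e w\quad\hbox{with}\quad \Psi:=R^{1-\sigma}p-\hbox{(corrector)} .
\]
The comparison is made slice by slice in $\Omega_t\cap B_R$. Both $\partial_e w(\cdot,t)$ and $p(\cdot,t)$ vanish on $\partial\Omega_t$, and $w\in C^{1,1}$, so the free boundary carries no boundary contribution. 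The claim then follows from the elliptic maximum principle on each slice, once the Laplacians are ordered in the interior and the boundary values are ordered on $\partial B_R\cap\Omega_t$.

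\textbf{Interior equations.} In $\Omega_t$ we have $\Delta\partial_e w=\partial_e f$, bounded by $L_f$ via \eqref{eq:cone_f_slice}, and $-\Delta p=c\geq\underline c$. Thus $-\Delta(R^{1-\sigma}p)\geq\underline c R^{1-\sigma}$. This beats $L_f$ only if $R^{1-\sigma}$ is large, which it is not. This is the point where the $T$-dependence of $f$ bites, and I would introduce a Green's function corrector. Let $G_t$ solve $-\Delta G_t=L_f$ in $\Omega_t\cap B_R$ with $G_t=0$ on the boundary. Then $\partial_e w-G_t$ is subharmonic relative to $p$, and $0\le G_t\le CL_fR^2$.

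The obstacle is to absorb $G_t$ into $p$ near the spine. Since $p$ may be as small as $R^{\gamma}$ on the annulus, I would use $G_t\le C L_f R^{2-\gamma}\cdot(p/(c_*R^\gamma))$. This bound comes from the barrier of Lemma~\ref{lem:cone_subbarrier}: $p$ dominates a multiple of the torsion function of the same domain, since both solve Dirichlet problems in $\Omega_t\cap B_R$ and $p\ge c_*R^\gamma$ on the outer shell. Because $2-\gamma>1-\sigma$, the corrector costs $o(R^{1-\sigma})p$. I expect this step to be the main difficulty.

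\textbf{Boundary data on $\partial B_R$.} Split $\partial B_R\cap\Omega_t$ into two parts.

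On the part $\{|x'|\ge\delta R\}$, Lemma~\ref{lem:cone_pressure_away_spine} gives $p\ge c_*R^\gamma$. The bounds \eqref{eq:cone_w0_bounds} together with the Lipschitz-in-time control $|\nabla w(t)-\nabla w(t_0)|\le C|t-t_0|\,\|p\|_\infty$ (valid after time-integrating $\nabla p$; I would instead use $C^{1,1}$ bounds from Lemma~\ref{obst-quadratic-growth}) give $|\nabla w|\le CR$. Since $\sigma>\gamma$, we get $CR\le R^{1-\sigma}c_*R^\gamma$ for small $R$.

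On the thin cone $\{|x'|<\delta R\}$, $|\nabla w|\le C\delta R+o(R)$, and $p$ is not controlled from below. Here I would use $\partial_e w\le C\delta R\cdot(\hbox{harmonic measure of the cone portion})$. With $\delta$ small relative to $\bar\alpha$, the cone has small capacity for $k\le d-2$, and this contribution is again bounded by $R^{1-\sigma}p$ at radius $R/4$.

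The hypothesis $U(\alpha,R_0,r)\subset\Omega_s$ with $r\sim R/32$ comes from Lemma~\ref{lem:r_plus_upper_bd} for backward times $t\ge t_0-cR^2$, and from \eqref{eq:cone_Ur_positive_at_t0} for forward times.

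\textbf{Forward times and bootstrap.} For forward times the window length $R^{2-\sigma}$ is needed only to keep $f$ within \eqref{eq:cone_f_slice} and the $C^{1,1}$ bounds valid. Since $\Omega_t$ is increasing, the forward side uses only the slice comparison \eqref{eq:HS-slice-pressure-comparison}, namely $p(\cdot,t_0)\lesssim p(\cdot,t)$, to transfer lower bounds forward.

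Finally I would bootstrap over dyadic scales $R_j=2^{-j}R$. The estimate at scale $R_j$ feeds improved gradient bounds $|\nabla w|\le R_j^{1-\sigma}p$ into the boundary data at scale $R_{j+1}$, which widens the admissible time window and closes the iteration with constants $c_\sigma,r_\sigma$.
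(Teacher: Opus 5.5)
\textbf{There is a genuine gap.} It sits where you expected the difficulty, and it appears again in a place you did not flag.

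Every error term in your scheme must eventually be bounded by $R^{1-\sigma}p(x_0,t)$ at an evaluation point $x_0\in B_{R/4}$. Such a point may lie arbitrarily close to $\partial\Omega_t$, where $p(x_0,t)\to0$.

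\emph{The corrector.} Your torsion-type corrector $G_t$ cannot be absorbed this way. The maximum principle only gives $G_t\le (L_f/\underline c)\,p$: indeed $-\Delta(\epsilon p)=\epsilon c$ dominates $-\Delta G_t=L_f$ only when $\epsilon\ge L_f/\underline c$. Boundary information cannot improve this interior ordering, and in any case $p\ge c_*R^\gamma$ holds only on the part of $\partial B_R$ away from the spine. So the corrector costs $O(1)\,p$, not $o(R^{1-\sigma})\,p$.

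\emph{The near-spine cap.} The same problem recurs on $\partial B_R\cap\{|x'|<\delta R\}$. A small harmonic measure seen from points at distance $\sim R$ does not give $\omega^{x_0}(\text{cap})\lesssim R^{-\sigma}p(x_0,t)$ uniformly as $x_0$ approaches the free boundary. That would need a boundary-Harnack comparison of harmonic measures near a singular free boundary, which is not available here.

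\emph{What the paper does instead.} It uses two devices you are missing.
\begin{itemize}
\item The barrier contains the Caffarelli term $\kappa R^{-(2-\sigma)}(c_jQ-w+\psi_j)$, with $Q=|x-x_0|^2/(2d)$. This term is harmonic in the approximating domain $\Omega_j$ and equals $-\kappa R^{-(2-\sigma)}w(x_0,t)\le0$ at $x_0$. It is $\ge-\kappa R^{-(2-\sigma)}\varepsilon_j$ on the free-boundary part of $\partial\Omega_j$. On the near-spine cap it is at least $c_d\kappa R^\sigma-C\kappa\delta_0^2R^\sigma-\kappa\,o(R^\sigma)$, because $Q\ge c_dR^2$ there while $w\le C|x'|^2+o(R^2)$. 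Choosing $\delta_0<8\bar\alpha$ small and then $\kappa$ large, this beats $R^{-(1-\sigma)}|\partial_ew|\le(C\delta_0+o(1))R^\sigma$ with no lower bound on $p$ at all. The bound $p\ge c_*R^\gamma$ is used only on $\{|x'|\ge\delta_0R\}$, to pay the $O(\kappa R^\sigma)$ cost of $-w$ there.
\item The correctors solve $\Delta\psi_j=f-c_j$ and $\Delta\psi_j^e=\partial_ef-a_j$ with zero data on $\partial\Omega_j$. Here $c_j,a_j$ are the averages of $f,\partial_ef$ against the Green function $G_{\Omega_j}(x_0,\cdot)$. This forces $\psi_j(x_0)=\psi^e_j(x_0)=0$, so the correctors vanish at the evaluation point as well as on the boundary and never need to be absorbed. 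Only $a_jQ$ remains, and it is absorbed by $\kappa c_jQ$ since $c_j\ge1/2$ and $|a_j|R\le\kappa/4$.
\end{itemize}

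\emph{Forward times.} Your bound $|\nabla w(\cdot,t)|\le CR$ fails there. The difference $v=w(\cdot,t)-w(\cdot,t_0)$ can be of size $R^{2-\sigma}$, so the $C^{1,1}$ estimate only gives $R^{-(1-\sigma)}|\nabla v|\lesssim R^{\sigma/2}$. This cannot be absorbed on the near-spine cap, and on the far part it fails when $\sigma\le2\gamma$. The paper instead uses $v\le(\overline c/\underline c)(t-t_0)\,p(\cdot,t)$ from the slice comparison, together with Young's inequality: $R^{-(1-\sigma)}|\partial_ev|\le C\kappa^{-1}R^\sigma+C\kappa R^{-(2-\sigma)}v$. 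The choice of $c_\sigma$ then makes $C\kappa R^{-(2-\sigma)}v\le p/2$. This is the real role of the window length $c_\sigma R^{2-\sigma}$, not merely keeping $f$ and the $C^{1,1}$ bounds under control.

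\emph{The bootstrap.} No dyadic bootstrap is needed here. The proposition is proved at a single scale $R$ by the minimum principle. The paper's bootstrap belongs to the proof of Theorem~\ref{prop:cone_T_differentiability}, where it narrows the time range of $T(B_r)$.
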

\begin{proof}
Fix \(\sigma\in(\gamma,1)\) and fix a unit vector \(e\). It suffices to estimate the directional derivative $\partial_e w$, with constants independent of \(e\). We begin by choosing some parameters. Let
\[
    K:=
    \left(\frac{6}{\lambda_1\alpha^2}
    \|p\|_{L^\infty(B_{R_0}\times(t_0-R_0^2,t_0))}
    \right)^{1/2}.
\]
Choose \(c_\sigma\in(0,1)\) so small that
\begin{equation}\label{eq:cone_csigma_choice}
    K\sqrt{c_\sigma}\leq \frac1{32}.
\end{equation} Next choose \(r_\sigma\in(0,R_0/4)\) so small that, whenever
\(0<R<r_\sigma\),
\[
    [t_0-c_\sigma R^2,t_0+c_\sigma R^{2-\sigma}]
    \subset (t_0-R_0^2,t_0+R_0^2).
\]
Thus \eqref{eq:cone_f_slice} applies for all times in this interval.

Fix \(x_0\in B_{R/4}\) and \(t\in[t_0-c_\sigma R^2,t_0+c_\sigma R^{2-\sigma}]\). If
\(w(x_0,t)=0\), then \(\nabla w(x_0,t)=0\), and there is nothing to prove. We therefore assume
\(w(x_0,t)>0\). Let \(\varepsilon_j\downarrow0\), with
\(\varepsilon_j<w(x_0,t)\), and choose smooth open sets $\Omega_j$ such that 
\begin{equation}\label{eq:cone_Omega_j}
    B_{R/2}\cap\{w(\cdot,t)>\varepsilon_j\}\subset\Omega_j
    \subset B_R\cap\Omega_t.
\end{equation}

For nonnegative \(C^{1,1}\) functions we use
\begin{equation}\label{eq:cone_c11_sqrt}
    |\nabla u|^2\leq C u,
\end{equation}
with \(C\) controlled by the local \(C^{1,1}\) norm. Applying this to
\(u=w(\cdot,t)\) gives
\begin{equation}\label{eq:cone_inner_boundary_smallness}
    0\leq w(x,t)\leq\varepsilon_j,
    \qquad
    |\partial_e w(x,t)|\leq C\sqrt{\varepsilon_j},
    \qquad x\in\partial\Omega_j\cap B_{R/2}.
\end{equation}

Let \(G_{\Omega_j}\) be the Green function for \(-\Delta\) in \(\Omega_j\), and
set
\[
    I_j:=\int_{\Omega_j}G_{\Omega_j}(x_0,y)\,dy,
\]
\[
    c_j:=
    \frac{\int_{\Omega_j}f(y,t)G_{\Omega_j}(x_0,y)\,dy}{I_j},
    \qquad
    a_j:=
    \frac{\int_{\Omega_j}\partial_e f(y,t)G_{\Omega_j}(x_0,y)\,dy}{I_j}.
\]
We define the correctors \(\psi_j\) and \(\psi_j^e\) by
\[
    \Delta\psi_j=f(\cdot,t)-c_j,
    \qquad
    \Delta\psi_j^e=\partial_e f(\cdot,t)-a_j,
    \qquad
    \psi_j=\psi_j^e=0\hbox{ on }\partial\Omega_j.
\]
By \eqref{eq:cone_f_slice}, \(c_j\in[1/2,1]\) and \(|a_j|\leq L_f\).  The
normalization by the Green function gives
\begin{equation}\label{eq:cone_corrector_center}
    \psi_j(x_0)=\psi_j^e(x_0)=0.
\end{equation}

Define
\[
    Q(x):=\frac{|x-x_0|^2}{2d},
    \qquad
    q_j:=\partial_e w(\cdot,t)-\psi_j^e-a_jQ,
\]

With \(\kappa>0\) to be chosen,
we consider the following function:

\begin{equation}\label{eq:cone_barrier}
    z_j:=p(\cdot,t)-R^{-(1-\sigma)}q_j
    +\kappa R^{-(2-\sigma)}\bigl(c_jQ-w(\cdot,t)+\psi_j\bigr).
\end{equation}
Note that $z_j$ is superharmonic in $\Omega_j$, since  \(\Delta w(\cdot,t)=f(\cdot,t)\) and
the pressure equation \eqref{eq:HS-pointwise-pressure-equation}
give
\[
    \Delta q_j=0,
    \qquad
    \Delta\bigl(c_jQ-w(\cdot,t)+\psi_j\bigr)=0 \qquad \hbox{ in } \Omega_j.
\]
Our goal is to estimate the lower bound of $z_j$ on $\partial\Omega_j$, which then yields the desired estimate at $x=x_0$ by superharmonicity.
On \(\partial\Omega_j\), the correctors vanish, and therefore
\[
    z_j=p(\cdot,t)-R^{-(1-\sigma)}\partial_e w(\cdot,t)
    +\bigl(\kappa c_jR^{-(2-\sigma)}+a_jR^{-(1-\sigma)}\bigr)Q
    -\kappa R^{-(2-\sigma)}w(\cdot,t).
\]
After \(\kappa\) is fixed, we decrease \(r_\sigma\) so that
\(|a_j|R\leq\kappa/4\) for \(R<r_\sigma\). Since \(c_j\geq1/2\),
\begin{equation}\label{eq:cone_boundary_common}
    z_j\geq p(\cdot,t)-R^{-(1-\sigma)}\partial_e w(\cdot,t)
    +\frac{\kappa}{4}R^{-(2-\sigma)}Q
    -\kappa R^{-(2-\sigma)}w(\cdot,t)
\end{equation}
on \(\partial\Omega_j\). On \(\partial\Omega_j\cap B_{R/2}\), this and
\eqref{eq:cone_inner_boundary_smallness} give
\begin{equation}\label{eq:cone_inner_boundary_zj}
    z_j\geq
    -CR^{-(1-\sigma)}\sqrt{\varepsilon_j}
    -\kappa R^{-(2-\sigma)}\varepsilon_j.
\end{equation}

It remains to estimate \(z_j\) on the outer boundary 
\(\partial\Omega_j\cap\{|x|\geq R/2\}\), where $Q\geq c_dR^2$. For this part we proceed by different arguments at past times $t<t_0$, using results from the barrier argument, and at future times $t>t_0$, using the right continuity of $p$.

\textbf{\emph{Backward window.}}
Let $x\in \partial\Omega_j\cap \{|x|\geq R/2\}$.
Lemma~\ref{lem:r_plus_upper_bd} gives
\begin{equation}\label{eq:cone_r_plus_small_on_window}
    r_+(t)\leq K\sqrt{t_0-t}\leq \frac{R}{32},
    \qquad t\in[t_0-c_\sigma R^2,t_0].
\end{equation}

 Define
\[
    v:=w(\cdot,t_0)-w(\cdot,t)\geq0.
\]
Using \eqref{eq:cone_c11_sqrt} for \(v\), followed by Young's inequality, we have
\begin{equation}\label{eq:cone_backward_absorb}
    \kappa R^{-(2-\sigma)}v+R^{-(1-\sigma)}\partial_ev
    \geq -C\kappa^{-1}R^\sigma.
\end{equation}
Combining \eqref{eq:cone_boundary_common}, \(w(\cdot,t)=w(\cdot,t_0)-v\),
\eqref{eq:cone_w0_bounds}, and \eqref{eq:cone_backward_absorb}, we obtain
\begin{equation}\label{eq:cone_backward_outer_common}
    \begin{aligned}
    z_j(x)\geq{}&p(x,t)+c_d\kappa R^\sigma
    -CR^{-(1-\sigma)}|x'|
    -C\kappa R^{-(2-\sigma)}|x'|^2\\
    &-o(R^\sigma)-\kappa o(R^\sigma)
    -C\kappa^{-1}R^\sigma-CR^{1+\sigma}.
    \end{aligned}
\end{equation}

We now choose the threshold \(\delta_0\) separating the near-spine and
away-from-spine parts of the boundary in our case analysis. Let \(C_*\) bound
the coefficient of \(\kappa R^{-(2-\sigma)}|x'|^2\) in
\eqref{eq:cone_backward_outer_common}. Take \(\bar\alpha\) small enough that \(8\bar\alpha<1\) and
\(C_*(8\bar\alpha)^2\leq c_d/4\), and choose
\(\delta_0\in(4\alpha,8\bar\alpha)\). Then
\begin{equation}\label{eq:cone_delta_choice}
    4\alpha<\delta_0,
    \qquad
    C_*\delta_0^2\leq c_d/4 .
\end{equation}
Let \(c_*\) be the constant from
Lemma~\ref{lem:cone_pressure_away_spine} with \(\delta=\delta_0\). If \(|x'|\leq\delta_0R\), then
\begin{equation}\label{eq:cone_near_backward}
    z_j(x)\geq p(x,t)
    +\bigl(c_d\kappa-C\delta_0-C\kappa\delta_0^2-C\kappa^{-1}-o(1)-\kappa o(1)\bigr)R^\sigma
    -CR^{1+\sigma}.
\end{equation}
By \eqref{eq:cone_delta_choice}, the coefficient of \(R^\sigma\) in
\eqref{eq:cone_near_backward} is at least
\[
    \frac{c_d}{2}\kappa-C\delta_0-C\kappa^{-1}-o(1)-\kappa o(1),
\]
which is positive once \(\kappa\) is chosen large and \(r_\sigma\) is
decreased so that the \(o(1)\) terms are small for \(R<r_\sigma\).

If \(|x'|\geq\delta_0R\), set
\(r:=r_+(t)+R/32\). By
\eqref{eq:cone_r_plus_small_on_window}, \(R/32\leq r\leq R/16\), and by
the definition of \(r_+\), \(U(\alpha,R_0,r)\subset\Omega_t\).
Lemma~\ref{lem:cone_pressure_away_spine}, applied with \(s=t\), gives
\[
    p(x,t)\geq c_*R^\gamma.
\]
Since \(|x'|\leq R\), \eqref{eq:cone_backward_outer_common} yields
\begin{equation}\label{eq:cone_far_backward}
    z_j(x)\geq
    c_*R^\gamma-C(\kappa+\kappa^{-1})R^\sigma-CR^{1+\sigma}.
\end{equation}
Because \(\gamma<\sigma\), \eqref{eq:cone_near_backward} and
\eqref{eq:cone_far_backward} are nonnegative once \(r_\sigma\) is small enough.

The minimum principle and \eqref{eq:cone_inner_boundary_zj} give
\[
    z_j(x_0)\geq
    -CR^{-(1-\sigma)}\sqrt{\varepsilon_j}
    -\kappa R^{-(2-\sigma)}\varepsilon_j.
\]
By \eqref{eq:cone_corrector_center},
\[
    z_j(x_0)=p(x_0,t)-R^{-(1-\sigma)}\partial_e w(x_0,t)
    -\kappa R^{-(2-\sigma)}w(x_0,t).
\]
Letting \(j\to\infty\) gives
\[
    \partial_e w(x_0,t)\leq R^{1-\sigma}p(x_0,t).
\]
Replacing \(e\) by \(-e\), we conclude \eqref{eq:cone_gradient_intrinsic_window} for $t\leq t_0$.

\textbf{\emph{Forward window.}}
We next estimate $z_j$ on $\partial\Omega_j \cap \{|x|\geq R/2\}$ for the forward times $t>t_0$ with the same choice of $\delta_0$ and $\kappa$ as for backward times, but with potentially smaller choices of \(c_\sigma\) and
\(r_\sigma\) made below. 
If \(t=t_0+\tau\) with
\(\tau\leq c_\sigma R^{2-\sigma}\), set
\[
    v:=w(\cdot,t)-w(\cdot,t_0)=\int_{t_0}^t p(\cdot,s)\,ds\geq0.
\]
Integrating the slice comparison
\eqref{eq:HS-slice-pressure-comparison} in \(s\in(t_0,t)\), and passing to the
canonical representative from Proposition~\ref{prop:HS-USC-representative}, gives
\begin{equation}\label{eq:cone_forward_v_bound}
    v(x)=\int_{t_0}^t p(x,s)\,ds
    \leq \frac{\overline c}{\underline c}\tau p(x,t).
\end{equation}
By \eqref{eq:cone_forward_v_bound} and \(\tau\leq c_\sigma R^{2-\sigma}\), we have
\(v\leq(\overline c/\underline c)\,c_\sigma R^{2-\sigma}p(\cdot,t)\). We decrease
\(c_\sigma\), depending on \(\kappa\) and \(\overline c/\underline c\), so that
\(C\kappa\,(\overline c/\underline c)\,c_\sigma\leq\tfrac12\), where \(C\) is the constant
in \eqref{eq:cone_forward_absorb} below.
Since the \(C^{1,1}\) estimate \eqref{eq:cone_c11_sqrt} for \(v\) gives
\begin{equation}\label{eq:cone_forward_absorb}
    -\kappa R^{-(2-\sigma)}v-R^{-(1-\sigma)}\partial_ev
    \geq
    -C\kappa^{-1}R^\sigma-C\kappa R^{-(2-\sigma)}v,
\end{equation}
it follows 
\begin{equation}\label{eq:cone_forward_outer_common}
    \begin{aligned}
    z_j(x)\geq{}&\frac12p(x,t)+c_d\kappa R^\sigma
    -CR^{-(1-\sigma)}|x'|
    -C\kappa R^{-(2-\sigma)}|x'|^2\\
    &-o(R^\sigma)-\kappa o(R^\sigma)
    -C\kappa^{-1}R^\sigma-CR^{1+\sigma}.
    \end{aligned}
\end{equation}

When \(|x'|\leq\delta_0R\), the choices of \(\delta_0\) in
\eqref{eq:cone_delta_choice}, of \(\kappa\) after
\eqref{eq:cone_near_backward}, and of \(c_\sigma\)  make the right-hand side of
\eqref{eq:cone_forward_outer_common} nonnegative, after the final decrease of the range of $R$ given by 
\(r_\sigma\). When \(|x'|\geq\delta_0R\), set \(h=R/20\). By
\eqref{eq:cone_Ur_positive_at_t0} and monotonicity,
\[
    U(\alpha,R_0,h)\subset\Omega_{t_0}\subset\Omega_t,
\]
and Lemma~\ref{lem:cone_pressure_away_spine}, applied with \(s=t\) and \(r=h\),
gives
\[
    p(x,t)\geq c_*R^\gamma.
\]
Since \(\gamma<\sigma\), the positive pressure term in
\eqref{eq:cone_forward_outer_common} dominates the remaining \(R^\sigma\) and
\(R^{1+\sigma}\) errors for \(R<r_\sigma\). From here the minimum
principle and the passage \(j\to\infty\) proceed exactly as in the backward
window, yielding \(\partial_e w(x_0,t)\leq R^{1-\sigma}p(x_0,t)\); replacing \(e\)
by \(-e\) and taking the supremum over unit directions proves
\eqref{eq:cone_gradient_intrinsic_window}.
\end{proof}

To apply Proposition~\ref{lem:cone_gradient_intrinsic_window} at progressively smaller spatial scales, we first narrow down, by bootstrapping, the time range containing \(T(B_r)\). This also improves the resulting Lipschitz modulus at each step.
\begin{proof}[Proof of Theorem~\ref{prop:cone_T_differentiability}]
Fix \(\varepsilon\in(0,1)\). Choose the cone parameter
\(0<\alpha\leq\bar\alpha\), and also \(\alpha\leq\alpha_0(d,k)\) when
\(k\geq1\).  If \(k\geq1\), choose \(\alpha\) so small that
\(\nu_\alpha<\varepsilon\), which is possible by
Proposition ~\ref{lem:first_eigenvalue}, and set \(\gamma=\nu_\alpha\).  If \(k=0\),
choose \(\gamma\in(0,\varepsilon)\).  With this choice of \(\alpha\), take
\(R_0<R_*\).  Finally choose
\(\sigma\) with
\[
    \gamma<\sigma<\varepsilon .
\]
Let \(c_\sigma,r_\sigma\) be the constants from
Proposition~\ref{lem:cone_gradient_intrinsic_window}.  Define
\[
    \Phi_\sigma(\eta):=\frac{(1-\sigma)(1+\eta)}{2-\sigma}.
\]
We will repeatedly use the following elementary observation. Since $w$ is
locally Lipschitz in spacetime, if $|\nabla w|\leq A p=A \partial_t w$ in a spacetime
cylinder $U\times I$ and $[x,y]\subset U$, then
\[
    s\mapsto w(x+s(y-x),t+sA|x-y|)
\]
is nondecreasing whenever $t+sA|x-y|\in I$ for $s\in[0,1]$. Equivalently,
\begin{equation}\label{eq:cone_spacetime_monotonicity}
    w(y,t+A|x-y|)\geq w(x,t)
\end{equation}
provided $t,t+A|x-y|\in I$. Thus, if $[x,y]\subset U$, $T(x),T(y)\in I$, and
$\max\{T(x),T(y)\}+A|x-y|\in I$, then
\begin{equation}\label{eq:cone_T_lipschitz_from_gradient}
    |T(x)-T(y)|\leq A|x-y|.
\end{equation}
In the two applications that follow, \eqref{eq:cone_gradient_intrinsic_window}
supplies this hypothesis with \(A=R^{1-\sigma}\) on
\[
    B_{R/4}\times[t_0-c_\sigma R^2,t_0+c_\sigma R^{2-\sigma}].
\]

The rest of the proof is a bootstrap on the exponent \(\eta\) in estimates of the
form
\begin{equation}\label{eq:cone_future_bound_eta}
    T(z)\leq t_0+r^{1+\eta},\qquad z\in B_{r/8},
\end{equation}
valid for all sufficiently small \(r\). We first establish
\eqref{eq:cone_future_bound_eta} with \(\eta=0\). Choose \(M_0\) so large that
$c_\sigma M_0^{2-\sigma}>2$, and set
\(R=M_0r^{1/(2-\sigma)}\). For \(r\) small, \(R<r_\sigma\),
$B_{r/8}\subset B_{R/4}$, and \eqref{eq:cone_gradient_intrinsic_window} applies
up to time \(t_0+r\). If \(z\in B_{r/8}\), applying
\eqref{eq:cone_spacetime_monotonicity} with $x=0$, $y=z$, and
$t=t_0+\tau$, then letting $\tau\downarrow0$, gives
\[
    T(z)\leq t_0+R^{1-\sigma}|z|\leq t_0+r.
\]
Thus \eqref{eq:cone_future_bound_eta} holds with \(\eta=0\).

Assume now that \eqref{eq:cone_future_bound_eta} holds
for some \(\eta\in[0,1-\sigma)\) and all sufficiently small \(r\), and let
\(b<\Phi_\sigma(\eta)\). Put
\[
    q:=\frac{1+\eta}{2-\sigma},\qquad R:=Mr^q,\qquad A:=R^{1-\sigma},
\]
where $M$ is fixed so large that $c_\sigma M^{2-\sigma}>4$. Since $q<1$, for
small $r$ we have
$R<r_\sigma$ and $B_{r/8}\subset B_{R/4}$. Choose $\theta\in(0,1)$ so small
that $2-\theta>2q$. Proposition~\ref{prop:cone_hitting_time_upper_bd} gives
\[
    T(z)\geq t_0-C_\theta r^{2-\theta}\geq t_0-\frac12c_\sigma R^2,
    \qquad z\in B_{r/8},
\]
after reducing $r$. On the future side, \eqref{eq:cone_future_bound_eta} gives
\[
    \max\{T(x),T(y)\}+A|x-y|
    \leq t_0+r^{1+\eta}+\frac14M^{1-\sigma}r^{1+\Phi_\sigma(\eta)}
    \leq t_0+\frac12c_\sigma R^{2-\sigma}
\]
for $x,y\in B_{r/8}$, after decreasing $r$. Hence
\eqref{eq:cone_T_lipschitz_from_gradient}, used with
\eqref{eq:cone_gradient_intrinsic_window} at scale $R$, gives
\[
    |T(x)-T(y)|\leq A|x-y|.
\]
Since \(A=M^{1-\sigma}r^{\Phi_\sigma(\eta)}\leq r^b\) for small \(r\), we obtain
\begin{equation}\label{eq:cone_T_bootstrap_modulus}
    |T(x)-T(y)|\leq r^b|x-y|,
    \qquad x,y\in B_{r/8}.
\end{equation}
Taking $y=0$ in \eqref{eq:cone_T_bootstrap_modulus} also gives
\eqref{eq:cone_future_bound_eta} with \(b\) in place of \(\eta\).

Let \(\eta_0=0\) and
\(\eta_{n+1}=\Phi_\sigma(\eta_n)\). The preceding
step shows by induction that, for each \(n\geq1\) and each
\(b<\eta_n\), both
\eqref{eq:cone_future_bound_eta} with \(\eta=b\) and
\eqref{eq:cone_T_bootstrap_modulus} with exponent \(b\) hold. Since
\[
    \eta_n=(1-\sigma)\left(1-\left(\frac{1-\sigma}{2-\sigma}\right)^n\right)
    \uparrow 1-\sigma,
\]
and \(1-\varepsilon<1-\sigma\), \eqref{eq:cone_T_near_sharp_modulus} follows.

Taking $y=0$ in \eqref{eq:cone_T_near_sharp_modulus}, with $r=16|x|$, yields
\[
    |T(x)-T(0)|\leq (16|x|)^{1-\varepsilon}|x|=o(|x|).
\]
Therefore $T$ is differentiable at $0$ and $\nabla T(0)=0$. If $x_j\to0$ and
$T$ is differentiable at $x_j$, applying
\eqref{eq:cone_T_near_sharp_modulus} with $r=16|x_j|$ in a neighborhood of
$x_j$ gives
\[
    |\nabla T(x_j)|\leq (16|x_j|)^{1-\varepsilon}\to0.
\]
\end{proof}

\subsection{\texorpdfstring{$C^1$}{C1} regularity without top-stratum singularities}

We recall from \S\ref{subsec:intro-setting} the notation
\(O_I=\{x:T(x)\in I\}\) for \(I\Subset(0,\infty)\), the regular set
\(\mathcal R\), the singular set \(\Sigma\), and the stratification
\eqref{eq:intro-strata}.  For \(t>0\), we write
\(\Sigma(t):=\Sigma\cap\{T=t\}\) and
\(\Sigma_k(t):=\Sigma_k\cap\{T=t\}\) for the corresponding time slices.  We also write
\[
    \Sigma_{\leq d-2}
    :=\{q\in\R^d:T(q)<\infty,\ q\in \Sigma_k
    \text{ for some }0\leq k\leq d-2\},
\]
for the singular set below the top stratum.

Next we discuss the $C^1$ regularity of $T$ away from the hitting times of $\Sigma_{d-1}$-points. To this end we need to analyze the behavior of the velocity field $\nabla p$ in space-time neighborhoods of regular points. We begin by recalling the notion of Mosco convergence \cite{Mosco69}.

\begin{defn}[Mosco convergence]\label{defn:mosco}
Let $V_j,V$ be closed subspaces of a Hilbert space $H$. We say that $V_j\to V$ in the Mosco sense if:
\begin{enumerate}
\item[(i)] every $v\in V$ admits $v_j\in V_j$ with $v_j\to v$ strongly in $H$;
\item[(ii)] every weak limit of a sequence $v_{j_k}\in V_{j_k}$ belongs to $V$.
\end{enumerate}
\end{defn}

We view $H^1_0(D)$, for $D\subset\R^d$ open, as a subspace of $H^1(\R^d)$ by zero extension.

The main reason for introducing Mosco convergence is the following elementary stability property for the Dirichlet energy (see also \cite[Thm.~3.3]{Daners03}).

\begin{lem}[Dirichlet stability under Mosco convergence]\label{lem:mosco-dirichlet-stability}
Let $D_j,D\subset B_R$ be open, and suppose that $H^1_0(D_j)\to H^1_0(D)$ in the Mosco sense in $H^1(\R^d)$. Let $f_j\to f$ strongly in $H^{-1}(\R^d)$. If $u_j\in H^1_0(D_j)$ and $u\in H^1_0(D)$ solve
\[
    \int \nabla u_j\cdot\nabla\varphi=\langle f_j,\varphi\rangle
    \quad\hbox{for all }\varphi\in H^1_0(D_j),
    \qquad
    \int \nabla u\cdot\nabla\varphi=\langle f,\varphi\rangle
    \quad\hbox{for all }\varphi\in H^1_0(D),
\]
then $u_j\to u$ strongly in $H^1(\R^d)$.
\end{lem}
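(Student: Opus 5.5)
The plan is the standard variational argument: obtain a uniform $H^1$ bound, extract a weak limit, identify it as $u$ using the two halves of Mosco convergence, and finally upgrade weak to strong convergence by passing to the limit in the energy identity.

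\emph{Uniform bound.} Test the equation for $u_j$ with $\varphi=u_j$:
\[
\norm{\nabla u_j}_{L^2}^2=\langle f_j,u_j\rangle\leq\norm{f_j}_{H^{-1}}\norm{u_j}_{H^1}.
\]
Since $u_j$ vanishes outside $B_R$, Poincar\'e's inequality on $B_R$ gives $\norm{u_j}_{H^1}\leq C_R\norm{\nabla u_j}_{L^2}$. Hence $\norm{u_j}_{H^1}\leq C_R\sup_j\norm{f_j}_{H^{-1}}<\infty$.

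\emph{Identification of the limit.} Take any subsequence with $u_{j_k}\rightharpoonup\bar u$ weakly in $H^1(\R^d)$. Mosco condition (ii) gives $\bar u\in H^1_0(D)$. Now fix $\varphi\in H^1_0(D)$. By condition (i) there are $\varphi_j\in H^1_0(D_j)$ with $\varphi_j\to\varphi$ strongly in $H^1$. Testing the $u_{j_k}$ equation with $\varphi_{j_k}$ and passing to the limit uses two facts: weak times strong convergence for $\int\nabla u_{j_k}\cdot\nabla\varphi_{j_k}$, and strong times strong convergence in the duality for $\langle f_{j_k},\varphi_{j_k}\rangle$. This yields
\[
\int\nabla\bar u\cdot\nabla\varphi=\langle f,\varphi\rangle \qquad\hbox{for all }\varphi\in H^1_0(D).
\]
The Lax--Milgram solution in $H^1_0(D)$ is unique, because Poincar\'e on $B_R$ gives coercivity. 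Therefore $\bar u=u$, and the whole sequence converges weakly to $u$.

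\emph{Strong convergence.} Energy convergence follows from
\[
\norm{\nabla u_j}_{L^2}^2=\langle f_j,u_j\rangle\to\langle f,u\rangle=\norm{\nabla u}_{L^2}^2,
\]
again by the strong--weak pairing. Weak convergence together with convergence of the norms gives $\nabla u_j\to\nabla u$ strongly in $L^2$. Applying Poincar\'e on $B_R$ to $u_j-u$ (which is supported in $B_R$) then gives $u_j\to u$ strongly in $H^1(\R^d)$.

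\emph{Main obstacle.} I do not expect a real obstacle. The only point requiring care is that condition (i) is used to construct admissible test functions in the varying spaces $H^1_0(D_j)$, and the strong convergence in (i) is exactly what makes the limit of $\langle f_j,\varphi_j\rangle$ pass.
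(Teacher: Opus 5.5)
Your proposal is correct and follows essentially the same route as the paper's proof: a uniform bound from testing with $u_j$ and Poincar\'e on $B_R$, identification of every weak limit as $u$ via the two Mosco conditions and uniqueness, then strong convergence from $\langle f_j,u_j\rangle\to\langle f,u\rangle$ together with Poincar\'e.
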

\begin{proof}
Since $H^1_0(D_j)\subset H^1_0(B_R)$, the Poincar\'e inequality and the
equation tested with $u_j$ show that $(u_j)$ is bounded in $H^1(\R^d)$.
If, along a subsequence, $u_j\rightharpoonup \bar u$ in $H^1(\R^d)$, then
Mosco condition~(ii) gives $\bar u\in H^1_0(D)$.  For each
$\varphi\in H^1_0(D)$, condition~(i) provides
$\varphi_j\in H^1_0(D_j)$ with $\varphi_j\to\varphi$ strongly in
$H^1(\R^d)$.  Passing to the limit in the equation for $u_j$ shows that
$\bar u$ solves the limiting problem, so uniqueness gives $\bar u=u$.
Thus $u_j\rightharpoonup u$ in $H^1(\R^d)$.  Testing again with $u_j$ yields
\[
    \int |\nabla u_j|^2=\langle f_j,u_j\rangle
    \longrightarrow \langle f,u\rangle=\int |\nabla u|^2.
\]
Hence $\nabla u_j\to\nabla u$ strongly in $L^2(\R^d)$, and the conclusion follows from the Poincar\'e
inequality.
\end{proof}
\begin{lem}[Mosco convergence without top stratum]\label{lem:no_top_mosco}
Assume the hypotheses of Theorem~\ref{thm:HS-hitting-time-lipschitz}.
Let $I=(a,b)\Subset(0,\infty)$, with \(\Omega_t\) as in
\eqref{eq:HS-open-phase-bridge}, and assume that no point with hitting time in $I$ belongs to $\Sigma_{d-1}$. Fix $t_*\in I$. Then
\[
    H^1_0(\Omega_t)\to H^1_0(\Omega_{t_*})
\]
in the Mosco sense in $H^1(\R^d)$ as $t\to t_*$.
Moreover, for each $t\in I$,
\[
    \{u\in H^1(\R^d):u=0\hbox{ a.e. on }\R^d\setminus\Omega_t\}
    =H^1_0(\Omega_t).
\]
\end{lem}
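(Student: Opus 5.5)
We first prove the identification
\[
    \{u\in H^1(\R^d):u=0\hbox{ a.e. on }\R^d\setminus\Omega_t\}
    =H^1_0(\Omega_t),
\]
and then deduce the Mosco convergence from it, using monotonicity of $\Omega_t$ and continuity of $T$. For the identification, the inclusion $\supseteq$ is trivial. For $\subseteq$, we localize with a partition of unity subordinate to a finite cover of $\partial\Omega_t$ (compact by Lemma~\ref{lem:HS-basic-properties}(i)). Points of $\partial\Omega_t$ are exactly the points $q$ with $T(q)=t$. If $q\in\mathcal R$, Theorem~\ref{thm:hitting-interface-regularity}(ii) gives a $C^{1,\beta}$ graph representation of $\partial\Omega_t$ near $q$. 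For a domain with $C^1$ boundary, an $H^1$ function vanishing a.e. outside lies in $H^1_0$: translate inward in the graph direction and mollify. If $q\in\Sigma$, then by hypothesis $q\in\Sigma_{\leq d-2}$. By Theorem~\ref{thm:hitting-interface-regularity}(iv), $\Sigma$ is locally contained in a $C^1$ manifold of dimension at most $d-2$, and it is relatively closed by (i). Hence $\Sigma(t)$ is a compact set of zero $2$-capacity, since $C^1$ manifolds of codimension at least $2$ have zero $H^1$-capacity.

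Given $u$ vanishing a.e. outside $\Omega_t$, we proceed in two steps. First, we multiply by $1-\eta_\epsilon$, where the $\eta_\epsilon\in C_c^\infty$ equal $1$ near $\Sigma(t)$ and satisfy $\|\eta_\epsilon\|_{H^1}\to0$. For bounded $u$ (reduce to this by truncation) we get $u(1-\eta_\epsilon)\to u$ in $H^1$. Second, $u(1-\eta_\epsilon)$ vanishes outside $\Omega_t$ and is supported away from $\Sigma(t)$, so near the support of $u(1-\eta_\epsilon)$ the boundary $\partial\Omega_t$ consists only of regular points. The regular-point argument then places $u(1-\eta_\epsilon)$ in $H^1_0(\Omega_t)$, and closedness of $H^1_0(\Omega_t)$ finishes the proof.

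For the Mosco convergence we treat the two sides separately. For $t\downarrow t_*$, the sets $\Omega_t=\{T<t\}$ decrease to $\{T\leq t_*\}=\Omega_{t_*}\cup\partial\Omega_{t_*}$, using continuity of $T$ from Theorem~\ref{thm:HS-hitting-time-lipschitz}. Condition (i) is immediate, since $H^1_0(\Omega_{t_*})\subset H^1_0(\Omega_t)$. For (ii), a weak limit $v$ of $v_j\in H^1_0(\Omega_{t_j})$ vanishes a.e. outside $\bigcap_j\Omega_{t_j}=\{T\leq t_*\}$. The set $\partial\Omega_{t_*}=\{T=t_*\}$ is Lebesgue null: near regular points it is a $C^1$ graph, and $\Sigma$ is contained in lower dimensional manifolds. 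So $v$ vanishes a.e. outside $\Omega_{t_*}$, and the identification gives $v\in H^1_0(\Omega_{t_*})$. For $t\uparrow t_*$, condition (ii) is trivial because $\Omega_t\subset\Omega_{t_*}$. For (i), note that $\bigcup_{t<t_*}\Omega_t=\Omega_{t_*}$, and any $\varphi\in C_c^\infty(\Omega_{t_*})$ has compact support, hence is contained in some $\Omega_t$ with $t<t_*$. Density of $C_c^\infty(\Omega_{t_*})$ in $H^1_0(\Omega_{t_*})$ then gives recovery sequences.

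The main obstacle is the regular-boundary step together with the interface between regular and singular parts. We need a quantitative flattening near each regular point, uniform on the compact set $\partial\Omega_t\setminus\operatorname{supp}\eta_\epsilon$, and a capacity cutoff compatible with the truncation argument. The capacity estimate itself is standard: cover the $(d-2)$-dimensional $C^1$ piece by $N\sim\epsilon^{-(d-2)}$ balls of radius $\epsilon$, and use logarithmic cutoffs in the normal directions, which give vanishing energy in codimension $\geq2$. The remaining care is in making sure that the relevant boundary pieces are the spatial slices $\partial\Omega_t$ described by Theorem~\ref{thm:hitting-interface-regularity}(ii).
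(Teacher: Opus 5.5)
Your proof is correct, but it is organized differently from the paper's.

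\textbf{How the paper proceeds.} The paper relies on the capacitary characterization $H^1_0(D)=\{v\in H^1(\R^d): v=0\hbox{ q.e. on }\R^d\setminus D\}$ for arbitrary open $D$ (Adams--Hedberg), together with outer regularity of capacity. It then checks quasi-everywhere vanishing piece by piece:
\begin{itemize}
\item on the open exterior, from a.e.\ vanishing;
\item on the regular graph pieces, via zero trace from one-sided vanishing;
\item on $\Sigma(t)$, because that set has zero capacity.
\end{itemize}
It runs this argument once for the weak limit in Mosco~(ii) with $t_j>t_*$, and again for the identification.

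\textbf{How you proceed.} You prove the identification directly, using truncation, a capacitary cutoff $\eta_\epsilon$ around the compact capacity-zero set $\Sigma(t)$, a finite partition of unity, and inward translation plus mollification in the regular graph cylinders. You then get Mosco~(ii) for $t\downarrow t_*$ as a corollary: weak limits vanish a.e.\ off $\{T\leq t_*\}$, and $\{T=t_*\}$ is Lebesgue-null.

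\textbf{What each route buys.} Yours is self-contained: it needs no quasi-continuous representatives, no traces, and no characterization of $H^1_0$ for general open sets. It also shows that the $C^1$ convergence of the moving graphs mentioned in the paper's proof is not needed for this lemma. The paper's route is shorter given the citations, and it handles both assertions by one uniform mechanism.

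\textbf{Minor points.}
\begin{itemize}
\item The nullity of $\{T=t_*\}$ follows most directly from $\rho\in C([0,\tau);L^1(\R^d))$ and $\rho(\cdot,s)=\chi_{\Omega_s}$ a.e., since $\{T=t_*\}\subset\Omega_s\setminus\Omega_{t_*}$ for every $s>t_*$. This avoids your covering argument's reliance on $\{T=t\}=\partial\Omega_t$. Only the inclusion $\partial\Omega_t\subset\{T=t\}$ follows from continuity of $T$, and it is all your identification step needs. The reverse inclusion is a no-nucleation property.
\item In your capacity sketch for $d\geq3$, covering a $(d-2)$-dimensional piece by $N\sim\epsilon^{-(d-2)}$ balls of radius $\epsilon$ only bounds the capacity by $N\epsilon^{d-2}\sim1$. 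The vanishing comes entirely from the logarithmic profile in the distance to the manifold, or from the standard result the paper cites.
\item Take the cutoffs with $0\leq\eta_\epsilon\leq1$, so that $\eta_\epsilon\nabla u\to0$ in $L^2$ by dominated convergence.
\end{itemize}
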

\begin{proof}
For every open \(D\subset\R^d\), the capacity characterization of
\cite[Thm.~9.1.3]{AdamsHedberg} gives
\begin{equation}\label{eq:h10-qe-characterization}
    H^1_0(D)=\{v\in H^1(\R^d):v=0\hbox{ q.e. on }\R^d\setminus D\}.
\end{equation}
Here q.e. means {\it quasi-everywhere,} that is, outside a set of Sobolev
\(2\)-capacity zero; \(H^1\) functions are understood through their
quasi-continuous representatives.  We recall also that Sobolev capacity is outer regular by
\cite[Prop.~2.3.5]{AdamsHedberg}.  Moreover, an \(H^1\) function vanishing on
one side of a Lipschitz graph has zero trace on the graph.
To prove (i), note that continuity of $w$ implies that each $\phi\in C_c^\infty(\Omega_{t_*})$ also belongs to $C_c^\infty(\Omega_t)$ when $t$ is close to $t_*$. Density of these test functions in $H^1_0(\Omega_{t_*})$ gives the required strong approximation.

To prove (ii), let $t_j\to t_*$ and let $u_j\in H^1_0(\Omega_{t_j})$ converge weakly in $H^1(\R^d)$ to $u$. If $t_j\leq t_*$ along a subsequence, monotonicity gives $\Omega_{t_j}\subset\Omega_{t_*}$, hence $u\in H^1_0(\Omega_{t_*})$. Only the case $t_j>t_*$ remains.

Choose $R$ so large that $\Omega_t\Subset B_R$ for $t$ near $t_*$. Then $u_j=0$ q.e. in $\R^d\setminus B_R$ for all large $j$, hence also $u=0$ q.e. in $\R^d\setminus B_R$. By Theorem~\ref{thm:hitting-interface-regularity} and the absence of \(\Sigma_{d-1}\), \(\Sigma(t_*)\) is locally contained in countably many \(C^1\) manifolds of dimension at most \(d-2\), $\Sigma(t_*)$ has zero capacity \cite[Cor. 5.1.15]{AdamsHedberg}.

By outer regularity, choose an open set $U\Subset B_R$ containing $\Sigma(t_*)$ with arbitrarily small relative capacity. The compact set $(\partial\Omega_{t_*}\cap B_R)\setminus U$ consists of regular points. Finitely many graph cylinders $Q_\ell$, chosen compactly inside the open regular set, cover it. After shrinking the cylinders and the time window, each $Q_\ell$ carries a single graph representation of $\partial\Omega_t$ for all nearby times, with $C^1$ spatial regularity and Lipschitz dependence on $t$. The graph heights converge uniformly by the Lipschitz bound, while normal continuity along the hitting graph gives convergence of the normals; hence the graphs converge to $\partial\Omega_{t_*}$ in $C^1$.

By continuity of $T$, the weak limit $u$ vanishes q.e. in the open exterior
$B_R\setminus\overline{\Omega_{t_*}}$. In each $Q_\ell$, applying the trace
statement for $H^1$ functions across the Lipschitz graph
$\partial\Omega_{t_*}\cap Q_\ell$ shows that the quasi-continuous representative
of $u$ also vanishes q.e. on that graph. Since the cylinders cover
$(\partial\Omega_{t_*}\cap B_R)\setminus U$, we get $u=0$ q.e. on
$B_R\setminus(\Omega_{t_*}\cup U)$.

Letting the relative capacity of $U$ tend to zero, we get $u=0$ q.e. in $B_R\setminus\Omega_{t_*}$. Together with the vanishing outside $B_R$, \eqref{eq:h10-qe-characterization} gives $u\in H^1_0(\Omega_{t_*})$.

For the a.e.-vanishing characterization, fix $s\in I$ and let $v\in H^1(\R^d)$ vanish a.e. on $\R^d\setminus\Omega_s$. Quasi-continuity gives $v=0$ q.e. in the open exterior of $\Omega_s$. The regular part of $\partial\Omega_s$ is locally covered by $C^1$ graph cylinders, and the one-sided vanishing gives zero trace on the regular graph pieces. As before, the singular part $\Sigma(s)$ has zero capacity. Hence $v=0$ q.e. on $\R^d\setminus\Omega_s$, so \eqref{eq:h10-qe-characterization} gives $v\in H^1_0(\Omega_s)$. The opposite inclusion follows from the zero extension definition of $H^1_0(\Omega_s)$.
\end{proof}

\begin{lem}[Pressure continuity in regular patches]\label{lem:pressure_C1_no_top}
Under the hypotheses of Lemma~\ref{lem:no_top_mosco}, let $t_*\in I$. Then
\[
    p(\cdot,t)\to p(\cdot,t_*)\quad\hbox{strongly in }H^1_{\rm loc}
\]
as $t\to t_*$. Moreover, if $x_*\in\mathcal R$ and $t_*=T(x_*)$, then $\nabla p$ extends continuously from the positive phase $\{(x,t):x\in\Omega_t\}$ to the regular free boundary near $(x_*,t_*)$.
\end{lem}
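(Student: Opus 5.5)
The plan has two parts. First we prove strong $H^1$ continuity from the variational characterization of each slice, using Lemma~\ref{lem:mosco-dirichlet-stability} together with Lemma~\ref{lem:no_top_mosco}. Second, at regular boundary points, we upgrade this $H^1$ convergence to uniform convergence of $\nabla p$ up to the moving boundary, using elliptic boundary estimates in the graph cylinders of Theorem~\ref{thm:hitting-interface-regularity}(ii).

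\textbf{Step 1: $H^1$ continuity.} Fix $t\in I$. By Remark~\ref{rem:HS-pointwise-pressure-equation}, $-\Delta p(\cdot,t)=c(\cdot,t)$ in $\mathcal D'(\Omega_t)$. By \eqref{eq:HS-open-phase-identities}, $p(\cdot,t)$ vanishes a.e.\ outside $\Omega_t$. Provided $p(\cdot,t)\in H^1$, the a.e.\ characterization in Lemma~\ref{lem:no_top_mosco} then gives $p(\cdot,t)\in H^1_0(\Omega_t)$.

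For a.e.\ $t$ membership in $H^1$ is immediate. For every $t$ we argue as follows. Choose good times $s_j\downarrow t$. The slice variational identity \eqref{eq:HS-slice-variational}, tested with $p(\cdot,s_j)$, gives uniform $H^1$ bounds. Right-continuity and boundedness of $p$ then identify the weak limit of $p(\cdot,s_j)$ as $p(\cdot,t)$.

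Hence $p(\cdot,t)$ is the unique $H^1_0(\Omega_t)$-solution of $\int\nabla u\cdot\nabla\varphi=\int c(\cdot,t)\varphi$. All $\Omega_t$ lie in a fixed ball $B_R$. Moreover $c(\cdot,t)\chi_{B_R}\to c(\cdot,t_*)\chi_{B_R}$ uniformly by continuity of $c$, hence in $H^{-1}$. Lemma~\ref{lem:mosco-dirichlet-stability}, with the Mosco convergence of Lemma~\ref{lem:no_top_mosco}, gives $p(\cdot,t)\to p(\cdot,t_*)$ strongly in $H^1(\R^d)$.

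\textbf{Step 2: continuity of $\nabla p$ at regular points.} Near $x_*\in\mathcal R$, Theorem~\ref{thm:hitting-interface-regularity}(ii) gives a cylinder $Q$ and a time window on which $\Omega_t\cap Q=\{x_d<g(x',t)\}$. Here $g(\cdot,t)$ is uniformly $C^{1,\beta}$, and $g$ is continuous in $t$ with continuously varying normals.

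We flatten the boundary with the map $(x',x_d)\mapsto(x',x_d-g(x',t))$. In the new coordinates $p$ solves a divergence-form equation with $C^{0,\beta}$ coefficients, uniformly in $t$, with zero Dirichlet data on the flat boundary. Boundary Schauder estimates, or $C^{1,\beta}$ estimates for divergence-form equations, give uniform $C^{1,\beta'}$ bounds on $p(\cdot,t)$ up to the boundary in a smaller cylinder. These bounds are controlled by $\|p\|_{L^\infty}$ and $\|c\|_\infty$.

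By Arzel\`a--Ascoli applied in the flattened variables, every sequence $t_j\to t_*$ has a subsequence along which the flattened functions converge in $C^1$. Step 1 identifies the limit as the flattened $p(\cdot,t_*)$, so the whole family converges.

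Unflattening, we use the $C^1$ convergence of $g(\cdot,t)\to g(\cdot,t_*)$ and the equicontinuity of the gradients. This shows that $(x,t)\mapsto\nabla p(x,t)$ is jointly continuous on $\{x\in\overline{\Omega_t}\cap Q'\}$ near $(x_*,t_*)$, which is the desired continuous extension. Applying the same argument at nearby times $t_*'$ covers a full space-time neighborhood, since $\mathcal R$ is relatively open.

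\textbf{Main obstacle.} The delicate step is identifying $p(\cdot,t)$ as the $H^1_0(\Omega_t)$ minimizer at \emph{every} time, not only a.e.\ times. This requires $p(\cdot,t)\in H^1$ together with vanishing quasi-everywhere off $\Omega_t$, which is exactly where the a.e.\ characterization in Lemma~\ref{lem:no_top_mosco}, and hence the absence of $\Sigma_{d-1}$, enters. Without it, upward jumps of $p$ at merging times would break Step 1.
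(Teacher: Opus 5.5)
Your proposal is correct and follows essentially the paper's route. The $H^1$ continuity comes from the variational characterization of each slice, combined with Lemma~\ref{lem:no_top_mosco} and Lemma~\ref{lem:mosco-dirichlet-stability}. The gradient continuity comes from flattening the uniformly $C^{1,\beta}$ graph patches, uniform boundary $C^{1,\beta}$ estimates, and Arzel\`a--Ascoli, with subsequential limits identified through the strong $H^1$ convergence.

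The only variation is how $p(\cdot,t)$ is identified with the $H^1_0(\Omega_t)$ solution at every time. You do it directly, via Remark~\ref{rem:HS-pointwise-pressure-equation} and right-continuity. The paper does it for a.e.\ $t$ and then passes to every $t$ through the forward-average representative. The paper also uses that representative at the end to identify the upper semicontinuous $p$ pointwise with the $C^1$ representative, a point you leave implicit.
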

\begin{proof}
Choose \(R\) so large that \(\Omega_t\Subset B_R\) for \(t\) near \(t_*\).
For each \(t\), let \(u^{(t)}\in H^1_0(\Omega_t)\) be the unique minimizer of
\begin{equation}\label{eq:pressure_variational_energy}
    v\mapsto \frac12\int |\nabla v|^2-\int c(\cdot,t)v
    \qquad\hbox{over }H^1_0(\Omega_t).
\end{equation}
For a.e. \(t\), Lemma~\ref{lem:HS-basic-properties} gives
\[
    \int_{\R^d}\nabla p(\cdot,t)\cdot\nabla\phi\,dx
    =
    \int_{\R^d}c(\cdot,t)\phi\,dx
    \qquad\hbox{for every }\phi\in H^1_0(\Omega_t).
\]
Together with the a.e.-vanishing characterization in Lemma~\ref{lem:no_top_mosco}, this gives \(p(\cdot,t)=u^{(t)}\) in $H^1(\R^d)$ for a.e. $t$.
By the continuity of \(c(\cdot,t)\mathbf 1_{B_R}\) in $H^{-1}(\R^d)$, Lemma~\ref{lem:no_top_mosco}, and Lemma~\ref{lem:mosco-dirichlet-stability}, we have
\[
    u^{(t)}\to u^{(t_*)}\quad\hbox{strongly in }H^1(B_R),
\]
and hence strongly in $H^1_{\rm loc}$. For a.e. $s$, the averages in
\eqref{eq:HS-forward-average-representative} may be computed
using \(u^{(s)}\). The strong continuity of
\(s\mapsto u^{(s)}\) sends these averages to
\(u^{(t)}\) in $H^1_{\rm loc}$ as the averaging scale shrinks.
Hence the upper semicontinuous representative satisfies
\(p(\cdot,t)=u^{(t)}\) as an $H^1$ class for every $t$,
which gives the strong convergence for every \(t\to t_*\).

Fix \(x_*\in\mathcal R\) with \(T(x_*)=t_*\), and increase \(R\), if necessary,
so that a neighborhood of \(x_*\) is contained in \(B_R\).  By
Theorem~\ref{thm:hitting-interface-regularity}, we may choose coordinates near
\((x_*,t_*)\) in which \(\partial\Omega_t\) is a uniformly
\(C^{1,\beta}\) graph, for some \(\beta\in(0,1)\), the graph height is
Lipschitz in \(t\), and the graph gradients converge uniformly as
\(t\to t_*\). Thus the moving domains converge in $C^1$.
Since \(c\) is locally bounded for positive times,
\cite[Cor.~8.36]{GilTru}, applied after flattening the uniformly
\(C^{1,\beta}\) graph patches, gives uniform \(C^{1,\beta}\) bounds up to the
boundary in smaller cylinders for
\[
    -\Delta u^{(t)}=c(\cdot,t)\quad\hbox{in }\Omega_t,\qquad u^{(t)}=0\quad\hbox{on }\partial\Omega_t
\]
Arzelà--Ascoli then implies that every sequence \(t_j\to t_*\) has a subsequence
for which \(u^{(t_j)}\) converges in \(C^1\) up to the flat boundary. The strong
\(H^1\) convergence identifies the subsequential limit with \(u^{(t_*)}\), so
\(\nabla u^{(t)}\to\nabla u^{(t_*)}\) locally uniformly up to the moving regular
boundary. Since the upper semicontinuous representative \(p\) is defined by
forward time averages, the local uniform convergence of \(u^{(t)}\) and \(\nabla
u^{(t)}\) identifies \(p\) with \(u^{(t)}\) pointwise in a regular neighborhood of
\((x_*,t_*)\). Thus \(\nabla p\) has the desired continuous extension.
\end{proof}

\begin{proof}[Proof of Theorem~\ref{thm:no_top_stratum_C1}]
By Theorem~\ref{thm:hitting-interface-regularity}, \(\Sigma_{d-1}\) is
relatively closed.  Since \(T\) is continuous and the slice \(\{T=t_0\}\) is
compact, we may choose \(I\Subset(0,\infty)\) with \(t_0\in I\) and
\(\Sigma_{d-1}\cap\{T\in I\}=\emptyset\).
Let $\mathcal R$ denote the regular set. By Lemma~\ref{lem:pressure_C1_no_top}, $\nabla p$ is continuous in spacetime near every regular boundary point. Thus (see, for instance \cite[Lem.~5.18]{CJK25}), one has
\begin{equation}\label{eq:regular_grad_T_formula}
    \nabla T(x)=-\frac{\nabla p(x,T(x))}{|\nabla p(x,T(x))|^2},
    \qquad x\in\mathcal R\cap\{T\in I\}.
\end{equation}
Hence $T$ is $C^1$ on $\mathcal R\cap\{T\in I\}$, and $\nabla T\neq0$ there.
The relative openness of \(\mathcal R\) in
Theorem~\ref{thm:hitting-interface-regularity} shows that points of $\{T\in I\}$ sufficiently close to a regular point are regular.

Let $q$ be singular with $T(q)\in I$. Since the top stratum is absent,
$q\in\Sigma_{\leq d-2}$. Theorem~\ref{prop:cone_T_differentiability},
applied after translating the base point to $q$, gives differentiability at
$q$ and $\nabla T(q)=0$. If $x_j\to q$ with $T(x_j)\in I$, then each $x_j$
is regular or lower-stratum singular, so $T$ is differentiable at $x_j$
(by \eqref{eq:regular_grad_T_formula} in the regular case and by
Theorem~\ref{prop:cone_T_differentiability} in the singular case).
The stability conclusion of
Theorem~\ref{prop:cone_T_differentiability}, applied at $q$, then gives
$\nabla T(x_j)\to0$. Thus $\nabla T$, set equal to
zero on the singular set, is continuous on $\{T\in I\}$, and
$T\in C^1(\{T\in I\})$.
Finally, Theorem~\ref{prop:cone_T_differentiability} gives $\nabla T=0$
at singular points. Therefore the singular set in $\{T\in I\}$ is exactly
$\{x:\nabla T(x)=0\}$.
\end{proof}

\begin{rem}
Although the theorem starts from a single time slice, the pressure argument uses the resulting exclusion of $\Sigma_{d-1}$ on the whole interval $I$, not merely near the point where one wants $C^1$ regularity. A far-away top-stratum merger can change the scalar speed $|\nabla p|$ at otherwise regular points \cite[Rem.~5.7]{CJK25}. Lower-stratum changes have zero $H^1$ capacity, so they do not change the Dirichlet problem discontinuously.
\end{rem}

\section{Applications to tumor growth}\label{sec:tumor-applications}

In this section we consider the tumor growth model
\eqref{eqn:density}--\eqref{eqn:nutrient} with patch initial data; the pressure of the patch
solution is then a weak solution of \eqref{eq:intro-HS-law} with source $c$, in the sense of
Definition~\ref{def:HS-weak-solution} \cite{JKT21}. We verify that in the physical
dimensions $d\leq3$ this source
satisfies the source hypotheses of Theorem~\ref{thm:HS-hitting-time-lipschitz}, so that the
results of the preceding sections apply to the tumor patch. The time-regularity hypothesis
\eqref{eq:A-source-time} with $m=\infty$ will follow from the pressure estimate
in Lemma~\ref{lem:tumor-pressure-L4}.

\begin{lem}[$L^4$ estimate for the tumor pressure]
\label{lem:tumor-pressure-L4}
Let $(\rho,p,c)$ be a patch solution of
\eqref{eqn:density}--\eqref{eqn:nutrient}, with
$\rho(\cdot,0)=\chi_{\Omega_0^{+}}$.  Suppose that $\Omega_0^{+}$ is bounded
and satisfies a uniform interior ball condition, and that
$c_0\in W^{2,\infty}(\R^d)$ with $\inf_{\R^d}c_0>0$.  Then, for every
$\tau>0$,
\begin{equation}\label{eq:tumor-pressure-L4}
    \nabla p\in L^4(\R^d\times(0,\tau)).
\end{equation}
\end{lem}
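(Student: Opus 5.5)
We follow the strategy of \cite{DavidPerthame}: derive an $L^2$ estimate for $\Delta p$ weighted by $p$, and combine it with the elementary identity that turns $\int p\abs{\Delta p}^2$ into a bound on $\int\abs{\nabla p}^4$. In Hele-Shaw form the equation reads $\partial_t\rho-\nabla\cdot(\rho\nabla p)=c\rho$ with $p(1-\rho)=0$. Formally, $p$ satisfies the complementarity relation $p(\Delta p+c)=0$.

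\emph{Step 1: bound on $\int_0^\tau\!\int p\abs{\Delta p+c}^2$.} We work with the porous medium approximation $\rho_m$, $p_m=\frac{m}{m-1}\rho_m^{m-1}$, for which
\[
\partial_tp_m=(m-1)p_m(\Delta p_m+c)+\abs{\nabla p_m}^2 .
\]
Set $w_m:=\Delta p_m+c$. Multiplying by $w_m$ and integrating gives
\[
(m-1)\int p_mw_m^2=\int\partial_tp_m\,w_m-\int\abs{\nabla p_m}^2w_m .
\]
The time term equals $-\frac12\frac{d}{dt}\int\abs{\nabla p_m}^2+\int\partial_tp_m\,c$. The last integral is controlled after integrating by parts in time, using $\partial_tc$ bounded through the heat equation \eqref{eqn:nutrient} and $c\in L^\infty$. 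The cubic term needs more care. Using the identity
\[
\int\abs{\nabla p}^2\Delta p=-2\int\nabla p\cdot D^2p\,\nabla p
\]
together with the known \cite{DavidPerthame} inequality
\[
\int\abs{\nabla p}^4\leq C\int p^2\abs{D^2p}^2\leq C'\int p^2\abs{\Delta p}^2+\hbox{l.o.t.},
\]
we absorb it for $m$ large, with the pressure bounded by Lemma~\ref{lem:HS-basic-properties}(i). This yields
\[
\int_0^\tau\!\int\abs{\nabla p_m}^4\leq C\Bigl(1+\norm{\nabla p_m(0)}_{L^2}^2+\tau\Bigr).
\]

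\emph{Step 2: the $L^4$ identity.} For compactly supported nonnegative $p$, integration by parts gives
\[
\int\abs{\nabla p}^4=-\int p\,\nabla\cdot(\abs{\nabla p}^2\nabla p),
\]
and the right side is bounded by $C\int p\abs{\nabla p}^2\abs{D^2p}$. Cauchy--Schwarz gives
\[
\int\abs{\nabla p}^4\leq C\int p^2\abs{D^2p}^2 .
\]
Combined with Step~1 and the bound on $\norm{p}_{L^\infty}$, this gives uniform bounds in $m$. Lower semicontinuity as $m\to\infty$, with $\nabla p_m\rightharpoonup\nabla p$ (convergence as in \cite{PQV14}), then yields \eqref{eq:tumor-pressure-L4}.

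\emph{Hypotheses and main obstacle.} The hypotheses are used as follows. Boundedness of $\Omega_0^{+}$ gives compact support via Lemma~\ref{lem:HS-basic-properties}(i). $c_0\in W^{2,\infty}$ gives uniform $W^{1,\infty}$ control of $c$ and of $\partial_tc$ in weak norms. $\inf c_0>0$ keeps $c$ positive. The interior ball condition enters only to make the initial energy $\norm{\nabla p_m(0)}_{L^2}$ uniformly finite: we choose well-prepared initial pressures, namely the solutions of $-\Delta p_0=c_0$ in $\Omega_0^{+}$, which lie in $H^1$.

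The main obstacle is Step~1 at $t=0$ and in time. Two things are needed there. First, the initial data for $p_m$ must be chosen consistently with the patch while keeping $\norm{\nabla p_m(0)}_{L^2}$ bounded. Second, the cubic term must be absorbed uniformly in $m$. For both, we would first try to follow the argument of \cite[Thm.~1.1]{DavidPerthame} verbatim, checking that their regularity assumptions on the nutrient are implied by $c_0\in W^{2,\infty}$ and by parabolic regularity for \eqref{eqn:nutrient} with bounded $\rho$.
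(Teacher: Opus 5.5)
Your architecture matches the paper's: an energy estimate on smooth porous-medium approximations, obtained by testing the pressure equation with $w=\Delta p+c$, together with the following ingredients.
\begin{itemize}
\item The time term is handled through the primitive $\int pc$, so that only $\iint p\,\abs{\partial_tc}$ must be controlled. The paper bounds $(c_\gamma)_t$ uniformly in $L^1(B_R\times(0,\tau))$ by parabolic maximal regularity.
\item The initial data are well prepared, with $\nabla p(\cdot,0)$ bounded in $L^2$.
\item The conclusion follows by weak lower semicontinuity.
\end{itemize}

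\textbf{The gap.} It lies in the one step you spell out, the absorption of the cubic term $-\int\abs{\nabla p}^2\Delta p$. This term carries no weight $p$, while the only coercive quantity available is $(m-1)\int p\,w^2$.
\begin{itemize}
\item Rewriting the term as $2\int\nabla p\cdot D^2p\,\nabla p$ and applying Cauchy--Schwarz produces the unweighted $\norm{D^2p}_{L^2}=\norm{\Delta p}_{L^2}$. This is not bounded uniformly in $m$: in the limit $\Delta p$ carries a surface measure on the free boundary, exactly where the weight $p$ vanishes.
\item Your chain $\int\abs{\nabla p}^4\leq C\int p^2\abs{D^2p}^2\leq C'\int p^2\abs{\Delta p}^2+\hbox{l.o.t.}$ controls the quartic quantity, not the cubic one, so it cannot absorb the cubic term.
\item The second inequality of that chain is circular. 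Integrating by parts with weight $p^2$ gives
\[
    \int p^2\abs{D^2p}^2=\int p^2(\Delta p)^2+3\int p\abs{\nabla p}^2\Delta p+\int\abs{\nabla p}^4 ,
\]
so the ``lower-order terms'' contain $\int\abs{\nabla p}^4$ itself.
\end{itemize}
For the same reason, Step~1 cannot deliver an $L^4$ bound before Step~2 is available.

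\textbf{How to close it.} Integrate by parts with weight $p$ instead:
\[
    \int p\abs{D^2p}^2=\int p(\Delta p)^2+\tfrac32\int\abs{\nabla p}^2\Delta p .
\]
Using $c\geq0$ and $(\Delta p)^2\leq2w^2+2c^2$, this gives
\[
    -\int\abs{\nabla p}^2w
    \leq\tfrac43\int p\,w^2+\tfrac43\int pc^2-\tfrac23\int p\abs{D^2p}^2 .
\]
So the cubic term is one-signed, up to a piece absorbed by $(m-1)\int p\,w^2$ once $m>7/3$.

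The energy identity then yields $\int_0^\tau\!\int p_m\abs{D^2p_m}^2\leq C$ uniformly in $m$. The constant depends on $\norm{\nabla p_m(\cdot,0)}_{L^2}$, $\norm{p_m}_{L^\infty}$, $\norm{c}_{L^\infty}$, $\norm{\partial_tc}_{L^1(B_R\times(0,\tau))}$ and the support radius $R$.

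Only at this point does your Step~2 finish the proof:
\[
    \int\abs{\nabla p}^4\leq C\int p^2\abs{D^2p}^2\leq C\norm{p}_{L^\infty}\int p\abs{D^2p}^2 .
\]
All of these integrations by parts must be carried out on smooth approximations, since $p_m$ itself is not smooth across its free boundary; the paper uses those of \cite[Prop.~3.1]{CJK25}.
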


\begin{proof}
Fix $\tau>0$ and take the smooth approximations
$(\rho_\gamma,p_\gamma,c_\gamma)$ of \cite[Prop.~3.1]{CJK25}, with
$\nabla p_\gamma(\cdot,0)$ uniformly bounded in $L^2(\R^d)$. The maximum principle,
\eqref{eq:HS-compact-support-barrier}, and local parabolic maximal regularity give
uniform $L^\infty$ bounds for $c_\gamma,p_\gamma$, a uniform
$L^1(B_R\times(0,\tau))$ bound for $(c_\gamma)_t$, and a fixed ball $B_R$
containing $\supp p_\gamma$.

The $L^4$ energy argument in the proof of
\cite[Thm.~3.2]{DavidPerthame} applies with the primitive
$p_\gamma c_\gamma$.  Differentiating this primitive contributes the term
\[
 \iint p_\gamma |(c_\gamma)_t|
 \leq \|p_\gamma\|_{L^\infty}
       \|(c_\gamma)_t\|_{L^1(B_R\times(0,\tau))}.
\]
Thus \cite[proof of Thm.~3.2]{DavidPerthame} bounds
$\nabla p_\gamma$ uniformly in $L^4(\R^d\times(0,\tau))$; the convergence in
\cite[Prop.~3.1]{CJK25} and weak lower semicontinuity give
\eqref{eq:tumor-pressure-L4}.
\end{proof}

\begin{proof}[Proof of Theorem~\ref{thm:tumor-application}]
Fix $\tau>0$. The Duhamel representation of \eqref{eqn:nutrient} is
\begin{equation}\label{eq:tumor-nutrient-duhamel}
 c(\cdot,t)=e^{t\Delta}c_0
 -\int_0^t e^{(t-s)\Delta}(c\rho)(\cdot,s)\,ds.
\end{equation}
The maximum principle and \eqref{eq:tumor-nutrient-duhamel} give
\begin{equation}\label{eq:tumor-basic-nutrient-bounds}
 \underline c_0e^{-\tau}\leq c\leq\|c_0\|_{L^\infty},
 \qquad
 \sup_{0\leq t\leq\tau}\|\nabla c(\cdot,t)\|_{L^\infty}
 \leq \|\nabla c_0\|_{L^\infty}
      +C_d\sqrt{\tau}\,\|c_0\|_{L^\infty}.
\end{equation}
By \eqref{eq:HS-compact-support-comparison}, there  exists $R>0$ such that $p$ and
$\rho$ are supported in $B_R$ for $0\leq t\leq\tau$.  Moreover,
Lemma~\ref{lem:tumor-pressure-L4} gives \eqref{eq:tumor-pressure-L4}.

Set $u:=\partial_t c$.  Differentiating \eqref{eqn:nutrient} in time and using
$\partial_t\rho=\Delta p+c\rho$ and $\Delta c=u+c\rho$, we obtain
\begin{equation}\label{eq:tumor-ct-equation}
 u_t-\Delta u=(p-\rho)u-\Delta(cp)
       +2\nabla c\cdot\nabla p+c\rho(p-c),
 \qquad u(\cdot,0)=\Delta c_0-c_0\rho_0\in L^\infty.
\end{equation}
Time difference quotients in \eqref{eqn:nutrient}, together with
\eqref{eqn:density}, justify \eqref{eq:tumor-ct-equation} in distributions.
Writing its Duhamel formula and moving one derivative from $\Delta(cp)$ to the
heat kernel, the heat-semigroup estimates and
\eqref{eq:tumor-basic-nutrient-bounds} give
\begin{align}
 \|e^{t\Delta}\Delta(cp)\|_{L^\infty}
 &\leq Ct^{-1/2-d/8}\|\nabla(cp)\|_{L^4},\notag\\
 \|e^{t\Delta}(2\nabla c\cdot\nabla p+c\rho(p-c))\|_{L^\infty}
 &\leq Ct^{-d/8}
 \|2\nabla c\cdot\nabla p+c\rho(p-c)\|_{L^4}.
 \label{eq:tumor-ct-semigroup-estimates}
\end{align}
By \eqref{eq:tumor-basic-nutrient-bounds}, the support inclusion, and
Lemma~\ref{lem:tumor-pressure-L4}, the two spatial norms in
\eqref{eq:tumor-ct-semigroup-estimates} belong to $L^4(0,\tau)$. For $d\leq3$
both time kernels belong to $L^1(0,\tau)$ (the first precisely when $d<4$), so
Young's inequality and Gronwall's inequality for $(p-\rho)u$ give
\begin{equation}\label{eq:tumor-ct-L4-Linfty}
    \partial_t c\in L^4\bigl(0,\tau;L^\infty(\R^d)\bigr).
\end{equation}

The bounds in \eqref{eq:tumor-basic-nutrient-bounds} give local uniform
positivity and spatial Lipschitz continuity of $c$, while
\begin{equation}\label{eq:tumor-ct-negative-bound}
 \|(\partial_t c)^-\|_{L^1(0,\tau;L^\infty)}
 \leq \tau^{3/4}\|\partial_t c\|_{L^4(0,\tau;L^\infty)}<\infty.
\end{equation}
Equation~\eqref{eq:tumor-nutrient-duhamel} and the boundedness of $c\rho$
show that $c$ is continuous.  Together with this continuity, equations
\eqref{eq:tumor-basic-nutrient-bounds} and
\eqref{eq:tumor-ct-negative-bound} verify \eqref{eq:A-source} and
\eqref{eq:A-source-time} with $m=\infty$. Under \eqref{eq:A-interior-ball}, the hypotheses
of Theorems~\ref{thm:HS-hitting-time-lipschitz}, \ref{thm:closing-rates},
\ref{prop:cone_T_differentiability} and \ref{thm:no_top_stratum_C1} therefore hold, and
Theorem~\ref{thm:HS-hitting-time-lipschitz} gives the local Lipschitz continuity of $T$ in
$\{T<\infty\}$. This supplies the hypothesis of \cite[Prop.~5.17]{CJK25}, and hence
$\mathcal H^{d-2}(\Sigma(t))=0$ for almost every $t>0$.
\end{proof}

\subsection{A discussion on the size of the tumor region}
\label{subsec:tumor-region-size}
Theorem~\ref{thm:tumor-application},
Theorem~\ref{prop:cone_T_differentiability}, and
Theorem~\ref{thm:no_top_stratum_C1} concern the finite-time geometry of the
interface; they do not determine the geometry of the eventual tumor region
$\Omega_\infty:=\{T<\infty\}= \limsup_{t\to\infty} \Omega_t$. Below we provide a volume estimate for the growth of the tumor region, which in particular states that the region does not stay bounded: this 
contrasts with the zero-nutrient-diffusion model studied in \cite{JKT23}, where $\Omega_t$ may stay bounded with low nutrient level, or $|\Omega_t|$ grows exponentially with high nutrient level. Let us also mention that the asymptotic shape of the $\Omega_t$ remains an intriguing open question:
the tumor may still grow through thin fingers and leave behind nutrient-poor
regions.

\begin{prop}[Lower volume growth]\label{prop:tumor-volume-growth}
Let $(\rho,p,c)$ be a patch solution of
\eqref{eqn:density}--\eqref{eqn:nutrient}, with bounded initial patch
$\Omega_0^{+}$ and $\underline c_0:=\inf_{\R^d}c_0>0$.  There exist
$C_{\rm vol}>0$ and $t_*>0$, depending only on $d$,
$\underline c_0$, and $|\Omega_0^{+}|$, such that
\begin{equation}\label{eq:tumor-volume-growth}
    |\Omega_t|\geq |\Omega_0^{+}|+C_{\rm vol}t^{d/2}, \quad t\geq t_*.
\end{equation}
\end{prop}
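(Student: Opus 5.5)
The plan is to combine mass balance for the density with a lower bound on the total nutrient consumed. By conservation of mass,
\[
    \frac{d}{dt}|\Omega_t|=\int c\rho\,dx .
\]
Integrating in time gives
\[
    |\Omega_t|-|\Omega_0^{+}|=\int_0^t\!\!\int_{\Omega_s}c\,dx\,ds .
\]
This identity is obtained by testing \eqref{eqn:density} (equivalently \eqref{eq:HS-weak-equation}) against a cutoff equal to $1$ on the compact support from Lemma~\ref{lem:HS-basic-properties}(i). The task is therefore to show that
\[
    \int_0^t\!\!\int_{\Omega_s}c\;\gtrsim\; t^{d/2}.
\]
The difficulty is that $c$ is depleted inside the tumor, so the naive bound $c\geq\underline c_0e^{-t}$ from \eqref{eq:tumor-basic-nutrient-bounds} decays and is useless for large $t$. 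The mechanism I would exploit is the opposite one: diffusion keeps resupplying nutrient from the far field, where $c\approx\underline c_0$, so consumption can only stop if the tumor is large.

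I would argue by contradiction through a heat-equation comparison. Let $V(t):=|\Omega_t|$. Since $c\rho\leq c\,\chi_{\Omega_t}$ and $c\leq\|c_0\|_\infty$, nutrient is only removed on a set of measure at most $V(t)$. On the other hand, since $0\leq c\rho\leq c$, the comparison principle gives $c\geq\underline c_0\,\underline u$, where $\underline u$ solves
\[
    \partial_t\underline u-\Delta\underline u=-\chi_{\Omega_t}\underline u,\qquad \underline u(\cdot,0)=1 .
\]
I would then lower-bound $\underline u$ on $\Omega_t$ using a Duhamel/Feynman--Kac representation: $\underline u(x,t)$ equals the expectation of $\exp\bigl(-\int_0^t\chi_{\Omega_{t-s}}(x+B_s)\,ds\bigr)$. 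If $V(s)\leq Ks^{d/2}$ for all $s\leq t$, then the expected occupation time of a Brownian path started in $\Omega_t$ in sets of measure $\lesssim Ks^{d/2}$ is $O(K)$ uniformly in $t$. Indeed, the occupation density is at most $\int_1^t s^{-d/2}Ks^{d/2}\cdot(\text{geometric decay})$, and more carefully one bounds $\int_0^t |\Omega_{t-s}|\,(4\pi s)^{-d/2}ds$. Jensen's inequality then yields $\underline u\geq e^{-CK}$ on average over $\Omega_t$. This gives
\[
    V'(t)=\int_{\Omega_t}c\;\geq\;\underline c_0\,e^{-CK}\,V(t)\;\geq\;\underline c_0e^{-CK}|\Omega_0^{+}|,
\]
so $V$ grows at least linearly. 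For $d\leq 2$ this already exceeds $Kt^{d/2}$; for $d\geq3$, exponential growth $V\gtrsim e^{ct}$ contradicts the assumed polynomial bound. Hence $V$ cannot stay below $Kt^{d/2}$.

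This contradiction argument is, however, backwards: it only shows the tumor is large, whereas the statement asks for a lower bound, which is what we want anyway. So I would run the argument directly instead. Fix the constant in the target bound $V(t)-|\Omega_0^{+}|\geq C_{\rm vol}t^{d/2}$. Suppose the bound fails at some time $t\geq t_*$. Since $V$ is nondecreasing, $V(s)\leq |\Omega_0^{+}|+C_{\rm vol}t^{d/2}$ for all $s\leq t$. The occupation estimate then gives
\[
    \int_0^t\!\!\int_{\Omega_s}c\;\geq\;\underline c_0\,t\,|\Omega_0^{+}|\,\exp\Bigl(-C\int_0^t\frac{|\Omega_0^{+}|+C_{\rm vol}t^{d/2}}{(4\pi s)^{d/2}}\wedge1\,ds\Bigr).
\]
The occupation integral is bounded by $C(1+C_{\rm vol})$ for $d\geq3$. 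For $d\leq2$ it grows like $C_{\rm vol}\,t^{d/2}\cdot t^{1-d/2}$, roughly, and there I would instead use the simpler bound $\underline u\geq$ the heat-kernel mass outside a ball of volume $V$, giving $c\gtrsim\underline c_0$ on a fixed fraction of $\Omega_t$. Choosing $C_{\rm vol}$ small and $t_*$ large makes the right-hand side exceed $C_{\rm vol}t^{d/2}$, which is a contradiction.

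The main obstacle is the uniform lower bound on the nutrient inside the tumor, which replaces the decaying bound $\underline c_0e^{-t}$. This requires controlling the Brownian occupation time of a growing set using only the volume information, which is the step where the exponent $d/2$ emerges. I would first try the pointwise bound
\[
    \int_0^t\!\!\int_{\Omega_{t-s}}(4\pi s)^{-d/2}e^{-|x-y|^2/4s}\,dy\,ds\;\leq\;\int_0^t\min\bigl(1,\,V(t)(4\pi s)^{-d/2}\bigr)\,ds\;\lesssim\; V(t)^{2/d}.
\]
Combined with Jensen's inequality, this gives $\underline u\geq e^{-CV(t)^{2/d}}$. Thus $V'\geq\underline c_0V e^{-CV^{2/d}}$, and this ODE inequality integrates, for large $V$, to $V^{2/d}\gtrsim\log t$ only. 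That is too weak for the claimed rate. So the real obstacle is sharpening this: one should average over $x\in\Omega_t$ rather than work pointwise, so that boundary points, which see fresh nutrient, dominate. A layer of width $\sim\sqrt t$ near $\partial\Omega_t$ then has $c\gtrsim\underline c_0$, and by the isoperimetric inequality this yields $V'\gtrsim\min(V,\sqrt t\,V^{(d-1)/d})$, which integrates to $V\gtrsim t^{d/2}$.
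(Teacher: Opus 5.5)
Your proposal does not yet give a proof, because both load-bearing steps fail.

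\textbf{The occupation-time bound is false.} Under $|\Omega_s|\le Ks^{d/2}$, the integrand of $\int_0^t\min\bigl(1,|\Omega_{t-s}|(4\pi s)^{-d/2}\bigr)\,ds$ may equal $1$ for all $s\le\theta t$, with $\theta=\theta(K,d)>0$. So this quantity is of order $t$, not $O(K)$. In your ``direct'' version the integral likewise grows linearly in $t$ (like $C_{\rm vol}^{2/d}t$), not like $C(1+C_{\rm vol})$, and the right-hand side of your displayed inequality tends to $0$.

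This is not just a lossy estimate. In the self-similar radial regime, which is compatible with $|\Omega_t|\lesssim t^{d/2}$, a Brownian path started anywhere in $\Omega_t$ spends, in expectation, a time of order $t$ in the tumor. Jensen's inequality $\mathbb{E}[e^{-X}]\ge e^{-\mathbb{E}X}$ therefore returns essentially the trivial bound $e^{-ct}$. It discards exactly the polynomially (not exponentially) small probability of avoiding the tumor, which is what carries the nutrient supply.

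\textbf{Your closing fix does not repair this.} Inside $\Omega_t$ one has $\partial_t c=\Delta c-c$, so nutrient is screened on an $O(1)$ length. There is no layer of width $\sqrt t$ on which $c\gtrsim\underline c_0$, and the same objection applies to your $d\le 2$ claim that $c\gtrsim\underline c_0$ on a fixed fraction of $\Omega_t$. Moreover, $V'\gtrsim\min(V,\sqrt t\,V^{(d-1)/d})$ integrates to $V^{1/d}\gtrsim t^{3/2}$ (or to exponential growth), not to $V\gtrsim t^{d/2}$. Such growth is impossible in the nutrient-limited radial regime $c_0<1$, where $t^{d/2}$ is sharp (see the remark after the proposition). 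Even a corrected flux heuristic, $\underline c_0t^{-1/2}$ per unit area combined with isoperimetry, would need control of $\partial\Omega_t$ (holes, fingers, depleted exterior pockets) that you do not have.

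\textbf{The missing idea.} Instead of pointwise multiplicative (Feynman--Kac) bounds, use the \emph{linear} Duhamel formula \eqref{eq:tumor-nutrient-duhamel}, integrated over a \emph{past} phase. Set $G(t):=\int_0^t\!\int_{\Omega_s}c$, so that $|\Omega_t|=|\Omega_0^{+}|+G(t)$. Since $\Omega_{t_0}\subset\Omega_t$ for $t>t_0$ and $e^{t\Delta}c_0\ge\underline c_0$,
\[
    \int_{\Omega_t}c(\cdot,t)\ge\int_{\Omega_{t_0}}c(\cdot,t)
    \ge\underline c_0|\Omega_{t_0}|-(4\pi(t-t_0))^{-d/2}G(t_0)\,|\Omega_{t_0}|-\bigl(G(t)-G(t_0)\bigr).
\]
Here consumption before $t_0$ has been dispersed by the heat kernel ($L^1\to L^\infty$), while consumption after $t_0$ is bounded by its total mass.

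Waiting a time $\tau\simeq\max\bigl(1,G(t_0)^{2/d}\bigr)$ makes the middle term at most $\underline c_0|\Omega_{t_0}|/4$. Integrating over $t\in t_0+(\tau,2\tau)$ then gives $G(t_0+2\tau)-G(t_0)\ge\frac38\underline c_0\bigl(|\Omega_0^{+}|+G(t_0)\bigr)$. Thus the volume increases by a fixed factor over windows of length $\lesssim|\Omega_{t_0}|^{2/d}$. That window is the diffusion time at the volume scale, and it is the source of the exponent $d/2$. Summing the resulting geometric sequence gives \eqref{eq:tumor-volume-growth}, with no information on the geometry of $\Omega_t$ needed.
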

\begin{proof}
Let
\begin{equation}\label{eq:tumor-mass-functions}
    g(t)=\int_{\Omega_t}c(x,t)dx, \qquad G(t)=\int_{0}^tg(s)ds.
\end{equation}
Integrating \eqref{eqn:density} over time and space and using
$\rho(\cdot,t)=\chi_{\Omega_t}$ gives
\begin{equation}\label{eq:tumor-volume-mass-identity}
    |\Omega_t|-|\Omega_0^{+}|= G(t),
\end{equation}
so it suffices to bound $G(t)+|\Omega_0^{+}|$ from below. Fix
$t_0\geq0$ and use \eqref{eq:tumor-nutrient-duhamel} with
$e^{t\Delta}c_0\geq\underline c_0$. For $0<s<t_0<t$,

\begin{equation}\label{eq:tumor-old-source-heat-bound}
\|e^{(t-s)\Delta}(c\rho)(\cdot,s)\|_{L^{\infty}(\R^d)}
\leq (4\pi (t-t_0))^{-d/2}
  \|c\rho(\cdot,s)\|_{L^{1}(\R^d)}\\
=(4\pi (t-t_0))^{-d/2}g(s).
\end{equation}
and, for $t_0<s<t$,
\begin{equation}\label{eq:tumor-recent-source-L1-bound}
\int_{\Omega_{t_0}}e^{(t-s)\Delta}(c\rho)(x,s)\,dx
\leq \|c\rho(\cdot,s)\|_{L^1(\R^d)}=g(s).
\end{equation}
Integrating \eqref{eq:tumor-nutrient-duhamel} over $\Omega_{t_0}$, using
\eqref{eq:tumor-old-source-heat-bound},
\eqref{eq:tumor-recent-source-L1-bound}, and the monotonicity $\partial_t\rho\geq0$,
gives
\begin{equation}\label{eq:replenishment}
g(t)\geq \int_{\Omega_{t_0}}c(x,t)\,dx \geq |\Omega_{t_0}|\underline c_0
 -(4\pi(t-t_0))^{-d/2}G(t_0)|\Omega_{t_0}|
 -(G(t)-G(t_0)).
\end{equation}
Set
\begin{equation}\label{eq:tumor-replenishment-scale}
\tau(t_0):=\max \left(1, c_2G(t_0)^{2/d} \right), \qquad c_2:=(4\pi)^{-1}(4/\underline c_0)^{2/d},
\end{equation}
so that
$(4\pi\tau(t_0))^{-d/2}G(t_0)\leq \underline c_0/4$. Now
\eqref{eq:replenishment} gives
\begin{equation}\label{eq:tumor-g-lower-window}
g(t)\geq \frac{3\underline c_0}{4}|\Omega_{t_0}|
 -(G(t)-G(t_0)) \quad \hbox{ for } t\in t_0 + (\tau(t_0),2\tau(t_0)).
\end{equation}
Integrating \eqref{eq:tumor-g-lower-window} over its time interval,
the monotonicity of $G$ gives
\begin{equation}\label{eq:tumor-G-window-growth}
(G(t_0+2\tau(t_0))-G(t_0))(1+\tau(t_0))
\geq \frac{3\underline c_0 \tau(t_0)}{4}|\Omega_{t_0}|.
\end{equation}
Since $\tau(t_0)\geq1$, equations \eqref{eq:tumor-G-window-growth} and
\eqref{eq:tumor-volume-mass-identity} imply, with
$a:=3\underline c_0/8$,
\begin{equation}\label{eq:tumor-G-geometric-step}
 G(t_0+2\tau(t_0))+|\Omega_0^{+}|
 \geq (1+a)\bigl(G(t_0)+|\Omega_0^{+}|\bigr).
\end{equation}
Set $s_0=0$, $s_{j+1}=s_j+2\tau(s_j)$, and
$H_j:=G(s_j)+|\Omega_0^{+}|$. Then
$H_{j+1}\geq(1+a)H_j$ by \eqref{eq:tumor-G-geometric-step}, while
\eqref{eq:tumor-replenishment-scale} gives
$s_{j+1}-s_j\leq CH_j^{2/d}$. Summing and using the geometric growth yields
$s_{j+1}\leq CH_j^{2/d}$. Hence, for $t\in[s_j,s_{j+1}]$, monotonicity gives
$G(t)+|\Omega_0^{+}|\geq ct^{d/2}$; together with
\eqref{eq:tumor-volume-mass-identity}, this proves
\eqref{eq:tumor-volume-growth} after increasing $t_*$.
\end{proof}
\begin{rem}
The exponent
$d/2$ in Proposition~\ref{prop:tumor-volume-growth} agrees with the radial
scaling in the nutrient-limited regime $c_0<1$. Among the nutrient assumptions in
\eqref{eq:tumor-initial-nutrient-assumptions}, the volume estimate in
Proposition~\ref{prop:tumor-volume-growth} uses only that 
$\inf_{\R^d}c_0>0$, which is a natural assumption when nutrient is supplied from infinity at
a positive rate.    
\end{rem}

\section{Application to the injection problem}\label{sec:injection}

In this final section we briefly discuss the potential extension of our analysis to the {\it injection problem}, that is, \eqref{eq:intro-HS-law} with $c\equiv 0$ in $\R^d\setminus B_r(0)$, with fixed boundary data $p=1$ on $\partial B_r(0)$. While this article focuses on the effect of the source term in \eqref{eq:intro-HS-law}, the injection problem merits discussion, as it is the classical form of \eqref{eq:intro-HS-law} and has an extensive literature. In addition to the regularity results recalled in the introduction, the large-time behavior of the flow is known to be simpler than in the general setting of the previous sections: the free boundary becomes smooth in finite time \cite{Kim06,kim06long}, and, as $t\to\infty$, the set $\{p>0\}$ exhausts $\R^d$ and the free boundary approaches a spherical profile in the normalized scale \cite{qv01}. In particular, for the injection problem the set $\{0<T(x)<\infty\}$ is exactly $\R^d \setminus \Omega_0^{+}$, whereas the analogous question for the tumor growth model remains open (Subsection~\ref{subsec:tumor-region-size}). Even in this well-studied setting, however, a description of the {\it space-time} profile of the free boundary valid for {\it all} times would be new.

We expect that our results extend to the injection problem without major challenges. Indeed, much of our analysis would be considerably simpler in this setting, owing partly to the availability of the comparison principle and partly to the uniform positivity and homogeneity of the fixed boundary data. Rather than carrying out proofs, we confine ourselves to indicating the parts of our analysis where modifications are necessary; the full verification remains open.

$\circ$ {\it Theorem~\ref{thm:HS-hitting-time-lipschitz}.} The Lipschitz regularity of the hitting time $T$ follows from the barrier argument of \cite{caffarelli1978} (which is a much simpler version of the proof of Proposition~\ref{lem:cone_gradient_intrinsic_window}), with no assumptions on the initial data, but only at a positive distance from the initial patch. For nondegenerate initial data, Theorem~\ref{thm:HS-hitting-time-lipschitz} as stated follows from the comparison estimate below (see \cite[proof of Theorem~3.1]{kim03}): for any $\e>0$, we have
$$
\sup_{|x-y|\leq \e } p(y,t) \leq (1+C\e)p(x,(1+C\e)t+C\e),
$$
where $C>0$ is chosen so that the ordering holds at $t=0$. Formally, this states that $V_{x,t} \geq \frac{1}{C (t+1)}$, and thus $|T(x)-T(y)|\leq C(t+1)|x-y|$.

$\circ$ {\it Theorem}~\ref{prop:HS-pressure-hopf-lax}. Here we use the fact that $p$ is smooth near the fixed boundary, and that the boundary gradient $|Dp|$ is largest at the initial time, due to the monotone expansion of the support of $p$. More precisely, the function $\bar{p}$ in \eqref{eq:HS-Hopf-Lax-subsolution} remains a weak subsolution of the injection problem, and hence lies below $p$, in $(\R^d\setminus B_r(0))\times(t_0,t_1]$, provided we take $\Lambda(t)\equiv 0$ and $M=\sup_{x\in K} |Dp_0|(x)$ in the definition of $A(t)$. Formally, we then have $\partial_t p - |Dp|^2 +C_M p \geq 0$.

$\circ$ {\it Proposition~\ref{prop:pressure_lower_bd}.}  Using Harnack chains from the fixed boundary, one obtains a positive lower bound on $p$ at any point away from the zero set. In the case $k=0$, this implies that $p$ is uniformly positive on $\partial B_{R_0}$, and one can then use the fundamental solution as a subbarrier. In the case $1 \leq k \leq d-2$, it implies that $p$ is uniformly positive on $\partial B_{R_0}\cap \{ |x'| \geq \beta |x''| \}$ for fixed $\beta > \alpha$; standard comparison and boundary Harnack arguments, based on the harmonic functions from \eqref{eq:cone_harmonics}, then yield an analogue of Lemma~\ref{lem:cone_subbarrier}: $p(\rho, \varphi) \gtrsim \rho^{\nu_\alpha}\Phi_\alpha(\varphi)$ on $U(V,\alpha, C^{-1}R_0,Cr_0)$ for $C=C(d,k,\alpha,\beta)$.

\appendix

\section{Comparison principle for weak solutions}\label{app:distributional-comparison}

For \(\tau>0\), write \(Q_\tau:=\R^d\times(0,\tau)\).
The results of this appendix require only a nonnegative source
\(c\in L^\infty(Q_\tau)\).

\begin{defn}[Relaxed weak subsolutions and supersolutions]
\label{def:app-HS-relaxed-subsuper}
Let \(c\in L^\infty(Q_\tau)\) be nonnegative.  A pair
\((\rho,p)\) is a relaxed weak subsolution to \eqref{eq:intro-HS-law}, with
source \(c\), on \([0,\tau)\) if
\begin{equation}\label{eq:app-HS-relaxed-graph}
    0\leq\rho\leq1,\qquad p\geq0,\qquad p(1-\rho)=0
    \qquad\hbox{a.e. in }Q_\tau,
\end{equation}
\begin{equation}\label{eq:app-HS-relaxed-regularity}
    p\in L^2_{\operatorname{loc}}([0,\tau);H^1(\R^d)),
    \qquad
    \rho\in L^\infty_{\operatorname{loc}}([0,\tau);L^1(\R^d)),
\end{equation}
and, for the chosen \(L^1\)-valued representative of \(\rho\),
\begin{equation}\label{eq:app-HS-relaxed-time-traces}
    \rho\in C([0,\tau);L^1(\R^d)),
\end{equation}
and
\begin{equation}\label{eq:app-HS-relaxed-subsolution-inequality}
    \partial_t\rho-\Delta p\leq c\rho
    \qquad\hbox{in }\mathcal D'(Q_\tau).
\end{equation}
A relaxed weak supersolution is defined by reversing the inequality in
\eqref{eq:app-HS-relaxed-subsolution-inequality}.
\end{defn}

\begin{thm}[Relaxed comparison principle]\label{thm:app-HS-relaxed-comparison}
Let \(0<\tau<\infty\), and let
\(c^-,c^+\in L^\infty(Q_\tau)\) satisfy \(0\leq c^-\leq c^+\).  Let
\((\rho^-,p^-)\) and \((\rho^+,p^+)\) be, respectively, a relaxed weak
subsolution with source \(c^-\) and a relaxed weak supersolution with source
\(c^+\) on \([0,\tau)\).  If
\begin{equation}\label{eq:app-HS-relaxed-initial-order}
    \rho^-(\cdot,0)\leq\rho^+(\cdot,0)
    \qquad\hbox{a.e. in }\R^d,
\end{equation}
then
\begin{equation}\label{eq:app-HS-relaxed-density-comparison}
    \rho^-\leq\rho^+
    \qquad\hbox{a.e. in }Q_\tau .
\end{equation}
Consequently,
\begin{equation}\label{eq:app-HS-relaxed-pressure-comparison}
    p^-\leq p^+
    \qquad\hbox{a.e. in }Q_\tau .
\end{equation}
\end{thm}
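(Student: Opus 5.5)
The plan is to prove the density comparison \eqref{eq:app-HS-relaxed-density-comparison} by an $L^1$-contraction argument of Kružkov/Otto type. The pressure comparison \eqref{eq:app-HS-relaxed-pressure-comparison} is then deduced from it by an elliptic argument.

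Subtracting the two distributional inequalities gives
\[
\partial_t(\rho^--\rho^+)-\Delta(p^--p^+)\leq c^-\rho^--c^+\rho^+\leq c^+(\rho^--\rho^+)
\quad\hbox{in }\mathcal D'(Q_\tau),
\]
where we used $c^-\leq c^+$ and $\rho^-\geq 0$. The key structural fact is the monotone graph relation: $p^\pm\geq 0$, $0\leq\rho^\pm\leq 1$ and $p^\pm(1-\rho^\pm)=0$ say that $\rho^\pm\in H(p^\pm)$ for the maximal monotone Heaviside graph $H$. Consequently, wherever $p^->p^+$ we have $p^->0$, hence $\rho^-=1\geq\rho^+$. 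Thus $\operatorname{sgn}_+(p^--p^+)\leq\operatorname{sgn}_+(\rho^--\rho^+)$ in the appropriate selection sense, which is the usual compatibility needed for the doubling-free $L^1$ argument.

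Concretely, I would test the inequality with $\phi(x)\,\eta(t)\,S_\delta(p^--p^+)$, where $S_\delta$ is a smooth nondecreasing approximation of $\mathbf 1_{(0,\infty)}$, $\phi$ is a spatial cutoff equal to $1$ on $B_R$, and $\eta$ is a time cutoff. This test function is not admissible directly, because $\partial_t\rho$ is only a distribution and $S_\delta(p^--p^+)$ has no time regularity. I therefore expect the main obstacle to be the time term $\langle\partial_t(\rho^--\rho^+),S_\delta(p^--p^+)\rangle$. To handle it I would use Steklov time averages: replace $\partial_t$ by $\delta_h f=(f(t)-f(t-h))/h$, integrate against the test function, and exploit monotonicity of the graph. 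Since $S_\delta(p^--p^+)\neq 0$ forces $\rho^-(t)=1$, we get
\[
\bigl(\rho^-(t)-\rho^+(t)\bigr)-\bigl(\rho^-(t-h)-\rho^+(t-h)\bigr)\geq \bigl(\rho^-(t)-\rho^+(t)\bigr)_+-\bigl(\rho^-(t-h)-\rho^+(t-h)\bigr)_+
\]
on that set, up to the factor $S_\delta\leq 1$ with sign care. A cleaner route, which I would try first, is the Otto/Carrillo entropy trick: first let $\delta\to 0$ using $(\rho^--\rho^+)S_\delta(p^--p^+)\to(\rho^--\rho^+)_+$ on $\{p^-\neq p^+\}$, and separately handle the set $\{p^-=p^+\}$. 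On $\{p^-=p^+>0\}$ both densities equal $1$. On $\{p^-=p^+=0\}$ one tests instead with a function of $\rho$, using that $\Delta p^\pm=0$ a.e. there by Stampacchia. This yields
\[
\frac{d}{dt}\int(\rho^--\rho^+)_+\phi\leq \int|\nabla(p^--p^+)_+|\,|\nabla\phi|+\|c^+\|_\infty\int(\rho^--\rho^+)_+\phi .
\]
The gradient term arises because $\nabla S_\delta(p^--p^+)\cdot\nabla(p^--p^+)\geq 0$ is discarded, leaving only the cutoff error. Letting $R\to\infty$ kills that term, since $p^\pm\in L^2_tH^1$. Gronwall with the initial order and the $L^1$ time continuity \eqref{eq:app-HS-relaxed-time-traces} then gives $(\rho^--\rho^+)_+\equiv 0$.

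For the pressure, once $\rho^-\leq\rho^+$ holds, subtracting the inequalities gives
\[
-\Delta(p^--p^+)\leq c^-\rho^--c^+\rho^+-\partial_t(\rho^--\rho^+).
\]
Integrating in time from $0$ to $t$, with Baiocchi variables $w^\pm=\int_0^tp^\pm$, yields
\[
-\Delta(w^--w^+)\leq \int_0^t(c^-\rho^--c^+\rho^+)-(\rho^--\rho^+)(t)+(\rho^--\rho^+)(0).
\]
On $\{w^->w^+\}$ we have $\rho^-(t)=1$ a.e., and the first integral is nonpositive. However, the term $(\rho^+-\rho^-)(t)\geq 0$ has the wrong sign on that set. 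So instead I would work slicewise: for a.e. $t$, test with $(p^-(t)-p^+(t))_+$, which is supported where $\rho^-=1$, hence $\rho^+=1$ by the density order. On this set the time derivatives of both densities vanish in the Steklov sense, since both are saturated and nondecreasing there. We obtain
\[
\int|\nabla(p^--p^+)_+|^2\leq\int(c^--c^+)(p^--p^+)_+\leq 0,
\]
so $(p^--p^+)_+$ is constant and, being compactly supported in $L^2$, vanishes. Justifying that the time-derivative contribution vanishes on the saturated set is again the delicate point; it follows from $\partial_t\rho^-\geq 0$ only for solutions, so for relaxed subsolutions I would route it through the same Steklov limit as in the first part.
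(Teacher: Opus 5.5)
\textbf{Gap in the density comparison.} The gap is on the set $\{p^-=p^+=0\}$. Your test function $S_\delta(p^--p^+)$ vanishes there, so it carries no information about $(\rho^--\rho^+)_+$ on that set, and the whole content of the theorem can sit there. Take the frozen pair $(\rho(\cdot,s),0)$ compared with $(\rho(\cdot,s+t),p(\cdot,s+t))$ in Lemma~\ref{lem:HS-basic-properties}. There $p^-\equiv0$, so your test function is identically zero, yet the conclusion $\rho(\cdot,s)\le\rho(\cdot,s+t)$ is nontrivial.

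The fix you propose for this set does not work, for three reasons:
\begin{enumerate}[label=(\roman*)]
\item The densities have no spatial regularity, so a function of $\rho^\pm$ is not an admissible $H^1$ test function, and its pairing with $\nabla(p^--p^+)$ is undefined.
\item Stampacchia's theorem gives $\nabla p^\pm=0$ a.e.\ on $\{p^\pm=0\}$, not $\Delta p^\pm=0$. Here $\Delta p^\pm$ is only a distribution. Even for genuine solutions it has a singular part $|\nabla p|\,\mathcal H^{d-1}$ on $\partial\{p>0\}\subset\{p=0\}$, which is exactly what moves the free boundary.
\item A distributional inequality cannot be restricted to the non-open set $\{p^-=p^+=0\}$.
\end{enumerate}
So the differential inequality you feed into Gronwall is not established.

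The paper closes this by duality. Since $u=\rho^--\rho^+$ and $v=p^--p^+$ have the same sign, one can write $u=rA$, $v=rB$ with $r=u+v$ and measurable $A,B\in[0,1]$. One then tests with approximate solutions of the degenerate backward adjoint $A\partial_t\psi+B\Delta\psi=-AG$, which satisfy the coefficient-independent bound $0\le\psi\le(b-a)\norm{G}_{L^\infty}$ (Lemma~\ref{lem:app-HS-dual-tests}). Where $B=0<A$, the adjoint reduces to the ODE $\partial_t\psi=-G$, so the zero-pressure region costs nothing. This yields $\int u_+\le M(b-a)\int u_+$ on short time intervals, which is then iterated. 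An $L^1$-contraction route would instead need a genuine Kato inequality for the pair, for example via doubling of the time variable. Your outline does not supply it.

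\textbf{Pressure step.} This is essentially the paper's argument: once $\rho^-\le\rho^+$ is known, test both inequalities with $\omega(t)\chi_R^2(p^--p^+)_+$. However, the time term is not handled by monotonicity of $\rho$, which is unavailable for relaxed pairs. Instead, use a forward Steklov average $\eta^\varepsilon$ of the test function for the subsolution and a backward one for the supersolution. Since $\eta\ge0$ is supported in $\{\rho=1\}$ and $\rho\le1$, one gets $\rho\,\partial_t\eta^\varepsilon\le\partial_t\eta^\varepsilon$ pointwise (reversed for the backward average), and $\int\partial_t\eta^\varepsilon=0$. This is Lemma~\ref{lem:app-HS-elliptic-positive-phase}.
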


The proof of the density comparison in
\eqref{eq:app-HS-relaxed-density-comparison} proceeds by duality.  Writing
\(u:=\rho^- -\rho^+\) and \(v:=p^- -p^+\) for the differences of two
density--pressure pairs, the
difference of the sub- and supersolution inequalities involves the linear
combination \(A\partial_t\psi+B\Delta\psi\), where the coefficients
\(A,B\in[0,1]\) depend on \((u,v)\) and are therefore only measurable.
Closing the estimate requires testing against \(\psi\) that approximately
solves the degenerate backward adjoint
\[
    A\partial_t\psi+B\Delta\psi=-AG
    \qquad\hbox{in }D\times(a,b),
\]
for a prescribed nonnegative source \(G\), with
\(\norm{\psi}_{L^\infty}\) controlled independently of the coefficients.  The
next lemma constructs such an approximating sequence against the fixed weight
that appears in the comparison proof.

\begin{lem}[Approximate adjoint test functions]\label{lem:app-HS-dual-tests}
Let \(D\subset\R^d\) be a smooth bounded domain, let \(0\leq a<b\), and
set \(Q:=D\times(a,b)\).  Let \(A,B\in L^\infty(Q;[0,1])\), and let
\(G\in C_c^\infty(Q)\) be nonnegative.  For every \(r\in L^2(Q)\), there are
nonnegative
\(\psi_j\in C^\infty(\overline D\times[a,b])\) vanishing on the parabolic
boundary \((\partial D\times[a,b])\cup(D\times\{b\})\), with the uniform
\(L^\infty\) bound
\begin{equation}\label{eq:app-HS-dual-test-bound}
    0\leq\psi_j\leq (b-a)\norm{G}_{L^\infty(Q)}
    \qquad\hbox{in }Q,
\end{equation}
that approximately solve the backward adjoint
\(A\partial_t\psi+B\Delta\psi=-AG\) in the sense that
\begin{equation}\label{eq:app-HS-dual-test-limit}
    \lim_{j\to\infty}
    \int_Q r\left(A\partial_t\psi_j+B\Delta\psi_j+AG\right)\,dxdt
    =0.
\end{equation}
If, in addition, \(D=B_R\), \(G\) is supported in
\(B_{R_0}\times(a,b)\), and \(R\geq2R_0\), then the sequence may be chosen
so that
\begin{equation}\label{eq:app-HS-dual-normal-bound}
    -C R^{1-d}\leq \partial_\nu\psi_j\leq0
    \qquad\hbox{on }\partial B_R\times(a,b),
\end{equation}
where \(C\) depends only on \(d,R_0,b-a\), and \(G\), but not on \(j\) or
\(R\).
\end{lem}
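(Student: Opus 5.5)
The plan is to regularize the degenerate operator and solve a uniformly parabolic backward problem. Fix $\delta\in(0,1)$ and mollify the coefficients: let $A_\delta,B_\delta$ be smooth approximations of $A,B$ (space-time mollification after extension by zero), clamped so that $\delta\le A_\delta,B_\delta\le 1+\delta$, and with $A_\delta\to A$, $B_\delta\to B$ in $L^2(Q)$. Let $\psi_\delta$ solve the backward problem
\[
\partial_t\psi+\frac{B_\delta}{A_\delta}\Delta\psi=-G\ \hbox{ in }Q,\qquad \psi=0\ \hbox{ on }\partial D\times[a,b],\qquad \psi(\cdot,b)=0 .
\]
This is a nondegenerate linear parabolic equation with smooth coefficients, and $G$ is compactly supported, so the data are compatible. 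Classical Schauder theory therefore gives $\psi_\delta\in C^\infty(\overline D\times[a,b])$, perhaps after a further smoothing near the corners which $G$ avoids.

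The maximum principle, comparing with $0$ and with $(b-t)\|G\|_\infty$, gives $0\le\psi_\delta\le (b-a)\|G\|_\infty$, which is \eqref{eq:app-HS-dual-test-bound}. Multiplying the equation by $A_\delta$ yields
\[
A\partial_t\psi_\delta+B\Delta\psi_\delta+AG=(A-A_\delta)(\partial_t\psi_\delta+G)+(B-B_\delta)\Delta\psi_\delta .
\]
So the integral in \eqref{eq:app-HS-dual-test-limit} is bounded by
\[
\|r\|_{L^2}\Bigl(\|A-A_\delta\|\,\|\partial_t\psi_\delta+G\|+\|B-B_\delta\|\,\|\Delta\psi_\delta\|\Bigr),
\]
with the error terms measured in weighted norms.

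The main obstacle is that $\|\Delta\psi_\delta\|$ is not uniformly bounded in $\delta$, while the coefficient errors converge only in $L^2$. The standard fix is a two-parameter diagonal argument. For fixed $\delta$ we have $\|\Delta\psi_\delta\|_{L^\infty}\le K_\delta<\infty$. We then choose a second mollification parameter $\epsilon=\epsilon(\delta)$ such that, with coefficients $A_{\delta,\epsilon},B_{\delta,\epsilon}$, the error satisfies $\|r(A-A_{\delta,\epsilon})\|_{L^1}K_{\delta}\le\delta$. This works provided $K$ depends only on the ellipticity ratio and not on the smoothness of the coefficients.

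The cleanest way to get such a bound is the energy identity. Multiplying by $-A_\delta\Delta\psi_\delta/B_\delta\cdot B_\delta$, or more simply testing the equation against $\Delta\psi_\delta$, gives
\[
\tfrac12\partial_t\!\int|\nabla\psi|^2-\int\frac{B_\delta}{A_\delta}|\Delta\psi|^2=\int G\Delta\psi .
\]
Hence $\int_Q (B_\delta/A_\delta)|\Delta\psi_\delta|^2\le \int_Q (A_\delta/B_\delta)G^2$, and $\int_Q A_\delta|\partial_t\psi_\delta+G|^2/B_\delta$ is of the same size. I would use these weighted bounds with Cauchy–Schwarz: the error $(B-B_\delta)\Delta\psi_\delta$ is controlled by $\|r\sqrt{A_\delta/B_\delta}\,(B-B_\delta)\|_{L^2}$ times a bounded quantity, and similarly for the $A$ term. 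The weights $A_\delta/B_\delta\le(1+\delta)/\delta$ are not uniformly bounded, so the plan is to take $\delta$ fixed while mollifying only the measurable parts at scale $\epsilon\to0$. Dominated convergence then shows that $r\sqrt{A_\delta/B_\delta}(B-B_{\delta,\epsilon})\to \sqrt{\cdot}\,(\delta\hbox{-errors})$, whose size is $O(\sqrt\delta)\|r\|$ from the clamping. A diagonal choice then yields \eqref{eq:app-HS-dual-test-limit}.

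For the normal bound \eqref{eq:app-HS-dual-normal-bound} on $D=B_R$, note first that $\partial_\nu\psi_j\le0$ because $\psi_j\ge0$ vanishes on $\partial B_R$. For the lower bound, observe that in the annulus $R_0<|x|<R$ the function $\psi_j$ is a subsolution of the stationary problem ($\Delta\psi_j=-A\partial_t\psi/B\cdot$). Here I would instead compare directly with a barrier, which is cleaner: outside $B_{R_0}$ the source vanishes, so $\psi_j$ solves a homogeneous backward equation there with lateral data bounded by $(b-a)\|G\|_\infty$ on $\partial B_{R_0}$. The radial function
\[
\Psi=(b-a)\|G\|_\infty\,\frac{|x|^{2-d}-R^{2-d}}{R_0^{2-d}-R^{2-d}}
\]
is time independent and harmonic, hence a supersolution for every choice of coefficients. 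It vanishes on $\partial B_R$ and dominates $\psi_j$ on $\partial B_{R_0}$ and at $t=b$, so $\psi_j\le\Psi$. Comparing normal derivatives at $\partial B_R$ then gives $\partial_\nu\psi_j\ge\partial_\nu\Psi\ge -CR^{1-d}$ uniformly in $R\ge2R_0$. In $d=2$ the logarithm introduces a $1/\log$ factor that is still bounded by $CR^{-1}$ for $R\ge 2R_0$.
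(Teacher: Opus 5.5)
Your framework matches the paper's: regularize to $A_\delta,B_\delta\geq\delta$, solve the nondegenerate backward problem, get \eqref{eq:app-HS-dual-test-bound} from the maximum principle, write the error as $(A-A_\delta)(\partial_t\psi_\delta+G)+(B-B_\delta)\Delta\psi_\delta$, and bound $\partial_\nu\psi_j$ by a time-independent radial harmonic barrier on $B_R\setminus B_{R_0}$. The barrier part is correct. However, the step meant to give \eqref{eq:app-HS-dual-test-limit} does not close.

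Your energy bound
\[
\int_Q\frac{B_\delta}{A_\delta}\abs{\Delta\psi_\delta}^2\leq\int_Q\frac{A_\delta}{B_\delta}G^2
\]
is only $O(\delta^{-1})$. The reason is that $A_\delta/B_\delta$ can be of size $1/\delta$ on $\supp G$, for instance where $A=1$ and $B=0$. That is exactly the configuration $u>0=v$ the comparison proof must rule out, so it cannot be assumed away. In your Cauchy--Schwarz bound, the clamping error contributes $\norm{r\sqrt{A_\delta/B_\delta}\,(B-B_\delta)}_{L^2}\sim\delta^{1/2}\norm{r}_{L^2}$. The other factor, $\norm{(B_\delta/A_\delta)^{1/2}\Delta\psi_\delta}_{L^2}$, is by your estimate only $\lesssim\delta^{-1/2}\norm{G}_{L^2}$. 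The product is therefore $O(\norm{r}\norm{G})$, not $o(1)$, and no diagonal choice of $\epsilon(\delta)$ changes this. The fallback you mention is also unavailable: an $L^\infty$ bound on $\Delta\psi_\delta$ depending only on the ellipticity ratio does not exist for nondivergence operators, since it degenerates as the mollification scale shrinks.

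The missing idea is a $\delta$-uniform weighted estimate. Test with $\Delta\psi_\delta$ as before, but integrate the source by parts onto the smooth $G$ instead of using a weighted Young inequality. On $(t,b)$ this gives
\[
\tfrac12\norm{\nabla\psi_\delta(t)}_{L^2}^2+\int_t^b\!\!\int_D\frac{B_\delta}{A_\delta}\abs{\Delta\psi_\delta}^2\,dxds=\int_t^b\!\!\int_D\nabla G\cdot\nabla\psi_\delta\,dxds .
\]
Set $\Gamma:=\int_a^b\norm{\nabla G(\cdot,s)}_{L^2}\,ds$. The identity first yields $\sup_t\norm{\nabla\psi_\delta(t)}_{L^2}\leq2\Gamma$. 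Feeding this back in gives $\int_Q(B_\delta/A_\delta)\abs{\Delta\psi_\delta}^2\leq2\Gamma^2$, independent of $\delta$.

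With this bound, Cauchy--Schwarz controls the error by $2\Gamma\,\delta^{-1/2}\bigl(\norm{r(A-A_\delta)}_{L^2}+\norm{r(B-B_\delta)}_{L^2}\bigr)$. The clamping part is now $O(\delta^{1/2}\norm{r}_{L^2})$. The mollification part can be made at most $\delta$ by choosing the scale depending on $\delta$, using dominated convergence and $r\in L^2$. This is precisely the paper's requirement $\sqrt j\,\bigl(\norm{r(A_j-A)}_{L^2}+\norm{r(B_j-B)}_{L^2}\bigr)\to0$ with $A_j,B_j\in[j^{-1},1]$.
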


\begin{proof}

The construction is the regularized-adjoint, weighted-duality argument of
\cite[\S\S3,5]{PQV14} (see also
\cite[Prop.~5.1]{GKM22} and \cite[App.~A]{JKT23}); we recall only the properties
used below.  Approximate \(A,B\) by smooth \(A_j,B_j\) valued in \([j^{-1},1]\),
and let \(\psi_j\) solve the nondegenerate backward equation
\begin{equation}\label{eq:app-HS-regularized-adjoint}
    \partial_t\psi_j+\tfrac{B_j}{A_j}\Delta\psi_j=-G
    \quad\hbox{in }Q,\qquad
    \psi_j=0\hbox{ on }(\partial D\times[a,b])\cup(D\times\{b\}).
\end{equation}
By the maximum principle \(\psi_j\in C^\infty(\overline D\times[a,b])\) is
nonnegative and obeys the uniform bound \eqref{eq:app-HS-dual-test-bound}; and if
the approximation is taken with
\(\sqrt j\,(\norm{r(A_j-A)}_{L^2(Q)}+\norm{r(B_j-B)}_{L^2(Q)})\to0\), a weighted
energy estimate for \(\Delta\psi_j\) gives the error limit
\eqref{eq:app-HS-dual-test-limit}.

For \eqref{eq:app-HS-dual-normal-bound}, assume \(D=B_R\),
\(\supp G\subset B_{R_0}\times(a,b)\), and \(R\geq2R_0\). Let \(h_R\) be radial
and harmonic on \(B_R\setminus B_{R_0}\), with values
\((b-a)\norm{G}_{L^\infty(Q)}\) and \(0\) on the inner and outer boundaries.
Its explicit formula gives \(-CR^{1-d}\leq\partial_\nu h_R<0\) on
\(\partial B_R\). Since \(G=0\) outside \(B_{R_0}\), comparison gives
\(0\leq\psi_j\leq h_R\), hence
\(\partial_\nu h_R\leq\partial_\nu\psi_j\leq0\), as required.

\end{proof}

\begin{proof}[Proof of the density comparison in \eqref{eq:app-HS-relaxed-density-comparison}]
Set
\[
    u:=\rho^- -\rho^+,\qquad v:=p^- -p^+ .
\]
The goal is \(u\leq0\) a.e.  The short-time step below is controlled by
\begin{equation}\label{eq:app-HS-M-def}
    M:=\norm{c^-}_{L^\infty(Q_\tau)}.
\end{equation}
The graph constraints imply that \(u\) and \(v\) have the same sign:
if \(u>0\), then \(\rho^+<1\), so \(p^+=0\) and \(v=p^-\geq0\); if \(u<0\),
then \(\rho^-<1\), so \(p^-=0\) and \(v=-p^+\leq0\). Thus, for
\(r:=u+v\), set
\begin{equation}\label{eq:app-HS-AB-def}
    A:=\frac{u}{r},\qquad B:=\frac{v}{r}
    \qquad\hbox{on }\{r\neq0\},
\end{equation}
extended by \(A=B=0\) on \(\{r=0\}\), satisfy
\begin{equation}\label{eq:app-HS-AB-properties}
    0\leq A,B\leq1,\qquad u=rA,\qquad v=rB .
    \end{equation}

Fix \(0\leq a<b<\tau\), assume \(u(\cdot,a)\leq0\) a.e. in \(\R^d\), and
  let \(R>0\).  The continuity in
\eqref{eq:app-HS-relaxed-time-traces} allows tests touching \(t=a\) in
\eqref{eq:app-HS-relaxed-subsolution-inequality}.  After subtracting the
supersolution inequality for \(p^+\) from the subsolution inequality for
\(p^-\), using \(u(\cdot,a)\leq0\), and bounding
\(c^+\rho^+\geq c^-\rho^+\), we obtain
\begin{equation}\label{eq:app-HS-difference-weak-ineq}
    \int_a^b\!\!\int_{B_R}
    \nabla\phi\cdot\nabla v
    -u\,\partial_t\phi
    -c^-u\phi\,dxdt
    \leq0
\end{equation}
for every nonnegative \(\phi\in C_c^\infty(B_R\times[a,b))\).
The same inequality holds for nonnegative
\(\psi\in C^\infty(\overline{B_R}\times[a,b])\) vanishing on
\((\partial B_R\times[a,b])\cup(B_R\times\{b\})\): spatial and terminal-time
cutoffs approximate \(\psi\) in
\(L^2(a,b;H^1_0(B_R))\cap H^1(a,b;L^2(B_R))\).

For such a \(\psi\), integrating the gradient term in
\eqref{eq:app-HS-difference-weak-ineq} by parts in space gives
\begin{equation}\label{eq:app-HS-AB-ineq}
    \int_a^b\!\!\int_{B_R}
    r\left(A\partial_t\psi+B\Delta\psi\right)\,dxdt
    \geq
    \mathcal B_R[\psi]
    -\int_a^b\!\!\int_{B_R} c^-u\psi\,dxdt,
\end{equation}
where
\begin{equation}\label{eq:app-HS-boundary-term}
    \mathcal B_R[\psi]
    :=
    \int_a^b\!\!\int_{\partial B_R}
    v\,\partial_\nu\psi\,d\sigma dt .
\end{equation}

Let \(G\in C_c^\infty(\R^d\times(a,b))\) be nonnegative, and choose
\(R_0>0\) with \(\supp G\subset B_{R_0}\times(a,b)\).  For
\(R\geq2R_0\), let \(\psi_{j,R}\) be the sequence from
Lemma~\ref{lem:app-HS-dual-tests} on \(B_R\times(a,b)\).  By
\eqref{eq:app-HS-dual-normal-bound},
\begin{equation}\label{eq:app-HS-boundary-error}
\begin{aligned}
    \abs{\mathcal B_R[\psi_{j,R}]}
    &\leq
    C R^{1-d}\int_a^b\!\!\int_{\partial B_R}\abs{v}\,d\sigma dt  \\
    &\leq
    C R^{-(d-1)/2}
    \left(\int_a^b\!\!\int_{\partial B_R}\abs{v}^2\,d\sigma dt\right)^{1/2}
    =:\delta_R ,
\end{aligned}
\end{equation}
The constant \(C\) is independent of \(j\) and \(R\).  Since
\(v\in L^2(a,b;L^2(\R^d))\), the coarea formula gives
\(\liminf_{R\to\infty}\delta_R=0\).

Using \eqref{eq:app-HS-dual-test-limit}, \eqref{eq:app-HS-AB-ineq},
\eqref{eq:app-HS-boundary-error}, \(rA=u\), \(u\leq u_+\), and
\eqref{eq:app-HS-dual-test-bound}, we obtain, for every \(R\geq2R_0\),
\begin{equation}\label{eq:app-HS-local-positive-bound}
\begin{aligned}
    \int_a^b\!\!\int_{\R^d} uG\,dxdt
    &=
    -\lim_{j\to\infty}
    \int_a^b\!\!\int_{B_R}
    r\left(A\partial_t\psi_{j,R}+B\Delta\psi_{j,R}\right)\,dxdt \\
    &\leq
    M(b-a)\norm{G}_{L^\infty(\R^d\times(a,b))}
    \int_a^b\!\!\int_{B_R} u_+\,dxdt
    +\delta_R .
\end{aligned}
\end{equation}
Taking \(R\to\infty\) along radii for which \(\delta_R\to0\) yields
\begin{equation}\label{eq:app-HS-global-positive-bound}
    \int_a^b\!\!\int_{\R^d} uG\,dxdt
    \leq
    M(b-a)\norm{G}_{L^\infty(\R^d\times(a,b))}
    \int_a^b\!\!\int_{\R^d} u_+\,dxdt .
\end{equation}
Taking the supremum in \eqref{eq:app-HS-global-positive-bound} over smooth
\(0\leq G\leq1\) compactly supported in \(\R^d\times(a,b)\) gives
\[
    \int_a^b\!\!\int_{\R^d} u_+\,dxdt
    \leq
    M(b-a)\int_a^b\!\!\int_{\R^d} u_+\,dxdt.
\]
Hence \(u\leq0\) a.e. in \(\R^d\times(a,b)\) whenever \(M(b-a)<1\).

Finally, partition \([0,\tau']\), \(\tau'<\tau\), into intervals
with \(M(b-a)<1\). The initial ordering starts the iteration, and
\eqref{eq:app-HS-relaxed-time-traces} passes the ordering to each right
endpoint. Thus \(u\leq0\) on \(\R^d\times[0,\tau']\); letting
\(\tau'\uparrow\tau\) proves \eqref{eq:app-HS-relaxed-density-comparison}.
\end{proof}
\begin{lem}[Elliptic inequality on the saturated phase]\label{lem:app-HS-elliptic-positive-phase}
Let \((\rho,p)\) be a relaxed weak subsolution (resp.\ supersolution) to
\eqref{eq:intro-HS-law} on \([0,\tau)\), with source
\(c\in L^\infty(Q_\tau)\).  If
\(\eta\in L^2(0,\tau;H^1(\R^d))\) is nonnegative, compactly supported in
\(\R^d\times(0,\tau)\), and satisfies \(\eta(1-\rho)=0\), then
\begin{equation}\label{eq:app-HS-elliptic-positive-phase}
    \int_0^\tau\!\!\int_{\R^d}\nabla\eta\cdot\nabla p\,dxdt
    \leq\;(\geq)
    \int_0^\tau\!\!\int_{\R^d}\eta\,c\,dxdt.
\end{equation}
In particular, if \((\rho,p)\) is relaxed weak solution, then, for
every
\(\eta\in L^2(0,\tau;H^1(\R^d))\) compactly supported in
\(\R^d\times(0,\tau)\) and satisfying \(\eta(1-\rho)=0\),
\begin{equation}\label{eq:app-HS-elliptic-positive-phase-identity}
    \int_0^\tau\!\!\int_{\R^d}\nabla\eta\cdot\nabla p\,dxdt
    =
    \int_0^\tau\!\!\int_{\R^d}\eta\,c\,dxdt .
\end{equation}
\end{lem}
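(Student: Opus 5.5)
The strategy is to test the distributional inequality against $\eta$, after first regularizing $\eta$ so that the time-derivative term is under control. For a smooth nonnegative test function $\phi$, the subsolution inequality \eqref{eq:app-HS-relaxed-subsolution-inequality} reads
\[
-\iint \rho\,\partial_t\phi+\iint\nabla p\cdot\nabla\phi\le\iint c\rho\phi .
\]
We want to replace $\phi$ by $\eta$. For such test functions we have $\rho\eta=\eta$, since $\eta(1-\rho)=0$. So we must show that the term $-\iint\rho\,\partial_t\eta$ effectively disappears, or at least has the right sign. The difficulty is that $\eta$ has no time regularity. We therefore regularize in time, using one-sided Steklov averages chosen so that the resulting error has a favorable sign.

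Concretely, the first step is to approximate $\eta$ by nonnegative functions that are smooth in space (by spatial mollification) and are compactly supported. The constraint $\eta(1-\rho)=0$ is not preserved by mollification, so we do not mollify $\eta$ directly. Instead we use the time-averaged test function
\[
\phi_h(x,t):=\frac1h\int_{t-h}^{t}\eta(x,s)\,ds .
\]
Since $\partial_t\phi_h=(\eta(t)-\eta(t-h))/h$, the time term becomes
\[
-\iint\rho\,\partial_t\phi_h=\frac1h\iint \bigl(\rho(x,t+h)-\rho(x,t)\bigr)\eta(x,t)\,dx\,dt
\]
after a change of variables. We then use $\rho\le1$ together with $\rho(t)\eta(t)=\eta(t)$. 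This gives
\[
(\rho(t+h)-\rho(t))\eta(t)\le(1-\rho(t))\eta(t)=0,
\]
so the time term is $\le0$, which is exactly the sign needed in the subsolution case. For the supersolution case we use the backward average $\frac1h\int_t^{t+h}\eta$ instead. The time term then becomes $\frac1h\iint(\rho(t)-\rho(t-h))\eta(t)\ge0$, again by $\rho(t)\eta(t)=\eta(t)$ and $\rho\le1$.

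The test functions $\phi_h$ lie only in $L^2H^1$ with Lipschitz time dependence; they are not $C_c^\infty$. The next step is therefore to justify testing the distributional inequality with them. Since $p\in L^2H^1$ and $\rho\in C(L^1)\cap L^\infty$, the inequality extends by density to nonnegative compactly supported $\phi\in W^{1,1}_t(L^2\cap L^\infty)\cap L^2_tH^1$. This is done by mollifying $\phi$ in space and time, which preserves nonnegativity, and then passing to the limit. The pairing $\iint\rho\,\partial_t\phi$ needs $\partial_t\phi$ in a space dual to $\rho\in L^\infty$. If $\eta\notin L^\infty$, we first truncate, replacing $\eta$ by $\min(\eta,N)$; this still satisfies the constraint and is still in $L^2H^1$, and we remove the truncation at the end by monotone convergence. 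For $h$ small, $\phi_h$ remains compactly supported in $(0,\tau)$.

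Finally we let $h\to0$. Then $\phi_h\to\eta$ in $L^2H^1$, so $\iint\nabla p\cdot\nabla\phi_h\to\iint\nabla p\cdot\nabla\eta$. Likewise $\iint c\rho\phi_h\to\iint c\rho\eta=\iint c\eta$, using $\rho\eta=\eta$ once more. Combined with the sign of the time term, this yields \eqref{eq:app-HS-elliptic-positive-phase}. The identity \eqref{eq:app-HS-elliptic-positive-phase-identity} for solutions follows by combining the two inequalities. The main obstacle is the density step, namely verifying that the weak inequality holds for these nonsmooth Steklov-averaged test functions; the sign computation itself is elementary.
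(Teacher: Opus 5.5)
\textbf{Sign error: your forward and backward averages are attached to the wrong cases.} Your strategy is the paper's: one-sided Steklov averages in time, with the sign of the time term coming from $\rho\le1$ and $\rho\eta=\eta$, then a density argument and $h\to0$. But as written, neither case closes.

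For a subsolution, the tested inequality rearranges to
\[
    \iint\nabla p\cdot\nabla\phi_h\le\iint c\rho\phi_h+\iint\rho\,\partial_t\phi_h ,
\]
so what you need is $-\iint\rho\,\partial_t\phi_h\geq0$, not $\leq0$. With your backward average $\phi_h=\frac1h\int_{t-h}^t\eta$, your own (correct) computation gives $-\iint\rho\,\partial_t\phi_h=-\frac1h\iint(1-\rho(x,t+h))\eta(x,t)\,dx\,dt\leq0$. You are therefore left with
\[
    \iint\nabla p\cdot\nabla\phi_h\le\iint c\rho\phi_h+\frac1h\iint\bigl(1-\rho(x,t+h)\bigr)\eta(x,t)\,dx\,dt .
\]
The last term need not vanish as $h\to0$. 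A subsolution may desaturate at a non-Lipschitz rate just after the support of $\eta$ ends, and $\eta$ need not be bounded in time.

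For example, take $\tfrac12<t_1<\tau\le t_1+1$. Then $p\equiv0$ and $\rho=\chi_B(x)\bigl(1-(t-t_1)_+^{1/10}\bigr)$ form a relaxed subsolution. The function $\eta=\chi(x)(t_1-t)^{-2/5}\mathbf 1_{(t_1-1/2,t_1)}(t)$, with $0\le\chi\in C_c^\infty(B)$, is admissible. The extra term is then of order $h^{-3/10}$. The supersolution case fails in the same way with your forward average.

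\textbf{The repair is to swap the two choices.} For subsolutions, use the forward average $\frac1h\int_t^{t+h}\eta$; then $-\iint\rho\,\partial_t\phi_h=\frac1h\iint(\rho(t)-\rho(t-h))\eta(t)\geq0$, which is exactly your second computation. For supersolutions, use the backward average, which gives the opposite sign, as required. This is precisely the paper's choice ($\eta^\varepsilon$ for subsolutions, $\eta_\varepsilon$ for supersolutions). With it, your density justification and the passage $h\to0$ go through unchanged.

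Two smaller remarks:
\begin{enumerate}
\item The truncation is unnecessary. $\partial_t\phi_h$ is in $L^2$ with compact support, so it already pairs with $\rho\in L^\infty$. Also, undoing a truncation in the gradient term needs dominated, not monotone, convergence.
\item The identity \eqref{eq:app-HS-elliptic-positive-phase-identity} is asserted for signed $\eta$. After combining the two inequalities you must still apply them to $\eta_+$ and $\eta_-$, which are admissible test functions.
\end{enumerate}
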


\begin{proof}
We first prove the two one-sided statements, so assume \(\eta\geq0\).
Extend \(\eta\) by zero outside \((0,\tau)\), and set
\[
    \eta^\varepsilon(x,t)
    :=\frac1\varepsilon\int_t^{t+\varepsilon}\eta(x,s)\,ds,
    \qquad
    \eta_\varepsilon(x,t)
    :=\frac1\varepsilon\int_{t-\varepsilon}^t\eta(x,s)\,ds .
\]
For small \(\varepsilon\), these averages belong to
\(L^2_tH^1_x\cap H^1_tL^2_x\) and are compactly supported; mollification
justifies their use as test functions. If \((\rho,p)\) is a relaxed weak subsolution, testing
\eqref{eq:app-HS-relaxed-subsolution-inequality} with \(\eta^\varepsilon\) gives
\[
    \int\nabla\eta^\varepsilon\cdot\nabla p-\eta^\varepsilon c\rho\,dxdt
    \leq
    \int\rho\,\partial_t\eta^\varepsilon\,dxdt .
\]
Pointwise,
\[
    \rho(x,t)\partial_t\eta^\varepsilon(x,t)
    =
    \rho(x,t)\frac{\eta(x,t+\varepsilon)-\eta(x,t)}{\varepsilon}
    \leq
    \partial_t\eta^\varepsilon(x,t).
\]
Indeed, equality holds where \(\eta>0\), since then \(\rho=1\), while
on \(\{\eta=0\}\) the last derivative is nonnegative. Its space-time integral
vanishes, hence
\[
    \int\nabla\eta^\varepsilon\cdot\nabla p-\eta^\varepsilon c\rho\,dxdt
    \leq0 .
\]
Letting \(\varepsilon\downarrow0\) proves the subsolution case, using
\(\eta=\rho\eta\). The backward average gives the reverse inequality for
supersolutions, since its derivative is nonpositive on \(\{\eta=0\}\).
Applying the two inequalities to \(\eta_+\) and \(\eta_-\) proves
\eqref{eq:app-HS-elliptic-positive-phase-identity}.
\end{proof}

\begin{proof}[Proof of the pressure comparison in \eqref{eq:app-HS-relaxed-pressure-comparison}]
The density comparison in \eqref{eq:app-HS-relaxed-density-comparison} gives
\(\rho^-\leq\rho^+\).  Let \(\omega\in C_c^\infty(0,\tau)\) be nonnegative and
let \(\chi_R\in C_c^\infty(\R^d)\) satisfy \(0\leq\chi_R\leq1\),
\(\chi_R=1\) on \(B_R\), \(\chi_R=0\) outside \(B_{2R}\), and
\(\abs{\nabla\chi_R}\leq C/R\).  Set
\[
    w:=(p^- -p^+)_+,
    \qquad
    \eta_R(x,t):=\omega(t)\chi_R(x)^2w(x,t).
\]
This is an admissible nonnegative test for both pairs: on
\(\{\eta_R>0\}\), \(p^->0\) gives \(\rho^-=1\), and density comparison gives
\(\rho^+=1\).
Subtracting the supersolution case of
\eqref{eq:app-HS-elliptic-positive-phase} for \(p^+\) from the subsolution
case for \(p^-\) gives
\[
    \int_0^\tau \omega(t)\int_{\R^d}
    \chi_R^2\abs{\nabla w}^2
    +2\chi_Rw\nabla\chi_R\cdot\nabla w\,dxdt
    \leq
    \int_0^\tau\!\!\int_{\R^d}\eta_R(c^- -c^+)\,dxdt
    \leq0 .
\]
Therefore
\[
    \int_0^\tau \omega(t)\int_{\R^d}
    \chi_R^2\abs{\nabla w}^2\,dxdt
    \leq
    4\int_0^\tau \omega(t)\int_{\R^d}w^2\abs{\nabla\chi_R}^2\,dxdt .
\]
Letting \(R\to\infty\), using \(w\in L^2_tH^1_x\) and dominated
convergence, gives
\[
    \int_0^\tau \omega(t)\int_{\R^d}\abs{\nabla w}^2\,dxdt=0.
\]
Thus, for a.e.\ \(t\), \(w(\cdot,t)\) is constant and belongs to
\(L^2(\R^d)\), so it vanishes, proving
\eqref{eq:app-HS-relaxed-pressure-comparison}.
\end{proof}

\section{Global solutions and blowup analysis of the obstacle problem}\label{app:global-obstacle}

To identify the blowup limit in more general settings permitted by Lemma~\ref{lem:obst-blowup}, we need various tools from the classification of global solutions to the obstacle problem. We introduce a few of these tools in the next lemma.

\begin{lem}\label{lem:ellipsoid_blowup}
Let $d\geq2$, and let $v$ be a convex global solution of the
obstacle problem with source $f\equiv1$ in $\R^d$ and with at most
quadratic growth. Set $E:=\{v=0\}$. Then:
\begin{enumerate}[label=(\roman*)]
    \item\label{item:ellipsoid_2_pt_criterion} If $E$ is bounded and contains two points, then $E$ is an ellipsoid.
    \item\label{item:quadratic_interior_criterion} If $E$ contains two points, and there exists a positive-definite 2-homogeneous polynomial $q$ with $\Delta q = 1$ such that $v\leq q$, then $E$ has nonempty interior.
    \item\label{item:ellipsoid_unique} Assume $q$ is as in item~\ref{item:quadratic_interior_criterion}, and that $E\subset \overline{B_1}, E\cap \partial B_1\neq \emptyset$. Then $E$ is an ellipsoid uniquely determined by $q$, and $v$ is uniquely determined by $q$ as well.
\end{enumerate}
\end{lem}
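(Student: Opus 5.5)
The plan is to lean on the known classification of global solutions to the obstacle problem with bounded coincidence set (Friedman--Sakai \cite{FriedmanSakai1986}, see also \cite{EFW25}), together with Caffarelli's blowdown analysis. Items (i)--(iii) will be proved in order, each one feeding the next.

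\emph{Item (i).} Convexity of $v$ makes $E$ convex and closed. If $E$ is bounded and contains two points, it contains a nondegenerate segment, and I will first show that $E$ has nonempty interior. To do this, I assume $E$ lies in a proper affine subspace; then $E$ has zero capacity when $d\geq3$, or is a segment in a line. Near an interior point of the segment, $v$ would be a nonnegative solution of $\Delta v=1$ off a lower-dimensional set, and nondegeneracy (Lemma~\ref{obst-quadratic-growth}) rules this out. The cleanest route is the blowup at a relative interior point: it must be a half-space solution or a polynomial vanishing on a line, and neither is compatible with a bounded $E$ of empty interior, since a polynomial blowup is a $C^{1,1}$ global solution and its $(d-1)$-stratum forces $E$ to be locally two-sided. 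Once $E$ is a bounded convex body with nonempty interior, the Friedman--Sakai / DiBenedetto--Friedman theorem (a global solution of quadratic growth with bounded coincidence set of positive measure has an ellipsoidal coincidence set) gives that $E$ is an ellipsoid.

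\emph{Item (ii).} Since $0\le v\le q$ and $q$ is positive definite, we have $q\geq\lambda|x|^2$, but this only bounds $v$ from above, so it does not directly bound $E$. The useful bound is the other way around: $\{q=0\}=\{0\}\subset E$ would come from $v\le q$ together with $v\ge0$. I will use the blowdown instead. The blowdown $v_R=R^{-2}v(R\cdot)$ converges to some homogeneous global solution $v_\infty\le q$, so $\{v_\infty=0\}\supset\{q=0\}=\{0\}$. Since $v_\infty$ is homogeneous and $\le q$, it is either a polynomial or a half-space solution, and a half-space solution is not $\le q$ along its free-boundary direction unless ... ; so $v_\infty$ is a polynomial, and the strict positivity of $q$ forces the polynomial to be positive definite. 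By Caffarelli's theory, a positive definite blowdown implies that $E$ is bounded. Then (i) applies, and $E$, being an ellipsoid, has nonempty interior.

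\emph{Item (iii).} By (ii) and (i), $E$ is an ellipsoid. For ellipsoidal coincidence sets, the solution is given explicitly via the Newtonian potential of a homogeneous ellipsoid, and $v(x)=q_\infty(x)-(\text{potential correction})$ with quadratic part $q_\infty$, its blowdown. The family of such solutions with a fixed blowdown $q_\infty$ is the one-parameter family of homothetic, concentric ellipsoids, namely the confocal structure determined by the shape of $q_\infty$. The key step is to show $q_\infty=q$. We have $v\le q$, and $v-q_\infty=o(|x|^2)$, actually $O(|x|^{2-d})$ plus an affine/constant term, so $q_\infty\le q$ asymptotically. Both have trace $1$, so $q-q_\infty$ is a nonnegative quadratic form with zero trace, which forces $q=q_\infty$. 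The translation parameter must be handled as well: the center is forced by $v\le q$ after comparing the linear terms in the expansion. The remaining scale is then fixed by the normalization $E\subset\overline{B_1}$ with $E\cap\partial B_1\neq\emptyset$. Hence $E$ and $v$ are determined by $q$. I expect the main obstacle to be this rigidity step, in particular controlling the center and lower-order terms via $v\le q$.
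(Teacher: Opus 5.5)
\textbf{Gap in item (i): the nonempty-interior step.} None of the mechanisms you propose works. First, a bounded convex set lying in a hyperplane (say a $(d-1)$-dimensional disc) has positive capacity in $\R^d$, so the capacity claim is false. More fundamentally, nondegeneracy and blowups are local, while a lower-dimensional coincidence set is perfectly consistent locally. For example, $v=\tfrac12x_1^2$ is a convex global solution vanishing on a hyperplane. It satisfies Lemma~\ref{obst-quadratic-growth} everywhere, and its blowup at every zero is the degenerate polynomial $\tfrac12x_1^2$ itself. The remark that the $(d-1)$-stratum forces $E$ to be ``locally two-sided'' produces no contradiction either.

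What excludes such configurations is boundedness, which is global, so the argument must be global too. If the convex set $E$ has empty interior, then $|E|=0$. Hence $\Delta v=1$ a.e., and since $v\in C^{1,1}$, also in $\mathcal D'(\R^d)$. So $v-|x|^2/(2d)$ is harmonic with quadratic growth, and by Liouville $v$ is a nonnegative quadratic polynomial. Its zero set is then an affine subspace containing the line through your two points, contradicting boundedness. This is the paper's argument; after it, your appeal to Friedman--Sakai is fine.

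\textbf{Items (ii) and (iii).} Your (ii) ends by invoking (i), so it inherits this gap. The blowdown detour is also unnecessary, and your reason for definiteness of $v_\infty$ is wrong: an upper bound $v_\infty\le q$ cannot force positive definiteness. What works is that $q-v_\infty$ is a nonnegative quadratic form with zero trace, hence $v_\infty=q$. The same Liouville idea gives (ii) directly. Since $0\le v\le q$, we have $0\in E$. If $E$ had empty interior, $q-v$ would be a nonnegative harmonic function, hence the constant $q(0)-v(0)=0$. Then $v=q$ would vanish only at the origin, contradicting the second zero.

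In (iii), your outline matches the paper's: identify the quadratic part with $q$, fix the centre by the absence of linear terms, and fix the scale by the normalization. However, the genuinely nontrivial step is only asserted: that the axis ratios of $E$ are determined by the eigenvalues of $D^2q$. The paper proves it in three moves:
\begin{itemize}
\item it writes $v=q+Q_E-k$, using $v\le q$ and Liouville;
\item it reads off $q=\sum_i b_ix_i^2$ on $E$ from the interior expansion of the Newtonian potential $Q_E$;
\item it invokes the bijectivity of the map from normalized semi-axes $(a_i)$ to the coefficients $(b_i)$ (Lemma~A in \cite{KarpEllipsoid}).
\end{itemize}
You need this or an equivalent citation. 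Note also that the relevant one-parameter family of ellipsoids is homothetic, not confocal.
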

\begin{proof}

If $E$ has zero measure, then Liouville gives that $v$ is a nonnegative quadratic polynomial, in which case the assumptions that $E$ is bounded and has at least two points contradict. Thus, $E$ has positive measure, and since it is convex, it has nonempty interior. It is known by the classification of global solutions to the obstacle problem that an $E$ with these properties must be an ellipsoid \cite{FriedmanSakai1986, EFW25}.

Suppose item~\ref{item:quadratic_interior_criterion} fails. Since $v$ is convex, $E$ is convex with empty interior, so it has zero measure. It follows that $q - v$ is nonnegative harmonic, and thus constant by Liouville. Since $v(0) = q(0) = 0$, $q\equiv v$. But this contradicts that $v$ vanishes at another point while $q$ vanishes only at 0. Thus, $E$ must have nonempty interior.

We now move on to item~\ref{item:ellipsoid_unique}. The assumptions imply that $E$ is bounded and contains 0 and a point on the unit sphere. Then item~\ref{item:ellipsoid_2_pt_criterion} gives that $E$ is an ellipsoid. Let $Q_E$ denote the Newtonian potential of $E$,
\[ Q_E(x) = \begin{cases}
    C(d)\int_E \log |x-y|dy & d=2\\ C(d)\int_E |x-y|^{2-d}dy & d>2
\end{cases}   \]
    so that $\Delta Q_E = -\chi_E$. After rotating so that the coordinate axes align with the principal axes of $E$, $Q_E$ is given by a quadratic polynomial $b_0 - \sum b_i(x_i-y_i)^2$ inside $E$, where $y$ is the center of $E$, and the $b_i$ are positive. If $d\geq 3$, then $Q_E\to 0$ as $|x|\to\infty$, so the harmonic function $q-v+Q_E$ is bounded below and hence constant. If $d=2$, then $H:=q-v+Q_E$ is harmonic with at most quadratic growth, hence is a harmonic polynomial of degree at most two. Since $q-v=H-Q_E\geq0$ and $Q_E=O(\log(2+|x|))$, the leading homogeneous part of $H$ cannot change sign; thus $H$ is constant. Thus, for all $d\geq 2$, $v=q+Q_E-k$ for some constant $k$.

    Now, $v$ vanishes inside $E$, so $q = k-Q_E$ inside $E$. Expanding $q=k-Q_E$ in the coordinates above gives $q(x)=k-b_0+\sum_i b_i(x_i-y_i)^2$. Since the $2$-homogeneous polynomial $q$ has no linear term and $b_i>0$, we obtain $y=0$, and hence $q=\sum_i b_i x_i^2$ in $E$. This is after rotating the principal axes of $E$ to the coordinate axes. It follows that if in original coordinates we had $q = x\cdot Ax$, then the principal axes of $E$ are the eigenvectors of $A$, and the $b_i$ the corresponding eigenvalues.
    
    Now, define a map $F$ by $F(a_1,\dots, a_d) = (b_1,\dots, b_d)$, such that $-\sum b_i x_i^2$ is the quadratic associated with the Newtonian potential of the ellipsoid $E = \{ x : \sum \frac{x_i^2}{a_i^2} \leq 1 \}$, up to additive constant. In the proof of Lemma A in \cite{KarpEllipsoid}, it is shown that $F$ is 0-homogeneous and maps $\{ a_i \in (0,\infty)^d : \sum a_i = 1\}$ bijectively onto $\{ b_i\in (0,\infty)^d : \sum b_i = \frac{1}{2}\}$. Thus, the axis ratios of $E$ are determined by the eigenvalues of $D^2 q$. The assumption that $E\subset \overline{B_1}$ and $E\cap \partial B_1\neq \emptyset$ fixes the scaling of $E$, so $E$ is uniquely determined by $q$.

Finally, if $v'$ is another global solution to the obstacle problem with at most quadratic growth and vanishing exactly on $E$, then $v - v'$ is harmonic with quadratic growth at infinity, and hence a quadratic polynomial by Liouville. But $v - v'$ vanishes identically on $E$, so it vanishes everywhere.

\end{proof}

\renewcommand{\bibliofont}{\small\setlength{\baselineskip}{10pt}}
\bibliographystyle{alpha}
\bibliography{references}

\end{document}